\newtheorem{theorem}{Theorem}[section]
\newtheorem{lemma}[theorem]{Lemma}
\newtheorem{proposition}[theorem]{Proposition}
\newtheorem{corollary}[theorem]{Corollary}
\theoremstyle{definition}
\newtheorem{definition}[theorem]{Definition}
\newtheorem{condition}[theorem]{Condition}
\theoremstyle{remark}
\newtheorem{remark}[theorem]{Remark}
\newtheorem{example}[theorem]{Example}
\numberwithin{equation}{section}
\crefname{example}{Example}{Examples}
\Crefname{example}{Example}{Examples}
\crefname{assumption}{Assumption}{Assumptions}
\Crefname{assumption}{Assumption}{Assumptions}
\crefname{condition}{Condition}{Conditions}
\Crefname{condition}{Condition}{Conditions}
\let\para\S
\setlist{topsep=1ex, itemsep=0.5ex, before={\setlist{topsep=-.5ex}}}
\newcommand{\A}{\ensuremath{\mathscr{A}}}
\renewcommand{\a}{\ensuremath{\mathfrak{a}}}
\newcommand{\f}{\ensuremath{\frac}}
\newcommand{\B}{\ensuremath{\mathcal{B}}}
\newcommand{\C}{\ensuremath{\mathcal{C}}}
\newcommand{\D}{\ensuremath{\mathcal{D}}}
\renewcommand{\S}{{\mathrm{S}}}
\newcommand{\F}{\ensuremath{\mathcal{F}}}
\newcommand{\G}{\ensuremath{\mathcal{G}}}
\renewcommand{\H}{\ensuremath{\mathscr{H}}}
\newcommand{\cH}{\ensuremath{\mathcal{H}}}
\newcommand{\h}{\ensuremath{\mathfrak{h}}}
\newcommand{\K}{\ensuremath{\mathscr{K}}}
\renewcommand{\L}{\ensuremath{\mathcal{L}}}
\newcommand{\N}{\ensuremath{\mathbb{N}}}
\renewcommand{\P}{\ensuremath{\mathcal{P}}}
\newcommand{\R}{\ensuremath{\mathbb{R}}}
\newcommand{\X}{\ensuremath{\mathcal{X}}}
\newcommand{\1}{\ensuremath{\mathds{1}}}
\newcommand{\T}{\ensuremath{\mathcal{T}}}
\newcommand{\W}{\ensuremath{\mathcal{W}}}
\newcommand{\bW}{\ensuremath{\mathbb{W}}}
\newcommand{\sW}{\ensuremath{\mathsf{W}}}
\def\<{\langle}
\def\>{\rangle}
\def\G{{\mathcal G}}
\def\I{\mathcal I}
\def\vsigma{\varsigma}
\def\Leb{\mathop{\mathrm {Leb}}}
\NewDocumentCommand{\Lin}{om}{\IfNoValueTF{#1}{L(\R^{#2},\R^{#2})}{L(\R^{#1},\R^{#2})}}
\NewDocumentCommand{\Cb}{om}{\IfNoValueTF{#1}{\C_b^{#2}}{\C_b^{#2,#1}}}
\def\epsilon{\varepsilon}
\def\le{\leq}
\NewDocumentCommand{\Lip}{om}{\IfNoValueTF{#1}{|#2|_{\mathrm{Lip}}}{|#2|_{\mathrm{Lip};\,#1}}}
\newcommand{\define}{\ensuremath\triangleq}
\newcommand{\expec}[1]{\mathbb{E}[#1]}
\def\E{\mathbb E}
\newcommand{\Expec}[1]{\mathbb{E}\left[#1\right]}
\newcommand{\braket}[1]{\ensuremath\langle#1\rangle}
\newcommand{\Braket}[1]{\ensuremath\left\langle#1\right\rangle}
\newcommand{\prob}{\ensuremath\mathbb{P}}
\newcommand{\TV}[1]{\ensuremath{\left\|#1\right\|_\mathrm{TV}}}
\renewcommand{\geq}{\geqslant}
\renewcommand{\leq}{\leqslant}
\def\${|\!|\!|}
\def\B{{\mathcal B}}
\def\F{{\mathcal F}}
\def\ge{\geq}
\def\1{\mathbf 1}
\newcommand{\vertiii}[1]{{\left\vert\kern-0.25ex\left\vert\kern-0.25ex\left\vert #1 \right\vert\kern-0.25ex\right\vert\kern-0.25ex\right\vert}}
\newcommand{\rom}[1]{(\textup{\uppercase\expandafter{\romannumeral#1}})}
\newcommand{\substackal}[1]{%
  \vcenter{%
    \Let@ \restore@math@cr \default@tag
    \baselineskip\fontdimen10 \scriptfont\tw@
    \advance\baselineskip\fontdimen12 \scriptfont\tw@
    \lineskip\thr@@\fontdimen8 \scriptfont\thr@@
    \lineskiplimit\lineskip
    \ialign{\hfil$\m@th\scriptstyle##$&$\m@th\scriptstyle{}##$\hfil\crcr
      #1\crcr
    }%
  }%
}
\definecolor{LB}{rgb}{0.29, 0.63, 0.73}
\begin{document}
\title{Slow-Fast Systems with Fractional Environment and Dynamics}
\author{Xue-Mei Li and Julian Sieber\thanks{Email addresses: \{\href{mailto:xue-mei.li@imperial.ac.uk}{xue-mei.li}, \href{mailto:j.sieber19@imperial.ac.uk}{j.sieber19}\}@imperial.ac.uk}}
\affil{Department of Mathematics, Imperial College London, UK}
\date{\today}
\maketitle

\begin{abstract}
  \noindent We prove a fractional averaging principle for interacting slow-fast systems. The mode of convergence is in H\"older norm in probability. The main technical result is a quenched ergodic theorem on the conditioned fractional dynamics. We also establish geometric ergodicity for a class of fractional-driven stochastic differential equations, improving a recent result of Panloup and Richard.
  \vspace{0.2cm}\\
  \noindent\textbf{MSC2010:} 60G22, 60H10, 37A25.\\
  \noindent\textbf{Keywords:} Fractional Brownian motion, averaging, slow-fast system, quenched ergodic theorem, rate of convergence to equilibrium.
\end{abstract}

{\hypersetup{hidelinks}
\tableofcontents
}

\section{Introduction and Main Results}

We study slow-fast systems driven by fractional Brownian motions (fBm):
\begin{alignat}{4}
  dX_t^\varepsilon&=f(X_t^\varepsilon,Y_t^\varepsilon)\,dt+g(X_t^\varepsilon, Y_t^\varepsilon)\,dB_t, &\qquad X_0^\varepsilon&=X_0, \label{eq:slow}\\
  dY_t^\varepsilon&=\frac{1}{\varepsilon}b(X_t^\varepsilon,Y_t^\varepsilon)\,dt+\frac{1}{\varepsilon^{\hat{H}}}\sigma\,d\hat{B}_t, &\qquad Y_0^\varepsilon&=Y_0, \label{eq:fast}
\end{alignat}
where $B$ and $\hat{B}$ are independent fBms on an underlying complete probability space $(\Omega, \F,\prob)$ with Hurst parameters $H\in(\frac12,1)$ and $\hat{H}\in(1-H,1)$, respectively. Here, $g:\R^d\times\R^n\to\Lin[m]{d}$ and $\sigma\in\Lin{n}$ is non-degenerate. As the scale parameter $\varepsilon>0$ is taken to $0$, one hopes that the \emph{slow motion} $X^\varepsilon$ is well approximated by an \emph{effective dynamics} $\bar{X}$. For $H=\hat{H}=\frac12$, this convergence has been studied by myriad authors since the seminal works of Bogolyubov-Mitropol{\textquotesingle}ski\u{\i} \cite{Bogolyubov1955} and Hasminskii \cite{Hasminskii1968}, see e.g. the monographs and survey articles \cite{Freidlin2012,Skorokhod2002,Pavliotis2008,Berglund2006,Liu2012,Li2018} and references therein for a comprehensive overview. It is still a very active research area \cite{Liu2020,Roeckner2020,Roeckner2020a}.

For $H,\hat{H}\neq\frac12$, the SDEs \eqref{eq:slow}--\eqref{eq:fast} provide a suitable model for economic, medical, and climate phenomena exhibiting a genuinely non-Markovian behavior in both the system and its environment. It is for example very well known that neglecting temporal memory effects in climate modeling by resorting to a diffusion model results in prediction notoriously mismatching observational data \cite{Ashkenazy2003,Karner2002,Davidsen2010,Barboza2014}. It thus became widely popular to use fBm in climate modeling \cite{Sonechkin1998,Yuan2014,Eichinger2020}.

While slow-fast systems with fractional noise have seen a tremendous spike of interest in the last two years \cite{Bourguin-ailus-Spiliopoulos-typical,Bourguin-Gailus-Spiliopoulos,Hairer2020,Pei-Inaham-Xu, Pei-Inaham-Xu2,Han2021}, all of these works resort to Markovian, strongly mixing fast processes by choosing $\hat{H}=\frac12$ in \eqref{eq:fast}. The main contribution of this article is to establish the convergence $X^\varepsilon\to\bar{X}$ even for a \emph{non-Markovian} fast dynamics by allowing $\hat{H}\neq\frac12$. It hardly comes as a surprise that this renders the analysis much more delicate and it is not clear at all if an averaging principle can even hold for a fractional, \emph{non-mixing} environment. In fact, the usual assumption in the aforementioned works on Markovian averaging principles is a strong mixing condition with an algebraic rate \cite{Heunis1994,Abourashchi2010}. This condition is essentially never satisfied for a fractional dynamics \cite{Bai2016}.

Recent work of Hairer and the first author of this article suggests the following ansatz for the effective dynamics:
\begin{equation}\label{eq:effective_dynamics}
  d\bar{X}_t=\bar{f}(\bar{X}_t)\,dt+\bar{g}(\bar{X}_t)\,dB_t,\qquad \bar{X}_0=X_0,
\end{equation}
where $\bar{f}(x)\define\int f(x,y)\,\pi^x(dy)$ and similar for $\bar{g}$ \cite{Hairer2020}. For $\hat{H}=\frac12$, this work showed that the average is taken with respect to the unique invariant $\pi^x$ of the fast dynamics with \emph{frozen} slow input
\begin{equation}\label{eq:frozen_fast}
  dY_t^x=b(x,Y_t^x)\,dt+\sigma\,d\hat{B}_t.
\end{equation} 
For $\hat{H}\neq\frac12$, it is \emph{a priori} not clear what $\pi^x$ should be. We show that it is the one-time marginal of the unique stationary path space law $\prob_{\pi^x}\in\P\big(\C(\R_+,\R^n)\big)$, see \cref{sec:physical_solution} for details. [Here and in the sequel, $\P(\X)$ denotes the set of Borel probability measures on a Polish space $\X$.]

In addition to standard regularity requirements ensuring well-posedness of the slow-fast system (see \cref{cond:feedback} below), we shall impose a contractivity condition on the drift in \eqref{eq:fast}:
\begin{definition}\label{define-semi-contractive}
  Let $\lambda, R\geq 0$ and $\kappa>0$. We write $\S(\kappa, R, \lambda)$ for the set of Lipschitz continuous functions $b:\R^n\to\R^n$ satisfying
  \begin{equation}\label{eq:semicontractive}
  \Braket{b(x)-b(y),x-y}\leq\begin{cases}
  -\kappa|x-y|^2, & |x|,|y|\geq R,\\
  \lambda|x-y|^2, &\text{otherwise}.\\
  \end{cases}
\end{equation}
\end{definition}

Note that $\lambda$ may be smaller than $\Lip{b}$, whence its prescription is not necessarily redundant. If $b=-\nabla V$ is a gradient vector field with potential $V$, then \eqref{eq:semicontractive} is equivalent to $V$ being at most $\lambda$-concave on $|x|<R$ and $\kappa$-convex on $|x|\geq R$. If $V\in\C^2(\R^n)$, these requirements are in turn equivalent to $\nabla^2V\geq-\lambda$ and $\nabla^2 V\geq\kappa$ on the respective sets.

\begin{theorem}[Fractional Averaging Principle]\label{thm:feedback_fractional}
  Consider the slow-fast system \eqref{eq:slow}--\eqref{eq:fast}. Suppose that $f,g\in\Cb{2}$ and $b$ satisfies \cref{cond:feedback}. Let $\alpha<H$ and $\kappa,R>0$. Then there is a number $\lambda_0>0$ such that, if $b(x,\cdot)\in\S\big(\kappa, R,\lambda_0\big)$ for every $x\in\R^d$, all of the following hold:
  \begin{itemize}
    \item For every $x\in\R^d$, there exists a unique stationary path space law $\prob_{\pi^x}\in\P\big(\C(\R_+,\R^n)\big)$ for the frozen fast dynamics \eqref{eq:frozen_fast}.
    \item Let $\pi^x\in\P(\R^n)$ be the one-time marginal of $\prob_{\pi^x}$. If 
    \begin{equation*}
      x\mapsto\bar{g}(x)\define\int_{\R^n}g(x,y)\,\pi^x(dy)\in\C_b^2\big(\R^d,\Lin[m]{d}\big),
    \end{equation*}
    then there is a unique pathwise solution to \eqref{eq:effective_dynamics} and $X^\varepsilon\to\bar{X}$ as $\varepsilon\to 0$ in $\C^\alpha\big([0,T],\R^d\big)$ in probability for any $T>0$.
  \end{itemize} 
\end{theorem}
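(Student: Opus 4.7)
The proof splits naturally along the two bullet points. For the first---the stationary path-space law $\prob_{\pi^x}$---the plan is a synchronous coupling. Two solutions $Y^x, \tilde Y^x$ of \eqref{eq:frozen_fast} driven by the same path of $\hat B$ but starting from distinct pasts differ by a deterministic ODE, $\frac{d}{dt}(Y^x_t-\tilde Y^x_t) = b(x, Y^x_t) - b(x, \tilde Y^x_t)$, because the noise is additive. The condition $b(x,\cdot)\in\S(\kappa,R,\lambda_0)$ then gives $\frac{d}{dt}|Y^x_t-\tilde Y^x_t|^2 \leq -2\kappa|Y^x_t-\tilde Y^x_t|^2$ whenever both trajectories are outside $\{|y|\leq R\}$, and at most $2\lambda_0|Y^x_t-\tilde Y^x_t|^2$ otherwise. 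A drift/Lyapunov argument controls the occupation time of the ball, so choosing $\lambda_0$ small relative to $\kappa$ yields net exponential contraction; the unique invariant law on $\P(\C(\R_+,\R^n))$ is then $\prob_{\pi^x}$.

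For the averaging, partition $[0,T]$ into blocks of length $\Delta=\varepsilon^\beta$ with $\beta\in(0,1)$, so that $\Delta/\varepsilon\to\infty$ (the fast dynamics equilibrates) while $\Delta\to 0$ (the slow dynamics is frozen). On $[t_k, t_{k+1}]$, freeze $x_k\define X^\varepsilon_{t_k}$ and couple $Y^\varepsilon$ with the stationary frozen process $Y^{x_k}$ initialised from the same past. For the drift integral, the quenched ergodic theorem applied to $f(x_k,\cdot)$ gives, pathwise in $\hat B$,
\begin{equation*}
  \int_{t_k}^{t_{k+1}} f(x_k, Y^\varepsilon_s)\,ds \;=\; \Delta\,\bar f(x_k) + o(\Delta)
\end{equation*}
in probability, with enough uniformity in $x_k$ over compact sets to sum over $k$ and obtain $\int_0^T f(X^\varepsilon_s, Y^\varepsilon_s)\,ds \approx \int_0^T \bar f(X^\varepsilon_s)\,ds$.

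The Young integral $\int_0^T g(X^\varepsilon_s, Y^\varepsilon_s)\,dB_s$ is more delicate but amenable to the same freezing strategy. Since $H>\frac12$ and the integrands are $\alpha$-H\"older with $\alpha+H>1$, one decomposes
\begin{equation*}
  \int_{t_k}^{t_{k+1}}\!\! g(X^\varepsilon_s, Y^\varepsilon_s)\,dB_s = \bar g(x_k)\bigl(B_{t_{k+1}}-B_{t_k}\bigr) + \int_{t_k}^{t_{k+1}}\!\!\bigl(g(x_k, Y^\varepsilon_s)-\bar g(x_k)\bigr)\,dB_s + E_k^\varepsilon,
\end{equation*}
where $E_k^\varepsilon$ absorbs the freezing error $g(X^\varepsilon,\cdot)-g(x_k,\cdot)$ and is bounded via the $\C^2$-regularity of $g$ together with the $\alpha$-H\"older norm of $X^\varepsilon$. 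Conditioning on $\hat B$ (independent of $B$) turns the middle fluctuation term into a Young integral whose integrand is mean-zero under $\pi^{x_k}$ and whose $\alpha$-H\"older norm shrinks via a quenched sewing bound. Summing in $k$ and inserting the result into a Young--Gronwall inequality for $X^\varepsilon-\bar X$ yields convergence in $\C^\alpha([0,T],\R^d)$ in probability; existence and uniqueness of $\bar X$ are classical Young-theoretic facts, given $\bar f,\bar g\in\C_b^2$.

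The principal obstacle is the \emph{quenched} ergodic theorem for the non-Markov frozen dynamics: since $Y^\varepsilon$ depends on the entire past of $\hat B$, Birkhoff's theorem does not apply directly, and one must establish quantitative rates of convergence uniform in the frozen slow variable over compact sets. A secondary difficulty is that qualitative ergodic convergence alone is too weak for the Young integral; a H\"older-type control on the ergodic fluctuations, obtained via a fractional sewing argument combined with the quenched bound, is what ultimately allows the $dB$-integral to be averaged without recourse to rough-path machinery for $\hat B$.
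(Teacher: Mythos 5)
Your high-level architecture (ergodicity of the frozen dynamics, then a freezing/discretization argument for the slow integrals) matches the paper's, and you correctly flag the quenched ergodic theorem as the crux. But two steps as written would fail. First, for the contraction underlying both bullets, you assert that ``a drift/Lyapunov argument controls the occupation time of the ball.'' A Lyapunov function bounds the time spent \emph{far from} the origin; here you need the opposite---a lower bound on the time the trajectory spends \emph{outside} $B_R$, where \eqref{eq:semicontractive} is contractive---and this bound must hold uniformly over the initial condition \emph{and over the conditional past of the noise}, because the argument has to be iterated over unit time blocks conditionally on $\F_t$, where the past enters as an arbitrary smooth adversary $\varsigma=\bar{\hat{B}}^t$. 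The paper obtains this via a genuinely different mechanism: a universal deterministic control (\cref{prop:control}) that expels the solution from the ball regardless of the adversary, transferred to the SDE by the Cameron--Martin theorem (\cref{lem:probabilistic_control}), yielding an event of uniformly positive probability on which the occupation time of $\R^n\setminus B_{\bar R}$ exceeds $\eta$. Your proposal contains no substitute for this uniform lower bound, and without it the per-block contraction factor $\rho<1$ cannot be established.

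Second, your phrase ``couple $Y^\varepsilon$ with the stationary frozen process $Y^{x_k}$ initialised from the same past'' overlooks the central structural difficulty: conditioning on $\F_{t_k}$ \emph{changes the dynamics}. The fBm increment splits as $\theta_t\hat{B}=\bar{\hat{B}}^t+\tilde{\hat{B}}^t$, where the part independent of the past is a Riemann--Liouville process with non-stationary increments; consequently $\pi^{x_k}$ is \emph{not} invariant for the conditioned evolution, and the conditional law converges to $\pi^{x_k}$ only at an algebraic rate (\cref{prop:conditional_stationary_wasserstein}; the paper even exhibits an example showing an exponential rate is impossible). Any quantitative freezing argument must account for this loss of rate and for the $\Omega_\alpha$-norm of the smooth adversary; the paper does so through \cref{cor:total_variation_conditional} and then replaces your Khasminskii block summation by the stochastic sewing lemma, which packages exactly the two estimates (the $H^p_\eta$ and $\bar{H}^p_{\bar\eta}$ bounds) that your sketch leaves implicit. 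Finally, your Young--Gr\"onwall closing step requires uniform-in-$\varepsilon$ moment bounds on the H\"older norm of $X^\varepsilon$, which are only available after localization with the stopping times $\tau^\varepsilon_M$; this is a fixable but necessary omission.
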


The regularity of $\bar{g}$ not only hinges on the regularity of $g$ but also on the fast dynamics. First we note that the requirement on $\bar{g}$ clearly holds for a diffusion coefficient depending only on the slow motion $X^\varepsilon$: 
\begin{equation*}
  dX_t^\varepsilon=f(X_t^\varepsilon,Y_t^\varepsilon)\,dt+g(X_t^\varepsilon)\,dB_t.
\end{equation*}
Another class of examples is provided by \cref{cor:smooth} below. 

The technical core of the proof of \cref{thm:feedback_fractional} is a quantitative \emph{quenched ergodic theorem} on the conditional evolution of the process \eqref{eq:frozen_fast}. We prove this by means of a control argument, which is of independent interest. In fact, it allows us to improve recent work of Panloup and Richard \cite{Panloup2020} by establishing geometric ergodicity for a class of SDEs driven by additive fractional noise. To our best knowledge, this is the first result achieving an exponential convergence rate for a fractional dynamics (excluding the trivial instance of an everywhere contractive drift).

Let $\TV{\mu}\define\sup_A|\mu(A)|$ denote the total variation norm, $\W^p$ be the $p$-Wasserstein distance, and $\bW^p$ be the Wasserstein-like metric for generalized initial conditions introduced in \cref{def:wasserstein}.
 
\begin{theorem}[Geometric Ergodic Theorem]\label{thm:geometric}
Let $(Y_t)_{t\geq 0}$ be the solution to the SDE
\begin{equation}\label{eq:sde_intro}
  dY_t=b(Y_t)\,dt+\sigma\,dB_t
\end{equation}
started in the generalized initial condition $\mu$, where $\sigma\in\Lin{n}$ is non-degenerate and $B$ is an fBm with Hurst parameter $H\in(0,1)$. Then, for any $p\geq 1$ and any $\kappa,R>0$, there exists a $\Lambda=\Lambda(\kappa,R,p)>0$ such that, whenever $b\in\S\big(\kappa,R,\Lambda\big)$, there is a unique invariant measure $\I_\pi$ for \eqref{eq:sde_intro} in the sense of \cref{initial-condition}. Moreover, 
    \begin{equation}\label{eq:wasserstein_time_t}
        \W^p(\mathcal{L}(Y_t),\pi)\leq Ce^{-ct} \bW^p\big(\mu,\I_{\pi}\big) \qquad \forall\, t\geq 0
    \end{equation}
    and
    \begin{equation}\label{eq:tv_process}
        \TV{\L(Y_{\cdot+t})-\prob_\pi}\leq Ce^{-ct}\bW^1\big(\mu,\I_\pi\big) \qquad \forall\, t\geq 0,
    \end{equation}
    where $c,C>0$ are numerical constants independent of $t\geq 0$ and $\mu$.
\end{theorem}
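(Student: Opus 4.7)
The plan is to use a coupling argument on the enlarged state space of generalized initial conditions, on which $(Y_t)$ becomes a genuine Markov process once the infinite past of $B$ is incorporated. The central point is that, when $\Lambda$ is small enough compared to $\kappa$ and $R$, the strict contractivity of $b$ outside $B(0,R)$ dominates its potentially expansive behavior inside, producing exponential decay.

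For the Wasserstein bound \eqref{eq:wasserstein_time_t}, I would construct a synchronous coupling $(Y_t,Y_t')$ with both processes driven by the same realization of $B$, and with $(Y_0,Y_0')$ coupled optimally for the $\bW^p$-metric between $\mu$ and $\I_\pi$. Since the noise is additive, the difference $Z_t\define Y_t-Y_t'$ solves the random ODE $\dot Z_t=b(Y_t)-b(Y_t')$, and \cref{define-semi-contractive} yields
\begin{equation*}
  \frac{d}{dt}|Z_t|^p \leq p|Z_t|^p\big(-\kappa\mathbf{1}_{A_t^c}+\Lambda\mathbf{1}_{A_t}\big), \qquad A_t\define\{|Y_t|<R\}\cup\{|Y_t'|<R\}.
\end{equation*}
The key ingredient is a quenched Lyapunov-type bound showing that the time-averaged occupation density of $A_t$ is strictly less than one: the contractivity of $b$ outside $B(0,R)$ forces both processes to spend a definite fraction of time well outside $A_t$. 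A Gronwall estimate then gives $\mathbb{E}|Z_t|^p\leq Ce^{-ct}\mathbb{E}|Z_0|^p$ provided $\Lambda$ is small enough for the time-averaged exponent to be strictly negative, from which \eqref{eq:wasserstein_time_t} is obtained by infimizing over couplings.

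For \eqref{eq:tv_process}, synchronous coupling is insufficient because $Y_t\neq Y_t'$ almost surely, so I would replace it by an asymptotic coupling based on a Girsanov shift of $B$ via the Mandelbrot--Van Ness representation. On a sequence of random intervals $[\tau_k,\tau_{k+1}]$ along which both processes are simultaneously inside a fixed compact set, one constructs a predictable control $h$ that drives $Y'$ on top of $Y$; outside such intervals the Wasserstein contraction above keeps them close. The Girsanov density of the shifted noise relative to $B$ involves a fractional Riemann--Liouville operator applied to $h$, and by carefully smoothing $h$ at the endpoints of each interval, together with its bounded support and amplitude, one obtains all exponential moments. A regeneration argument, combined with exponential tails on the interval lengths inherited from the Wasserstein step, then transforms this into the exponential total variation rate.

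The principal obstacle will be the construction of the control $h$ and the estimation of its fractional Girsanov cost, which is also the source of the improvement over \cite{Panloup2020}. Unlike the classical Brownian case, this cost is highly sensitive to the regularity of $h$ near the boundary of its support, so each coupling interval must be split between smoothing phases, during which $h$ gradually ramps up and down, and active phases, during which it drives the processes together. Balancing these two phases against the regeneration frequency and the Wasserstein contraction rate determines the admissible range of $\Lambda=\Lambda(\kappa,R,p)$. Finally, uniqueness of $\I_\pi$ in the sense of \cref{initial-condition} follows from \eqref{eq:wasserstein_time_t} applied to any two stationary generalized initial conditions.
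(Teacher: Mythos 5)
There is a genuine gap at the heart of your Wasserstein argument. You reduce \eqref{eq:wasserstein_time_t} to the claim that the synchronously coupled pair spends a definite fraction of time in the contractive region, and you justify this by saying that ``the contractivity of $b$ outside $B(0,R)$ forces both processes to spend a definite fraction of time well outside $A_t$.'' This is not true and cannot be the mechanism: contractivity outside $B_R$ says nothing about whether the process ever leaves $B_R$ (inside $B_R$ the drift may well be recurrent towards the origin), so it is only the \emph{noise} that can push the solution into the contractive region, and then only on an event of positive probability, not almost surely. Worse, because the driver is not Markov, the relevant statement must be conditional on $\F_t$: after conditioning, the future increment of the fBm splits into an independent Riemann--Liouville part and an $\F_t$-measurable ``smooth'' drift which acts as an unbounded adversary. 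The technical core of the paper's proof of \cref{prop:conditional_initial_condition_wasserstein} is precisely to handle this: one constructs a \emph{universal} control of bounded Cameron--Martin norm (uniform over all adversaries and all initial points, \cref{prop:control}) that forces an occupation time $\eta$ of $\R^n\setminus B_{\bar R}$, and then uses the Cameron--Martin formula to get a lower bound $\a_{\eta,\bar R}>0$ on the probability that the rough part realizes this control (\cref{lem:probabilistic_control}). The per-unit-time contraction factor is then $(1-\a_{\eta,\bar R})e^{p\Lambda}+\a_{\eta,\bar R}e^{-p\Xi}$, and the smallness condition on $\Lambda$ comes from making this $<1$ --- not from a deterministic ``time-averaged exponent.'' Your proposal contains no substitute for this uniform-in-the-adversary lower bound, and without it the Gronwall step does not close.

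On the total variation part, your route (regeneration times, exponential moments of a Girsanov density, splitting each coupling window into smoothing and active phases) is closer to the Hairer/Panloup--Richard scheme and is much heavier than what is needed once the exponential Wasserstein decay is in hand. The paper instead makes a single coupling attempt on $[t,t+1]$: an explicit shift $\varphi^t$ with $|\varphi^t|_\infty$ and $|\dot\varphi^t|_\infty$ controlled by $|X_t-Z_t|^{1/2}$ (\cref{lem:girsanov}) merges the two solutions by time $t+1$, Pinsker bounds the failure probability by the Girsanov cost on $\{|X_t-Z_t|\le\delta\}$, the Wasserstein bound controls $\prob(|X_t-Z_t|>\delta)$, and optimizing $\delta=e^{-ct/2}$ gives the geometric rate directly, with no regeneration structure and no further constraint on $\Lambda$ from this step. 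You correctly identify that for $H>\frac12$ the Girsanov cost is sensitive to the regularity of the shift, but your plan leaves both the control construction and this cost estimate as open ``principal obstacles,'' which together with the gap above means the proof is not complete.
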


The work \cite{Hairer2005} already contained a result on the rate of convergence. There, the author assumed an off-diagonal contraction condition, see \cref{cond:off_diagonal} below, and obtained an algebraic rate in \eqref{eq:tv_process}. Very recently Panloup and Richard \cite{Panloup2020} studied $b\in\S(\kappa,R,0)$ for which they found a rate of order $e^{-Dt^\gamma}$ for some $\gamma<\frac23$ in both \eqref{eq:wasserstein_time_t} and \eqref{eq:tv_process}. Albeit these works did not require a global Lipschitz condition on the drift for Hurst parameters $H<\frac12$, we emphasize that they do impose this assumption for $H>\frac12$ to obtain \eqref{eq:tv_process}. This is due to the lack of regularity of a certain fractional integral operator. \Cref{thm:geometric} thus provides a genuine ramification of the results of \cite{Panloup2020} in the latter case. We note that similarly to the work of Panloup and Richard, the Wasserstein decay \eqref{eq:wasserstein_time_t} also holds for more general Gaussian driving noises with stationary increments. We shall briefly comment on this in \cref{sec:geometric_ergodicity}.

With the spiking interest in numerical methods based on the generalized Langevin equation with memory kernel \cite{Chak2020,Leimkuhler2020}, \cref{thm:geometric} and the quenched quantitative ergodic theorem underpinning it can give a better theoretical understanding. A first step would be to derive quantitative estimates on the constants $c$, $C$, and $\Lambda$; a possible pathway is outlined in \cref{rem:constant_xi} below. It is an interesting open question if there is indeed a finite threshold value of $\Lambda$ beyond which the exponential rates \eqref{eq:wasserstein_time_t}--\eqref{eq:tv_process} no longer hold. As established by Eberle, such a transition from exponential to sub-exponential rates does not happen in case $H=\frac12$ \cite{Eberle2016}.

\begin{example}
  Let us give an example of a drift not covered by the sub-exponential convergence theorems of \cite{Panloup2020}. Consider the double-well potential
  \begin{equation*}
      V(x)=\alpha|x|^4-\beta|x|^2
  \end{equation*}
  for $\alpha,\beta>0$. We modify $V$ outside of a compact such that its Hessian is bounded. Set $b=-\nabla V$. It is clear that $b\notin\bigcup_{\kappa,R>0}\S(\kappa,R,0)$ as soon as $\beta>0$. However, for $\frac{\beta}{\alpha}$ sufficiently small, \cref{thm:geometric} furnishes an exponential rate of convergence.
\end{example}

\paragraph{Outline of the article.} The next section features a brief overview of preliminary material. In \cref{sec:convergence}, we prove the quantitative quenched ergodic theorem and deduce \cref{thm:geometric}. The proof of \cref{thm:feedback_fractional} is concluded in \cref{sec:feedback}.
\paragraph{Acknowledgements.} We would like to thank the anonymous referees for their careful reading and helpful comments. Partial support from the EPSRC under grant no. EP/S023925/1 is also acknowledged.

\section{Preliminaries}\label{sec:preliminaries}

Recall that one-dimensional fractional Brownian motion with Hurst parameter $H\in(0,1)$ is the centered Gaussian process $(B_t)_{t\geq 0}$ with
\begin{equation*}
  \Expec{(B_t-B_s)^2}=|t-s|^{2H},\qquad s,t\geq 0.
\end{equation*}
To construct $d$-dimensional fBm one lets the coordinates evolve as independent one-dimensional fBms with the same Hurst parameter. We will make frequent use of the following classical representation of one-dimensional fBm as a fractional integral of a two-sided Wiener process $(W_t)_{t\in\R}$, which is due to Mandelbrot and van Ness \cite{Mandelbrot1968}:
\begin{equation}\label{eq:mandelbrot}
 B_t=\alpha_H\int_{-\infty}^0 (t-u)^{H-\frac12}-(-u)^{H-\frac12}\,dW_u+\alpha_H\int_0^t(t-u)^{H-\frac12}\,dW_u,\qquad t\geq 0.
\end{equation}
Here, $\alpha_H>0$ is some explicitly known normalization constant and we also write $B_t=\bar B_t+\tilde B_t$.

\subsection{Invariant Measures of Fractional SDEs} \label{sec:physical_solution}

Albeit being certainly non-Markovian on its own, the solution to \eqref{eq:sde_intro} can actually be cast as the marginal of an infinite-dimensional Feller process $Z_t\define\big(Y_t,(W_s)_{s\leq t}\big)$ with values in $\R^n\times\H_H$. Here, $W$ is the two-sided Wiener process driving the equation through \eqref{eq:mandelbrot} and $\H_H$ is a H\"older-type space of paths $\R_-\to\R^n$ supporting the Wiener measure $\sW$. More concretely, $\H_H$ is the closure of the space $\{f\in\C_c^\infty(\R_-,\R^n):\,f(0)=0\}$ in the norm
\begin{equation*}
  \|f\|_{\H_H}\define\sup_{s,t\leq 0}\frac{\big|f(t)-f(s)\big|}{|t-s|^{\frac{1-H}{2}}\sqrt{1+|t|+|s|}}.
\end{equation*} 
To ensure that this construction actually furnishes a solution to \eqref{eq:sde_intro}, we of course have to assume that the law of the second marginal of $Z$ coincides with $\sW$ for each time $t\geq 0$. This motivates the following definition:
\begin{definition}[\cite{Hairer2005}]\label{initial-condition}
A measure $\mu\in\P(\R^n\times\H_H)$ with $\Pi_{\H_H}^*\mu=\sW$ is called a \emph{generalized initial condition}. A generalized initial condition $\I_\pi$, which is invariant for the Feller process $Z$ is called an \emph{invariant measure} for the SDE \eqref{eq:sde_intro}. We write $\pi\define\Pi_{\R^n}^*\I_\pi$ for the first marginal and $\prob_\pi\in\P\big(\C(\R_+,\R^n)\big)$ for the law of the first coordinate of $Z$ when started in $\I_\pi$.
\end{definition} 

By only adding the past of the driving noise to the auxiliary process $Z$, Hairer's framework rules out the existence of `unphysical' invariant measures, which frequently occur in the theory of random dynamical systems, see \cite{Hairer2009} for details.

There are only a few examples for which the invariant measure can be written down explicitly:
\begin{example}\label{ex:disintegration_fou}
  Let $Y$ be the fractional Ornstein-Uhlenbeck process \cite{Cheridito2003}, that is,
  \begin{equation*}
    dY_t=-Y_t\,dt+dB_t.
  \end{equation*}
  Then it is well known that its invariant measure is given by
  \begin{equation*}
    \I_\pi(dy,dw)=\delta_{F(w)}(dy)\sW(dw),\qquad F(w)\define-\int_{-\infty}^0 e^s D_Hw(s)\,ds,
  \end{equation*}
  where $D_H:\H_H\to\H_{1-H}$ is a continuous linear operator switching between Wiener and fBm paths, see \cite[Eq. (3.6)]{Hairer2005} for the precise definition. The first marginal of $\I_\pi$ and the stationary path space law are given by
  \begin{equation*}
    \pi=\L\left(\int_{-\infty}^0 e^s\,dB_s\right)\quad\text{and}\quad\prob_\pi=\L\left(\int_{-\infty}^t e^s\,dB_s\right)_{t\geq 0}.
  \end{equation*}
  
\end{example}

\begin{remark}
  The invariant measure of \eqref{eq:sde_intro} is in general not of product form.
\end{remark}

Since $\sigma\in\Lin{n}$ is non-degenerate, one can show that there is an isomorphism between the strictly stationary solutions to \eqref{eq:sde_intro} and the set of invariant measures (provided one quotients the latter by the equivalence relation identifying generalized initial initial conditions which generate the same evolution in the first marginal). It is also not hard to prove the following:
\begin{proposition}[\cite{Hairer2005}]\label{prop:existence_invariant_measure}
  If $\sigma\in\Lin{n}$ and $b\in\S(\kappa,R,\lambda)$ for some $\kappa>0$, $R,\lambda\geq 0$, then there exists an invariant measure for \eqref{eq:sde_intro} in the sense of \cref{initial-condition}. Moreover, $\I_\pi$ has moments of all orders.
\end{proposition}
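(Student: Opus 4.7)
The plan is to carry out a Krylov--Bogolyubov argument on the Feller lift $Z_t=(Y_t,(W_s)_{s\leq t})$ of the SDE to the Polish space $\R^n\times\H_H$. The two ingredients are (i) continuity of the semi-flow in its initial condition (Feller property), which together with the global Lipschitz assumption on $b$ is standard from the pathwise existence and uniqueness theory for Young SDEs, and (ii) uniform-in-$t$ $p$-th moment bounds on $Y_t$, which give tightness of the Ces\`aro averages of $\L(Z_t)$ in the $\R^n$-coordinate (the $\H_H$-coordinate is trivially tight since its law is always $\sW$, by construction of the driving process in \eqref{eq:mandelbrot}). Once tightness is in hand, any weak limit of
\[
  \nu_T\define\frac{1}{T}\int_0^T\L(Z_t)\,dt
\]
is invariant for the Feller semigroup and has $\Pi_{\H_H}^*$-marginal equal to $\sW$, hence is a generalized invariant measure in the sense of \cref{initial-condition}. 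The moment claim for $\I_\pi$ follows from the uniform moment bound on $Y_t$ by Fatou's lemma.

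The main work is the uniform moment bound. I would decompose $Y_t=\xi_t+\sigma B_t$, so that $\xi_t$ solves the pathwise ODE $\dot\xi_t=b(\xi_t+\sigma B_t)$. Differentiating $|\xi_t|^2$ and writing
\[
  \langle b(\xi_t+\sigma B_t),\xi_t\rangle=\langle b(\xi_t+\sigma B_t)-b(\sigma B_t),\xi_t\rangle+\langle b(\sigma B_t),\xi_t\rangle,
\]
the semi-contractivity of \cref{define-semi-contractive} (applied to the pair $x=\xi_t+\sigma B_t$, $y=\sigma B_t$, whose difference is $\xi_t$) yields the bound $-\kappa|\xi_t|^2$ on the set where both $|\xi_t+\sigma B_t|$ and $|\sigma B_t|$ are $\geq R$, and otherwise a bound $\lambda|\xi_t|^2$ which can be controlled by $(\kappa+\lambda)\bigl(R+|\sigma B_t|\bigr)^2+\tfrac12\kappa|\xi_t|^2$ via Young's inequality. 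Combined with the linear growth of $b$ (using $b(0)$ and $\Lip{b}$) to bound the $\langle b(\sigma B_t),\xi_t\rangle$ term, this gives a deterministic Gr\"onwall-type inequality
\[
  \tfrac{d}{dt}|\xi_t|^2\leq -\tfrac{\kappa}{2}|\xi_t|^2+C\bigl(1+|\sigma B_t|^2\bigr),
\]
yielding $|\xi_t|^2\leq e^{-\kappa t/2}|\xi_0|^2+C\int_0^t e^{-\kappa(t-s)/2}(1+|B_s|^2)\,ds$. Taking expectations, and using that $\E|B_s|^{2p}\lesssim (1+s)^{2pH}$ while the exponential kernel decays, we obtain $\sup_{t\geq 0}\E|Y_t|^p<\infty$ for every $p\geq 1$ whenever $\E|Y_0|^p<\infty$; iterating with a moment-based initial condition (e.g.\ $Y_0=0$) suffices for the Krylov--Bogolyubov step.

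The main obstacle is that the above dissipativity estimate requires some slack: in the regime $\lambda>0$ the Young-type splitting inflates the constant in front of $|\xi_t|^2$, so one must verify that the $-\kappa$ term is still dominant (perhaps only after restricting to $|\xi|$ beyond some threshold and treating the compact set via a standard drift argument). Since the statement allows arbitrary $\lambda\geq 0$, only the \emph{existence} of an invariant measure is being claimed (not uniqueness or a rate), so there is enough room to absorb the $\lambda$-contributions into the compact exceptional set without needing to track sharp constants. I would finish by invoking the standard Krylov--Bogolyubov theorem on $\R^n\times\H_H$, using weak-$*$ continuity of the Feller semigroup and the projection-preserving property $\Pi_{\H_H}^*\nu_T=\sW$ to identify the limit as a valid generalized invariant measure.
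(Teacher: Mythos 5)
The paper itself offers no proof of this proposition---it is quoted from \cite{Hairer2005}---but your overall strategy (Krylov--Bogolyubov on the Feller lift $Z_t=(Y_t,(W_s)_{s\leq t})$, with tightness supplied by uniform moment bounds and the $\H_H$-marginal pinned to $\sW$) is exactly the standard one. The problem lies in the execution of the moment bound, where there are two genuine gaps.

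First, your application of the semi-contractivity to the pair $x=\xi_t+\sigma B_t$, $y=\sigma B_t$ only yields $-\kappa|\xi_t|^2$ when \emph{both} $|\xi_t+\sigma B_t|\geq R$ and $|\sigma B_t|\geq R$. On the event $|\sigma B_t|<R$ (which recurs pathwise), you land in the ``otherwise'' branch no matter how large $|\xi_t|$ is, and the resulting $+\lambda|\xi_t|^2$ cannot be absorbed into $-\tfrac{\kappa}{2}|\xi_t|^2$ plus a function of $|\sigma B_t|$ alone, since $\lambda$ may vastly exceed $\kappa$; the exceptional set is not compact in $\xi$, so the ``treat the compact set by a drift argument'' fix does not apply. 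The repair is available in the paper: \cref{lem:bigger_ball} (equivalently \cref{rem:large_scall_off_diagonal}\ref{it:off_diagonal}) upgrades the hypothesis to $\braket{b(x)-b(y),x-y}\leq D-\tilde\kappa|x-y|^2$ for \emph{all} $x,y$, which gives the Gr\"onwall inequality without case analysis. Second, and more seriously, even granting the inequality $|\xi_t|^2\leq e^{-\kappa t/2}|\xi_0|^2+C\int_0^te^{-\kappa(t-s)/2}(1+|\sigma B_s|^2)\,ds$, its right-hand side has expectation of order $1+t^{2H}$ (the contribution from $s$ near $t$ is not killed by the kernel), and $Y_t=\xi_t+\sigma B_t$ with $\E|\sigma B_t|^p\asymp t^{pH}\to\infty$; so the claimed conclusion $\sup_{t\geq0}\E|Y_t|^p<\infty$ simply does not follow from comparing $Y$ against the drift-free process $\sigma B$. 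One must compare against a reference with \emph{uniformly bounded} moments---e.g.\ the fractional Ornstein--Uhlenbeck process $dZ_t=-Z_t\,dt+\sigma\,dB_t$---and use the off-diagonal contraction to get $|Y_t-Z_t|^2\lesssim\int_0^te^{-\kappa(t-s)}(1+|Z_s|)^2\,ds$; this is precisely the mechanism of \cref{lem:comparison} (estimate \eqref{eq:solution_difference}) and \cref{lem:fast_process_moments} in the paper, and of Hairer's original argument. With that substitution the rest of your sketch (tightness of the Ces\`aro averages, identification of the limit, Fatou for the moments of $\I_\pi$) goes through.
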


The conclusion of \cref{prop:existence_invariant_measure} actually holds for a merely locally Lipschitz off-diagonal large scale contractive drift (see \cref{cond:off_diagonal} below). See also \cite{Hairer2007,Deya2019} for versions for multiplicative noise. Finally, we introduce a Wasserstein-type distance for generalized initial conditions:
\begin{definition}\label{def:wasserstein}
Let $\mu$ and $\nu$ be generalized initial conditions. Let $\mathscr{C}_{\Delta}(\mu,\nu)$ denote the set of couplings of $\mu$ and $\nu$ concentrated on the diagonal $\Delta_{\H_H}\define\{(w,w^\prime)\in\H_H^2:\,w=w^\prime\}$. For $p\geq 1$, we set
  \begin{equation*}
  \bW^p(\mu,\nu)\define\inf_{\rho\in\mathscr{C}_\Delta(\mu,\nu)}\left(\int_{(\R^n\times\H_H)^2}|x-y|^p\,\rho(dx,dw,dy,dw^\prime)\right)^{\frac1p}.
\end{equation*}
\end{definition}

Note that clearly $\W^p\big(\Pi_{\R^n}^*\mu,\Pi_{\R^n}^*\nu\big)\leq \bW^p(\mu,\nu)$ and the inequality is strict in general.

\subsection{Large Scale Contractions}

Known ergodic theorems on \eqref{eq:sde_intro} require either a Lyapunov-type stability or a large scale contractivity condition on the drift $b$. The former indicates that once far out, the solutions have the tendency to come back to a neighborhood of the origin. Under this condition, it is conceivable that two distinct solutions can come back from diverging routes, thus allowing to couple them. The Lyapunov stability condition was used in \cite{Fontbona2017,Deya2019} for multiplicative noise. 

A large scale contraction on the other hand will force two solutions to come closer once they have left a ball $B_R$ of sufficiently large radius $R>0$. The following two conditions appeared in previous works:

\begin{condition}[Off-diagonal large scale contraction, \cite{Hairer2005}]\label{cond:off_diagonal}
There exist numbers $\tilde \kappa>0$ and $D,\lambda\geq 0$ such that  
\begin{equation}\label{quasi-contr}
  \Braket{b(x)-b(y),x-y}\leq \big(D-\tilde \kappa|x-y|^2\big)\wedge\big(\lambda|x-y|^2\big)\qquad \forall\,   x,y\in\R^n.
\end{equation}
\end{condition}

\begin{condition}[Large scale contraction, \cite{Panloup2020}]
There exist numbers $R\ge 0$ and $\kappa>0$ such that 
  \begin{equation}\label{contractive}
    \Braket{b(x)-b(y),x-y}\leq    -\kappa|x-y|^2   \qquad \forall\, x,y\in \R^n\setminus B_R.  
  \end{equation}
\end{condition}

\begin{example}
  The function $b(x)=x-x^3$ is a large scale contraction. 
\end{example}

We will later use the following standard result, a slightly weaker version of which was proven in \cite[Lemma 5.1]{Panloup2020}.
\begin{lemma}\label{lem:bigger_ball}
If $b$ is locally Lipschitz continuous and satisfies the large scale contraction condition \eqref{contractive}, then for any $\bar{\kappa}\in(0,\kappa)$, there is an $\bar{R}>0$ such that
    \begin{equation*}
        \braket{b(x)-b(y),x-y}\leq -\bar{\kappa}|x-y|^2  \qquad \forall\, y\in\R^n,\, |x|>\bar{R}.
    \end{equation*}
\end{lemma}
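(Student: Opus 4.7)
The plan is to split into cases based on the size of $|y|$ and reduce to the contractivity hypothesis via a linear interpolation. Fix $\bar{\kappa}\in(0,\kappa)$. If $|y|>R$, then both $x$ and $y$ lie in $\R^n\setminus B_R$ and the bound is immediate from \eqref{contractive} since $-\kappa\leq-\bar{\kappa}$. The substantive case is $|y|\leq R$, which I handle by moving $y$ slightly outwards along the segment joining it to $x$: the function $s\mapsto|y+s(x-y)|$ is convex on $[0,1]$ and takes values $\leq R$ at $s=0$ and $>R+1$ at $s=1$ (assuming $|x|>R+1$), so by the intermediate value theorem there is $\tilde{s}\in(0,1)$ for which $y_1\define y+\tilde{s}(x-y)$ satisfies $|y_1|=R+1$.

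With this choice the hypothesis \eqref{contractive} applies to the pair $(x,y_1)$ and, using $x-y_1=(1-\tilde{s})(x-y)$, rearranges to
\begin{equation*}
  \braket{b(x)-b(y_1),x-y}\leq-\kappa(1-\tilde{s})|x-y|^2.
\end{equation*}
The complementary term $\braket{b(y_1)-b(y),x-y}$ is controlled by local Lipschitz continuity on the closed ball $\overline{B_{R+1}}$, with constant $L$ say; combined with the identity $|y_1-y|=\tilde{s}|x-y|$ this yields an upper bound of $L\tilde{s}|x-y|^2$. Summing the two estimates,
\begin{equation*}
  \braket{b(x)-b(y),x-y}\leq-\bigl[\kappa-\tilde{s}(\kappa+L)\bigr]|x-y|^2.
\end{equation*}

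What remains is to make $\tilde{s}$ arbitrarily small by taking $|x|$ large. This follows from the crude estimate $\tilde{s}|x-y|=|y_1-y|\leq|y_1|+|y|\leq 2R+1$ combined with $|x-y|\geq|x|-R$, giving $\tilde{s}\leq(2R+1)/(|x|-R)$ uniformly over $y\in\overline{B_R}$. It therefore suffices to choose $\bar R$ so large that this upper bound on $\tilde{s}$ is at most $(\kappa-\bar{\kappa})/(\kappa+L)$. I expect the only delicate point to be that the interpolation point $y_1$ must lie in a ball on which $b$ admits a Lipschitz constant independent of $|x|$; this is precisely why one selects the fixed radius $R+1$ for $y_1$, rather than a radius that would grow with $|x|$ and spoil the estimate on $\braket{b(y_1)-b(y),x-y}$.
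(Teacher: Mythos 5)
Your proof is correct and is essentially the paper's own argument: the paper interpolates to the point $z_\beta=(1-\beta)x+\beta y$ of norm $R+1$ (your $y_1$ with $\beta=1-\tilde{s}$), uses \eqref{contractive} on the pair $(x,z_\beta)$ and the local Lipschitz constant on $B_{R+1}$ for the remainder, and makes the interpolation parameter favourable by taking $\bar R$ large. The only cosmetic difference is your explicit uniform bound $\tilde{s}\leq(2R+1)/(|x|-R)$ versus the paper's $\beta\geq(\bar R-R-1)/(\bar R+R)$.
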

\begin{proof}
 Since $\braket{b(x)-b(y),x-y}\leq  -{\kappa}|x-y|^2$ for $x$ and $y$ outside of the ball $B_R$, 
we only need to show that the required contraction holds for any $|y|\le R$ and $|x|>\bar R$. 
Fix such $x$ and $y$. 

Without loss of generality, we may also assume that $\bar{R}\geq R+1$. Then there is a $\beta\in(0,1)$ such that $z_\beta\define (1-\beta)x+\beta y$ has norm $|z_\beta|=R+1$. 
Since $x-y=\f 1 \beta(x-z_\beta)$ and, since $x, z_\beta$ are outside of $B_R$, 
    \begin{equation*}
        \braket{b(x)-b(z_\beta),x-y}\leq-\f 1 \beta \kappa|x-z_\beta|^2=-\kappa\beta|x-y|^2.
    \end{equation*}
Let $K\define\Lip[B_{R+1}]{b}$ denote the Lipschitz constant of $b$ on $B_{ R+1}$. Since $|z_\beta-y|=(1-\beta)|x-y|$, it holds that
    \begin{align*}
        \braket{b(x)-b(y),x-y}&=\braket{b(x)-b(z_\beta),x-y} +\<b(z_\beta)-b(y), x-y\>\\
        & \leq -\kappa\beta|x-y|^2+K(1-\beta)|x-y|^2
    \end{align*}
Since $\beta$ is the length of the proportion of the line segment outside of $B_{R+1}$,
we can choose it as close to $1$ as we like by choosing $\bar R$ sufficiently large $\big(\beta=\f{|x-z_\beta|}{|x-y|}\geq\frac{|x|-R-1}{|x|+R}\geq\frac{\bar{R}-R-1}{\bar{R}+R}\big)$.
\end{proof}

\begin{remark}\label{rem:large_scall_off_diagonal}
\leavevmode
\begin{enumerate}
  \item Let $b: \R^n\to \R^n$ be a globally Lipschitz continuous function. Then the large scale contraction condition \eqref{contractive} is equivalent to $b\in\bigcup_{\lambda>0}\S(\kappa,R,\lambda)$. In view of \cref{lem:bigger_ball}, condition \eqref{eq:semicontractive} also holds for a merely locally Lipschitz continuous $b$ at the cost of a smaller contractive rate and a bigger contractive range. In fact, choose $\bar\kappa\in(0,\kappa)$ and let $\bar R>R$ be the corresponding radius furnished by \cref{lem:bigger_ball}. This gives \eqref{eq:semicontractive} with $\kappa\rightsquigarrow\bar{\kappa}$, $R\rightsquigarrow\bar{R}$, and $\lambda\rightsquigarrow \Lip[B_{\bar{R}}]{b}$.
  \item\label{it:off_diagonal} The off-diagonal large scale contraction condition is weaker than the large scale contraction condition.  With the former, there may be no $\kappa>0$ such that \eqref{contractive} holds in the region $\{|x-y| \le \f D {2\tilde \kappa}\} \cap \{|x|\ge R, |y|\ge R\}$. On the other hand, if \eqref{contractive} holds and $b$ is locally Lipschitz continuous, we can choose any $\tilde \kappa<\kappa$. In fact, denoting the radius from \cref{lem:bigger_ball} by $\bar R>0$, one only needs to show \eqref{quasi-contr} when both $x$ and $y$ are in $B_{\bar R}$. To this end, we pick $\lambda=\Lip[B_{\bar{R}}]{b}$ and $D\geq\sup_{x,y\in B_{\bar{R}}}(\tilde\kappa+\lambda)|x-y|^2$.
\end{enumerate}
\end{remark}

\section{The Conditional Evolution of Fractional Dynamics}\label{sec:convergence}

To derive strong $L^p$-bounds on the H\"older norm of the slow motion in \cref{sec:feedback} below, we need to study the conditional distribution of the evolution \eqref{eq:frozen_fast}. Unlike the Markovian case, the conditioning changes the dynamics and the resulting evolution may \emph{no longer} solve the original equation. We will show that, in the limit $t\to\infty$, the law of the conditioned dynamics still converges to $\pi^x$, the first marginal of the invariant measure for the fast dynamics with frozen slow input \eqref{eq:frozen_fast}. The rate of convergence is however slower (only algebraic rather than exponential).

Let us first state the regularity assumption imposed in \cref{thm:feedback_fractional}. For this we introduce a convenient notation, which we shall frequently use in the sequel. We write $a\lesssim b$ if there is a constant $C>0$ such that $a\leq C b$. The constant $C$ is independent of any ambient parameters on which $a$ and $b$ may depend.

\begin{condition}\label{cond:feedback}
  The drift $b:\R^d\times\R^n\to\R^n$ satisfies the following conditions:
  \begin{itemize}
    \item \emph{Linear growth:}
    \begin{equation*}
      |b(x,y)|\lesssim 1+|x|+|y|, \qquad \forall\, x\in\R^d,y\in\R^n.
    \end{equation*}
    \item \emph{Uniformly locally Lipschitz in the first argument:} For each $R>0$, there is an $L_R>0$ such that
    \begin{equation*}
      \sup_{y\in\R^n}|b(x_1,y)-b(x_2,y)|\leq L_R|x_1-x_2|, \qquad \forall\, |x_1|,|x_2|\leq R.
    \end{equation*}
    \item \emph{Uniformly Lipschitz in the second argument:} There is an $L>0$ such that
    \begin{equation*}
      \sup_{x\in\R^d}|b(x,y_1)-b(x,y_2)|\leq L|y_1-y_2|, \qquad \forall\, y_1,y_2\in\R^n.
    \end{equation*}
  \end{itemize}
\end{condition}

Let $(\F_t)_{t\geq 0}$ be a complete filtration to which $\hat{B}$ is adapted. For any continuous, $(\F_t)_{t\geq 0}$-adapted, $\R^d$-valued process $X$ with continuous sample paths, and any $\varepsilon>0$, the equation
\begin{equation}\label{eq:general_flow}
  d\Phi_{t}^X=\frac{1}{\varepsilon}b\big(X_t,\Phi_{t}^X\big)\,dt+\frac{1}{\varepsilon^{\hat{H}}}\sigma\,d\hat{B}_t,\qquad \Phi_{t}^{X}=y,
\end{equation}
has a unique global pathwise solution under \cref{cond:feedback}, see \cref{lem:comparison} below. The flow $\Phi_{s,t}^X(y)$ associated with \eqref{eq:general_flow} is therefore well defined. An important special case of \eqref{eq:general_flow} is when the extrinsic process is given by a fixed point $x\in\R^d$. For this we reserve the notation $\bar{\Phi}^x$:
\begin{equation}\label{eq:general_flow-fixed-x}
  d\bar{\Phi}_t^x=\frac{1}{\varepsilon}b(x,\bar{\Phi}_t^x)\,dt+\frac{1}{\varepsilon^{\hat{H}}}\sigma\,d\hat{B}_t,\qquad \bar{\Phi}_0^x=y.
\end{equation}
We would like the reader to observe that the dependency of flows on the scale parameter $\varepsilon>0$ is suppressed in our notation. Note that, by self-similarity, sending $\varepsilon\to 0$ in \eqref{eq:general_flow-fixed-x} is equivalent to keeping $\varepsilon=1$ fixed and taking $t\to\infty$. As the $\varepsilon$-dependence of the flows \eqref{eq:general_flow}--\eqref{eq:general_flow-fixed-x} will play a key r\^ole in \cref{sec:feedback}, we choose to introduce a new notation in case $\varepsilon=1$, which is used throughout the rest of this section: 
\begin{definition}\label{def:flow}
  Let $\h\in\C_0(\R_+,\R^n)\define\big\{f\in\C(\R_+,\R^n):\,f(0)=0\big\}$ and $x\in\R^d$. We denote the flow of the ordinary differential equation
  \begin{equation}\label{eq:ode_solution}
    dy_t=b(x,y_t)\,dt+d\h_t
  \end{equation}
  by $\Psi^x_{s,t}(y,\h)$, where $y\in\R^n$ and $0\leq s\leq t$. It is given by the solution to the integral equation
  \begin{equation*}
    \Psi_{s,t}^x(y,\h)=y+\int_s^t b\big(x,\Psi_{s,r}^x(y,\h)\big)\,dr+\h_t-\h_s.
  \end{equation*}
   We also use the abbreviation $\Psi^x_{t}\define \Psi^x_{0,t}$.
\end{definition}

Under \cref{cond:feedback}, \eqref{eq:ode_solution} is well posed and it follows that $\Psi_{s,t}^x(y, \h)=\Psi_{t-s}^x(y,\theta_s \h)$ for each $0\leq s\leq t$ and $y\in\R^n$, where $\theta_sf=f(\cdot+s)-f(\cdot)$ is the Wiener shift operator on the path space. If $x\in\R^d$, $y\in\R^n$, or $\h\in\C_0(\R_+,\R^n)$ are random, we understand \cref{def:flow} pathwise for each fixed sample $\omega\in\Omega$. The solutions to \eqref{eq:general_flow} and \eqref{eq:general_flow-fixed-x} are also understood in this sense.

\subsection{Processes with a Locally Independent Increment Decomposition}\label{sec:increment}

The derivation of the conditioned evolution relies on the following simple fact: For $t,h\geq 0$, we have
\begin{equation}\label{eq:increment_decomposition}
  (\theta_t\hat{B})_h=\hat{B}_{t+h}-\hat{B}_t=\bar{\hat{B}}_h^t+\tilde{\hat{B}}_h^t,
\end{equation}
where, in a slight abuse of notation (the integrand has to be multiplied by the identity matrix),
\begin{equation*}
  \bar{\hat{B}}_h^t\define \alpha_{\hat H}\int_{-\infty}^t\left((t+h-u)^{\hat{H}-\frac12}-(t-u)^{\hat{H}-\frac12}\right)\,d\hat{W}_u,\quad \tilde{\hat{B}}_h^t\define\alpha_{\hat H}\int_t^{t+h} (t+h-u)^{\hat{H}-\frac12}\,d\hat{W}_u.
\end{equation*}
This decomposition is easily obtained by rearranging \eqref{eq:mandelbrot}. For any $t\geq 0$, the two components $\bar{\hat{B}}^t$ and $ \tilde{\hat{B}}^t$ are independent. We call $\bar{\hat{B}}^t$ the \emph{smooth} part of the increment, whereas $\tilde{\hat{B}}^t$ is referred to as the \emph{rough} part. This terminology is based on the fact that, away from the origin, the process $\bar{\hat{B}}^t$ has continuously differentiable sample paths and therefore the `roughness' of $\hat{B}$ essentially comes from $\tilde{\hat{B}}^t$. Indeed, it is not hard to check that $\tilde{\hat{B}}^t$ is of precisely the same H\"older regularity as $\hat{B}$. We also observe that $\tilde{\hat{B}}^t\overset{d}{=}\tilde{\hat{B}}^0\define\tilde{\hat{B}}$ for all $t>0$. 

The process $\tilde{\hat{B}}$ is---up to a prefactor---known as Riemann-Liouville process (or type-II fractional Brownian motion) and was initially studied by L\'evy \cite{Levy1953}. Its use in modelling was famously discouraged in \cite{Mandelbrot1968} due to its overemphasis of the origin and the `regularized' process \eqref{eq:mandelbrot} was proposed instead. In fact as we shall see below, the lack of stationarity of the increments of $\tilde{\hat{B}}$ complicates the analysis of the conditioned evolution. 

\begin{definition}\label{def:ind_increment}
Let  $(\F_t)_{t\geq 0}$ be a complete filtration. An $(\F_t)_{t\geq 0}$-adapted stochastic process $Z$ is said to have a \emph{locally independent decomposition of its increments} with respect to $(\F_t)_{t\geq 0}$ if for any $t\geq 0$, there exists an increment decomposition of the form
$$(\theta_t Z)_h=\tilde Z^t_h+\bar Z^t_h, \qquad h\geq 0,$$
where $\bar Z^t \in \F_t$ and $\tilde Z^t$ is independent of $\F_t$. 
\end{definition}

As seen in \eqref{eq:increment_decomposition}, an fBm $\hat{B}$ has a locally independent decomposition of its increments with respect to any filtration $(\F_t)_{t\geq 0}$ \emph{compatible} with $\hat{B}$. By this we mean that $(\hat{W}_s)_{s\leq t}\in\F_t$ and $(\theta_t\hat{W}_s)_{s\geq t}$ is independent of $\F_t$ for any $t\geq 0$.

\begin{example}\label{example-1}
Let us give some further examples, which will become important later on:
\begin{enumerate}
\item\label{it:rough_decomposition} Let $(\hat W_t)_{t\geq 0}$ be a Wiener process and 
$ \tilde{\hat B}_t\define\alpha_{\hat H}\int_0^{t} (t-u)^{H-\frac12}\,d\hat W_u$ be the Riemann-Liouville process. 
Then, for any $t\ge 0$ and $h\ge 0$, 
 \begin{align}
    (\theta_t\tilde{\hat B})_h&=\alpha_{\hat H}\int_0^t \Big( (t+h-u)^{\hat H-\frac12}-(t-u)^{\hat H-\frac12}\Big)\,d\hat W_u+\alpha_{\hat H}\int_t^{t+h}(t+h-u)^{\hat H-\frac12}\,d\hat W_u\nonumber\\
    &\define Q^t_h+\tilde{\hat B}^t_h.\label{eq:z_t}
  \end{align}
Thus, $\tilde{\hat B}$ admits a locally independent decomposition of its increments with respect to any filtration compatible with $\hat{B}$.

\item Another example, given in \cite{Gehringer-Li-2020, Gehringer-Li-2020-1}, is the stationary fractional Ornstein-Uhlenbeck process $Z_t=\int_{-\infty }^t e^{-(t-s)}\,d\hat{B}_s$. More generally, it is clear that $Z_t=\int_{-\infty }^t \mathfrak{G}(s,t)\,d\hat{B}_s$ with a suitable kernel $\mathfrak{G}$ also has this property.

\item\label{it:smooth_decomposition} Albeit not being a direct instance of \cref{def:ind_increment}, it is also interesting to observe a \emph{fractal} property of $\hat{B}$: The smooth part of the increment has an independent decomposition as $\bar{\hat B}_h^t=P_h^t+Q_h^t$, where $Q^t$ was defined in \eqref{eq:z_t} and
\begin{equation*}
  P_h^t\define\alpha_{\hat H}\int_{-\infty}^0\Big((t+h-u)^{\hat H-\frac12}-(t-u)^{\hat H-\frac12}\Big)\,d\hat W_u.
\end{equation*}
\end{enumerate}
\end{example}

Our argument for the quenched ergodic theorem will be based on a two step conditioning procedure making use of an explicit representation of the conditioned process. We state it for a general noise with locally independent increments:

\begin{lemma}\label{lem:conditioning_general}
Let $0\leq s\leq t<t+h$ and $(Z_t)_{t\geq 0}$ be a continuous stochastic process admitting a locally independent decomposition $(\theta_t Z)_{h}=\tilde Z^t_h+\bar Z^t_h$ with respect to $(\F_t)_{t\geq 0}$. Let $X$ and $Y$ be $\F_t$-measurable random variables. 
Then, for any $F:\R^n\to\R$ bounded measurable, 
\begin{equation*}  
  \Expec{F\big(\Psi^X_{s,t+h}(Y, Z)\big)\,\Big|\,\F_t}
  =\Expec{F\big(\Psi_h^{x}(y,\varsigma+\tilde Z^t)\big)}   \bigg|_{\substackal{x&=X,\varsigma=\bar Z^t,\\y&=\Psi^X_{s,t}(Y,Z)}},
\end{equation*}
where $\Psi$ is defined in \cref{def:flow}.
\end{lemma}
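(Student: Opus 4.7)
The plan is to reduce the statement to the standard freezing lemma for conditional expectations by exploiting the pathwise nature of the flow $\Psi$ together with the locally independent increment decomposition of $Z$.

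First I would establish two elementary properties of the flow $\Psi$ defined by \eqref{eq:ode_solution}. Under \cref{cond:feedback}, the map $(x,y,\h)\mapsto \Psi^x_h(y,\h)$ is jointly continuous (and hence Borel measurable) on $\R^d\times\R^n\times\C_0(\R_+,\R^n)$ by a routine Gronwall argument. Moreover, the solution map satisfies the \emph{cocycle identity}
\begin{equation*}
  \Psi^X_{s,t+h}(Y,Z)=\Psi^X_{t,t+h}\bigl(\Psi^X_{s,t}(Y,Z),Z\bigr),
\end{equation*}
together with the \emph{time-shift identity}
\begin{equation*}
  \Psi^X_{t,t+h}(y,Z)=\Psi^X_{h}(y,\theta_t Z).
\end{equation*}
The second identity is immediate from the integral equation: setting $\tilde y_u\define\Psi^X_{t,t+u}(y,Z)$ and substituting $r=t+v$ in the integral shows that $\tilde y$ satisfies the defining equation for $\Psi^X_\cdot(y,\theta_t Z)$, so uniqueness gives the claim.

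Combining the two identities with the hypothesized decomposition $(\theta_t Z)_\cdot=\bar Z^t_\cdot+\tilde Z^t_\cdot$ yields
\begin{equation*}
  \Psi^X_{s,t+h}(Y,Z)=\Psi^X_h\bigl(\Psi^X_{s,t}(Y,Z),\,\bar Z^t+\tilde Z^t\bigr).
\end{equation*}
At this stage the right-hand side is a Borel measurable function of the four arguments $(X,\Psi^X_{s,t}(Y,Z),\bar Z^t,\tilde Z^t)$; the first three are $\F_t$-measurable by assumption (and by the fact that $\Psi^X_{s,t}(Y,Z)$ is built pathwise from $\F_t$-measurable quantities), while $\tilde Z^t$ is independent of $\F_t$.

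The final step is then a direct application of the standard freezing lemma for conditional expectations: if $U$ is $\F_t$-measurable, $V$ is independent of $\F_t$, and $G$ is bounded measurable, then $\E[G(U,V)\mid\F_t]=\E[G(u,V)]\big|_{u=U}$. Applying this with $U=(X,\Psi^X_{s,t}(Y,Z),\bar Z^t)$, $V=\tilde Z^t$, and $G(x,y,\varsigma,\tilde z)=F(\Psi^x_h(y,\varsigma+\tilde z))$ gives exactly the asserted formula. The only substantive point to check is the joint measurability of $G$, which follows from the continuity of $\Psi$ noted above and the continuity of addition on $\C_0(\R_+,\R^n)$; this is the step I expect to require the most care in a fully rigorous write-up, but it is not technically difficult.
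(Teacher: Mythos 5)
Your proof is correct and matches the paper's (one-line) argument: the paper invokes exactly "the flow property" — which you unpack as the cocycle identity together with the time-shift identity $\Psi^X_{t,t+h}(y,Z)=\Psi^X_h(y,\theta_t Z)$ stated after \cref{def:flow} — and "standard properties of conditional expectations," i.e.\ the freezing lemma you apply in the final step. Your explicit attention to joint measurability of the flow is a sensible addition that the paper leaves implicit.
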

\begin{proof} 
  This in an immediate consequence of the flow property of the equation \eqref{eq:ode_solution} and standard properties of conditional expectations.
\end{proof}

Coming back to the flow of the fast motion with frozen slow input \eqref{eq:general_flow-fixed-x}, the following result is an easy consequence of \cref{lem:conditioning_general}:
\begin{lemma}\label{lem:conditioning}
Let $(\F_t)_{t\geq 0}$ be a filtration which is compatible with $\hat{B}$. Fix $0\leq s\leq t<t+h$ and let $X,Y$ be $\F_t$-measurable random variables. Then, for any $F:\R^n\to\R$ bounded measurable,
\begin{equation*}  
  \Expec{F\big(\bar{\Phi}^{X}_{s,t+h}(Y)\big)\,\Big|\,\F_t}=\Expec{F\Big(\Psi_{\frac{h}{\varepsilon}}^{x}\big(y,\varsigma+\sigma\tilde{\hat{B}}\big)\Big)}\bigg|_{\substackal{x&=X,\varsigma=\varepsilon^{-\hat{H}}\sigma\bar{\hat{B}}^{t}_{\varepsilon\cdot},\\y&=\bar{\Phi}^X_{s,t}(Y)}},
\end{equation*}
where $\bar{\hat{B}}^t_{\varepsilon\cdot}\define\big(\bar{\hat{B}}^t_{\varepsilon h}\big)_{h\geq 0}$.
\end{lemma}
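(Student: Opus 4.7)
My plan is to derive the identity by a time-rescaling that converts the $\varepsilon$-scaled flow $\bar{\Phi}^x$ into the unscaled flow $\Psi^x$ driven by a rescaled path, and then to invoke \cref{lem:conditioning_general} directly.

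\textbf{Step 1: Time rescaling.} Set $\h_u\define\varepsilon^{-\hat{H}}\sigma\hat{B}_{\varepsilon u}$ for $u\geq 0$; this defines a continuous $(\F_{\varepsilon u})_{u\geq 0}$-adapted path with $\h_0=0$. A routine change of variables in the integral formulation of \eqref{eq:general_flow-fixed-x} shows that
\begin{equation*}
  \bar{\Phi}_{s,t}^X(Y)=\Psi_{s/\varepsilon,\,t/\varepsilon}^X(Y,\h)\qquad\text{pathwise}.
\end{equation*}
In particular, the quantity on the left-hand side of the claimed identity equals $F\big(\Psi^X_{s/\varepsilon,(t+h)/\varepsilon}(Y,\h)\big)$.

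\textbf{Step 2: Induced locally independent decomposition of $\h$.} Let $\G_u\define\F_{\varepsilon u}$, which is again a filtration compatible with the Wiener process $\tilde{\hat W}_u\define\varepsilon^{-1/2}\hat W_{\varepsilon u}$. Starting from \eqref{eq:increment_decomposition} and rescaling,
\begin{equation*}
  (\theta_u\h)_{h'}=\varepsilon^{-\hat H}\sigma(\theta_{\varepsilon u}\hat B)_{\varepsilon h'}
  =\underbrace{\varepsilon^{-\hat H}\sigma\bar{\hat B}^{\varepsilon u}_{\varepsilon h'}}_{\bar Z^u_{h'}}+\underbrace{\varepsilon^{-\hat H}\sigma\tilde{\hat B}^{\varepsilon u}_{\varepsilon h'}}_{\tilde Z^u_{h'}}.
\end{equation*}
Since $\bar{\hat B}^{\varepsilon u}$ is $\F_{\varepsilon u}$-measurable and $\tilde{\hat B}^{\varepsilon u}$ is a Wiener integral supported on $(\varepsilon u,\infty)$, the process $\h$ admits a locally independent increment decomposition with respect to $(\G_u)_{u\geq 0}$ in the sense of \cref{def:ind_increment}.

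\textbf{Step 3: Applying \cref{lem:conditioning_general} and self-similarity.} Since $X$ and $Y$ are $\F_t=\G_{t/\varepsilon}$-measurable and $s/\varepsilon\leq t/\varepsilon<(t+h)/\varepsilon$, \cref{lem:conditioning_general} yields
\begin{equation*}
  \Expec{F\big(\Psi^X_{s/\varepsilon,(t+h)/\varepsilon}(Y,\h)\big)\,\Big|\,\F_t}
  =\Expec{F\Big(\Psi^{x}_{h/\varepsilon}\big(y,\varsigma+\tilde Z^{t/\varepsilon}\big)\Big)}\bigg|_{\substackal{x&=X,\,\varsigma=\bar Z^{t/\varepsilon},\\y&=\Psi^X_{s/\varepsilon,t/\varepsilon}(Y,\h)}}.
\end{equation*}
By Step~1, $\Psi^X_{s/\varepsilon,t/\varepsilon}(Y,\h)=\bar\Phi^X_{s,t}(Y)$, and by construction $\bar Z^{t/\varepsilon}=\varepsilon^{-\hat H}\sigma\bar{\hat B}^{t}_{\varepsilon\cdot}$. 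Finally, a direct scaling computation (change of variables $u=t+\varepsilon v$ inside the defining Wiener integral of $\tilde{\hat B}^{t}$, together with the scaling $\hat W_{t+\varepsilon v}-\hat W_t\stackrel{d}{=}\sqrt{\varepsilon}\,\hat W_v$) gives
\begin{equation*}
  \varepsilon^{-\hat H}\tilde{\hat B}^{t}_{\varepsilon\cdot}\stackrel{d}{=}\tilde{\hat B}^{0}\stackrel{d}{=}\tilde{\hat B}
\end{equation*}
as processes. Since on the right-hand side $x$, $y$, and $\varsigma$ are (deterministic) parameters after plugging in, the outer expectation depends on $\tilde Z^{t/\varepsilon}$ only through its law; replacing $\tilde Z^{t/\varepsilon}$ by $\sigma\tilde{\hat B}$ produces precisely the formula in the statement.

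\textbf{Main obstacle.} There is no real technical obstacle---this is, as the authors flag, an immediate consequence of \cref{lem:conditioning_general}. The only points that require care are the bookkeeping of the time-rescaled filtration $\G_u=\F_{\varepsilon u}$ (so that the $\F_t$-conditioning matches the $\G_{t/\varepsilon}$-conditioning needed to apply \cref{lem:conditioning_general}) and the self-similarity step that collapses the $\varepsilon$- and $t$-dependence of $\tilde Z^{t/\varepsilon}$ into the generic Riemann--Liouville process $\tilde{\hat B}$.
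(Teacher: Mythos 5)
Your proof is correct and fills in precisely the steps the authors leave implicit behind the phrase ``easy consequence of \cref{lem:conditioning_general}'': time-rescale $\bar\Phi$ into $\Psi$ driven by $\h_u=\varepsilon^{-\hat H}\sigma\hat B_{\varepsilon u}$, observe that $\h$ inherits a locally independent increment decomposition with respect to $\G_u=\F_{\varepsilon u}$, apply \cref{lem:conditioning_general}, and then use Brownian scaling to identify $\varepsilon^{-\hat H}\tilde{\hat B}^t_{\varepsilon\cdot}\overset{d}{=}\tilde{\hat B}$. This is the same route the paper intends; the details are all in order.
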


We now turn to the fine properties of the smooth part of the increment. For $\alpha>0$ we define the set
\begin{equation}\label{eq:omega}
  \Omega_{\alpha}\define\Big\{f\in \C_0(\R_+,\R^n)\cap\C^2\big((0,\infty),\R^n\big):\limsup_{t\to\infty}\left(t^{\alpha}\big|\dot{f}(t)\big|+t^{1+\alpha}\big|\ddot{f}(t)\big|\right)<\infty\Big\}.
\end{equation}
This space is equipped with the semi-norm 
\begin{equation*}
  \|f\|_{\Omega_\alpha}\define\sup_{t\geq 1}t^{\alpha}\big|\dot{f}(t)\big|+\sup_{t\geq 1}t^{1+\alpha}\big|\ddot{f}(t)\big|.
\end{equation*}
We also set $\Omega_{\alpha-}\define\bigcap_{\beta<\alpha}\Omega_{\beta}$. The motivation for this definition stems from the following lemma:

\begin{lemma}\label{lem:smooth_part_decay}
  Let $\varepsilon>0$ and $t\geq 0$. Then $\varepsilon^{-\hat{H}}\bar{\hat{B}}^t_{\varepsilon\cdot}\overset{d}{=}\bar{\hat{B}}^t\overset{d}{=}\bar{\hat{B}}\in\Omega_{(1-\hat{H})-}$ a.s. and $\|\bar{\hat{B}}\|_{\Omega_\alpha}\in\bigcap_{p\geq 1} L^p$ for any $\alpha<1-\hat{H}$.
\end{lemma}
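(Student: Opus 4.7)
The plan is to verify in turn the two distributional identities $\varepsilon^{-\hat H}\bar{\hat B}^t_{\varepsilon\cdot}\overset{d}{=}\bar{\hat B}^t\overset{d}{=}\bar{\hat B}$, pointwise $L^p$-moment bounds on the derivatives $\dot{\bar{\hat B}}$ and $\ddot{\bar{\hat B}}$, and the $L^p$-integrability of the seminorm $\|\bar{\hat B}\|_{\Omega_\alpha}$ for $\alpha<1-\hat H$. The main obstacle is the last step, which requires upgrading pointwise moment estimates to a uniform control over the unbounded interval $[1,\infty)$.

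For the distributional identities, I would plug in the definition of $\bar{\hat B}^t_{\varepsilon h}$ and perform the change of variable $u=t+\varepsilon v$, which factors out a prefactor $\varepsilon^{\hat H-\frac12}$ from the kernel. The remaining Wiener integral is driven by $d\hat W_{t+\varepsilon v}$; since $(\hat W_{t+\varepsilon v}-\hat W_t)_{v\in\R}$ has the same law as $\sqrt{\varepsilon}\,\tilde W_v$ for a two-sided Wiener process $\tilde W$, the scaling identity follows. Specializing to $\varepsilon=1$ yields the stationarity identity $\bar{\hat B}^t\overset{d}{=}\bar{\hat B}$.

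For the pointwise moment bounds, I fix $h>0$ and note that the kernel $(h-u)^{\hat H-\frac12}$ is smooth in $h$ uniformly in $u\leq 0$. A routine dominated-convergence argument based on the It\^o isometry justifies differentiating under the stochastic integral, giving
\[
\dot{\bar{\hat B}}(h)=\alpha_{\hat H}(\hat H-\tfrac12)\int_{-\infty}^0 (h-u)^{\hat H-\frac32}\,d\hat W_u
\]
together with an analogous expression for $\ddot{\bar{\hat B}}$; both Wiener integrals converge in $L^2$ precisely because $\hat H<1$. Direct integration yields $\E\big[\dot{\bar{\hat B}}(h)^2\big]\simeq h^{2\hat H-2}$ and $\E\big[\ddot{\bar{\hat B}}(h)^2\big]\simeq h^{2\hat H-4}$, and since both derivatives live in the first Wiener chaos, hyper-contractivity upgrades these to $\|\dot{\bar{\hat B}}(h)\|_{L^p}\lesssim_p h^{\hat H-1}$ and $\|\ddot{\bar{\hat B}}(h)\|_{L^p}\lesssim_p h^{\hat H-2}$ for every $p\geq 1$.

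The most delicate step is to pass from these pointwise bounds to the uniform control inherent to $\|\cdot\|_{\Omega_\alpha}$. I would decompose $[1,\infty)=\bigcup_{k\geq 0}[2^k,2^{k+1}]$ and set $M_k\define\sup_{h\in[2^k,2^{k+1}]}\big|\dot{\bar{\hat B}}(h)\big|$ and $N_k\define\sup_{h\in[2^k,2^{k+1}]}\big|\ddot{\bar{\hat B}}(h)\big|$. The self-similarity of $\hat W$ (by the same change of variable as above) gives $\big(\dot{\bar{\hat B}}(2^k h)\big)_h\overset{d}{=}2^{k(\hat H-1)}\big(\dot{\bar{\hat B}}(h)\big)_h$, and hence $M_k\overset{d}{=}2^{k(\hat H-1)}M_0$, with an analogous identity for $N_k$. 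Since $\dot{\bar{\hat B}}$ and $\ddot{\bar{\hat B}}$ are centered Gaussian processes with locally H\"older-continuous covariance, Fernique's theorem yields $M_0,N_0\in\bigcap_{p\geq 1}L^p$. Combining the obvious bounds $\sup_{h\geq 1}h^\alpha|\dot{\bar{\hat B}}(h)|\leq\sup_{k\geq 0}2^{(k+1)\alpha}M_k$ and $\sup_{h\geq 1}h^{1+\alpha}|\ddot{\bar{\hat B}}(h)|\leq\sup_{k\geq 0}2^{(k+1)(1+\alpha)}N_k$ with the crude $(\sup_k a_k)^p\leq\sum_k a_k^p$, the estimate reduces to a geometric sum of common ratio $2^{p(\alpha+\hat H-1)}$, which converges precisely when $\alpha<1-\hat H$. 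This yields $\|\bar{\hat B}\|_{\Omega_\alpha}\in\bigcap_{p\geq 1}L^p$, from which the almost-sure membership $\bar{\hat B}\in\Omega_{(1-\hat H)-}$ is immediate.
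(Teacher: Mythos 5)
Your proof is correct, but the key step—upgrading pointwise moment bounds to control of $\sup_{t\geq 1}t^{\alpha}|\dot{\bar{\hat{B}}}_t|$ and $\sup_{t\geq 1}t^{1+\alpha}|\ddot{\bar{\hat{B}}}_t|$ in every $L^p$—is done by a genuinely different route than the paper's. The paper integrates the Wiener integral by parts to get the pathwise representation $\dot{\bar{\hat{B}}}_t\propto\int_{-\infty}^0(t-u)^{\hat{H}-\frac52}W_u\,du$, bounds $|W_u|$ by a single random constant times $(1+|u|)^{\frac12+\delta}$ (Fernique applied to the Wiener measure on the weighted space $\mathcal{M}^{\frac12+\delta}$), and integrates the kernel to obtain the deterministic-in-form envelope $|\dot{\bar{\hat{B}}}_t|\leq Ct^{-(1-\hat{H}-\delta)}$; the arbitrarily small loss $\delta$ is harmless since only $\alpha<1-\hat{H}$ is claimed. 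You instead keep the Wiener-integral representation of the derivatives, exploit the exact self-similarity $\dot{\bar{\hat{B}}}(2^k\cdot)\overset{d}{=}2^{k(\hat{H}-1)}\dot{\bar{\hat{B}}}(\cdot)$ to reduce every dyadic block $[2^k,2^{k+1}]$ to the fixed block $[1,2]$, apply Fernique only on that compact interval, and sum the resulting geometric series in $L^p$. Your version makes the threshold $\alpha=1-\hat{H}$ transparent (it is exactly where the geometric series diverges) and avoids the integration by parts and the weighted path-space argument altogether, at the price of invoking the scaling property of the two-sided Wiener process block by block; the paper's version produces one almost-sure envelope valid for all $t\geq1$ simultaneously, which is slightly more than the $\Omega_\alpha$-norm bound requires. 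Both proofs pass over the identification of the $L^2$-derivative of the Wiener integral with the almost-sure pathwise derivative at the same level of informality (the paper simply asserts the formula for $\dot{\bar{\hat{B}}}_t$ before integrating by parts), so I do not count that against you; and your explicit verification of the scaling identity $\varepsilon^{-\hat{H}}\bar{\hat{B}}^t_{\varepsilon\cdot}\overset{d}{=}\bar{\hat{B}}$, which the paper's proof omits entirely, is a welcome addition.
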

\begin{proof}
  Let $\delta\in\big(0,1-\hat H\big)$. It is enough to prove that there is a random variable $C>0$ with moments of all orders such that
  \begin{equation}\label{eq:estimate_all_orders}
    \big|\dot{\bar{\hat{B}}}_t\big|\leq \frac{C}{t^{1-\hat{H}-\delta}},\qquad\big|\ddot{\bar{\hat{B}}}_t\big|\leq\frac{C}{t^{2-\hat{H}-\delta}}
  \end{equation}
  for all $t\geq 1$ on a set of probability one. This in turn easily follows from sample path properties of the standard Wiener process. Firstly, we have that
  \begin{equation*}
    \dot{\bar{\hat{B}}}_t=\alpha_{\hat{H}}\left(\hat{H}-\frac12\right)\int_{-\infty}^0(t-u)^{\hat{H}-\frac32}\,dW_u=-\alpha_{\hat{H}}\left(\hat{H}-\frac12\right)\left(\hat{H}-\frac32\right)\int_{-\infty}^0 (t-u)^{\hat{H}-\frac52}W_u\,du
  \end{equation*}
  since $\lim_{u\to-\infty}(t-u)^{\hat{H}-\frac32}W_u=0$. Therefore,
  \begin{align*}
   \big|\dot{\bar{\hat{B}}}_t\big|&\lesssim \left(\sup_{-1\leq s\leq 0} |W_s| \int_{-1}^0 (t-u)^{\hat{H}-\frac52}\,du+\sup_{s\leq -1}\frac{|W_s|}{(t-s)^{\frac12+\delta}}\int_{-\infty}^{-1} (t-u)^{\hat{H}-2+\delta}\,du\right)\\
   &\leq C\left(t^{\hat{H}-\frac52}+(t+1)^{\hat{H}-1+\delta}\right).
  \end{align*}
  The fact that $C$ has moments of all order is an easy consequence of Fernique's theorem. In fact, the Wiener process defines a Gaussian measure on the separable Banach space
  \begin{equation*}
    \mathcal{M}^{\frac12+\delta}\define\left\{f\in\C_0(\R_+,\R^n):\,\|f\|_{\mathcal{M}^{\frac12+\delta}}\define\sup_{u\geq 0}\frac{|f(u)|}{(1+u)^{\frac12+\delta}}<\infty\right\}
  \end{equation*}
  By Fernique's theorem, the random variable $\|W\|_{\mathcal{M}^{\frac12+\delta}}$ has therefore Gaussian tails. The first estimate in \eqref{eq:estimate_all_orders} follows. The bound on $\big|\ddot{\bar{\hat{B}}}_t\big|$ is similar.
\end{proof}

\subsection{A Universal Control}

Let $b\in\S(\kappa,R,\lambda)$, $\varsigma\in\C_0([0,1],\R^n)$, and $u\in L^\infty([0,1],\R^n)$. Let us consider the following controlled ordinary differential equation:
\begin{equation}\label{eq:controlled_ode}
    x^{\varsigma,u}(t)=x_0+\int_0^t b\big(x^{\varsigma,u}(s)\big)\,ds+\varsigma(t)+\int_0^t u(s)\,ds,\qquad t\in[0,1].
\end{equation}
We think of $\varsigma$ as an external `adversary' and of $u$ as a control. Since $b$ is Lipschitz continuous, it is standard that there is a unique global solution to \eqref{eq:controlled_ode}. If $u\equiv 0$, we adopt the shorthand $x^{\varsigma}\define x^{\varsigma,0}$.

The aim of this section is to exhibit an $\eta\in(0,1)$ as large as possible so that the following holds: Given $\bar{R}>0$, there is an $M>0$ such that, for any adversary $\varsigma\in\C_0([0,1],\R^n)$ and any initial condition $x_0\in\R^n$, we can find a control $u\in L^\infty([0,1],\R^n)$ with $|u|_\infty\leq M$ ensuring that the occupation time of $x^{\varsigma,u}$ of the set $\R^n\setminus B_{\bar{R}}$ is at least $\eta$. It is important to emphasize that the sup-norm of the control $|u|_\infty$ may neither depend on the adversary $\varsigma$ nor on the initial condition $x_0$ (otherwise the construction of $u$ essentially becomes trivial). We shall actually choose $u$ as concatenation of the zero function and a \emph{universal} control $\hat u\in L^\infty([0,N^{-1}],\R^n)$ for a sufficiently large, but universal, $N\in\N$.

We begin with a lemma:

\begin{lemma}\label{lem:control_bound}
    There is a constant $C>0$ independent of $\varsigma$ and $u$ such that, for the solution of \eqref{eq:controlled_ode},
    \begin{equation*}
        |x^{\varsigma,u}(t)-x^{\varsigma}(t)|^2\leq C(1+|u|^2_\infty)t
    \end{equation*}
    for all $t\in [0,1]$.
\end{lemma}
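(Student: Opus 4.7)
The plan is a direct Gr\"onwall-type estimate on the error process $e(t)\define x^{\varsigma,u}(t)-x^\varsigma(t)$. Subtracting the two integral equations and noting that the driver $\varsigma$ cancels, we obtain
\begin{equation*}
  e(t)=\int_0^t\bigl(b(x^{\varsigma,u}(s))-b(x^\varsigma(s))\bigr)\,ds+\int_0^t u(s)\,ds,\qquad e(0)=0,
\end{equation*}
so that $e$ is absolutely continuous with $\dot e(t)=b(x^{\varsigma,u}(t))-b(x^\varsigma(t))+u(t)$.

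Next, I would test against $e(t)$. Since $b\in\S(\kappa,R,\lambda)$, we have $\langle b(x)-b(y),x-y\rangle\leq \lambda|x-y|^2$ for all $x,y\in\R^n$ (the `otherwise' branch in \eqref{eq:semicontractive} dominates the contractive one as $\lambda\geq -\kappa$). Combining this with Young's inequality $\langle e(t),u(t)\rangle\leq \tfrac12|e(t)|^2+\tfrac12|u|_\infty^2$ yields
\begin{equation*}
  \tfrac{d}{dt}\tfrac12|e(t)|^2\leq \bigl(\lambda+\tfrac12\bigr)|e(t)|^2+\tfrac12|u|_\infty^2.
\end{equation*}

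The final step is Gr\"onwall's lemma. Since $e(0)=0$, integration gives
\begin{equation*}
  |e(t)|^2\leq |u|_\infty^2\int_0^t e^{(2\lambda+1)(t-s)}\,ds\leq |u|_\infty^2\,t\,e^{2\lambda+1}\qquad \forall\,t\in[0,1].
\end{equation*}
Absorbing $e^{2\lambda+1}$ and the harmless additive $1$ into a single constant $C>0$ depending only on $\lambda$ (equivalently, on $b$) produces the desired bound $|e(t)|^2\leq C(1+|u|_\infty^2)t$. No step presents a real obstacle; the only point worth flagging is that the constant $C$ depends on the one-sided Lipschitz constant $\lambda$ of $b$ (and hence on $\kappa,R$), but importantly \emph{not} on $\varsigma$, $u$, or $x_0$, which is exactly what the subsequent universal control argument will require.
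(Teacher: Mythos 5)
Your proof is correct, and it takes a genuinely different (and arguably simpler) route than the paper. The paper passes through the off-diagonal large scale contraction condition (\cref{cond:off_diagonal}, obtained via \cref{rem:large_scall_off_diagonal}): it uses $\braket{b(x)-b(y),x-y}\leq D-\tilde\kappa|x-y|^2$, tests against $e^{\tilde\kappa t}|e(t)|^2$, and obtains a bound with a \emph{decaying} exponential kernel $\int_0^t e^{-\tilde\kappa(t-s)}(1+|u(s)|^2)\,ds$. You instead use only the weaker global one-sided bound $\braket{b(x)-b(y),x-y}\leq\lambda|x-y|^2$ (valid since $\lambda\geq 0>-\kappa$), which yields a \emph{growing} exponential that you tame using $t\leq 1$. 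Both give the statement as written. The trade-off: your argument is shorter and uses fewer moving parts, but the paper's version produces the bound
\begin{equation*}
  |x^{\varsigma,u}(t)-x^{\varsigma}(t)|^2\leq C\int_0^t e^{-\tilde\kappa(t-s)}\bigl(1+|u(s)|^2\bigr)\,ds,
\end{equation*}
which is uniformly bounded in $t\geq 0$, not merely linear in $t$ on $[0,1]$; this stronger conclusion is not needed for \cref{prop:control} but is in the same spirit as the exponential-kernel estimates used repeatedly elsewhere in \cref{sec:convergence} (e.g. \cref{lem:comparison}). One small point worth stating more carefully: in the paper's \cref{define-semi-contractive} one has $\lambda\geq 0$ by fiat, so the ``otherwise'' branch dominates automatically; your parenthetical ``as $\lambda\geq-\kappa$'' is true but slightly undersells it.
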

\begin{proof}
    Since $b$ is contractive on the large scale, there are constants $D,\tilde{\kappa}>0$ such that
    \begin{equation*}
        \braket{b(x)-b(y),x-y}\leq D-\tilde{\kappa}|x-y|^2
    \end{equation*}
    for all $x,y\in\R^n$, see \cref{rem:large_scall_off_diagonal} \ref{it:off_diagonal}. Define now $f(t)\define e^{\tilde\kappa t}\big|x^{\varsigma,u}(t)-x^{\varsigma}(t)\big|^2$, then
    \begin{equation*}
        f^\prime(t)=\tilde\kappa f(t)+2e^{\tilde\kappa t}\Braket{b\big(x^{\varsigma,u}(t)\big)-b\big(x^{\varsigma}(t)\big)+u(t),x^{\varsigma,u}(t)-x^{\varsigma}(t)}\leq 2D e^{\tilde\kappa}+\frac{|u(t)|^2}{\tilde\kappa}
    \end{equation*}
    for all $t\in [0,1]$. Consequently, setting $C\define \max(2D,\tilde\kappa^{-1})$, we have
    \begin{equation*}
      \big|x^{\varsigma,u}(t)-x^{\varsigma}(t)\big|^2\leq C \int_0^t e^{-\tilde\kappa(t-s)}\left(1+|u(s)|^2\right)\,ds
    \end{equation*}
    and the lemma follows at once.
\end{proof}

For a piecewise constant function $u:[0,1]\to\R^n$, let $\D_u\subset[0,1]$ denote the finite set of discontinuities. We then have the following control result:
\begin{proposition}\label{prop:control}
	Let $\eta<\frac12$ and $\bar{R}>0$. Then there is a value $M>0$ such that the following holds true: For each $\varsigma\in\C_0([0,1],\R^n)$ and each $x_0\in\R^n$, we can find a piecewise constant control $u\in L^\infty([0,1],\R^n)$ with $|u|_\infty+|\D_u|\leq M$ such that the occupation time of $x^{\varsigma,u}$ of the set $\R^n\setminus B_{\bar{R}}$ is greater than or equal to $\eta$.
\end{proposition}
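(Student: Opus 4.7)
The plan is to construct $u$ as a piecewise constant function on a partition of $[0,1]$ into $N$ sub-intervals $I_k := [k/N, (k+1)/N]$, where $N$ and the magnitude bound $M$ depend only on $\eta$, $\bar R$, and the Lipschitz constant $L$ of $b$. On each $I_k$ we commit to a constant control $v_k$ chosen from the universal family $M \cdot \{\pm e_1, \ldots, \pm e_n\} \cup \{0\}$, based on the trajectory already determined on $[0, k/N]$ and the behavior of $\varsigma$ on $I_k$.

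The analytic backbone is a shift estimate that is \emph{uniform in $\varsigma$}: if $u \equiv v$ on $[s, s+\tau]$ starting at state $y_0$, and $z$ denotes the zero-control trajectory from $y_0$ with the same $\varsigma$ on $[s, s+\tau]$, then $d(t) := x^{\varsigma, u}(s+t) - z(t)$ satisfies
\[
  |d(t) - vt| \leq L\int_0^t |d(r)|\,dr \leq \tfrac12 L|v|t^2 e^{L\tau}, \qquad t \in [0,\tau],
\]
by a direct Gronwall argument applied to $\dot d = b(x^{\varsigma, u}) - b(z) + v$. This is a sharpening of the bound in \cref{lem:control_bound}. For $\tau = 1/N$ and $N$ large compared to $L$, the leading shift $vt$ of size $|v|/N$ dominates the error of order $L|v|/N^2$, uniformly in $\varsigma$ and $y_0$.

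The construction then proceeds iteratively. At step $k$, denote $y_k := x^{\varsigma,u}(k/N)$ and let $z^{(k)}: I_k \to \R^n$ be the zero-control trajectory starting at $y_k$ on $I_k$ (determined by $\varsigma$ and the already chosen pieces of $u$). Choose $v_k := M \hat v_{j(k)}$ so that the shifted path $t \mapsto z^{(k)}(t) + M\hat v_{j(k)}(t - k/N)$ lies outside $B_{\bar R}$ for $t$ in a subset of $I_k$ of length at least $\eta/N$. By the shift estimate this forces $|x^{\varsigma,u}(t)| > \bar R$ on essentially the same subset (up to the negligible $L|v|/N^2$ error), and summing over $k$ yields the total outside-occupation bound of $\eta$.

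The main obstacle is justifying the existence of such $\hat v_{j(k)}$ uniformly over the $\varsigma$-dependent path $z^{(k)}$. This is handled by a pigeonhole/covering argument: for $s := t - k/N$ large enough that $Ms > \sqrt 2\,\bar R$, the $2n$ balls $B_{\bar R}(-Ms\hat v_j)$ are pairwise disjoint in $\R^n$, so $z^{(k)}(t)$ belongs to at most one of them; hence at least $2n-1$ of the $2n$ candidate directions $\hat v_j$ satisfy $|z^{(k)}(t) + Ms\hat v_j| > \bar R$. Integrating over $t \in I_k$ and averaging over $j$, some $j(k)$ necessarily achieves the desired occupation length, provided $M$ is chosen of order $N\bar R/(1-\eta)$. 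The resulting bounds $|u|_\infty \leq M$ and $|\D_u| \leq N$ are uniform in $\varsigma$ and $x_0$, which concludes the proof.
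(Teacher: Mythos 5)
Your argument is correct in substance but takes a genuinely different route from the paper's. The paper fixes a \emph{single} unit vector $e$ once and for all and uses an out-and-back bang-bang control of magnitude $\frac{2\bar R+1}{(1-2\eta)\delta}$ on each subinterval of length $N^{-1}$: either the free trajectory $x^\varsigma$ already spends time $\geq\eta/N$ outside $B_{\bar R}$ (take $u\equiv 0$ there), or it lies in $B_{\bar R}$ on a set of measure $>(1-\eta)/N$, and on the middle portion of the subinterval the displacement $\left|\int_0^t\hat u\right|\geq 2\bar R+1$ dominates $|x^\varsigma(t)|\leq\bar R$ by the reverse triangle inequality, so \emph{no} choice of direction is ever needed. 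You instead compare $x^{\varsigma,u}$ on each $I_k$ with the zero-control trajectory $z^{(k)}$ restarted from the same point (your Gr\"onwall shift estimate, which uses only $\Lip{b}$ and is sharper for small times than \cref{lem:control_bound}), and then select among the $2n$ candidate directions by a disjoint-balls pigeonhole. Both constructions yield $|u|_\infty+|\D_u|$ bounded in terms of $N$, $\bar R$, $\eta$, $\Lip{b}$ only. Your version avoids the case distinction and, for $n\geq 2$, would even give occupation fractions up to $\frac{2n-1}{2n}$; the paper's version is dimension-independent and keeps the control to two pieces per subinterval.

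Two quantitative points need tightening. First, run the pigeonhole with radius $\bar R+1$ rather than $\bar R$: the Gr\"onwall error $\tfrac12\Lip{b}Ms^2e^{\Lip{b}/N}\lesssim\Lip{b}\bar R/N$ is small but not zero, so you need the shifted path outside $B_{\bar R+1}$ (with the error $<1$ for $N$ large) in order to conclude $|x^{\varsigma,u}(t)|>\bar R$. Second, averaging over $j$ only guarantees a good set of measure $\geq\frac{2n-1}{2n}$ times the usable part of $I_k$ (the part where $Ms>\sqrt2(\bar R+1)$), which for $n=1$ is exactly one half; this is precisely where $\eta<\frac12$ enters, and the correct size of $M$ is of order $N\bar R/(1-2\eta)$, not $N\bar R/(1-\eta)$ --- the latter does not leave enough room as $\eta\uparrow\frac12$.
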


\begin{proof}
  We prove that there exist an integer $N$ and a control $\hat u\in L^\infty([0,N^{-1}])$ with at most two constant pieces independent of both the initial condition $x_0$ and the adversary $\varsigma$ such that either
  \begin{equation*}
    \Leb \Big(\Big\{t\in[0,N^{-1}]:|x^{\varsigma}(t)|>\bar{R} \Big\}\Big)\ge \f \eta N\quad\text{or}\quad\Leb \Big(\Big\{t\in[0,N^{-1}]:|x^{\varsigma,\hat u}(t)|>\bar{R}\Big\}\Big)\ge \f \eta N.
  \end{equation*}
  In the former case, we of course choose $u\equiv 0$, otherwise we let $u=\hat u$. By the flow property of well-posed ordinary differential equations, the solution to \eqref{eq:controlled_ode} restarted at time $N^{-1}$ solves a similar equation (with new adversary $\tilde{\varsigma}(\cdot)=\theta_{N^{-1}} \varsigma \in\C_0([0,1-N^{-1}],\R^n)$ and initial condition $x^{\varsigma,u}(N^{-1})$). Upon constructing $\hat u$, we can thus easily deduce the proposition by iterating this construction.

  Suppose that the time spent by uncontrolled solution $(x_t^\varsigma)_{t \in [0,N^{-1}]}$ in $\R^n\setminus B_{\bar{R}}$ is strictly less than $\frac{\eta}{N}$. We let $A_{x_0,\varsigma}$ be the set of times $t\in[0,N^{-1}]$ at which $|x^\varsigma(t)|\leq \bar{R}$. Note that $A_{x_0,\varsigma}$ is the union of a countable number of closed, disjoint intervals. By assumption, we have $\Leb(A_{x_0,\varsigma})>(1-\eta)N^{-1}$. 
  
  For $\delta\define (2N)^{-1}$ and $e$ any fixed unit vector, we define $\hat u$ to be the piecewise constant function
  \begin{equation*}
    \hat u(t)=\begin{cases}
     \frac{2\bar{R}+1}{(1-2\eta)\delta}e, & t\in [0,  \delta],\\
    -\frac{2\bar{R}+1}{(1-2\eta)\delta}e, & t\in (\delta, 2\delta],
    \end{cases}
  \end{equation*}
  so that
  \begin{equation*}
    \int_0^t \hat u(s)\,ds=\begin{cases}
     \frac{2\bar{R}+1}{(1-2\eta)\delta}te,   & t\in [0,  \delta],\\
    \frac{2\bar{R}+1}{(1-2\eta)\delta}(2\delta-t)e, & t\in (\delta, 2\delta].
    \end{cases}
  \end{equation*}
  We observe that
  \begin{equation}\label{eq:lower_control}
    |x^{\varsigma,\hat u}(t)|\geq \left  |\int_0^t \hat u(s)\,ds \right|-|x^\varsigma(t)|-\Lip{b}\int_{0}^t\big|x^{\varsigma,\hat u}(s)-x^\varsigma(s)\big|\,ds.
  \end{equation}
  Moreover, owing to \cref{lem:control_bound}, we can bound
  \begin{equation}\label{eq:lower_control_n}
    \phantom{\leq}\int_{0}^t\big|x^{\varsigma,\hat u}(s)-x^\varsigma(s)\big|\,ds\leq\sqrt{C}(1+|\hat u|_\infty)\int_{0}^{2\delta}\sqrt{s}\,ds=\frac{2\sqrt{C}}{3 N^{\frac32}}\left(1+\frac{2(2\bar{R}+1)N}{1-2\eta}\right)<\Lip{b}^{-1},
  \end{equation}
  provided we choose the integer $N=N(C,\bar{R},\eta,\Lip{b})$ large enough. Define the set $B_{x_0,\varsigma}\define A_{x_0,\varsigma}\cap [(1-2\eta)\delta,(1+2\eta)\delta]$. Combining \eqref{eq:lower_control} and \eqref{eq:lower_control_n}, we then certainly have that $|x^{\varsigma,\hat u}(t)|>\bar{R}$ for all $t\in B_{x_0,\varsigma}$. Since
  \begin{equation*}
    \Leb(B_{x_0,\varsigma})\geq\frac{(1-\eta)}{N}-2(1-2\eta)\delta=\frac{\eta}{N}
  \end{equation*}
  and $|\hat u|_\infty$ as well as $|\D_{\hat{u}}|$ only depend on $N$ and $\bar R$,  this finishes the proof.
\end{proof}

We conclude our study of the deterministic controlled ODE \eqref{eq:controlled_ode} with the following stability result which is proven by a standard Gr\"onwall argument:
\begin{lemma}\label{lem:cont_control}
    Let $x^{\varsigma,u}$ denote the solution to the controlled differential equation \eqref{eq:controlled_ode} with initial condition $x_0\in\R^n$ and control $u\in L^\infty([0,1],\R^n)$. Then, for any $w\in\C_0([0,1],\R^n)$, we have the bound
    \begin{equation*}
        |x^{\varsigma,u}-\tilde{x}|_\infty\leq e^{\Lip{b}}\left|\int_0^\cdot u(s)\,ds-w\right|_\infty,
    \end{equation*}
    where $\tilde{x}$ is the unique solution to
    \begin{equation*}
      \tilde{x}(t)=x_0+\int_0^t b\big(\tilde{x}(s)\big)\,ds+w(t)+\varsigma(t),\qquad t\in[0,1].
    \end{equation*}
\end{lemma}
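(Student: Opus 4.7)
The statement is a straightforward stability estimate, and the proof is a single application of Gr\"onwall's inequality after subtracting the two integral equations. I will work pathwise on $[0,1]$.

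First I would set $\Delta(t)\define x^{\varsigma,u}(t)-\tilde{x}(t)$ and subtract the defining integral equations. The external adversary $\varsigma$ cancels, and what remains is
\begin{equation*}
  \Delta(t)=\int_0^t\bigl(b(x^{\varsigma,u}(s))-b(\tilde{x}(s))\bigr)\,ds+\Bigl(\int_0^t u(s)\,ds-w(t)\Bigr).
\end{equation*}
Taking absolute values and using the global Lipschitz bound on $b$ (which holds by hypothesis in \cref{cond:feedback} and is used throughout \eqref{eq:controlled_ode}), I obtain the integral inequality
\begin{equation*}
  |\Delta(t)|\leq \Lip{b}\int_0^t|\Delta(s)|\,ds+\Bigl|\int_0^\cdot u(s)\,ds-w\Bigr|_\infty,\qquad t\in[0,1].
\end{equation*}

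Next I would apply the standard (integral) Gr\"onwall lemma to $t\mapsto|\Delta(t)|$, treating the constant $\bigl|\int_0^\cdot u(s)\,ds-w\bigr|_\infty$ as the inhomogeneous term. This yields $|\Delta(t)|\leq e^{\Lip{b}\,t}\bigl|\int_0^\cdot u(s)\,ds-w\bigr|_\infty$ for every $t\in[0,1]$. Taking the supremum over $t\in[0,1]$ and using $t\leq 1$ in the exponential gives exactly the desired bound with prefactor $e^{\Lip{b}}$.

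There is no real obstacle: the only mild subtlety is that $u$ is only $L^\infty$ (so $\int_0^\cdot u(s)\,ds$ is absolutely continuous but not better), but this plays no r\^ole since the estimate only involves the sup-norm of $\int_0^\cdot u-w$, which is finite because $w\in\C_0([0,1],\R^n)$ and $\int_0^\cdot u$ is continuous. Existence and uniqueness of $\tilde{x}$ are standard under the global Lipschitz assumption on $b$ and are not re-established here.
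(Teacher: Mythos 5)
Your proof is correct and is exactly the argument the paper intends: the paper states that \cref{lem:cont_control} ``is proven by a standard Gr\"onwall argument'' and omits the details, which are precisely your subtraction of the two integral equations (cancelling $\varsigma$) followed by the integral Gr\"onwall inequality with constant inhomogeneity $\bigl|\int_0^\cdot u(s)\,ds-w\bigr|_\infty$. No gaps; the global Lipschitz continuity of $b$ is part of the standing assumption $b\in\S(\kappa,R,\lambda)$ in that section.
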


\subsection{Exponential Stability of the Conditional Evolution}
We now turn to the conditional evolution of \eqref{eq:general_flow-fixed-x} derived in \cref{lem:conditioning}. For brevity, we drop the hat on the driving fBm throughout this and the next section. Remember that we have to study SDEs driven by a Riemann-Liouville process 
\begin{equation*}
\tilde{B}_t\define\alpha_H\int_0^{t} (t-u)^{H-\frac12}\,dW_u,
\end{equation*}
where $(W_t)_{t\geq 0}$ is a standard Wiener process. Recall from \cref{def:flow} that, for $\varsigma\in\C_0(\R_+,\R^n)$, $\Psi_{s,t}(\cdot, \vsigma+\sigma\tilde B)$ denotes the solution flow to the equation 
\begin{equation}\label{eq:rl_sde}
  dX_t=b(X_t)\,dt+d\varsigma_t+\sigma\,d\tilde{B}_t.
\end{equation}
For brevity, let us henceforth set $\Psi_{s,t}^{\varsigma}(\cdot)\define\Psi_{s,t}(\cdot,\varsigma+\sigma\tilde B)$.

We first prove that---starting from any two initial points---the laws of the solutions converge to each other with an exponential rate. This however does not yet imply the convergence of $\L\big(\Psi_t^{\varsigma}(x)\big)$ to the first marginal of the invariant measure $\pi$ of the equation $dX_t=b(X_t)\,dt+\sigma\,dB_t$ since, even if we choose $X_0\sim\pi$, we have $\L\big(\Psi_t^{\varsigma}(X_0)\big)\neq\pi$ for $t>0$ in general. 

As a preparation, we let $\big(\C_0([0,1],\R^n),\cH_H,\mu_H\big)$ denote the abstract Wiener space induced by the Gaussian process $(\tilde B_t)_{t\in[0,1]}$. Recall that the Cameron-Martin space is given by $\cH_H=\mathscr{K}_H(H_0^1)$, where
\begin{equation*}
  \mathscr{K}_H f(t)\define\begin{cases}
  \displaystyle\alpha_H\int_0^t (t-s)^{H-\frac32}f(s)\,ds, & H>\frac12,\\ 
  \displaystyle\alpha_H\frac{d}{dt}\int_0^t (t-s)^{H-\frac12}f(s)\,ds, & H<\frac12,
  \end{cases}\qquad t\in[0,1],
\end{equation*}
and 
\begin{equation*}
  H_0^1\define\left\{f=\int_0^\cdot\dot{f}(s)\,ds:\,\dot{f}\in L^2([0,1],\R^n)\right\}
\end{equation*}
is the Cameron-Martin space of the standard Wiener process. The inner product on $\cH_H$ is defined by $\braket{\mathscr{K}_H f,\mathscr{K}_H g}_{\cH_H}\define\braket{\dot{f},\dot{g}}_{L^2}$. 

We shall make use of the following simple observation:
\begin{lemma}\label{lem:cameron_martin_facts}
  Let $f:[0,1]\to\R^n$ be piecewise linear with $f(0)=0$. Then, for each $H\in(0,1)$, $f\in\cH_H$ and
  \begin{equation}\label{eq:cameron_martin_bound}
    \|f\|_{\cH_H}\lesssim|\dot{f}|_\infty \big(1+\big|\D_{\dot{f}}\big|\big).
  \end{equation}
\end{lemma}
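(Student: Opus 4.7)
The plan is to decompose $f$ as a linear combination of shifted ReLU-type functions and thereby reduce \eqref{eq:cameron_martin_bound} to a uniform Cameron-Martin estimate on each elementary piece. Labelling the points of $\D_{\dot f}$ as $0<t_1<\cdots<t_{N-1}<1$, setting $t_0:=0$, and letting $c_i\in\R^n$ denote the constant value of $\dot f$ on $(t_i,t_{i+1})$ (with the convention $c_{-1}:=0$), one has the telescoping identity
\begin{equation*}
  f(t) \;=\; \sum_{i=0}^{N-1} (c_i - c_{i-1})\,(t-t_i)_+,
\end{equation*}
in which $N\leq 1+|\D_{\dot f}|$ and each coefficient has norm at most $2|\dot f|_\infty$. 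Applying the triangle inequality in $\cH_H$ then reduces the proof to showing that $\sup_{a\in[0,1]}\|h_a\|_{\cH_H}<\infty$, where $h_a(t):=(t-a)_+$.

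For the latter bound I would exhibit an explicit preimage of $h_a$ under $\mathscr K_H$, namely
\begin{equation*}
  g_a(s) \;:=\; C_H\,(s-a)_+^{3/2-H}
\end{equation*}
for a constant $C_H>0$ depending only on $H$. The identity $\mathscr K_H g_a = h_a$ then reduces, via the Beta integral
\begin{equation*}
  \int_a^t (t-s)^{\beta-1}(s-a)^{\gamma-1}\,ds \;=\; (t-a)^{\beta+\gamma-1}B(\beta,\gamma) \qquad (\beta,\gamma>0,\,t>a),
\end{equation*}
to a one-line algebraic computation that fixes $C_H=[\alpha_H\Gamma(H-\tfrac12)\Gamma(\tfrac52-H)]^{-1}$ in the regime $H>\tfrac12$; for $H<\tfrac12$ an analogous computation, invoking differentiation under the integral sign, produces an explicit $C_H$ of the same general form.

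Granted this, a direct computation yields $\dot g_a(s)=(\tfrac32-H)C_H(s-a)_+^{1/2-H}$, and hence
\begin{equation*}
  \|h_a\|_{\cH_H}^2 \;=\; \|\dot g_a\|_{L^2}^2 \;=\; (\tfrac32-H)^2 C_H^2\int_a^1(s-a)^{1-2H}\,ds \;\leq\; \frac{(\tfrac32-H)^2 C_H^2}{2-2H},
\end{equation*}
which is finite for every $H\in(0,1)$ (since $2(\tfrac12-H)>-1$) and, crucially, uniform in $a\in[0,1]$. The main (and essentially only) subtlety is the case distinction between $H>\tfrac12$ and $H<\tfrac12$ in verifying $\mathscr K_H g_a = h_a$, as the definition of $\mathscr K_H$ involves an extra outer derivative in the latter regime; the case $H=\tfrac12$ is trivial, since $\mathscr K_{1/2}$ reduces to the identity on $H_0^1$ and $\|h_a\|_{\cH_{1/2}}=\sqrt{1-a}\leq 1$.
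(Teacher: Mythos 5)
Your proof is correct, but it takes a genuinely different route from the paper. The paper works with general Lipschitz $f$ and invokes the inversion formula $\K_H^{-1}=\varrho_H\K_{1-H}$ on Lipschitz functions (citing Picard/Samko et al.), together with $\frac{d}{dt}\K_H^{-1}f=\K_H^{-1}\dot f$; it then bounds $\big|\frac{d}{dt}\K_H^{-1}f(t)\big|$ pointwise --- directly for $H<\frac12$, and by splitting the fractional integral at the jump points of $\dot f$ for $H>\frac12$, where the factor $1+|\D_{\dot f}|$ appears from the number of pieces and the resulting singularity $t^{\frac12-H}$ is square-integrable. You instead exploit the specific piecewise-linear structure: the telescoping decomposition $f=\sum_i(c_i-c_{i-1})(\cdot-t_i)_+$ reduces everything, via the triangle inequality in the Hilbert space $\cH_H$, to the single hinge function $h_a$, for which you exhibit the explicit preimage $g_a=C_H(\cdot-a)_+^{3/2-H}$ by a Beta-integral identity (your constant is right, since $\Gamma(2)=1$), and the uniform-in-$a$ bound on $\|\dot g_a\|_{L^2}$ is immediate from $1-2H>-1$. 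What your approach buys is self-containedness --- no appeal to the nontrivial external inversion result, only the verification of one explicit identity $\K_Hg_a=h_a$, with both regimes $H\gtrless\frac12$ handled by essentially the same computation; what it gives up is generality, as the paper's argument identifies $\K_H^{-1}f$ for arbitrary Lipschitz $f$, whereas your decomposition is tied to piecewise linearity. Both yield the same factor $|\dot f|_\infty(1+|\D_{\dot f}|)$, which is all the lemma is used for.
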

\begin{proof}
  It follows from \cite[Theorem 5]{Picard2011} (see also \cite{Samko1993}) that the inverse of $\K_H$ exists on the set of Lipschitz functions and there is a numerical constant $\varrho_H>0$ such that $\K_H^{-1}=\varrho_H\K_{1-H}$. Notice also that we have $\frac{d}{dt}\K_H^{-1}f=\K_H^{-1}\dot{f}$.
  
  Let us first consider the case $H<\frac12$. The bound \eqref{eq:cameron_martin_bound} is an immediate consequence of 
  \begin{equation*}
    \left|\frac{d}{dt}\K_H^{-1} f(t)\right|\leq\varrho_H\int_0^t (t-s)^{-H-\frac12}\big|\dot{f}(s)\big|\,ds\lesssim|\dot{f}|_\infty \qquad\forall\,t\in[0,1].
  \end{equation*}
  For $H>\frac12$ we let $\tau_1,\dots,\tau_k$ denote the jump points of $\dot{f}$ in the interval $[0,t)$. Notice that
  \begin{align*}
    \left|\frac{d}{dt}\K_H^{-1} f(t)\right|&\leq\varrho_H\left|\frac{d}{dt}\left(\sum_{i=1}^{k-1}\int_0^{\tau_1}(t-s)^{\frac12-H}\dot{f}(s)\,ds+\cdots+\int_{\tau_k}^t (t-s)^{\frac12-H}\dot{f}(s)\,ds\right)\right| \\
    &\lesssim |\dot{f}|_\infty\big(1+|\D_{\dot{f}}|\big)t^{\frac12-H}.
  \end{align*}
  Since $1-2H>-1$, we obtain
  \begin{equation*}
    \|f\|_{\cH_H}=\left\|\frac{d}{dt}\K_H^{-1}f\right\|_{L^2}\lesssim|\dot{f}|_\infty\big(1+|\D_{\dot{f}}|\big),
  \end{equation*}
  as required.
  
\end{proof}

The next important lemma lifts the control result of \cref{prop:control} to solutions of SDEs with additive noise:
\begin{lemma}\label{lem:probabilistic_control}
  Let $b\in\S(\kappa, R,\lambda)$ and $\sigma\in\Lin{n}$ be invertible. Then, for any $\bar R>0$ and any $\eta\in(0,\frac12)$, there is constant $\a_{\eta,\bar{R}}>0$ such that the following holds: For each $x\in\R^n$ and each $\varsigma\in\C_0(\R_+,\R^n)$, we can find an event $\A_{x,\varsigma}$ with $\prob(\A_{x,\varsigma})\geq\a_{\eta,\bar{R}}$ such that
  \begin{equation*}
    \int_0^1 \1_{\big\{t: \big|\Psi_{t}^{\varsigma}(x)(\omega)\big|>\bar R\big\}}(s)\,ds > \eta \qquad  \forall\, \omega \in \A_{x,\varsigma}.
  \end{equation*}
\end{lemma}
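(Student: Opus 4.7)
The plan is to combine the deterministic control result of \cref{prop:control} with a quantitative Cameron-Martin shift inequality. Loosely, we build a deterministic control $u$ which drives the ODE out of a slightly enlarged ball for an occupation time strictly bigger than $\eta$, and then observe that the event $\{\sigma\tilde B\approx \int_0^\cdot u(s)\,ds$ in sup-norm$\}$ (i) has uniformly positive probability and (ii) forces the SDE solution to inherit this property.

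\textbf{Step 1 (Deterministic control with a buffer).} Fix $\eta'\in(\eta,\tfrac12)$ and apply \cref{prop:control} with the radius $\bar R$ replaced by $\bar R+1$ and the occupation parameter $\eta$ replaced by $\eta'$. For each $x\in\R^n$ and $\varsigma\in\C_0(\R_+,\R^n)$ this produces a piecewise constant control $u=u_{x,\varsigma}\in L^\infty([0,1],\R^n)$ with
\[
|u|_\infty+|\D_u|\leq M,
\]
where $M=M(\bar R,\eta)>0$ is independent of $x$ and $\varsigma$, such that the occupation time of $x^{\varsigma,u}$ in $\R^n\setminus B_{\bar R+1}$ is at least $\eta'$.

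\textbf{Step 2 (Stability turns sup-norm closeness into an occupation bound).} Set $\delta\define\tfrac12 e^{-\Lip{b}}$ and define
\[
\A_{x,\varsigma}\define\left\{\omega\in\Omega:\left|\sigma\tilde B(\omega)-\int_0^\cdot u(s)\,ds\right|_\infty<\delta\right\}.
\]
Applying \cref{lem:cont_control} with $w=\sigma\tilde B$, on $\A_{x,\varsigma}$ we have $|\Psi_\cdot^{\varsigma}(x)-x^{\varsigma,u}|_\infty<\tfrac12$. Consequently, every $t\in[0,1]$ with $|x^{\varsigma,u}(t)|>\bar R+1$ satisfies $|\Psi_t^{\varsigma}(x)|>\bar R$, so the occupation time of $\Psi^{\varsigma}(x)$ outside $B_{\bar R}$ is at least $\eta'>\eta$ on $\A_{x,\varsigma}$.

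\textbf{Step 3 (Cameron-Martin lower bound on $\prob(\A_{x,\varsigma})$).} Let $h\define\sigma^{-1}\int_0^\cdot u(s)\,ds$, so that $\A_{x,\varsigma}$ is the event $\{|\tilde B-h|_\infty<\delta/|\sigma|_{\rm op}\}$. Because $h$ is piecewise linear with $h(0)=0$, $|\dot h|_\infty\leq|\sigma^{-1}|_{\rm op}M$, and $|\D_{\dot h}|\leq M$, \cref{lem:cameron_martin_facts} yields a constant $K=K(\bar R,\eta,\sigma)>0$ with $\|h\|_{\cH_H}\leq K$. Let $\mu_H$ denote the law of $\tilde B$ on $\C_0([0,1],\R^n)$. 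Since the sup-norm ball $B\define B(0,\delta/|\sigma|_{\rm op})$ is centrally symmetric, the Anderson-Cameron-Martin inequality (which follows from Jensen applied to the Cameron-Martin density $\exp(I(-h)-\tfrac12\|h\|_{\cH_H}^2)$ together with symmetry of the Paley-Wiener integral on $B$) gives
\[
\prob(\A_{x,\varsigma})=\mu_H(h+B)\geq\mu_H(B)\,e^{-\|h\|_{\cH_H}^2/2}\geq \mu_H(B)\,e^{-K^2/2}.
\]
The small-ball probability $\mu_H(B)>0$ since the support of $\mu_H$ is all of $\C_0([0,1],\R^n)$ (as $\cH_H$ is dense therein), and it depends only on $\delta,\sigma,H$. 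This yields the required uniform lower bound $\a_{\eta,\bar R}\define\mu_H(B)\,e^{-K^2/2}>0$.

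\textbf{Main obstacle.} The delicate point is securing a lower bound that is truly independent of the initial condition $x$ and the adversary $\varsigma$. This relies on two quantitative inputs. First, \cref{prop:control} provides a control $u$ whose sup-norm \emph{and} number of jumps are bounded by a constant depending only on $\bar R$ and $\eta$; without such uniformity one could not hope for a positive universal lower bound. Second, \cref{lem:cameron_martin_facts} translates these data into a Cameron-Martin norm bound of the \emph{same} quality, something that is nontrivial in the regime $H>\tfrac12$ because $\K_H^{-1}=\varrho_H\K_{1-H}$ is a singular integral operator and would blow up were the derivative $\dot h$ merely $L^\infty$ without a control on the number of discontinuities.
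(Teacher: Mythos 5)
Your proposal is correct and follows essentially the same route as the paper's own proof: the deterministic control from \cref{prop:control} on an enlarged ball, the stability estimate of \cref{lem:cont_control} to transfer sup-norm closeness into an occupation-time bound, and a Cameron--Martin/Jensen/symmetry argument combined with the uniform $\cH_H$-norm bound of \cref{lem:cameron_martin_facts} to get a lower bound on $\prob(\A_{x,\varsigma})$ independent of $x$ and $\varsigma$. The only (harmless) cosmetic differences are your explicit buffer $\eta'>\eta$ and the explicit choice $\delta=\tfrac12 e^{-\Lip{b}}$ in place of the paper's unspecified $\varepsilon$.
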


\begin{proof}
  Let $u_{x,\varsigma}\in L^\infty([0,1],\R^n)$ be the piecewise constant control furnished by \cref{prop:control} such that the occupation time of $x^{\varsigma,u_{x,\varsigma}}$ of the set $\R^n\setminus B_{\bar{R}+1}$ is greater than $\eta$. We set $U_{x,\varsigma}\define\int_0^\cdot u_{x,\varsigma}(s)\,ds$ and note that $U_{x,\varsigma}$ is piecewise linear. \Cref{lem:cont_control} allows us to choose an $\varepsilon>0$ (independent of $x$ and $\varsigma$) such that, on the event $\A_{x,\varsigma}\define\big\{\big|U_{x,\varsigma}-\sigma\tilde{B}\big|_\infty\leq\varepsilon\big\}$, the occupation time of $\big(\Psi^{\vsigma}_{h}(x)\big)_{h\in[0,1]}$ of $\R^n\setminus B_{\bar R} $ exceeds $\eta$. 

  It remains to show that $\inf_{x,\varsigma}\prob(\A_{x,\varsigma})>0$. To this end, we first note that $U_{x,\varsigma}\in\cH_H$ by \cref{lem:cameron_martin_facts}. By the Cameron-Martin formula (see e.g. \cite{Bogachev1998}),  
  \begin{align*}
    \prob(\A_{x,\varsigma})&\geq\prob\big(\big|\sigma^{-1}U_{x,\varsigma}-\tilde{B}\big|_\infty\leq|\sigma|^{-1}\varepsilon\big)\\
    &=\exp\left(-\frac12\|\sigma^{-1}U_{x,\varsigma}\|_{\cH_H}^2\right)\int_{\{|x|_\infty\leq|\sigma|^{-1}\varepsilon\}}e^{\braket{x,U_{x,\varsigma}}_{\cH_H}}\,\mu_H(dx).
  \end{align*}
  Consequently, Jensen's inequality and spherical symmetry give
  \begin{equation}\label{eq:quant_lower_bound}
    \prob(\A_{x,\varsigma})\geq\exp\left(-\frac12\|\sigma^{-1}U_{x,\varsigma}\|_{\cH_H}^2\right)\prob\big(|\tilde{B}|_\infty\leq|\sigma|^{-1}\varepsilon\big).
  \end{equation}
  Combining \cref{prop:control,lem:cameron_martin_facts}, we obtain that $\sup_{x,\varsigma}\|U_{x,\varsigma}\|_{\cH_H}\lesssim M(1+M)$. This concludes the proof.
\end{proof}

\begin{proposition}\label{prop:conditional_initial_condition_wasserstein}
 Let $\sigma\in\Lin{n}$ be invertible. Then, for any $\kappa, R>0$ and any $p\geq 1$, there exists a number $\Lambda=\Lambda(\kappa,R,p)\in(0,\kappa)$ such that the following holds: If $b\in\S\big(\kappa,R, \Lambda\big)$, there are constants $c,C>0$ such that, for any $\varsigma\in\C_0([0,1],\R^n)$,
  \begin{equation*}
    \W^p\Big(  \L\big( {\Psi}^{\vsigma}_t(Y)\big),  \L\big( {\Psi}^{\vsigma}_t(\tilde Y)\big)\Big)\leq C\W^p\big(\L(Y),\L(\tilde Y)\big) e^{-c t}
  \end{equation*}
   for all $t\geq 0$.
 \end{proposition}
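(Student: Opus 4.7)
The plan is to use a synchronous coupling combined with the universal control of \cref{lem:probabilistic_control}. Setting $Z_t \define \Psi^\varsigma_t(Y) - \Psi^\varsigma_t(\tilde Y)$, the common drive $\varsigma+\sigma\tilde B$ cancels in the difference equation, so
$$\frac{d|Z_t|^2}{dt} = 2\Braket{b\big(\Psi^\varsigma_t(Y)\big) - b\big(\Psi^\varsigma_t(\tilde Y)\big),\; Z_t}.$$
I rely on two complementary bounds. First, \cref{rem:large_scall_off_diagonal}\ref{it:off_diagonal} upgrades the semi-contractivity of $b$ to a global off-diagonal contraction $\braket{b(x)-b(y),x-y} \leq D_0 - \tilde\kappa_0|x-y|^2$ with $\tilde\kappa_0 \in (0,\kappa)$, which yields the pathwise estimate $|Z_t|^2 \leq |Z_0|^2 e^{-2\tilde\kappa_0 t} + D_0/\tilde\kappa_0$; in particular $|Z_t| \leq 2L \define 2\sqrt{D_0/\tilde\kappa_0}$ for all $t \geq T^\star(|Z_0|)$, where $T^\star$ grows only logarithmically in $|Z_0|$. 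Second, whenever both $\Psi^\varsigma_t(Y)$ and $\Psi^\varsigma_t(\tilde Y)$ lie outside $B_R$, the semi-contractive condition directly gives the contracting rate $-\kappa$; elsewhere only the crude rate $+\Lambda$ is available.

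Next I fix $\bar R \define R + 2L + 1$ and apply \cref{lem:probabilistic_control} on each unit interval $[k,k+1]$ with $k \geq \lceil T^\star \rceil$. To restart the control argument cleanly at time $k$, I invoke the locally independent decomposition of the Riemann-Liouville process (\cref{example-1}\ref{it:rough_decomposition}): the increment $\theta_k\tilde B$ decomposes into a smooth $\F_k$-measurable part $Q^k$ (which I absorb into a new $\F_k$-measurable adversary) and a fresh Riemann-Liouville process $\tilde B^k$ independent of $\F_k$. Applied conditionally on $\F_k$, \cref{lem:probabilistic_control} yields an event $\A_k$ with $\prob(\A_k|\F_k) \geq \alpha \define \a_{\eta,\bar R}$ on which $|\Psi^\varsigma_{k+h}(Y)| > \bar R$ on an $h$-set of Lebesgue measure at least $\eta$. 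Since $|Z_{k+h}| \leq 2L < \bar R - R$, this forces $|\Psi^\varsigma_{k+h}(\tilde Y)| > R$ on the same set, so the $-\kappa$-contractive region is visited simultaneously by both trajectories for time $\geq \eta$. Integrating the ODE for $|Z_t|^p$ across $[k,k+1]$ gives
$$|Z_{k+1}|^p \leq |Z_k|^p \Big[ e^{-p(\kappa\eta - \Lambda(1-\eta))}\1_{\A_k} + e^{p\Lambda}\1_{\A_k^c}\Big],$$
and taking conditional expectations produces $\E[|Z_{k+1}|^p\,|\,\F_k] \leq r\cdot|Z_k|^p$ with $r\define \alpha e^{-p(\kappa\eta - \Lambda(1-\eta))} + (1-\alpha) e^{p\Lambda}$. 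At $\Lambda = 0$ one has $r = 1 - \alpha(1-e^{-p\kappa\eta}) < 1$, so by continuity there exists a threshold $\Lambda_0 = \Lambda_0(\kappa,R,p) \in (0,\kappa)$ such that $r < 1$ whenever $\Lambda \leq \Lambda_0$.

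Iterating the conditional bound for integers $k \geq \lceil T^\star \rceil$ gives exponential decay in this regime, while the pathwise off-diagonal estimate governs the transient segment $t \in [0,T^\star]$. Choosing $c \leq \min(\tilde\kappa_0,\log(1/r))/2$ and exploiting that $T^\star$ is only logarithmic in $|Z_0|$, the two phase bounds combine into $(\E|Z_t|^p)^{1/p} \leq C (\E|Z_0|^p)^{1/p} e^{-ct}$ with a universal constant $C$; the Wasserstein estimate then follows upon taking the $\W^p$-optimal coupling of $(Y,\tilde Y)$. The principal obstacle is that the control constant $\a_{\eta,\bar R}$ of \cref{lem:probabilistic_control} degenerates rapidly as $\bar R$ grows (since $\|U_{x,\varsigma}\|_{\cH_H}$ does), which prevents $\bar R$ from being chosen as a function of $|Z_0|$; the off-diagonal contraction is precisely the tool that tames $|Z_t|$ down to a universal scale $2L$ before the control argument is deployed, and the delicate part is reconciling the logarithmic transient with the exponential rate so that both $C$ and $c$ end up independent of the initial distance.
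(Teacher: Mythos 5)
Your proposal is correct, and its engine is the same as the paper's: a synchronous coupling, the locally independent decomposition $\theta_k\tilde B=Q^k+\tilde B^k$ to restart the control lemma conditionally on $\F_k$, the event $\A_k$ of probability $\geq\a_{\eta,\bar R}$ on which one trajectory spends time $\geq\eta$ outside a fixed large ball, and the one-step factor $r=\a e^{-p\Xi}+(1-\a)e^{p\Lambda}<1$ obtained by shrinking $\Lambda$. The one genuine divergence is how you exploit the excursion. The paper invokes \cref{lem:bigger_ball}, which upgrades the large-scale contraction to a \emph{one-sided} statement: $\braket{b(x)-b(y),x-y}\leq-\bar\kappa|x-y|^2$ for \emph{all} $y\in\R^n$ as soon as $|x|>\bar R$. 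Hence only the controlled trajectory needs to leave $B_{\bar R}$, the recursion $\E[|Z_{k+1}|^p\,|\,\F_k]\leq r|Z_k|^p$ holds from time $0$, and no transient phase is needed. You instead use the two-sided condition \eqref{eq:semicontractive} directly, which forces you to first tame $|Z_t|$ to the universal scale $2L$ via the off-diagonal bound of \cref{rem:large_scall_off_diagonal}\ref{it:off_diagonal}, and then to reconcile the $|Z_0|$-dependent logarithmic transient with the exponential rate. Your bookkeeping for that reconciliation does close (the transient length $\sim\tilde\kappa_0^{-1}\log|Z_0|$ is absorbed into the prefactor precisely because $c\leq\tilde\kappa_0$), and you correctly identify why $\bar R$ must not depend on $|Z_0|$; but the entire two-phase structure is avoidable, and the paper's route yields the cleaner statement with $C=e^{\Lambda}/\rho$ directly. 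A small further remark: your a priori bound $|Z_t|\leq|Z_0|+L$ during the transient is what keeps the prefactor proportional to $|Z_0|$ there, so that the final estimate really is homogeneous in $\W^p(\L(Y),\L(\tilde Y))$; this should be stated explicitly in a full write-up.
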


\begin{proof} 
Write $X_t\define\Psi^{\vsigma}_{t}(Y)$ and $Z_t=\Psi_{t}^{\vsigma}(\tilde Y)$. Let $\mu_t\define\L(X_t)$ and $\nu_t\define\L(Z_t)$, thus $(X_t,Z_t)$ is a synchronous coupling of $\mu_t$ and $\nu_t$. Our strategy for proving the exponential convergence of $t\mapsto\W^p(\mu_t,\nu_t)$ is to show that, for any $t>0$, the evolution of $(X_s)_{s\in[t,t+1]}$ conditional on $\F_t$ spends a sufficient amount of time in the contractive region $\{|x|>R\}$. As noted in \cref{example-1} \ref{it:rough_decomposition}, there is an independent increment decomposition $(\theta_t\tilde{B})_{h}= Q^t_h+\tilde{B}^t_h$ for the Riemann-Liouville process. Using this and the conditional evolution derived in \cref{lem:conditioning_general}, we find
  \begin{align}
    \Expec{\big|X_{t+1}-Z_{t+1}\big|^p}&    =  \Expec{\Expec{\big|\Psi^{\vsigma}_{t,t+1}(X_t)-\Psi^{\vsigma}_{t,t+1}(Z_t)\big|^p\,\middle|\,\F_t}}\nonumber\\
    &=     \Expec{\Expec{\Big|\Psi_{1}\big(X_t, \theta_t\vsigma+\sigma\theta_t\tilde B\big)-\Psi_{1}\big(Z_t, \theta_t\vsigma+\sigma \theta_t\tilde B\big)\Big|^p\,\middle|\,\F_t}}\nonumber\\
    &=\Expec{\Expec{\Big|\Psi_{1}\big(X_t, \theta_t\vsigma+\sigma Q^t+\sigma\tilde{B}^t\big)-\Psi_{1}\big(Z_t, \theta_t\vsigma+\sigma Q^t+\sigma\tilde{B}^t\big)\Big|^p\,\middle|\,\F_t}} \nonumber\\
       &=     \Expec{  \Expec{\Big|\Psi^{\theta_t\vsigma+\ell}_{1} (x)-\Psi_{1}^{\theta_t\vsigma+\ell}(z)\Big|^p}\bigg|_{\substackal{x&=X_t,z=Z_t,\\\ell&=\sigma Q^t}}},\label{two-initial-conditions}
 \end{align}
 where in the last step we also used that $(\tilde{B}^t_{h})_{h\geq 0}\overset{d}{=}(\tilde{B}_{h})_{h\geq 0}$.

By assumption, the drift $b$ does not expand by more than a factor of $\Lambda$ on all of $\R^n$. We therefore have the pathwise estimate
\begin{equation}\label{eq:rl_lipschitz}
\big|\Psi_{s,t}^{\theta_t\vsigma+\ell}(x)-\Psi_{s,t}^{\theta_t\vsigma+\ell}(z)\big|^p\le e^{p(t-s)\Lambda} |x-z|^p
\end{equation}
for all $0\leq s< t\leq 1$. Let $\eta\in(0,\frac12)$ and $\bar\kappa\in(0,\kappa)$ be such that $\Xi\define\bar\kappa\eta-\Lambda(1-\eta)>0$ (recall that we assume $\Lambda<\kappa$). Let $\bar R>R$ be the corresponding radius furnished by \cref{lem:bigger_ball}. For any $x\in\R^n$ and any $\varsigma,\ell\in\C_0(\R_+,\R^n)$, let $\A_{x,\theta_t\varsigma+\ell}$ be the event from \cref{lem:probabilistic_control}. Recall that $\prob(\A_{x,\theta_t\varsigma+\ell})\geq\a_{\eta,\bar{R}}>0$ and 
\begin{equation*}
  \int_0^1 \1_{\big\{s: \big|\Psi_{s}^{\theta_t\vsigma+\ell}(x)(\omega)\big|>\bar R\big\}}(r)\,dr > \eta \qquad  \forall\, \omega \in \A_{x,\theta_t\varsigma+\ell}.
\end{equation*}
Since $\Xi>0$, by possibly decreasing $\Lambda$ we can also ensure that 
\begin{equation}\label{eq:lambda}
  0<\Lambda < \frac1p\log\left(\f  {1-\a_{\eta,\bar{R}}e^{-p \Xi} }{1-\a_{\eta,\bar{R}}}\right).
\end{equation}
Owing to pathwise continuity of $h\mapsto\Psi_h^{\theta_t\vsigma+\ell}(x)$, there are random times $t_1,\dots,t_{2N(\omega)}$ such that, for all $\omega\in \A_{x,\theta_t\varsigma+\ell}$,
\begin{itemize}
  \item $0\leq t_1(\omega)<\cdots<t_{2N(\omega)}(\omega)\leq 1$, 
  \item $\displaystyle\sum_{i=1}^{N(\omega)}\big(t_{2i}(\omega)-t_{2i-1}(\omega)\big)\geq\eta$, and
  \item $\displaystyle\bigcup_{i=1}^{N(\omega)}\big(t_{2i-1}(\omega),t_{2i}(\omega)\big)\subset\big\{h\in[0,1]:\,\big|\Psi_{h}^{\theta_t\varsigma+\ell}(x)(\omega)\big|>\bar R\big\}$.
\end{itemize}
Together with \eqref{eq:rl_lipschitz} it follows that, on the event $\A_{x,\theta_t\varsigma+\ell}$,
\begin{align*}
&\phantom{\le}\big|\Psi_{1}^{\theta_t\varsigma+\ell}(x)-\Psi_{1}^{\theta_t\varsigma+\ell}(z)\big|^p
= \Big|\Psi_{t_{2N},1}^{\theta_t\varsigma+\ell}\Big(\Psi_{t_{2N}}^{\theta_t\varsigma+\ell} (x)\Big)-\Psi_{t_{2N},1}^{\theta_t\varsigma+\ell}\Big(\Psi_{t_{2N}}^{\theta_t\varsigma+\ell} (z)\Big)\Big|^p\\
&\le e^{p(1-t_{2N})\Lambda} \big|\Psi_{t_{2N}}^{\theta_t\varsigma+\ell}(x)-\Psi_{t_{2N}}^{\theta_t\varsigma+\ell}(z)\big|^p\\
&\le e^{p (1-t_{2N})\Lambda} e^{-p (t_{2N}-t_{2N-1})\bar\kappa} \big|\Psi_{t_{2N-1}}^{\theta_t\varsigma+\ell}(x)-\Psi_{t_{2N-1}}^{\theta_t\varsigma+\ell}(z)\big|^p\\
&\leq\cdots\le \exp\left[p\left(\Lambda\sum_{i=0}^N(t_{2i+1}-t_{2i})-\bar\kappa\sum_{i=0}^{N}  (t_{2i}-t_{2i-1}) \right)\right] |x-z|^p\\
&\le e^{ -p\Xi}|x-z|^p,
\end{align*}
where we have set $t_{2N+1}\define 1$ for convenience. On the complementary event $\Omega\setminus \A_{x,\theta_t\varsigma+\ell}$, we apply the trivial estimate \eqref{eq:rl_lipschitz}. Inserting these bounds back into \eqref{two-initial-conditions}, we conclude that
\begin{equation*}
  \Expec{\big|X_{t+1}-Z_{t+1}\big|^p}\leq\Big(\big(1-\a_{\eta,\bar R}\big)e^{p\Lambda}+\a_{\eta,\bar R}e^{-p\Xi}\Big)\Expec{|X_t-Z_t|^p}\define\rho\Expec{|X_t-Z_t|^p}.
\end{equation*}
Observe that $\rho<1$ by \eqref{eq:lambda}. Finally, a straight-forward induction shows that
\begin{equation}\label{eq:to_minimize}
  \W^p\Big(\L\big( {\Psi}^{\vsigma}_t(Y)\big),  \L\big( {\Psi}^{\vsigma}_t(\tilde Y)\big)\Big)\leq
  \big\|X_t-Z_t\big\|_{L^p}\leq e^{\Lambda} \rho^{[t]}\big\|Y-\tilde Y\big\|_{L^p}\leq\frac{e^{\Lambda}}{\rho}e^{-|\log\rho|t}\big\|Y-\tilde Y\big\|_{L^p},
\end{equation}
where $[\cdot]$ denotes the integer part. Minimize over the set of couplings of $\L(Y)$ and $\L(\tilde{Y})$ to conclude the proof.
\end{proof}

A more explicit expression for the threshold value $\Lambda(\kappa,R,p)$ can be derived by the method outlined in \cref{rem:constant_xi} below. We abstain from including further details in this work. Let us however introduce the following notation:
\begin{definition}
  Let $\kappa,R>0$ and $p\geq 1$. We abbreviate $\S_p(\kappa,R)\define\S\big(\kappa,R,\Lambda(\kappa,R,p)\big)$ with the constant from \cref{prop:conditional_initial_condition_wasserstein}.
\end{definition}

By \cref{lem:conditioning}, the Wasserstein bound of \cref{prop:conditional_initial_condition_wasserstein} lifts to bounds on the fast motion with frozen slow input \eqref{eq:general_flow-fixed-x}. We obtain the following Lipschitz dependence of the flow $\bar \Phi$ on the initial value:

\begin{corollary}\label{cor:fast_different_initial}
  Let $(\F_t)_{t\geq 0}$ be a filtration compatible with the fBm $\hat{B}$. Let $0\leq s\leq t$ and let $X$, $Y$, and $\tilde{Y}$ be $\F_s$-measurable random variables. Suppose that there are $\kappa,R>0$ such that $b(x,\cdot)\in\S_1(\kappa,R)$ for every $x\in\R^d$. Then there is a constant $c>0$ such that, for any Lipschitz continuous function $h:\R^d\times\R^n\to\R$,
 \begin{equation*}
    \Big|\Expec{h\big(X,\bar{\Phi}_{s,t}^X(Y)\big)-h\big(X,\bar{\Phi}_{s,t}^X(\tilde{Y})\big)\,\middle|\,\F_s}\Big|\lesssim\Lip{h}|Y-\tilde Y|e^{-c\frac{|t-s|}{\varepsilon}}.
  \end{equation*}
  If, in addition, $b(x,\cdot)\in\S_p(\kappa,R)$ for all $x\in\R^d$, then also
  \begin{equation*}
    \Big\|\bar{\Phi}_{s,t}^X(Y)-\bar{\Phi}_{s,t}^X(\tilde{Y})\Big\|_{L^p}\lesssim\|Y-\tilde Y\|_{L^p}e^{-c\frac{|t-s|}{\varepsilon}}.
  \end{equation*}
\end{corollary}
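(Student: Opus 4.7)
The plan is to combine the conditional representation of \cref{lem:conditioning} with the synchronous-coupling $L^p$ estimate established in the proof of \cref{prop:conditional_initial_condition_wasserstein}. The key observation is that $\bar{\Phi}^X_{s,t}(Y)$ and $\bar{\Phi}^X_{s,t}(\tilde Y)$ are driven by the \emph{same} fBm $\hat{B}$, so once the $\F_s$-measurable smooth part of the increment $\varsigma_s\define\varepsilon^{-\hat{H}}\sigma\bar{\hat{B}}^s_{\varepsilon\cdot}$ has been absorbed into the drift adversary, the pair forms a synchronous coupling of the Riemann--Liouville-driven SDE \eqref{eq:rl_sde} on the rescaled time interval $[0,(t-s)/\varepsilon]$.

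Concretely, I would first apply \cref{lem:conditioning} jointly to the pair of flows: its proof extends verbatim to functions of the pair since both components share the same increment decomposition of $\hat{B}$. This yields, for any bounded measurable $F\colon\R^n\times\R^n\to\R$,
\begin{equation*}
  \Expec{F\big(\bar{\Phi}^X_{s,t}(Y),\bar{\Phi}^X_{s,t}(\tilde Y)\big)\,\big|\,\F_s}=\Expec{F\Big(\Psi^{x}_{(t-s)/\varepsilon}(y,\varsigma+\sigma\tilde{\hat{B}}),\Psi^{x}_{(t-s)/\varepsilon}(\tilde y,\varsigma+\sigma\tilde{\hat{B}})\Big)}\bigg|_{\substackal{x&=X,\,y=Y,\\\tilde y&=\tilde Y,\,\varsigma=\varsigma_s}},
\end{equation*}
with $\tilde{\hat{B}}$ on the right an independent copy of the Riemann--Liouville process. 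The two representing flows share this common driver, so at frozen values $(x,y,\tilde y,\varsigma)$ they again form a synchronous coupling of \eqref{eq:rl_sde}, exactly as in the proof of \cref{prop:conditional_initial_condition_wasserstein}.

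For the first estimate, I would pick $F(y_1,y_2)\define h(x,y_1)-h(x,y_2)$ with $x$ to be frozen to $X$, bound the integrand by $\Lip{h}\,|y_1-y_2|$, and invoke the synchronous-coupling $L^1$ bound
\begin{equation*}
  \Expec{\big|\Psi^{x}_T(y,\varsigma+\sigma\tilde{\hat{B}})-\Psi^{x}_T(\tilde y,\varsigma+\sigma\tilde{\hat{B}})\big|}\leq Ce^{-cT}|y-\tilde y|,\qquad T\geq 0,
\end{equation*}
which is precisely inequality \eqref{eq:to_minimize} in the proof of \cref{prop:conditional_initial_condition_wasserstein} with $p=1$, uniform in $x$ and $\varsigma$ under the hypothesis $b(x,\cdot)\in\S_1(\kappa,R)$. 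Setting $T=(t-s)/\varepsilon$ and substituting back proves the first assertion. The $L^p$ bound is identical in spirit: I take $F(y_1,y_2)=|y_1-y_2|^p$ and apply the $L^p$-version of \eqref{eq:to_minimize} (valid under $b(x,\cdot)\in\S_p(\kappa,R)$) to deduce
\begin{equation*}
  \Expec{\big|\bar{\Phi}^X_{s,t}(Y)-\bar{\Phi}^X_{s,t}(\tilde Y)\big|^p\,\big|\,\F_s}\leq C^p e^{-cp(t-s)/\varepsilon}|Y-\tilde Y|^p,
\end{equation*}
then take total expectation and a $p$-th root.

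The only conceptual point worth flagging is the origin of the rescaled rate $(t-s)/\varepsilon$ rather than $t-s$. This is already built into \cref{lem:conditioning}: by self-similarity of the fractional noise, the prefactor $\varepsilon^{-\hat{H}}$ converts into a time rescaling $r\mapsto r/\varepsilon$ of the Riemann--Liouville driver, so the fast flow over $[s,t]$ is represented as the unit-scale flow $\Psi$ run for time $(t-s)/\varepsilon$, at which the contraction rate from \cref{prop:conditional_initial_condition_wasserstein} kicks in. No genuine obstacle remains---the argument is a direct pairing of the explicit conditional representation with the quantitative contraction already at our disposal.
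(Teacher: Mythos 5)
Your argument is correct and follows essentially the same route as the paper: the paper's proof likewise reduces both bounds to \cref{lem:conditioning} combined with the exponential contraction from \cref{prop:conditional_initial_condition_wasserstein}, invoking Kantorovich--Rubinstein duality for the first estimate (where you instead use the synchronous coupling directly, which is the same underlying bound since that coupling is exactly how the Wasserstein decay is proved) and the synchronous coupling for the second. Your fleshed-out version, including the uniformity in $x$ and $\varsigma$ and the time rescaling $T=(t-s)/\varepsilon$, matches the intended argument.
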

\begin{proof}
  The first estimate is an immediate consequence of \cref{lem:conditioning} and Kantorovich-Rubinstein duality. The second bound follows from the fact that we used a synchronous coupling in the proof of \cref{prop:conditional_initial_condition_wasserstein}.
\end{proof}

The proof of \cref{prop:conditional_initial_condition_wasserstein} shows that its conclusion actually holds if $\tilde B$ is replaced by another process $Z$ with similar properties:
\begin{remark}\label{rem:Wasserstein-general}
Let $Z$ be a process with locally independent increment decomposition $\theta_t Z=\bar Z^t+\tilde Z^t$. Assume that
\begin{enumerate}
\item the $\F_t$-adapted part $\bar Z^t$ takes values in $\C_0(\R_+,\R^n)$ and
\item\label{it:cm_dense} there is a unit vector $e\in\R^n$ such that, for each $t\geq 0$, $\L\big((\tilde Z^t_h\cdot e)_{h\in[0,1]}\big)$ is supported on all of $\C_0([0,1])$.
\end{enumerate} 
Then a statement similar to \cref{prop:conditional_initial_condition_wasserstein} holds.
\end{remark}

\begin{example}\label{ex:titmarsh}
  Suppose that $\tilde{Z}^t_h=\int_t^{t+h}\mathfrak{G}(t+h-s)\,dW_s$ for some kernel $\mathfrak{G}:\R_+\to\Lin{n}$ which is square integrable at the origin and continuous on $(0,\infty)$. Then the requirement \ref{it:cm_dense} in \cref{rem:Wasserstein-general} holds if $\int_0^t|\mathfrak{G}(s)|\,ds>0$ for each $t>0$. Indeed, this can be shown by a clever application of Titmarsh's convolution theorem as in \cite[Lemma 2.1]{Cherny2008}.
\end{example}

The example shows that in particular an fBm of any Hurst parameter $H\in(0,1)$ falls in the regime of \cref{rem:Wasserstein-general}. Hence, we have the following corollary to \cref{prop:conditional_initial_condition_wasserstein}:
\begin{corollary}\label{conveergence-equilibrium}
 Let $p\geq 1$ and suppose that $b\in\S_p(\kappa,R)$ for some $\kappa,R>0$. Let $(X_t)_{t\geq 0}$ be the solution to 
 \begin{equation}\label{eq:fbm_sde}
   dX_t=b(X_t)\,dt+\sigma\,dB_t
 \end{equation}
 started in the generalized initial condition $\mu$, where $(B_t)_{t\geq 0}$ is an fBm with Hurst parameter $H\in(0,1)$ and $\sigma\in\Lin{n}$ is invertible. Then there is a unique invariant measure $\I_\pi\in\P(\R^n\times\H_H)$ for the equation \eqref{eq:fbm_sde} in the sense of \cref{initial-condition}. Moreover, writing $\pi=\Pi_{\R^n}^*\I_\pi$ for the first marginal, there are constants $c,C>0$ such that
 \begin{equation}\label{eq:wasserstein_fbm}
    \W^p\big(\L( X_t),\pi\big)\leq C\bW^p(\mu,\I_\pi) e^{-ct}
  \end{equation}
  for all $t\geq 0$.
 \end{corollary}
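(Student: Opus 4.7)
The strategy is to combine the existence result from Proposition~\ref{prop:existence_invariant_measure} with the Wasserstein contraction of Proposition~\ref{prop:conditional_initial_condition_wasserstein}, extended to fBm drivers as indicated in Remark~\ref{rem:Wasserstein-general}. Since $\S_p(\kappa,R)=\S(\kappa,R,\Lambda(\kappa,R,p))$, Proposition~\ref{prop:existence_invariant_measure} immediately furnishes an invariant measure $\I_\pi$ with moments of all orders; the remainder of the argument will establish uniqueness and the quantitative convergence \eqref{eq:wasserstein_fbm} simultaneously.

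To invoke the extended version of Proposition~\ref{prop:conditional_initial_condition_wasserstein}, I would apply Remark~\ref{rem:Wasserstein-general} to the process $Z_t=\sigma B_t$. By Mandelbrot--van Ness it admits the locally independent increment decomposition
\begin{equation*}
  (\theta_t Z)_h = \sigma\bar B^t_h + \sigma\tilde B^t_h,
\end{equation*}
where $\sigma\bar B^t$ is $\F_t$-measurable and takes values in $\C_0(\R_+,\R^n)$, while the independent remainder $\sigma\tilde B^t$ is $\sigma$ times a (shifted) Riemann--Liouville process. The latter fits Example~\ref{ex:titmarsh} with kernel $\mathfrak{G}(s)=\alpha_H s^{H-1/2}\sigma$, which is square-integrable at the origin, continuous on $(0,\infty)$, and of strictly positive $L^1$-mass near $0$; its Cameron--Martin space is thus dense in $\C_0([0,1],\R^n)$. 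With these two structural features the proof of Proposition~\ref{prop:conditional_initial_condition_wasserstein} carries through essentially unchanged: after conditioning on $\F_t$, the Cameron--Martin translate of $\sigma\tilde B^t$ realizes the universal control of Proposition~\ref{prop:control} with probability bounded below as in \eqref{eq:quant_lower_bound}, yielding the per-unit-time contraction factor.

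The remaining step is a synchronous coupling. Given $\mu$ and $\I_\pi$, I would fix a diagonal coupling $\gamma\in\mathscr{C}_\Delta(\mu,\I_\pi)$ and realize $(X_0,W|_{\R_-},Y_0)$ on a common probability space with $(X_0,W|_{\R_-})\sim\mu$, $(Y_0,W|_{\R_-})\sim\I_\pi$, and matching past Wiener components. Appending an independent future Wiener process and defining $B$ via \eqref{eq:mandelbrot} drives both copies of \eqref{eq:fbm_sde} with the \emph{same} fBm. Invariance yields $\L(Y_t)=\pi$ for every $t\geq 0$, while the extended Proposition~\ref{prop:conditional_initial_condition_wasserstein} delivers $\|X_t-Y_t\|_{L^p}\leq Ce^{-ct}\|X_0-Y_0\|_{L^p}$. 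Minimizing over $\gamma$ produces \eqref{eq:wasserstein_fbm}. Uniqueness, modulo the equivalence identified after Proposition~\ref{prop:existence_invariant_measure}, is then automatic: any two candidate invariant measures have $\bW^p$-finite distance through their matching $\H_H$-marginals and all-order moments, and the above estimate forces their first marginals to agree as $t\to\infty$.

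The main obstacle is the verification of Remark~\ref{rem:Wasserstein-general}: one has to rerun the proof of Proposition~\ref{prop:conditional_initial_condition_wasserstein} for an arbitrary locally independent increment decomposition, ensuring only that the lower bound \eqref{eq:quant_lower_bound} on the control event is uniform in both the adversary and the base point. Because $\tilde B^t\overset{d}{=}\tilde B$ for every $t\geq 0$, the distribution of the Cameron--Martin direction is $t$-independent and the same contraction factor as in the Riemann--Liouville setting is obtained.
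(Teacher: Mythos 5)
Your proposal is correct and follows essentially the same route as the paper: existence with all-order moments from \cref{prop:existence_invariant_measure}, the contraction argument of \cref{prop:conditional_initial_condition_wasserstein} transferred to the fBm driver via \cref{rem:Wasserstein-general} and \cref{ex:titmarsh}, and a synchronous coupling whose past Wiener components are matched on the diagonal so that minimization over $\mathscr{C}_\Delta(\mu,\I_\pi)$ yields the $\bW^p$ bound. Your write-up is in fact more explicit than the paper's (which simply asserts that the "very same arguments" apply, modulo restricting to diagonal couplings of the noise past), and your treatment of uniqueness via the decay estimate is the intended one.
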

\begin{proof}
  By \cref{prop:existence_invariant_measure}, we know that there is an invariant measure $\I_\pi$ to \eqref{eq:fbm_sde} with moments of all orders. The Wasserstein estimate \eqref{eq:wasserstein_fbm} then follows by the very same arguments as in \cref{prop:conditional_initial_condition_wasserstein}. The only difference is that we now have to specify a generalized initial condition $\nu\in\P\big((\R^n\times\H_H)^2\big)$ for the coupling $(X_t,Z_t)$, see \cref{sec:physical_solution}. Unlike for the conditioned dynamics, we have $Z_t\sim\pi$ if we start $Z$ in the invariant measure $\I_\pi$. In order for our previous argument to apply, we need to ensure that the past of the noises in the synchronous coupling coincide. In \eqref{eq:to_minimize} we can thus only minimize over couplings in the set 
  \begin{equation*}
    \big\{\rho\in\P\big((\R^n\times\H_H)^2\big):\,\rho(\R^n\times\R^n\times\Delta_{\H_H})=1\big\},
  \end{equation*}
  which precisely yields \eqref{eq:wasserstein_fbm}.
\end{proof}

\subsection{Quenched Convergence to the Invariant Measure}\label{quenched-convergence}

The other distance, which will play a r\^ole in \cref{sec:uniform_bounds} below, is between $\L\big(\Psi_t^\vsigma(Y)\big)$ and the stationary law $\pi$ of the equation \eqref{eq:fbm_sde}. We stress that---contrarily to the proof of \cref{conveergence-equilibrium}---we cannot simply start the process in the invariant measure. In fact, the measure $\pi$ is not stationary for \eqref{eq:rl_sde} since the increments of $\tilde{B}$ are not stationary. It is therefore necessary to wait for a sufficient decay of the deterministic `adversary' $\varsigma$, whence we only find an algebraic rate of convergence. Before we state the result, let us first illustrate that there is indeed no hope for an exponential rate:
\begin{example}
  Let
  \begin{equation*}
    dX_t=-X_t\,dt+d\tilde{B}_t,\qquad dY_t=-Y_t\,dt+dB_t.    
  \end{equation*}
  If we start both $X$ and $Y$ in the generalized initial condition $\delta_0\otimes\sW$, then $\L(X_t)=N(0,\Sigma_{t}^2)$ and $\L(Y_t)=N(0,\bar\Sigma_{t}^2)$ where
  \begin{equation*}
    \Sigma_t^2=\bar\Sigma_t^2-\Expec{\left|\int_0^te^{-(t-s)}\dot{\bar{B}}_s\,ds\right|^2}.
  \end{equation*}
  In particular, $\W^2\big(\L(X_t),\L(Y_t)\big)=|\Sigma_{t}-\bar\Sigma_t|\gtrsim t^{-(1-\hat{H})}$ uniformly in $t\geq 1$. Since it is easy to see that $\W^2\big(\L(Y_t),\pi\big)\lesssim e^{-t}$, it follows that $\W^2\big(\L(X_t),\pi\big)\gtrsim t^{-(1-\hat{H})}$.
\end{example}

\begin{proposition}\label{prop:conditional_stationary_wasserstein}
  Suppose that $b\in\S_p(\kappa,R)$ for some $\kappa,R>0$ and $\sigma\in\Lin{n}$ is invertible. Let $p\geq 1$, $\varsigma\in\Omega_\alpha$ for some $\alpha>0$, and $Y$ be an $\F_0$-measurable random variable. Then, for each $\beta<\min\big(\alpha,1-H\big)$, there is a constant $C>0$ such that
  \begin{equation}\label{eq:wasserstein_quenched}
    \W^p\big(\L(\Psi^\vsigma_t(Y)), \pi\big)\leq C\frac{\big(1+\|\varsigma\|_{\Omega_\beta}\big)\big(1+\W^p(\L(Y),\pi)\big)}{t^{\beta}}
  \end{equation}
  for all $t>0$.
\end{proposition}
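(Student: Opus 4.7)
The bound is immediate for bounded $t$ using moment estimates, so the plan is to treat large $t$. First, a standard triangle inequality reduces the problem to the case $\L(Y) = \pi$: using a $\W^p$-optimal coupling with $Z \sim \pi$ and applying \cref{prop:conditional_initial_condition_wasserstein} produces
\[
  \W^p\big(\L(\Psi^\vsigma_t(Y)),\L(\Psi^\vsigma_t(Z))\big) \leq Ce^{-ct}\W^p(\L(Y),\pi),
\]
whose exponential decay is absorbed into $t^{-\beta}$. It thus suffices to handle $\W^p(\L(\Psi^\vsigma_t(Z)),\pi)$ when $Z \sim \pi$.

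Next, I would enlarge the probability space so that $Z$ is the first marginal of the full invariant measure $\I_\pi$, with an associated two-sided Wiener process whose future part yields the Riemann-Liouville driving $\tilde B$ and whose past yields the smooth piece $\bar B$ in \eqref{eq:mandelbrot}. Let $Y^*$ be the solution to the fBm-driven SDE started at $Y^*_0 = Z$ with driving noise $B = \bar B + \tilde B$. By \cref{prop:existence_invariant_measure} and stationarity, $\L(Y^*_h) = \pi$ for every $h \geq 0$, so the task reduces to $\|X_t - Y^*_t\|_{L^p} \lesssim t^{-\beta}(\|\vsigma\|_{\Omega_\beta}+1)$, where $X = \Psi^\vsigma(Z, \tilde B)$ shares the same Riemann-Liouville driving as $Y^*$.

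The crux is an iterative conditioning on the filtration $\F_{t_n}$ at times $t_n = s + nT$, where $s = t/2 = NT$ and $T$ is chosen large enough that \cref{prop:conditional_initial_condition_wasserstein} applied at time $T$ produces a contraction factor $\rho = Ce^{-cT} < 1$. Using \cref{lem:conditioning_general} together with the decomposition $\theta_t B = P^t + Q^t + \tilde B^t$ from \cref{example-1} \ref{it:smooth_decomposition}, both processes evolve over $[t_{n-1},t_n]$ as flows sharing the common Riemann-Liouville increment $\tilde B^{t_{n-1}}$ but with distinct adversaries $\vsigma^{(n)}_1 = \theta_{t_{n-1}}\vsigma + \sigma Q^{t_{n-1}}$ (for $X$) and $\vsigma^{(n)}_2 = \sigma(P^{t_{n-1}} + Q^{t_{n-1}}) = \sigma \bar B^{t_{n-1}}$ (for $Y^*$). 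A triangle inequality splits $\|X_{t_n} - Y^*_{t_n}\|_{L^p}$ into a same-adversary piece bounded by $\rho\|X_{t_{n-1}} - Y^*_{t_{n-1}}\|_{L^p}$ via a conditional application of \cref{prop:conditional_initial_condition_wasserstein}, plus a same-initial-condition piece, which Gr\"onwall's lemma controls by $e^{\Lip{b}T}\|\vsigma^{(n)}_1 - \vsigma^{(n)}_2\|_{L^\infty[0,T]}$.

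The $\Omega_\beta$-assumption gives $\|\theta_{t_{n-1}}\vsigma\|_{L^\infty[0,T]} \leq T\|\vsigma\|_{\Omega_\beta}t_{n-1}^{-\beta}$, and rewriting $P^{t}_h = \bar B_{t+h} - \bar B_t$ with the derivative decay $|\dot{\bar B}_u| \lesssim \|\bar B\|_{\Omega_\beta} u^{-\beta}$ for any $\beta < 1-H$ (with $L^p$ moments via \cref{lem:smooth_part_decay}) yields $\|\sup_{h\in[0,T]}|P^{t_{n-1}}_h|\|_{L^p} \lesssim T t_{n-1}^{-\beta}$. Writing $a_n = \|X_{t_n} - Y^*_{t_n}\|_{L^p}$, the resulting recursion $a_n \leq \rho a_{n-1} + C(\|\vsigma\|_{\Omega_\beta}+1)t_{n-1}^{-\beta}$ iterates to $a_N \leq \rho^N a_0 + C(\|\vsigma\|_{\Omega_\beta}+1)\sum_{n=1}^N \rho^{N-n}t_{n-1}^{-\beta}$. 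Since $a_0$ grows at most polynomially in $s$ (by a Gr\"onwall argument that uses the large-scale contraction together with the linear growth in \cref{cond:feedback}) while $\rho^N = \rho^{t/(2T)}$ is super-polynomially small, and the geometric sum is dominated by $s^{-\beta} \asymp t^{-\beta}$, the desired estimate follows. The main subtlety is that the adversary $\vsigma^{(n)}_1$ is $\F_{t_{n-1}}$-measurable rather than deterministic, but since the constants in \cref{prop:conditional_initial_condition_wasserstein} depend only on $b$ and $\sigma$, not on the adversary (uniformity is already built into \cref{prop:control} and \cref{lem:probabilistic_control}), conditioning creates no obstacle.
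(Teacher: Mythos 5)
Your proposal is correct and follows essentially the same route as the paper's proof: a synchronous coupling with the stationary solution of the fBm equation, blockwise conditioning via the decomposition $\theta_t B=P^t+Q^t+\tilde B^t$ so that the two flows share the rough part $\tilde B^t$ and differ only through the adversaries $\theta_t\varsigma+\sigma Q^t$ versus $\sigma\bar B^t$, a conditional one-step contraction from the occupation-time control, an error of order $t^{-\beta}$ from the decay of $\dot\varsigma$ and $\dot{\bar B}$, and iteration of the resulting recursion. The only differences are cosmetic: you split each block by a triangle inequality into a same-adversary contraction plus a Gr\"onwall perturbation estimate, whereas the paper runs a single differential inequality on $|R_h-S_h|^p$ (absorbing the adversary difference via Young's inequality), and you start the iteration at $t/2$ with a crude a priori bound instead of iterating from the initial time.
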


\begin{proof}
  Fix $t\geq 1$, abbreviate $X\define\Psi_\cdot^{\vsigma}(Y)$, and let $Z$ be the stationary solution to the equation \eqref{eq:fbm_sde}. We assume that $X$ and $Z$ are driven by the same Wiener process. Let us first consider the case $p\geq 2$. Recall the following locally independent decompositions from \cref{sec:increment}:
    $$\theta_t B=\bar B^t+\tilde B^t, \qquad \theta_t \tilde B=Q^t+\tilde B^t.$$
  Remember also that the `smooth' part of the fBm increment can be further decomposed as $\bar{B}^t=P^t+Q^t$, see \cref{example-1} \ref{it:smooth_decomposition}. Therefore, 
  \begin{align}
    \Expec{\big|X_{t+1}-Z_{t+1}\big|^p}&    =  \Expec{\Expec{\big|\Psi_{t,t+1}(X_t,\vsigma+ \sigma\tilde B)-\Psi_{t,t+1}(Z_t,\sigma B)\big|^p\,\middle|\,\F_t}}\nonumber\\
        &=\Expec{\Expec{\Big|\Psi_{1}\big(X_t, \theta_t\vsigma+\sigma Q^t+\sigma \tilde B^t\big)-\Psi_{1}\big(Z_t,\sigma P^t+\sigma Q^t+\sigma\tilde B^t\big)\Big|^p\,\middle|\,\F_t}}\nonumber\\
       &=\Expec{\Big|  \Psi_1^{\theta_t \vsigma+ \ell} (x) - \Psi_1^{\bar\ell+\ell} (z) \Big|^p\bigg|_{\substackal{x&=X_t,z=Z_t,\\\ell&=\sigma Q^t,\bar{\ell}=\sigma P^t}}}\label{eq:expec_diff}
  \end{align}
  Write $R_h\define\Psi_h^{\theta_t \vsigma+\ell} (x)$ and $S_h\define \Psi_h^{\bar\ell+\ell} (z) $. Notice that, since $\vsigma$ and $\bar\ell$ are differentiable,
  \begin{align*}
    \frac{d}{dh}\big|R_{h}-S_{h}\big|^p&=p\Braket{\dot{\varsigma}_{t+h}-\dot{\bar{\ell}}_h+b\big(R_{h}\big)-b\big(S_{h}\big),R_{h}-S_{h}}\big|R_{h}-S_{h}\big|^{p-2}\\
    &\leq p(\Lambda+\gamma)\big|R_{h}-S_{h}\big|^p
     +\left(\frac{p-1}{\gamma p}\right)^{p-1}\left(|\dot{\varsigma}_{t+h}|+|\dot{\bar{\ell}}_{h}|\right)^p
  \end{align*}
  for any $\gamma>0$, where $\Lambda=\Lambda(\kappa,R,p)$ is the expansion threshold derived in \cref{prop:conditional_initial_condition_wasserstein}. It follows that, for any $0\leq h_1\leq h_2\leq 1$,
  \begin{align}
    &\phantom{\leq}\big|R_{h_2}-S_{h_2}\big|^p\nonumber\\
    &\leq\big|R_{h_1}-S_{h_1}\big|^p e^{p(\Lambda+\gamma)(h_2-h_1)}+\left(\frac{p-1}{\gamma p}\right)^{p-1}\int_{h_1}^{h_2}e^{p(\Lambda+\gamma)(h_2-s)}\left(|\dot\varsigma_{t+s}|+|\dot{\bar{\ell}}_{s}|\right)^p\,ds\nonumber\\
    &\leq \big|R_{h_1}-S_{h_1}\big|^p e^{p(\Lambda+\gamma)(h_2-h_1)}+C_\gamma(h_2-h_1),\label{eq:estimate_waserstein_1}
  \end{align}
  where we abbreviated
  \begin{equation*}
    C_\gamma\define \left(\frac{p-1}{\gamma p}\right)^{p-1}\left(\frac{\|\varsigma\|_{\Omega_\beta}}{t^{\beta}}+|\dot{\bar{\ell}}|_\infty\right)^p.
  \end{equation*}
  We now argue similarly to \cref{prop:conditional_initial_condition_wasserstein}: Pick $\eta\in(0,\frac12)$ and $\bar{\kappa}\in(0,\kappa)$ such that $\Xi\define \eta\bar{\kappa}-(1-\eta)\Lambda>0$.  Let $\bar{R}>0$ be the corresponding constant of \cref{lem:bigger_ball} and $\A_{x,\theta_t\varsigma+\ell}$ be the event furnished by \cref{lem:probabilistic_control}. As before, we write $t_1,\dots,t_{2N(\omega)}$ for the random times characterizing the excursions of $(R_h)_{h\in[0,1]}$ outside of $B_{\bar R}$, see \cref{prop:conditional_initial_condition_wasserstein}. By an argument similar to \eqref{eq:estimate_waserstein_1},  
  \begin{equation}\label{eq:estimate_waserstein_2}
    \big|R_{t_{2i}}-S_{t_{2i}}\big|^p\leq \big|R_{t_{2i-1}}-S_{t_{2i-1}}\big|^p e^{p(\gamma-\bar{\kappa})(t_{2i}-t_{2i-1})}+C_\gamma (t_{2i}-t_{2i-1})
  \end{equation}
  for all $i=1,\dots,N(\omega)$ on the set $\A_{x,\theta_t\varsigma+\ell}$. Combining \eqref{eq:estimate_waserstein_1} and \eqref{eq:estimate_waserstein_2}, we further find on this set
  \begin{align*}
    \phantom{\leq}\big|R_{1}-S_{1}\big|^p&\leq e^{p(\Lambda+\gamma)(1-t_{2k})}\big|R_{t_{2k}}-S_{t_{2k}}\big|^p+C_\gamma(1-t_{2k})\\
    &\leq e^{p(\Lambda+\gamma)(1-t_{2k})}e^{p(\gamma-\bar{\kappa})(t_{2k}-t_{2k-1})}\big|R_{t_{2k-1}}-S_{t_{2k-1}}\big|^p+C_\gamma(1-t_{2k-1})\\
    &\leq\cdots \leq e^{p(\Lambda+\gamma)(1-\eta) +p(\gamma-\bar \kappa)\eta}|x-z|^p+C_\gamma 
    \leq e^{-p\big(\Xi-\gamma\big)}|x-z|^p+C_\gamma 
  \end{align*}
  Choose $\gamma>0$ sufficiently small such that simultaneously $\Xi-\gamma>0$ and 
  \begin{equation*}
    \rho\define \big(1-\a_{\eta,\bar{R}}\big)e^{p(\Lambda+\gamma)}+\a_{\eta,\bar{R}}e^{-p(\Xi-\gamma)}<1.
  \end{equation*}
  This shows that
  \begin{equation}\label{eq:estimate_p_bigger}
    \Expec{\big|R_{1}-S_{1}\big|^p}\leq\rho|x-y|^p+C_\gamma.
  \end{equation}
  It is clear that the estimate \eqref{eq:estimate_p_bigger} also holds for $p<2$ with the constant
  \begin{equation*}
    C_\gamma=\frac{1}{(2\gamma)^{\frac{p}{2}}}\left(\frac{\|\varsigma\|_{\Omega_\beta}}{t^{\beta}}+|\dot{\bar{\ell}}|_\infty\right)^p
  \end{equation*}
   and a slightly increased $\rho<1$. Since $P^t=\bar{B}_{t+\cdot}$, \cref{lem:smooth_part_decay} and the identity \eqref{eq:expec_diff} show that
  \begin{equation*}
    \Expec{\big|X_{t+1}-Y_{t+1}\big|^p}\leq\rho\Expec{\big|X_{t}-Y_{t}\big|^p}+\frac{C\big(1+\|\varsigma\|_{\Omega_\beta}^p\big)}{t^{p\beta}}
  \end{equation*}
  for some numerical constant $C>0$ independent of $t$ and $\varsigma$. Therefore, iterating this bound we find
  \begin{equation}\label{eq:quenched_wasserstein}
    \Expec{\big|X_{t}-Y_{t}\big|^p}\lesssim e^{-ct}\Expec{|X_0-Y_0|^p}+C\big(1+\|\varsigma\|_{\Omega_\beta}^p\big)\sum_{i=0}^{[t]-2}\frac{\rho^i}{(t-1-i)^{p\beta}}.
  \end{equation}
  The last sum is easily seen to be $\lesssim t^{-p\beta}$ uniformly in $t\geq 2$ and the claim follows at once.
\end{proof}

By a strategy inspired by \cite[Section 7]{Panloup2020} (see also \cite{Hairer2005}), we can lift \cref{prop:conditional_stationary_wasserstein} to a total variation bound. Since the exposition of Panloup and Richard does not immediately transfer to the problem at hand, we choose to include the necessary details.  Consider the system
\begin{equation}\label{eq:girsanov_coupling}
  \begin{aligned}[c]
    dX_s&=b(X_s)\, ds +d\varsigma_s+\sigma d\tilde{B}_s,\\
    dZ_s&=b(Z_s)\, ds +\sigma\,dB_s+\sigma\varphi^t(s)\,ds,
    \end{aligned}
\end{equation}
where $X_0$ is an arbitrary initial condition and $Z$ is the stationary solution of the first equation. Our aim is to exhibit an adapted integrable function $\varphi^t:[0,t+1]\to\R^n$ which vanishes on $[0,t]$ and ensures that $X_{t+1}=Z_{t+1}$. To this end, we define
\begin{equation}\label{eq:coupling_function}
  \varphi^t(s)\define\left\{
  \begin{array}{ll}
  \left(2\frac{|X_t-Z_t|^{\frac12}}{|X_s-Z_s|^{\frac12}}+\lambda\right)\sigma^{-1}(X_s-Z_s)-\dot{\bar{B}}_s+\sigma^{-1}\dot{\varsigma}_s, \quad \quad &s\in [t,t+1],\\
  0, \qquad \qquad &\hbox{ otherwise.}
  \end{array}\right.
\end{equation}
\begin{lemma}\label{lem:girsanov}
 Let $t\geq 1$,   $\varsigma\in\Omega_\alpha$, $b\in\S(\kappa,R,\lambda)$, 
   and consider the system \eqref{eq:girsanov_coupling}  with $\varphi^t$ defined in \eqref{eq:coupling_function}. Then $X_{t+1}=Z_{t+1}$ and, for any $\beta<\alpha\wedge(1-H)$,
\begin{equation}\label{eq:phi_norm}
  |\varphi^t|_\infty\lesssim |X_t-Z_t|+\frac{\|\varsigma\|_{\Omega_\beta}+\|\bar{B}\|_{\Omega_\beta}}{t^{\beta}},\qquad |\dot{\varphi}^t|_\infty\lesssim |X_t-Z_t|^{\frac12}+|X_t-Z_t|+\frac{\|\varsigma\|_{\Omega_\beta}+\|\bar{B}\|_{\Omega_\beta}}{t^{1+\beta}},
\end{equation}
where the derivative of $\varphi^t$ is understood as right- and left-sided derivative at the boundaries $t$ and $t+1$, respectively.
\end{lemma}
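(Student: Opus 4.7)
My approach is to analyse the difference $D_s\define X_s-Z_s$ and show that the explicit choice of $\varphi^t$ is designed precisely to turn the coupling equation into an autonomous ODE with a finite-time-extinction property at $s=t+1$.

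\textbf{Step 1: Derivation of the ODE for $D_s$.} Since both equations are driven by the \emph{same} Wiener process (and hence the same rough part $\tilde B$ via the Mandelbrot--van Ness decomposition $B=\bar B+\tilde B$), the $\sigma\,d\tilde B_s$ contributions cancel upon subtraction. For $s\in[t,t+1]$ the remaining noise terms $\dot\varsigma_s$ and $-\sigma\dot{\bar B}_s$ are precisely the ones cancelled by the last two summands of $\varphi^t(s)$, leaving
\begin{equation*}
D_s'=b(X_s)-b(Z_s)-\Big(2\tfrac{|D_t|^{1/2}}{|D_s|^{1/2}}+\lambda\Big)D_s.
\end{equation*}
If $D_t=0$, uniqueness of the SDE forces $D\equiv0$ on $[t,t+1]$ and $\varphi^t\equiv 0$ by convention, so I may assume $D_t\neq 0$. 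Pairing with $D_s$ and invoking $\langle b(x)-b(y),x-y\rangle\leq\lambda|x-y|^2$ (inherent to $b\in\S(\kappa,R,\lambda)$) absorbs the $\lambda|D_s|^2$ term, giving the clean scalar inequality
\begin{equation*}
\tfrac{d}{ds}|D_s|^2\leq-4|D_t|^{1/2}|D_s|^{3/2}.
\end{equation*}

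\textbf{Step 2: Finite-time coalescence.} Substituting $v(s)\define|D_s|^{1/2}$ (away from zeros) converts the above into $v'\leq-|D_t|^{1/2}$, so
\begin{equation*}
|D_s|^{1/2}\leq|D_t|^{1/2}\big(1-(s-t)\big),\qquad s\in[t,t+1].
\end{equation*}
Evaluating at $s=t+1$ yields $|D_{t+1}|=0$, i.e.\ $X_{t+1}=Z_{t+1}$, as claimed. This also provides the monotonicity $|D_s|\leq|D_t|$ throughout $[t,t+1]$, which is the key a priori estimate for the remaining bounds.

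\textbf{Step 3: Sup-norm bound on $\varphi^t$.} Using $|D_s|^{1/2}\leq|D_t|^{1/2}$ the first summand of $\varphi^t$ is pointwise controlled by $2|\sigma^{-1}||D_t|^{1/2}|D_s|^{1/2}+\lambda|\sigma^{-1}||D_s|\lesssim|D_t|$. For the last two summands, $\varsigma\in\Omega_\beta$ and $\bar B\in\Omega_{(1-H)-}$ (\cref{lem:smooth_part_decay}) yield $|\dot\varsigma_s|\leq\|\varsigma\|_{\Omega_\beta}s^{-\beta}\leq\|\varsigma\|_{\Omega_\beta}t^{-\beta}$ and similarly for $\dot{\bar B}$, on the interval $[t,t+1]$ with $t\geq 1$. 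Summing produces the first estimate in \eqref{eq:phi_norm}.

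\textbf{Step 4: Derivative bound.} This is the main technical step, since the expression $D_s/|D_s|^{1/2}$ looks singular at the coalescence time. Differentiating gives
\begin{equation*}
\tfrac{d}{ds}\Big(\tfrac{D_s}{|D_s|^{1/2}}\Big)=\tfrac{D_s'}{|D_s|^{1/2}}-\tfrac{\langle D_s,D_s'\rangle D_s}{2|D_s|^{5/2}},\qquad\Big|\tfrac{d}{ds}\tfrac{D_s}{|D_s|^{1/2}}\Big|\leq\tfrac{3}{2}\tfrac{|D_s'|}{|D_s|^{1/2}}.
\end{equation*}
The ODE from Step 1 together with the Lipschitz continuity of $b$ bounds $|D_s'|\lesssim|D_s|+|D_t|^{1/2}|D_s|^{1/2}$, so $|D_s'|/|D_s|^{1/2}\lesssim|D_s|^{1/2}+|D_t|^{1/2}$, \emph{removing the apparent singularity}. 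Multiplying by the prefactor $2|D_t|^{1/2}$ and invoking $|D_s|\leq|D_t|$ produces a term of size $|D_t|^{1/2}+|D_t|$. The $\lambda\sigma^{-1}D_s'$ contribution is controlled analogously, and $\ddot\varsigma$, $\ddot{\bar B}$ are handled via the $\Omega_\beta$ seminorm at order $t^{-1-\beta}$, giving the second bound in \eqref{eq:phi_norm}.

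\textbf{Expected main obstacle.} The chief subtlety is the well-posedness of the coupling system \eqref{eq:girsanov_coupling} with the singular drift $\varphi^t$: the factor $|D_s|^{-1/2}$ is not Lipschitz. I would resolve this by constructing $(X,Z)$ via a Picard/regularisation scheme (for instance solving the ODE for $|D_s|^2$ on $[t,t+1-\delta]$, then passing $\delta\downarrow 0$ using the explicit extinction bound of Step~2), then verifying after the fact that $D$ inherits the a priori bound $|D_s|^{1/2}\leq|D_t|^{1/2}(1-(s-t))$, which tames the singularity: $|\varphi^t(s)|$ stays bounded because the numerator $D_s$ itself vanishes like $(t+1-s)^2$, faster than the singular denominator.
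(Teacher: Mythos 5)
Your proposal is correct and follows essentially the same route as the paper's proof: the same differential inequality $\frac{d}{ds}|X_s-Z_s|^2\leq -4|X_t-Z_t|^{1/2}|X_s-Z_s|^{3/2}$ obtained by cancelling the $\lambda$-expansion of $b$ against the $\lambda$-term in $\varphi^t$, the same finite-time extinction via $|X_s-Z_s|^{1/2}\leq(1-(s-t))|X_t-Z_t|^{1/2}$, and the same bound $\big|\frac{d}{ds}(D_s/|D_s|^{1/2})\big|\leq\frac32|D_s'|/|D_s|^{1/2}\lesssim|X_t-Z_t|^{1/2}$ for the derivative estimate. Your additional remark on the well-posedness of the singular coupled system is a point the paper glosses over, and your proposed regularisation is a sensible way to handle it.
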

\begin{proof}
  The argument is a minor modification of \cite[Lemma 5.8]{Hairer2005}: Abbreviate $f(s)\define|X_s-Z_s|^2$, then
  \begin{equation*}
    f^\prime(s)=2\braket{b(X_s)-b(Z_s)+\dot{\varsigma}_s-\sigma\dot{\bar{B}}_s-\sigma\varphi^t(s),X_s-Z_s}\leq -4|X_t-Z_t|^{\frac12}f(s)^{\frac34}
  \end{equation*}
  since $b\in\S(\kappa,R,\lambda)$. It follows that
  \begin{equation*}
    |X_s-Z_s|^{\frac12}\leq |X_t-Z_t|^{\frac12}-(s-t)|X_t-Z_t|^{\frac12}\qquad\forall\, s\in[t,t+1],
  \end{equation*}
  whence $X_{t+1}=Z_{t+1}$. This also implies
  \begin{equation*}
    \left|\frac{d}{ds}\big(X_s-Z_s\big)\right|\leq\big(\Lip{b}+2+\lambda\big)|X_t-Z_t|^{\frac12}|X_s-Z_s|^{\frac12}
  \end{equation*}
  and consequently
  \begin{equation*}
    \left|\frac{d}{ds}\left(\frac{X_s-Z_s}{|X_s-Z_s|^{\frac12}}\right)\right|\leq\frac32\frac{\left|\frac{d}{ds}\big(X_s-Z_s\big)\right|}{|X_s-Z_s|^{\frac12}}\lesssim|X_t-Z_t|^{\frac12}.
  \end{equation*}
  The bounds \eqref{eq:phi_norm} follow at once.
\end{proof}
\begin{remark}
  We stress that the bound on $|\dot{\varphi}^t|_\infty$ only holds for a Lipschitz continuous drift $b$.
\end{remark}

It is now easy to prove the following result:
\begin{proposition}\label{prop:conditional_stationary_total}
  Assume the conditions of \cref{prop:conditional_stationary_wasserstein} for $p=1$. Then, for any $\beta<\alpha\wedge(1-H)$, it holds that
  \begin{equation*}
    \TV{\L\big(\Psi^\vsigma_t(Y)\big)-\pi}\lesssim {t^{-\frac{\beta}{3}}\big(1+\|\varsigma\|_{\Omega_\beta}\big)\big(1+\W^1(\L(Y),\pi)\big)}  \qquad \forall\, t>0.
  \end{equation*}
\end{proposition}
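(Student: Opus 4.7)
The strategy, adapting \cite{Hairer2005,Panloup2020}, is to upgrade the $\W^1$-estimate of \cref{prop:conditional_stationary_wasserstein} to a total variation bound by coupling the dynamics via \cref{lem:girsanov} and applying the Cameron-Martin-Girsanov theorem for fBm. Let $Z$ be the stationary solution of \eqref{eq:fbm_sde} started in an invariant initial condition $\I_\pi$, synchronously coupled with $X\define\Psi^\varsigma(Y)$ so that the independent Riemann-Liouville part $\tilde B^t$ of the increment $\theta_tB=\bar B^t+\tilde B^t$ (see \cref{example-1}) also drives $X$ on $[t,t+1]$. By \cref{lem:girsanov}, adding the drift $\sigma\varphi^t$ (vanishing on $[0,t]$) to the equation for $Z$ forces $X_{t+1}=Z_{t+1}$ pathwise. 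A Girsanov change of measure $\tilde\prob$ that removes this drift restores the stationary law of $Z$, so that $Z_{t+1}\sim\pi$ under $\tilde\prob$. Writing $h\define\int_0^\cdot\sigma\varphi^t(s)\,ds$ for the associated Cameron-Martin shift, Pinsker's inequality combined with the fractional Girsanov identity yields
\begin{equation*}
  \TV{\L(\Psi^\varsigma_t(Y))-\pi}\leq\TV{\prob-\tilde\prob}\leq\tfrac{1}{2}\sqrt{\Expec{\|h\|_{\cH_H}^2}}.
\end{equation*}

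The next step is to control $\|h\|_{\cH_H}$. Since $h$ is supported on the unit interval $[t,t+1]$, using the representation $\K_H^{-1}=\varrho_H\K_{1-H}$ and integrating by parts inside the fractional integral yields, with a constant independent of $t$,
\begin{equation*}
  \|h\|_{\cH_H}^2\lesssim|\sigma\varphi^t(t)|^2+|\sigma\dot\varphi^t|_\infty^2.
\end{equation*}
Inserting the pointwise bounds from \cref{lem:girsanov}---crucially, the $|X_t-Z_t|^{1/2}$ contribution to $|\dot\varphi^t|_\infty$---produces
\begin{equation*}
  \|h\|_{\cH_H}^2\lesssim|X_t-Z_t|+|X_t-Z_t|^2+t^{-2\beta}\big(1+\|\varsigma\|_{\Omega_\beta}^2+\|\bar B\|_{\Omega_\beta}^2\big).
\end{equation*}

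To extract a convergence rate from the $\W^1$-bound of \cref{prop:conditional_stationary_wasserstein}---which only controls $\Expec{|X_t-Z_t|}$, not second moments---we split on the event $A\define\{|X_t-Z_t|\leq\delta\}$ for a threshold $\delta>0$. On $A^c$, Markov's inequality together with \cref{prop:conditional_stationary_wasserstein} gives $\prob(A^c)\lesssim\delta^{-1}t^{-\beta}(1+\|\varsigma\|_{\Omega_\beta})(1+\W^1(\L(Y),\pi))$, which enters the total variation trivially. On $A$, the preceding display produces $\Expec{\|h\|_{\cH_H}^2\mathbf 1_A}\lesssim\delta+t^{-2\beta}(\cdots)$, hence a TV contribution of order $\sqrt\delta+t^{-\beta}$. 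Balancing $\sqrt\delta$ against $\delta^{-1}t^{-\beta}$ by taking $\delta\sim t^{-2\beta/3}$ yields the advertised rate $t^{-\beta/3}$, with the prefactor $(1+\|\varsigma\|_{\Omega_\beta})(1+\W^1(\L(Y),\pi))$ tracked through each step. The main technical obstacle is the $t$-uniform Cameron-Martin estimate: the fractional integral $\K_H^{-1}$ must be handled so that its constant does not degrade with $t$, which the support property of $h$ on the narrow interval $[t,t+1]$ precisely enables.
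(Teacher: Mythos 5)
Your proposal is correct and follows essentially the same route as the paper: the same Girsanov coupling built from the $\varphi^t$ of \cref{lem:girsanov}, the same Pinsker-type bound on the cost of the Cameron--Martin shift supported on $[t,t+1]$ (controlled via $|\varphi^t|_\infty$ and $|\dot\varphi^t|_\infty$, the latter being exactly why $H>\frac12$ requires the Lipschitz drift), and the same splitting on $\{|X_t-Z_t|\leq\delta\}$ with the optimization $\delta\sim t^{-2\beta/3}$. The only cosmetic difference is that the paper phrases the change of measure as a bound on $\prob\big(W'\neq\T(W)\big)$ for an optimal coupling of $W$ and its shifted version, rather than as a density argument for $Z$, but these are equivalent formulations of the same estimate.
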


\begin{proof}
  Let $B$ and $B^\prime$ be $H$-fBms built from underlying two-sided Wiener processes $W$ and $W^\prime$, see \eqref{eq:mandelbrot}. Recall that $\tilde{B}$ is the Riemann-Liouville process associated with $B$. Let $X$ and $Z$ solve 
  \begin{align}\label{eq:proof_tv_equations}
    \begin{split}
    dX_s&=b(X_s)\,ds+d\varsigma_s+\sigma d\tilde{B}_s,\\
    dZ_s&=b(Z_s)\,ds+\sigma\,dB^\prime_s,
    \end{split}
  \end{align}
  where $X_0\overset{d}{=}Y$ and $Z$ is the stationary solution. 
    Fix $t>1$.
    We shall use the bound
  \begin{align}
    \TV{\L\big(\Psi^\vsigma_{t+1}(Y)\big)-\pi}&=\inf_{(\tilde B,B^\prime)}\prob\big(X_{t+1}\neq Z_{t+1}\big)\leq\inf_{(W,W^\prime)}\prob\big(X_{t+1}\neq Z_{t+1}\big) 
    \nonumber\\
    &\leq\inf_{(W,W^\prime)}\prob\big(X_{t+1}\neq Z_{t+1},|X_t-Z_t|\leq\delta\big)+ \inf_{(W,W^\prime)}\prob\big(|X_t-Z_t|>\delta\big).\label{eq:tv_proof}
  \end{align}
 Taking $W$ and $W'$ equal, we are in the setting of \cref{prop:conditional_stationary_wasserstein}. The estimate \eqref{eq:quenched_wasserstein} thus shows that, for any $\delta\in(0,1]$,
\begin{equation}
\label{eq:tv_proof_2}
\inf_{(W,W^\prime)}\prob\big(|X_t-Z_t|>\delta)\nonumber \le \frac{C\big(1+\|\varsigma\|_{\Omega_\beta}\big)\big(1+\W^1(\L(Y),\pi)\big)}{\delta t^\beta}.
\end{equation}
 To bound the first term in \eqref{eq:tv_proof} we exploit the fact that $X_t$ and $Z_t$ are already close so that we can couple them at time $t+1$ with a controlled cost. 
 Let $\varphi^t$ be the function from \cref{lem:girsanov}; in particular $\varphi^t(s)=0$  for $s<t$.
 We observe that $B^\prime=B+\int_0^\cdot\varphi^t(s)\,ds$ on $[0,t+1]$ if and only if $W^\prime=W+\int_{-\infty}^\cdot\psi^t(s)\,ds$ on $(-\infty,t+1]$, where for a suitable constant $\gamma_H\in\R$,
  \begin{equation*}
    \psi^t(s)=\begin{cases}\displaystyle\gamma_H\frac{d}{ds}\int_{t}^s (s-u)^{\frac12-H}\varphi^t(u)\,du, & s\in[t,t+1], \\
    0, & s\in(-\infty,t),
    \end{cases}
  \end{equation*}
  see \cite[Lemma 4.2]{Hairer2005} for details. Let $\T$ be the linear transformation $\T(w)=w+\int_{-\infty}^\cdot\psi^t(s)\,ds$, $w\in\C\big((-\infty,t+1],\R^n\big)$. We return to the equations \eqref{eq:proof_tv_equations}. By the construction, we have $X_{t+1}=Z_{t+1}$ for those realizations for which $W^\prime=\T(W)$.  In particular,
\begin{equation*}
  \inf_{(W,W^\prime)}\prob\big(X_{t+1}\neq Z_{t+1},|X_t-Z_t|\leq \delta\big) \leq \inf_{(W, W')} \prob\big(W' \not = \T(W), |X_t-Z_t|\leq \delta\big).
\end{equation*}
Throughout this proof, all stochastic processes are considered only up to time $t+1$. Let $\hat \prob(\cdot)=\prob\big(\cdot\,\big|\, |X_t-Z_t|\leq \delta\big)$. Let $\mu$ and $\nu$ denote the laws under $\hat\prob$ of $W$ and $\T(W)$, respectively.
We take an optimal coupling $(\hat W, \hat W^\prime)$ achieving the total variation distance $ \TV{\mu-\nu}$. Then the Pinsker-Csizsar inequality, a consequence of the Girsanov theorem, shows that
 \begin{align*}
    \inf_{(W,W^\prime)}\prob\big(X_{t+1}\neq Z_{t+1},|X_t-Z_t|\leq\delta\big)
    &\leq \TV{\mu-\nu}\prob\big(|X_t-Z_t|\leq \delta \big)\\
 &\leq\frac{1}{2} \E_{\hat \prob}  \left[ \int_t^{t+1}|\psi^t(s)|^2\,ds\right]^{\frac12}\prob\big(|X_t-Z_t|\leq\delta\big).   
 \end{align*}
On integration by parts we find
  \begin{equation*}
    \int_t^{t+1}|\psi^t(s)|^2\,ds\lesssim\begin{cases}
      |\varphi^t|_\infty^2,& H<\frac12,\\
      |\varphi^t|^2_\infty+|\dot{\varphi}^t|_\infty^2,& H>\frac12.
    \end{cases}
  \end{equation*}
 In either case, \eqref{eq:phi_norm} yields
  \begin{equation*}
    \int_t^{t+1}|\psi^t(s)|^2\,ds\lesssim \delta+\frac{\|\varsigma\|^2_{\Omega_\beta}+\|\bar{B}\|^2_{\Omega_\beta}}{t^{2\beta}}
  \end{equation*}
  on the event $\{|X_t-Z_t|\leq \delta\}$ and therefore
  \begin{align*}
    &\phantom{\leq}\inf_{(W,W^\prime)}\prob\big(X_{t+1}\neq Z_{t+1},|X_t-Z_t|\leq\delta\big) 
    \lesssim\sqrt{\delta}+\frac{\|\varsigma\|_{\Omega_\beta}+\big\|\|\bar{B}\|_{\Omega_\beta}\big\|_{L^2}}{t^\beta}.
  \end{align*}
Combining this with \eqref{eq:tv_proof} and \cref{lem:smooth_part_decay}, we have proven
  \begin{equation}\label{eq:tv_final}
    \TV{\L\big(\Psi_{t+1}^\varsigma(Y)\big)-\pi}\lesssim \big(1+\|\varsigma\|_{\Omega_\beta}\big)\big(1+\W^1(\L(Y),\pi)\big)\left(\sqrt{\delta}+\frac{1}{\delta t^\beta}\right),
  \end{equation}
  which is minimized for $\delta=t^{-\frac{2\beta}{3}}$.
\end{proof}

By duality and \cref{lem:conditioning}, we obtain the following ergodic theorem as a corollary to \cref{prop:conditional_stationary_wasserstein,prop:conditional_stationary_total}. It provides the fundamental estimates for our proof of the averaging principle for the fractional slow-fast system with feedback dynamics.
\begin{corollary}\label{cor:total_variation_conditional}
  Let $0\leq s\leq t$ and let $X,Y$ be $\F_s$-measurable random variables. Suppose that there are $\kappa,R>0$ such that $b(x,\cdot)\in\S_1(\kappa,R)$ for every $x\in\R^d$. Then, for any $\zeta<1-\hat{H}$ and
  \begin{enumerate}
    \item\label{it:ergodicity_wasserstein} any Lipschitz function $h:\R^d\times\R^n\to\R$, 
    \begin{equation*}
    \Big|\Expec{h\big(X,\bar{\Phi}_{s,t}^X(Y)\big)-\bar{h}(X)\,|\,\F_s}\Big|\lesssim\Lip{h}\Big(1+\big\|\varepsilon^{-\hat{H}}\bar{\hat{B}}_{\varepsilon\cdot}^s\big\|_{\Omega_\zeta}\Big)\big(1+|Y|\big)\left(1\wedge\frac{\varepsilon^{\zeta}}{|t-s|^{\zeta}}\right).
  \end{equation*}
  \item\label{it:ergodicity_tv} any bounded measurable function $h:\R^d\times\R^n\to\R$,
  \begin{equation*}
    \Big|\Expec{h\big(X,\bar{\Phi}_{s,t}^X(Y)\big)-\bar{h}(X)\,|\,\F_s}\Big|\lesssim |h|_\infty\Big(1+\big\|\varepsilon^{-\hat{H}}\bar{\hat{B}}_{\varepsilon\cdot}^s\big\|_{\Omega_\zeta}\Big)\big(1+|Y|\big)\left(1\wedge\frac{\varepsilon^{\frac{\zeta}{3}}}{|t-s|^{\frac{\zeta}{3}}}\right).
  \end{equation*}
  \end{enumerate}
  Here, as usual, $\bar{h}(x)=\int_{\R^n} h(x,y)\,\pi^x(dy)$.
\end{corollary}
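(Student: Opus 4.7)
The plan is to apply the conditioning identity of \cref{lem:conditioning} to reduce the conditional expectation to a deterministic problem involving the frozen fractional SDE, then transfer the Wasserstein bound of \cref{prop:conditional_stationary_wasserstein} into (i) via Kantorovich--Rubinstein duality and the total variation bound of \cref{prop:conditional_stationary_total} into (ii) via $|\int h\,d(\mu-\nu)|\leq |h|_\infty\TV{\mu-\nu}$.

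More precisely, applying \cref{lem:conditioning} with its time parameter $t$ equal to our $s$ and increment equal to $t-s$, and subtracting $\bar h(X)$, reduces the corollary to bounding
\begin{equation*}
  \bigl|\Expec{h\bigl(x,\Psi^x_\tau(y,\varsigma+\sigma\tilde{\hat B})\bigr)}-\bar h(x)\bigr|
\end{equation*}
for deterministic $x\in\R^d$, $y\in\R^n$, $\varsigma\in\Omega_\zeta$, and $\tau=(t-s)/\varepsilon$, under the frozen contractive drift $b(x,\cdot)\in\S_1(\kappa,R)$ and driving Riemann--Liouville process. By \cref{lem:smooth_part_decay}, the substituted $\varsigma=\varepsilon^{-\hat H}\sigma\bar{\hat B}^s_{\varepsilon\cdot}$ is equal in law to $\sigma\bar{\hat B}\in\Omega_{(1-\hat H)-}$ almost surely, so $\|\varsigma\|_{\Omega_\zeta}$ is finite for every $\zeta<1-\hat H$.

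For case (i), Kantorovich--Rubinstein duality bounds the displayed quantity by $\Lip{h}\,\W^1\bigl(\L(\Psi^x_\tau(y,\varsigma+\sigma\tilde{\hat B})),\pi^x\bigr)$. For $\tau\geq 1$, \cref{prop:conditional_stationary_wasserstein} with $p=1$ and $\beta=\zeta$ yields the decay $C(1+\|\varsigma\|_{\Omega_\zeta})(1+\W^1(\delta_y,\pi^x))\tau^{-\zeta}$, and $\W^1(\delta_y,\pi^x)\lesssim 1+|y|$ thanks to the uniform-in-$x$ first moment bound on $\pi^x$ from \cref{prop:existence_invariant_measure}. For $\tau\leq 1$, a short-time Gr\"onwall estimate applied to the frozen equation gives the trivial bound $(1+|y|)(1+\|\varsigma\|_{\Omega_\zeta})$. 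Substituting $\tau=(t-s)/\varepsilon$ and taking the minimum yields the claimed factor $1\wedge\varepsilon^\zeta/|t-s|^\zeta$. Case (ii) is entirely parallel, using \cref{prop:conditional_stationary_total} (whose exponent $\zeta/3$ replaces $\zeta$).

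The principal difficulty lies in establishing the short-time bound with the prescribed dependence on $(y,\|\varsigma\|_{\Omega_\zeta})$ uniformly in $x$: since $\|\varsigma\|_{\Omega_\zeta}$ only controls the large-time tail of $\varsigma$, one must additionally exploit that $\bar{\hat B}$ starts at $0$ with $C^2$ sample paths together with the uniform large-scale contractivity $b(x,\cdot)\in\S_1(\kappa,R)$ and the linear-growth assumption of \cref{cond:feedback} to obtain the required moment bounds on both $\Psi^x_\tau$ and $\pi^x$ that are uniform in $x$.
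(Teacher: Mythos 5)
Your proposal follows exactly the route the paper takes: it deduces the corollary from \cref{lem:conditioning} together with Kantorovich--Rubinstein duality and \cref{prop:conditional_stationary_wasserstein} for part (i), and the total variation pairing with \cref{prop:conditional_stationary_total} for part (ii), with \cref{lem:smooth_part_decay} justifying that the substituted adversary lies in $\Omega_\zeta$. The additional points you flag (the short-time regime giving the $1\wedge(\cdot)$ factor and the uniformity in $x$ of the moment bounds) are exactly the details the paper leaves implicit, and your treatment of them is sound.
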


\subsection{Geometric Ergodicity for SDEs Driven by Fractional Brownian Motion}\label{sec:geometric_ergodicity}

Applying the arguments of \cref{prop:conditional_initial_condition_wasserstein,prop:conditional_stationary_total} to the equation \begin{equation}\label{eq:sde}
  dY_t=b(Y_t)\,dt+\sigma\,dB_t,   
\end{equation}   
we obtain an exponential rate of convergence improving the known results:
\begin{proof}[Proof of \cref{thm:geometric}]
 In \cref{conveergence-equilibrium} we have already proven the  Wasserstein decay \eqref{eq:wasserstein_time_t}:
    \begin{equation*}
        \W^p(\mathcal{L}(Y_t),\pi)\leq Ce^{-ct}\bW^p\big(\mu,\pi\big), \qquad \forall\, t\ge 0
    \end{equation*}
    The total variation rate \eqref{eq:tv_process} then follows by a similar Girsanov coupling as in the proof of \cref{prop:conditional_stationary_total}. In fact, we now consider
  \begin{align*}
    dX_s&=b(X_s)\,ds+\sigma dB_s,\\
    dZ_s&=b(Z_s)\,ds+\sigma\,dB_s +\sigma\varphi^t(s)\,ds,
  \end{align*}
  where $X$ is started in the generalized initial condition $\mu$ and $Z$ is the stationary solution. Let us define
  \begin{equation*}
    \varphi^t(s)\define -\left(\frac{4|X_t-Z_t|^{\frac12}}{|X_s-Z_s|^{\frac12}}+\lambda\right)\sigma^{-1}(X_s-Z_s)\1_{[t,t+1]}(s).
  \end{equation*}
  It can then be checked similarly to \cref{lem:girsanov} that $X_{t+1}=Y_{t+1}$ and
  \begin{equation*}
    |\varphi^t|_\infty\lesssim |X_t-Z_t|,\qquad |\dot{\varphi}^t|_\infty\lesssim |X_t-Z_t|^{\frac12}+|X_t-Z_t|.
  \end{equation*}
  Consequently, the estimate \eqref{eq:tv_final} becomes
  \begin{equation}\label{eq:geometric_time_t}
    \TV{\L(Y_{t+1})-\pi}\lesssim\bW^1\big(\mu,\pi\big)\left(\sqrt{\delta}+\frac{e^{-ct}}{\delta}\right)
  \end{equation}
  and choosing $\delta= e^{-\frac{ct}{2}}$ shows a geometric decay of the total variation distance at a fixed time. To get asserted decay on the path space \eqref{eq:tv_process}, we observe that, by the very same argument as in \cite[Proposition 7.2 (iii)]{Panloup2020}, $\varphi^t$ actually induces a coupling on the path space with a similar cost. Hence, $\TV{\L(Y_{t+\cdot})-\prob_\pi}$ is still bounded by a quantity proportional to the right-hand side of \eqref{eq:geometric_time_t} and \eqref{eq:tv_process} follows at once. 
\end{proof}
\begin{remark}\label{rem:constant_xi}
    The admissible repulsivity strength $\Lambda(\kappa,R,p)$ obtained in the proof of \cref{thm:geometric} is certainly not optimal. We therefore abstain from deriving a quantitative upper bound. Let us however indicate one way to obtain such an estimate: Start from \eqref{eq:quant_lower_bound} in the proof \cref{lem:probabilistic_control} and recall a standard result (see e.g. \cite[Theorem D.4]{Piterbarg2012}) saying that
    \begin{equation*}
      \prob\big(|\tilde{B}|_\infty\leq|\sigma|^{-1}\varepsilon\big)\geq 1-K\big(|\sigma|^{-1}\varepsilon\big)^{\frac{1}{H}}e^{-H(|\sigma|^{-1}\varepsilon)^2}
    \end{equation*}
    for a known numerical constant $K>0$. Finally optimize over all constants involved.
\end{remark}

Let us finally sketch the main differences for a more general Gaussian driving noise $G$ in equation \eqref{eq:sde}. We assume that $G$ has continuous sample paths and a moving average representation similar to \eqref{eq:mandelbrot} with a kernel $\mathfrak{G}:\R\to\Lin{n}$ which vanishes on $(-\infty,0]$, is continuous on $(0,\infty)$, and satisfies
\begin{equation*}
	\int_{-\infty}^t\big|\mathfrak{G}(t-u)-\mathfrak{G}(-u)\big|^2\,du<\infty
\end{equation*}
for each $t>0$. Then
\begin{equation*}
	G_t=\int_{-\infty}^t \mathfrak{G}(t-u)-\mathfrak{G}(-u)\,dW_u,\qquad t\geq 0,
\end{equation*}
has the locally independent increment decomposition 
\begin{equation*}
	\big(\theta_t G\big)_h=\int_{-\infty}^t\mathfrak{G}(t+h-u)-\mathfrak{G}(t-u)\,dW_u+\int_t^{t+h}\mathfrak{G}(t+h-u)\,dW_u\define\bar{G}^t_h+\tilde{G}^t_h
\end{equation*} 
with respect to any compatible filtration. Moreover, we require that
\begin{equation*}
  \int_0^{\delta} |\mathfrak{G}(u)|\,du>0
\end{equation*}
for each $\delta>0$. We remark that (up to a time-shift) this is certainly implied by the assumptions of Panloup and Richard, see \cite[Condition $\boldsymbol{(\mathrm{C}_2)}$]{Panloup2020}. As we have seen in \cref{ex:titmarsh}, the Cameron-Martin space of $(\tilde{G}_h)_{h\in[0,1]}$ then densely embeds into $\C_0([0,1],\R^n)$. Thus \cref{rem:Wasserstein-general} applies and we obtain a geometric rate in Wasserstein distance, provided that there is a stationary measure for the equation $dY_t=b(Y_t)\,dt+\sigma\,dG_t$.

\section{The Fractional Averaging Principle}\label{sec:feedback}

Let us remind the reader of the setup of \cref{thm:feedback_fractional}: We consider the slow-fast system
\begin{alignat}{4}
  dX_t^\varepsilon&=f(X_t^\varepsilon,Y_t^\varepsilon)\,dt+g(X_t^\varepsilon,Y_t^\varepsilon)\,dB_t,& \qquad&X_0^\varepsilon=X_0,\label{eq:slow_feedback_sec}\\
  dY_t^\varepsilon&=\frac{1}{\varepsilon}b(X_t^\varepsilon,Y_t^\varepsilon)\,dt+\frac{1}{\varepsilon^{\hat{H}}}\sigma\,d\hat{B}_t,&\qquad &Y_0^\varepsilon=Y_0,\label{eq:fast_feedback_sec}
\end{alignat}
driven by independent $d$-dimensional and $n$-dimensional fractional Brownian motions $B$ and $\hat{B}$ with Hurst parameters $H\in(\frac12,1)$ and $\hat{H}\in(1-H,1)$, respectively. We claim that $X_t^\epsilon$ converges to the solution of the na\"ively averaged equation \eqref{eq:effective_dynamics} as $\varepsilon\to 0$. 

Let us also introduce the following filtrations for later reference:
\begin{equation*}
  \G_t\define\sigma(B_s,s\leq t),\quad\hat{\G}_t\define\sigma(\hat{B}_s,s\leq t),\quad \F_t\define\G_t\vee\hat{\G}_t.
\end{equation*}
To be utterly precise, we actually use the right-continuous completion of $\F$ in order to ensure that hitting time of an open sets by a continuous, adapted process is a stopping time. Observe that $\F$ is compatible with the fBm $\hat{B}$, see \cref{sec:increment}.

We shall first convince ourselves that, under the conditions of \cref{thm:feedback_fractional}, the pathwise solution of the  slow-fast system \eqref{eq:slow_feedback_sec}--\eqref{eq:fast_feedback_sec} exists globally. If the drift vector field $b:\R^d\times\R^n\to\R^n$ in \eqref{eq:fast_feedback_sec} were \emph{globally} Lipschitz continuous, this would be an easy consequence of the standard Young bound \cite{Young1936}:
\begin{equation}\label{eq:young}
  \left|\int_s^t f_r\,d\h_r\right|\lesssim |f|_{\C^\beta}|\h|_{\C^\alpha}|t-s|^{\alpha+\beta}+|f_s||\h|_{\C^\alpha}|t-s|^\alpha,
\end{equation}
provided that $\alpha+\beta>1$. We shall also prove a bound on the moments of the H\"older norm of the solution for any fixed scale $\epsilon$. The main technical estimates in the proof of \cref{thm:feedback_fractional} are delegated to \cref{sec:uniform_bounds}, allowing us to easily conclude the argument in \cref{sec:proof} by appealing to L\^e's stochastic sewing lemma \cite{Le2020}. 

\subsection{A Solution Theory for the Slow-Fast System}\label{sec:solution_theory}

We shall begin with a deterministic (pathwise) existence and uniqueness result. Fix a terminal time $T>0$ and let $\h=(\h^1,\h^2)\in\C^{\alpha_1}([0,T],\R^{m})\times\C^{\alpha_2}([0,T],\R^{n})$, where $\alpha_1>\frac12$ and $\alpha_2>1-\alpha_1$. We consider the Young differential equation
\begin{equation}\label{eq:ode}
  z(t)=\begin{pmatrix}z^1(t)\\z^2(t)\end{pmatrix}=z_0+\int_0^t \begin{pmatrix}F_1\big(z(s)\big)\\F_2\big(z(s)\big)\end{pmatrix}\,ds+\int_0^t G\big(z(s)\big)\,d\h_s.
\end{equation}
We impose the following assumptions on the data:

\begin{condition}\label{cond:data_ode}
\leavevmode
\begin{enumerate}
  \item\label{it:cond_ode_1} $F_1:\R^{d}\times\R^{n}\to\R^{d}$ is bounded and globally Lipschitz continuous.
  \item\label{it:cond_ode_2} $F_2:\R^{d}\times\R^{n}\to\R^{n}$ is locally Lipschitz continuous and of linear growth, that is, $|F_2(z,x)|\lesssim 1+|x|+|z|$ for all $x\in\R^n$ and $z\in\R^d$. Moreover, there are $\kappa,D>0$ such that
  \begin{equation*}
  \Braket{F_2(z, x)-F_2(z,y),x-y}\leq D- \kappa|x-y|^2 \qquad \forall\,   x,y\in\R^n, \forall \,z\in \R^d.
\end{equation*}
  \item\label{it:cond_ode_3} $G:\R^{d}\times\R^{n}\to\Lin[m+n]{d+n}$ is of the form $G=\begin{pmatrix}G_1 & 0\\ 0 & G_2\end{pmatrix}$ with $G_1\in\Cb{2}\big(\R^{d}\times\R^{n},\Lin[m]{d}\big)$ and $G_2\in\Lin{d}$ is constant.
\end{enumerate}
\end{condition}

Our proof for the well-posedness of \eqref{eq:ode} and the non-explosiveness is based on the following comparison lemma, versions of which will be of repeated use in the sequel:
\begin{lemma}\label{lem:comparison}
  Let $F_2:\R^{d}\times\R^{n}\to\R^{n}$ satisfy \cref{cond:data_ode} \ref{it:cond_ode_2} and let $\varsigma\in\C_0(\R_+,\R^{n})$, $z\in\C(\R_+,\R^{d})$. 
  \begin{enumerate}
    \item Then for any $x_0\in\R^{n}$, there are unique global solutions to
    \begin{equation*}
        x(t)=x_0+\int_0^t F_2\big(z(s),x(s)\big)\,ds+\varsigma_t,\qquad y(t)=x_0-\int_0^ty(s)\,ds+\varsigma_t.
      \end{equation*}
      Furthermore, on any finite time interval $[0,T]$, the difference of the solutions satisfies the bound
      \begin{equation}\label{eq:solution_difference}
        |x(t)-y(t)|^2\lesssim\int_0^t e^{-\kappa(t-s)}\big(1+|y(s)|+|z(s)|\big)^2\,ds
      \end{equation}
      for all $t\in[0,T]$. In particular,
      \begin{equation}\label{eq:a_priori_sup}
        |x|_\infty\lesssim 1+|x_0|+|\varsigma|_\infty+|z|_\infty.
      \end{equation}

      \item  If, in addition, $\varsigma\in\C^\alpha([0,T],\R^{n})$ for some $\alpha>0$, then $x\in\C^\alpha([0,T],\R^{n})$ and the following bound holds:
      \begin{equation}\label{eq:comparison_apriori} 
        |x|_{\C^{\alpha}}\lesssim 1+|x_0|+|z|_\infty+|\varsigma|_{\C^{\alpha}}.
      \end{equation}
  \end{enumerate}
\end{lemma}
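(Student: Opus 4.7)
My plan is to exploit the fact that the linear auxiliary equation for $y$ serves as a reference whose difference with $x$ can be controlled by the dissipativity of $F_2$ alone, thereby sidestepping the absence of a global Lipschitz condition on $F_2$. First, since the second equation is the inhomogeneous linear ODE $\dot{y}=-y+\dot{\varsigma}$ (in the Young/sewing sense), integration by parts yields the explicit representation $y(t)=e^{-t}x_0+\varsigma_t-\int_0^t e^{-(t-s)}\varsigma_s\,ds$, from which $|y|_\infty\lesssim|x_0|+|\varsigma|_\infty$ is immediate and $y\in\C^\alpha$ whenever $\varsigma\in\C^\alpha$. For the equation for $x$, local Lipschitz continuity of $F_2$ together with the continuity of $\varsigma$ and $z$ gives local existence and uniqueness by a standard Picard iteration up to some explosion time $\tau$; the bulk of the work is to show $\tau>T$ for any finite $T$ through an a priori sup bound.

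To establish the a priori bound, I set $u(t)\define x(t)-y(t)$, so that $u(0)=0$ and $\dot{u}(t)=F_2(z(t),x(t))+y(t)$ on $[0,\tau)$. Computing
\begin{equation*}
\frac{d}{dt}|u|^2=2\Braket{F_2(z,x)-F_2(z,y),x-y}+2\Braket{F_2(z,y)+y,x-y},
\end{equation*}
the first term is $\le 2D-2\kappa|u|^2$ by the dissipativity in \cref{cond:data_ode}\ref{it:cond_ode_2}, while for the second term Young's inequality gives a bound $\kappa|u|^2+\kappa^{-1}|F_2(z,y)+y|^2$. Using the linear growth of $F_2$ and $|y|_\infty\lesssim 1+|x_0|+|\varsigma|_\infty$, the remainder is controlled by $C\bigl(1+|y(t)|+|z(t)|\bigr)^2$. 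Multiplying through by $e^{\kappa t}$, integrating, and dividing back produces exactly \eqref{eq:solution_difference} (absorbing the constant $2D$ into the bracket via $1\le (1+|y|+|z|)^2$). The pointwise bound \eqref{eq:a_priori_sup} then follows from the triangle inequality $|x|\le|y|+|u|$ since $\int_0^t e^{-\kappa(t-s)}\,ds$ is uniformly bounded. Standard blow-up alternative arguments now extend $x$ globally.

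For the Hölder estimate in part (ii), I use the integral representation
\begin{equation*}
x(t)-x(s)=\int_s^t F_2\bigl(z(r),x(r)\bigr)\,dr+\varsigma_t-\varsigma_s,
\end{equation*}
bounding the integrand by the linear-growth estimate combined with the just-obtained sup bound on $x$, and controlling $|t-s|\le T^{1-\alpha}|t-s|^\alpha$. This gives
\begin{equation*}
|x(t)-x(s)|\lesssim_T\bigl(1+|x_0|+|\varsigma|_\infty+|z|_\infty\bigr)|t-s|^\alpha+|\varsigma|_{\C^\alpha}|t-s|^\alpha,
\end{equation*}
and absorbing $|\varsigma|_\infty\lesssim T^\alpha|\varsigma|_{\C^\alpha}$ (since $\varsigma(0)=0$) yields \eqref{eq:comparison_apriori}.

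The only mildly delicate point is the first step: making sure that the dissipative estimate on $\langle F_2(z,x)-F_2(z,y),x-y\rangle$ is uniform in the extrinsic input $z$, which is precisely why the hypothesis in \cref{cond:data_ode}\ref{it:cond_ode_2} is stated with a constant $\kappa$ independent of $z\in\R^d$. Everything else reduces to textbook linear ODE computations and Gr\"onwall.
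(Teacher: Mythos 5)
Your proposal is correct and follows essentially the same route as the paper: local existence/uniqueness for $x$ from local Lipschitz continuity (the paper uses Carath\'eodory after subtracting $\varsigma$, you use Picard on the integral equation, which is equivalent here), then the identical energy estimate on $u=x-y$ splitting $\braket{F_2(z,x)+y,x-y}$ into the dissipative part and $\braket{F_2(z,y)+y,x-y}$, Young's inequality, linear growth, and the integrating factor $e^{\kappa t}$ to get \eqref{eq:solution_difference}, with the H\"older bound read off from the integral representation exactly as in the paper. The only cosmetic difference is your explicit variation-of-constants formula for $y$, which the paper dismisses as standard.
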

\begin{proof}    
  Since $F_2$ is locally Lipschitz, it is clear that uniqueness holds for the equation defining $x$. To see existence, first notice that $\tilde{x}(t)\define x(t)-\varsigma_t$ solves
  \begin{equation*}
    \tilde{x}(t)=x_0+\int_0^t F_2\big(z(s),\tilde{x}(s)+\varsigma_s\big)\,ds.
  \end{equation*}
  Set $\Upsilon(s,x) =F_2\big(z(s), x+\varsigma_s\big)$. This function is jointly continuous in $(s,x)$. Therefore, a local solution exists by the Carath\'eodory theorem. 

  On the other hand, global existence and uniqueness of $y$ is standard. Consequently, the required non-explosion statement follows easily upon establishing \eqref{eq:solution_difference}. To this end, we first observe that, for all $z\in\R^{d}$ and all $x,y\in\R^{n}$, the off-diagonal large scale contraction property and the linear growth of $F$ furnish the following bound:
  \begin{align*}
    \Braket{F_2(z,x)+y,x-y}&\leq D-\kappa|x-y|^2+\<F_2(z,y)+y,x-y\>\\
    &\leq D- \frac{\kappa}{2}|x-y|^2+\frac{C}{\kappa}\big(1+|z|+|y|\big)^2
  \end{align*}
  for some uniform constant $C>0$, where we also used Young's inequality. Consequently, the function $h(t)\define e^{\kappa t}|x(t)-y(t)|^2$ satisfies 
  \begin{equation*}
    h^\prime(t)\lesssim e^{\kappa t}\big(1+|y(t)|+|z(t)|\big)^2
  \end{equation*}
  and \eqref{eq:solution_difference} follows at once. 

  The bound \eqref{eq:comparison_apriori} is an immediate consequence of \eqref{eq:solution_difference} together with the fact that
    \begin{equation*}
    |x|_{\C^{\alpha}}\lesssim|F_2(z,x)|_{\infty}T^{1- \alpha}+|\varsigma|_{\C^{\alpha}}\lesssim \big(1+|z|_\infty+|x|_{\infty}\big) T^{1-\alpha}+|\varsigma|_{\C^{\alpha}}.\qedhere
  \end{equation*}
\end{proof}

The announced existence and uniqueness result for \eqref{eq:ode} is as follows:
\begin{proposition}\label{prop:abstract_ode}
  Under \cref{cond:data_ode}, for any $T>0$ and any $\beta<\alpha_1\wedge\alpha_2$, \eqref{eq:ode} has a unique global solution in $\C^{\beta}([0,T],\R^{d+n})$. 
\end{proposition}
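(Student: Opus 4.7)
The special structure of \cref{cond:data_ode} makes the system essentially decoupled in the following sense: since $G$ is block-diagonal and $G_2$ is constant, the $z^2$-equation carries only additive noise $G_2(\h^2_\cdot-\h^2_0)\in\C^{\alpha_2}$, and thus, once $z^1$ is regarded as an extrinsic driver, the $z^2$-equation falls into the scope of \cref{lem:comparison}. The $z^1$-equation, on the other hand, is a classical Young differential equation with globally bounded, Lipschitz drift $F_1$ and $\Cb{2}$ diffusion $G_1$, to which the standard Young ODE theory applies.

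\paragraph{Local existence and uniqueness.} Fix $\beta\in(1-\alpha_1,\alpha_1\wedge\alpha_2)$, which is non-empty by the hypotheses $\alpha_1>\frac12$ and $\alpha_2>1-\alpha_1$. For $\tau,r>0$, set up the Picard map on the closed ball $\mathcal{B}_r$ of radius $r$ centered at the constant path $z_0$ in $\C^\beta([0,\tau],\R^{d+n})$:
\begin{equation*}
    \Gamma(u)_t\define\begin{pmatrix}z^1_0+\int_0^tF_1(u_s)\,ds+\int_0^tG_1(u_s)\,d\h^1_s\\ z^2_0+\int_0^tF_2(u_s)\,ds+G_2(\h^2_t-\h^2_0)\end{pmatrix}.
\end{equation*}
The Young integral in the first row is well-defined since $\alpha_1+\beta>1$ and $G_1\circ u\in\C^\beta$ (as $G_1\in\Cb{2}$). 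Using \eqref{eq:young}, the global Lipschitz continuity of $F_1$, the $\Cb{2}$-regularity of $G_1$, and the local Lipschitz property of $F_2$ restricted to $\mathcal{B}_r$, a routine computation shows that $\Gamma$ maps $\mathcal{B}_r$ into itself and is a contraction for $r$ large and $\tau$ small (depending only on $|z_0|$, $|\h^1|_{\C^{\alpha_1}}+|\h^2|_{\C^{\alpha_2}}$ and the constants in \cref{cond:data_ode}). Banach's fixed-point theorem then yields a unique local solution.

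\paragraph{Global existence and the main obstacle.} It remains to extend the maximal local solution to the full interval $[0,T]$ by an a priori bound preventing blow-up. Applying \cref{lem:comparison} with extrinsic driver $z=z^1$ and $\varsigma=G_2(\h^2-\h^2_0)$ gives
\begin{equation*}
    |z^2|_\infty+|z^2|_{\C^\beta([0,T])}\lesssim 1+|z_0^2|+|z^1|_\infty+|\h^2|_{\C^{\alpha_2}([0,T])}.
\end{equation*}
On the $z^1$-side, the boundedness of $F_1$ and of $G_1\in\Cb{2}$ together with \eqref{eq:young} yield, on any subinterval $[s,s+\Delta]\subset[0,T]$,
\begin{equation*}
    |z^1|_{\C^\beta([s,s+\Delta])}\lesssim 1+|\h^1|_{\C^{\alpha_1}}+\Delta^{\alpha_1}\big(|z^1|_{\C^\beta([s,s+\Delta])}+|z^2|_{\C^\beta([s,s+\Delta])}\big),
\end{equation*}
where the $\Cb{2}$-regularity of $G_1$ controls the prefactor multiplying the Hölder norms. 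Choosing $\Delta$ small enough to absorb the $|z^1|_{\C^\beta}$-term on the left and substituting the $z^2$-bound closes the estimate over $[s,s+\Delta]$ in terms of $|z^1|_\infty$ alone, which in turn is controlled by $|z^1_s|+T|F_1|_\infty+|G_1|_\infty|\h^1|_{\C^{\alpha_1}}T^{\alpha_1}+\cdots$ independently of the Hölder norm of $z^2$. Iterating over a finite number of subintervals covering $[0,T]$ prevents blow-up, so the local solution extends to $[0,T]$. The principal delicacy of the argument is avoiding circularity between the $z^1$- and $z^2$-bounds, and this is precisely what the structural hypothesis $G=\diag(G_1,G_2)$ with constant $G_2$ is designed to enable: the $z^2$-estimate depends on $z^1$ only through its sup-norm, while $|z^2|_{\C^\beta}$ enters the $z^1$-estimate only with the small factor $\Delta^{\alpha_1}$. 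Global uniqueness is inherited from the local statement.
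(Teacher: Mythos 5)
Your proposal is correct and follows essentially the same route as the paper: a Picard contraction for local well-posedness, \cref{lem:comparison} to control $z^2$ through $|z^1|_\infty$, and a Young-bound estimate on $|z^1|_{\C^\beta}$ over small subintervals that is absorbed and iterated to rule out blow-up. The only (harmless) imprecisions are that the factor multiplying $\Delta^{\alpha_1}(\cdots)$ in your $z^1$-estimate should also carry $|\h^1|_{\C^{\alpha_1}}$ (so $\Delta$ depends on the fixed path $\h^1$, as in the paper), and that $|z^1|_\infty$ on $[s,s+\Delta]$ is really controlled via $|z^1_s|$ inherited from the previous subinterval rather than ``independently of the H\"older norm of $z^2$''---exactly the iteration the paper carries out.
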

\begin{proof}
Owing to \cref{lem:comparison}, it is enough to derive an \emph{a priori} bound on $|z^1|_{\C^{\tilde{\alpha}}}$, $\tilde{\alpha}\in[\beta,\alpha_1)$, to conclude with a standard Picard argument. 

Let $\delta\in(0,1)$. By the Young bound \eqref{eq:young}, we see that
  \begin{align*}
    |z^1|_{\C^{\tilde{\alpha}}}&\lesssim |F_1|_\infty\delta^{1-\tilde{\alpha}}+\big(\big|G_1(z^1,z^2)\big|_{\C^{\tilde{\alpha}\wedge\alpha_2}}+|G_1|_\infty\big)|\h^1|_{\C^{\tilde{\alpha}}}\\
    &\lesssim\big(1+|z^1|_{\C^{\tilde{\alpha}}}+|z^2|_{\C^{\alpha_2}}\big)\big(1+|\h^1|_{\C^{\alpha_1}}\big)\delta^{\alpha_1-\tilde{\alpha}},
  \end{align*}
  where the prefactor is proportional to $M\define|F_1|_\infty+|G|_\infty+\Lip{G}$. We may apply \cref{lem:comparison} to $z^2$ to further find
 \begin{equation*}
    |z^1|_{\C^{\tilde{\alpha}}}\lesssim
    \big(1+|z^1|_{\C^{\tilde{\alpha}}}+|z_0|+|\h^2|_{\C^{\alpha_2}}\big)\big(1+|\h^1|_{\C^{\alpha_1}}\big)\delta^{\alpha_1-\tilde{\alpha}}.
  \end{equation*}
  Here, we take the H\"older norms of $z^1,z^2$ over the interval $[0,\delta]$, whereas we use the full interval $[0,T]$ for $\h^1$ and $\h^2$. For $\delta>0$ small enough, we therefore get
  \begin{equation}\label{eq:iteration}
    |z^1|_{\C^{\tilde{\alpha}}([0,\delta])}\lesssim\big(1+|z_0|+|\h^2|_{\C^{\alpha_2}}\big)\big(1+|\h^1|_{\C^{\alpha_1}}\big).
  \end{equation}
  Combining this with \cref{lem:comparison}, we can find a constant $C>0$ such that
  \begin{equation*}
    |z(\delta)|\leq|z_0|+|z^1|_{\C^{\tilde{\alpha}}([0,\delta])}+|z^2|_{\C^{\alpha_2}([0,\delta])}\leq C\big(1+|z_0|+|\h^2|_{\C^{\alpha_2}}\big)\big(1+|\h^1|_{\C^{\alpha_1}}\big).
  \end{equation*}
  This bound can now be easily iterated and together with \eqref{eq:iteration} we see that there is a (increased) constant $C$ such that
  \begin{equation*}
    |z^1|_{\C^{\tilde{\alpha}}([t,t+\delta])}\lesssim\big(1+|z_t|+|\h^2|_{\C^{\alpha_2}}\big)\big(1+|\h^1|_{\C^{\alpha_1}}\big)\leq C^{\left[\frac{t}{\delta}\right]+1}\big(1+|z_0|+|\h^2|_{\C^{\alpha_2}}\big)\big(1+|\h^1|_{\C^{\alpha_1}}\big)^{\left[\frac{t}{\delta}\right]+2}
  \end{equation*}
  for each $t\in[0,T-\delta]$. Since $|\cdot|_{\C^{\tilde{\alpha}}([0,T])}\leq 2\delta^{\tilde{\alpha}-1}\sup_t |\cdot|_{\C^{\tilde{\alpha}}([t,t+\delta])}$, we get that
  \begin{equation}\label{eq:a_priori}
    |z^1|_{\C^{\tilde{\alpha}}([0,T])}\leq\frac{2C^{\left[\frac{t}{\delta}\right]+1}}{\delta^{1-\tilde\alpha}}\big(1+|z_0|+|\h^2|_{\C^{\alpha_2}}\big)\big(1+|\h^1|_{\C^{\alpha_1}}\big)^{\left[\frac{T}{\delta}\right]+2}.
  \end{equation}

  Local existence and uniqueness of a solution to \eqref{eq:ode} is a classical consequence of the Young bound. Indeed, if we define 
  \begin{equation*}
    A_\delta\define\left\{f\in\C^{\beta}([0,\delta],\R^{d+n}):\,f(0)=z_0\text{ and }|f|_{\C^{\beta}}\leq 1\right\},
  \end{equation*}
  then, for $\delta>0$ small enough, the operator $\mathcal{A}_\delta: A_\delta\to A_\delta$,
  \begin{equation*}
    (\mathcal{A}_\delta z)(t)\define z_0+\int_0^t\begin{pmatrix}F_1\big(z(s)\big)\\F_2\big(z(s)\big)\end{pmatrix}\,ds+\int_0^t G\big(z(s)\big)\,d\h_s,
  \end{equation*}
   is contracting on a complete metric space. Abbreviating $\gamma\define\alpha_1\wedge\alpha_2$, this in turn follows from the well-known bounds
  \begin{align*}
    \left|\int_0^\cdot G\big(z(s)\big)\,d\h_s\right|_{\C^{\beta}}&\lesssim(\Lip{G}+|G|_\infty)(|z|_{\C^{\beta}}+1)|\h|_{\C^{\gamma}}\delta^{\gamma-\beta},\\
    \left|\int_0^\cdot G\big(z(s)\big)-G\big(\bar{z}(s)\big)\,d\h_s\right|_{\C^{\beta}}&\lesssim (\Lip{G}+\Lip{DG})|\h|_{\C^{\gamma}}\delta^{\gamma-\beta}|z-\bar{z}|_{\C^{\beta}},\\
    \left|\int_0^\cdot \begin{pmatrix}F_1\big(z(s)\big)\\F_2\big(z(s)\big)\end{pmatrix}\,ds\right|_{\C^{\beta}}&\leq \big(|F_1|_{\infty;\,B_{\delta^{\beta}}(z_0)}+|F_2|_{\infty;\,B_{\delta^{\beta}}(z_0)}\big)\delta^{1-\beta},\\
    \left|\int_0^\cdot \begin{pmatrix}F_1\big(z(s)\big)-F_1\big(\bar{z}(s)\big)\\F_2\big(z(s)\big)-F_2\big(\bar{z}(s)\big)\end{pmatrix}\,ds\right|_{\C^{\beta}}&\leq \big(\Lip{F_1}+\Lip[B_{\delta^{\beta}}(z_0)]{F_2}\big)\delta|z-\bar{z}|_{\C^{\beta}}
  \end{align*}
  for all $z,\bar{z}\in A_\delta$, where $|\cdot|_{\infty;\,A}$ and $\Lip[A]{\cdot}$ denote the respective norms of the function restricted to the set $A$. Here, we also used that $\max\big(|z-z_0|_\infty,|\bar z-z_0|_\infty\big)\leq\delta^\beta$ since $z,\bar{z}\in A_\delta$ by assumption. Consequently, there is a unique solution to \eqref{eq:ode} in $\C^{\beta}([0,\delta],\R^{d+n})$. Global existence and uniqueness follow from the \emph{a priori} estimates \eqref{eq:comparison_apriori} and \eqref{eq:a_priori} by a standard maximality argument.
\end{proof}

We now bring the randomness back in the picture. To this end, let $\alpha>0$, $p\geq 1$, and $T>0$. We define the space
\begin{equation*}
   \B_{\alpha,p}([0,T],\R^d)\define\left\{X:[0,T]\times\Omega\to\R^d:\,X\text{ is }(\F_t)_{t\in[0,T]}\text{-adapted and }\|X\|_{\B_{\alpha,p}([0,T],\R^d)}<\infty\right\},
\end{equation*} 
where we introduced the semi-norm
\begin{equation*}
  \|X\|_{\B_{\alpha,p}([0,T],\R^d)}\define\sup_{s\neq t\in[0,T]}\frac{\|X_t-X_s\|_{L^p}}{|t-s|^\alpha}.
\end{equation*}
If the terminal time $T$ and the dimension $d$ are clear from the context, we shall also write $\B_{\alpha,p}$ for brevity. By Kolmogorov's continuity theorem, we have the continuous embeddings
\begin{equation}\label{eq:embeddings}
  L^p\big(\Omega,\C^{\alpha+\delta}([0,T],\R^d)\big)\hookrightarrow\B_{\alpha,p}([0,T],\R^d)\hookrightarrow  L^p\big(\Omega,\C^{\alpha-\delta-\frac1p}([0,T],\R^d)\big)
\end{equation}
for any $\delta>0$. Finally, let us also introduce the Besov-type space
\begin{align*}
  W_0^{\alpha,\infty}([0,T],\R^d)&\define \big\{f:[0,T]\to\R^d:\,|f|_{\alpha,\infty}<\infty\big\},\\
  |f|_{\alpha,\infty}&\define\sup_{t\in[0,T]}\left(|f(t)|+\int_0^t\frac{|f(t)-f(s)|}{|t-s|^{\alpha+1}}\,ds\right).
\end{align*}
Nualart and R\u{a}s\c{c}anu proved the following classical result:
\begin{proposition}[{\cite[Theorem 2.1.II]{Rascanu2002}}]\label{prop:nualart}
   Let $f:\R^d\times\R^n\to\R^d$ be bounded Lipschitz continuous and $g:\R^d\times\R^n\to\Lin[m]{d}$ be of class $\C_b^2$. Let $(Y_t)_{t\in[0,T]}$ be a stochastic process with sample paths in $\C^{\gamma}([0,T],\R^n)$ for some $\gamma>1-H$ and let $B$ be an fBm with Hurst parameter $H>\frac12$. Then there is a unique global solution to the equation
   \begin{equation*}
     X_t=X_0+\int_0^t f(X_s,Y_s)\,ds+\int_0^tg(X_s,Y_s)\,dB_s
   \end{equation*}
   and, provided that $X_0\in L^\infty$, we also have that
   \begin{equation*}
     |X|_{\alpha,\infty}\in\bigcap_{p\geq 1} L^p
   \end{equation*}
   for each $\alpha<\frac12\wedge\gamma$.
\end{proposition}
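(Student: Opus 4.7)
This result is essentially \cite[Theorem 2.1.II]{Rascanu2002}, and my plan is to imitate the pathwise Young framework of that paper, modified only to accommodate the exogenous $Y$-dependence of the coefficients. Fix $\omega \in \Omega$ and pick $\alpha' \in (1-\gamma, H)$, which is possible since $\gamma > 1-H$. For a.e.\ $\omega$, the driver $B(\omega)$ belongs to $\C^{\alpha'}([0,T],\R^m)$ and $Y(\omega)$ belongs to $\C^{\gamma}([0,T],\R^n)$. Since $g \in \Cb{2}$, for any candidate $X \in \C^{\alpha'}([0,T],\R^d)$ the composition $s \mapsto g(X_s, Y_s)$ is $(\alpha' \wedge \gamma)$-H\"older, and because $\alpha' + (\alpha' \wedge \gamma) > 1$ the Young integral $\int_0^{\cdot} g(X_s, Y_s)\,dB_s$ is well defined and obeys the bound \eqref{eq:young}.

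Local existence and uniqueness in $\C^{\alpha'}([0,\delta],\R^d)$ follow by a contraction argument on $\{X : X(0) = X_0,\ |X|_{\C^{\alpha'}} \leq 1\}$, using the Young estimate to control the stochastic integral and the boundedness of $f$ to control the drift; this step mirrors the first part of the proof of \cref{prop:abstract_ode}. To pass from local to global I would iterate on subintervals of uniform length, as in the calculation leading to \eqref{eq:a_priori}; the role of the linearly growing $F_2$ there is here played by the exogenous path $Y$, whose H\"older norm is finite pathwise by assumption. Boundedness of $f$, $g$, and $\nabla g$ prevents blow-up in finite time, and concatenation yields a pathwise bound of the schematic form
\begin{equation*}
    |X|_{\C^{\alpha'}([0,T])} \;\leq\; C\,(1 + |X_0|)\,\exp\!\Bigl(C'\bigl(1 + |B|_{\C^{\alpha'}}^{q} + |Y|_{\C^{\gamma}}^{q}\bigr)\Bigr)
\end{equation*}
for some exponent $q$ that can be driven strictly below $2$ by choosing $\alpha'$ sufficiently close to $H$.

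The claimed $L^p$ bound on $|X|_{\alpha,\infty}$ then follows from the continuous embedding $\C^{\alpha + \eta}([0,T],\R^d) \hookrightarrow W_0^{\alpha,\infty}([0,T],\R^d)$ (valid for any $\eta > 0$, upon picking $\alpha' = \alpha + \eta < H$), combined with Fernique's theorem: $|B|_{\C^{\alpha'}}$ has Gaussian tails, so $\exp(C' |B|_{\C^{\alpha'}}^{q}) \in L^p$ for every $p \geq 1$ whenever $q < 2$. Moments of $|Y|_{\C^{\gamma}}$ are absorbed into the hypothesis --- in the application to \cref{thm:feedback_fractional}, $Y$ is itself a fractional SDE solution whose H\"older norm is Gaussian-tailed. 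The step I expect to be the most delicate is keeping the exponent $q$ subcritical for Fernique throughout the iteration; this is precisely the balancing trick used by Nualart and R\u{a}\c{s}canu, and amounts to carefully choosing $\delta$ as a negative power of $|B|_{\C^{\alpha'}}$ in the Picard contraction so that the exponent of $|B|_{\C^{\alpha'}}$ produced by the concatenation stays below $2$.
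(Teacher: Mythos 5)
The paper does not actually prove this proposition -- it is cited verbatim from Nualart--R\u{a}\c{s}canu (with the $Y$-dependence incorporated as a time-inhomogeneity in the coefficients) -- so there is no paper proof to compare against; your proposal is a reconstruction. The skeleton is right (pathwise Young framework, Picard on short intervals, concatenation, Fernique), and you correctly identify where the subtlety lives. Two points deserve attention, one genuine and one a matter of efficiency.

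The genuine gap is the one you flag and then wave away: the conclusion $|X|_{\alpha,\infty}\in\bigcap_p L^p$ cannot follow from the stated hypothesis that $Y$ merely has $\C^\gamma$ sample paths. Any a~priori bound you derive from the Picard iteration must involve $|Y|_{\C^\gamma}$ (it enters the Young estimate through the time-regularity of $s\mapsto g(X_s,Y_s)$), so without some moment or tail control on $|Y|_{\C^\gamma}$ the $L^p$-integrability of $|X|_{\alpha,\infty}$ simply fails. You cannot ``absorb this into the hypothesis'' -- it is not in the hypothesis of the proposition, and your proof is incomplete without adding it explicitly. What makes the statement usable in this paper is that in the only place it is invoked (\cref{cor:norm_bound_solution}), the exogenous process is the fast motion $Y^\varepsilon$, and \cref{lem:comparison}, in particular \eqref{eq:comparison_apriori}, gives $|Y^\varepsilon|_{\C^\alpha}\lesssim 1+|Y_0|+|X^\varepsilon|_\infty+\varepsilon^{-\hat H}|\hat B|_{\C^\alpha}$, so $|Y^\varepsilon|_{\C^\gamma}$ does have all moments.

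The efficiency point: you present the a~priori bound as exponential in $|B|_{\C^{\alpha'}}$ and invoke the ``balancing trick'' of keeping the exponent below $2$. For the hypotheses at hand this is overkill and risks obscuring where the constraint $\alpha<\tfrac12\wedge\gamma$ actually comes from. Because $g$ and $Dg$ are \emph{bounded}, the estimate on $|X|_{\C^{\alpha'}}$ over a subinterval $[t_k,t_{k+1}]$ of length $\Delta$ is independent of the starting value $|X_{t_k}|$: after closing the fixed point, $|X|_{\C^{\alpha'},[t_k,t_{k+1}]}\lesssim(|g|_\infty+|Y|_{\C^\gamma}\Delta^\gamma)|B|_{\C^{\alpha'}}$, uniformly in $k$. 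Concatenating therefore gives a bound that is \emph{polynomial} in $|B|_{\C^{\alpha'}}$ and $|Y|_{\C^\gamma}$, not exponential (the exponential growth in the Nualart--R\u{a}\c{s}canu estimate comes from allowing Lipschitz, possibly unbounded, diffusion coefficients). Once the bound is polynomial, every moment is finite without any delicacy about subcritical exponents, and the Fernique-critical threshold never comes into play. You should also note that the interval in which to pick $\alpha'$ is $\big(\max(1-\gamma,\tfrac12),H\big)$, not $(1-\gamma,H)$: you need $\alpha'>\tfrac12$ (available since $H>\tfrac12$) to ensure $\alpha'+(\alpha'\wedge\gamma)>1$ in all cases, which your stated choice does not guarantee.
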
 

\begin{corollary}\label{cor:norm_bound_solution}
  Fix the scale parameter $\varepsilon>0$ and a terminal time $T>0$. Let $\alpha<H\wedge\hat{H}$. There is a unique pathwise solution $(X^\varepsilon,Y^\varepsilon)\in\C^{\alpha}([0,T],\R^{d+n})$ to the slow-fast system \eqref{eq:slow_feedback_sec}--\eqref{eq:fast_feedback_sec}. Moreover, for any $p\geq 1$ and any $\beta<\frac12\wedge\hat{H}$, we have that
  \begin{equation}\label{eq:b_norm_bound}
   \|X^\varepsilon\|_{\B_{\beta,p}}<\infty.
  \end{equation}
\end{corollary}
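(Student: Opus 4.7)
The plan is to apply Proposition~\ref{prop:abstract_ode} to obtain pathwise existence and uniqueness, and then to invoke Proposition~\ref{prop:nualart} for the moment bound. For existence, I would cast \eqref{eq:slow_feedback_sec}--\eqref{eq:fast_feedback_sec} into the form \eqref{eq:ode} by setting $F_1=f$, $F_2=\varepsilon^{-1}b$, $G=\diag(g,\varepsilon^{-\hat H}\sigma)$, and $\h=(B,\hat B)$. Item~\ref{it:cond_ode_1} of Condition~\ref{cond:data_ode} is immediate from $f\in\Cb{2}$, and \ref{it:cond_ode_3} from $g\in\Cb{2}$ with $\sigma$ constant. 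For \ref{it:cond_ode_2}, the linear growth and the uniformly local Lipschitz continuity come from Condition~\ref{cond:feedback}, while the off-diagonal large scale contraction with constants uniform in the first argument $z$ is provided by $b(x,\cdot)\in\S(\kappa,R,\lambda_0)$ together with Remark~\ref{rem:large_scall_off_diagonal}\ref{it:off_diagonal}: the radius $\bar R$ furnished by Lemma~\ref{lem:bigger_ball} depends only on $\kappa$, $R$ and the uniform Lipschitz constant of $b$ in the second argument, so $\tilde\kappa$ and $D$ can be chosen independently of $z$. Since $\hat H>1-H$, we may pick $\alpha_1\in(\tfrac12,H)$ and $\alpha_2\in(0,\hat H)$ with $\alpha_1+\alpha_2>1$; Proposition~\ref{prop:abstract_ode} then yields a unique pathwise solution $(X^\varepsilon,Y^\varepsilon)\in\C^\alpha([0,T],\R^{d+n})$ for any $\alpha<H\wedge\hat H$.

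For the moment bound \eqref{eq:b_norm_bound}, I would freeze the realization of $\hat B$ and view $Y^\varepsilon$ as a given input to the slow equation \eqref{eq:slow_feedback_sec}. From Lemma~\ref{lem:comparison} applied to the fast equation, one has $|Y^\varepsilon|_{\C^\gamma}\lesssim 1+|Y_0|+|X^\varepsilon|_\infty+\varepsilon^{-\hat H}|\hat B|_{\C^\gamma}$ for any $\gamma<\hat H$. Since $\hat H>1-H$, we may pick $\gamma>1-H$, so Proposition~\ref{prop:nualart} applies and yields $|X^\varepsilon|_{\alpha,\infty}\in\bigcap_{p\geq 1}L^p$ for every $\alpha<\tfrac12\wedge\hat H$. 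Combining the standard embedding $W_0^{\alpha,\infty}\hookrightarrow \C^{\alpha-\delta}$ (valid for any $\delta>0$) with the first inclusion in \eqref{eq:embeddings}, we obtain $\|X^\varepsilon\|_{\B_{\beta,p}}\lesssim\||X^\varepsilon|_{\C^{\alpha-\delta}}\|_{L^p}<\infty$ for every $\beta<\alpha-\delta$, hence for every $\beta<\tfrac12\wedge\hat H$, which is the asserted bound.

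The main delicate point is that the $L^p$-bound from Proposition~\ref{prop:nualart} tacitly relies on $L^p$-control of $|Y^\varepsilon|_{\C^\gamma}$, which via Lemma~\ref{lem:comparison} requires $|X^\varepsilon|_\infty\in L^p$. This cannot be extracted from the crude a priori estimate \eqref{eq:a_priori} of Proposition~\ref{prop:abstract_ode} because of its double-exponential dependence on $|B|_{\C^{\alpha_1}}$; one must instead rely on the sharper self-consistent Young-type fixed-point argument underpinning Proposition~\ref{prop:nualart}, which exploits the boundedness of $f$ and $g$ to close up the estimate using only Gaussian moments of $|B|_{\C^{\alpha_1}}$ and $|\hat B|_{\C^\gamma}$.
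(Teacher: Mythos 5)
Your proof follows the paper's argument almost exactly: existence and uniqueness via \cref{prop:abstract_ode} (with the same reduction to \cref{cond:data_ode}, using \cref{rem:large_scall_off_diagonal}\ref{it:off_diagonal} to get the off-diagonal contraction with constants uniform in $x$), and the moment bound \eqref{eq:b_norm_bound} via \cref{prop:nualart} followed by $W_0^{\alpha,\infty}\hookrightarrow\C^{\alpha-\delta}$ and the embeddings \eqref{eq:embeddings}. This is precisely the proof in the paper, down to the observation that the double-exponential Fernique obstruction prevents \eqref{eq:a_priori} from giving moments, so \cref{prop:nualart} is essential.

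Your final paragraph goes beyond what the paper records: you point out that the $L^p$-bound on $|X^\varepsilon|_{\alpha,\infty}$ from \cref{prop:nualart} depends on the time-H\"older modulus of the map $r\mapsto g(\cdot,Y_r^\varepsilon)$, hence on $|Y^\varepsilon|_{\C^\gamma}$, which by \cref{lem:comparison} is controlled by $|X^\varepsilon|_\infty$ --- a potential circularity. The paper's proof does not comment on this and applies \cref{prop:nualart} directly, implicitly treating the assertion as applying once $Y^\varepsilon$ merely has $\C^\gamma$ sample paths. You are right that this is the delicate point, and you are also right about what resolves it: one must go back into the weighted fixed-point estimate of Nualart--R\u{a}s\c{c}anu and exploit the boundedness of $f$ and $g$ to absorb the $Y^\varepsilon$-dependence together with the $X^\varepsilon$-self-reference, rather than treat \cref{prop:nualart} as a black box. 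You do not carry this out --- you assert it can be done --- but the paper does not carry it out either, so your proposal is at the same level of rigour as the source and, if anything, is more honest about where the weight of the argument actually sits.
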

\begin{proof}
  The first part is an immediate consequence of \cref{prop:abstract_ode}. We stress that the bound \eqref{eq:b_norm_bound} does not follow from our \emph{a priori} estimate \eqref{eq:a_priori} since, by Fernique's theorem, 
  \begin{equation*}
    \Expec{\exp\left(a|B|_{\C^\beta}^2\right)}<\infty
  \end{equation*} 
  if and only if $a>0$ is sufficiently small. Instead, we employ \cref{prop:nualart}: Since $Y^\varepsilon\in\C^{\hat H-}([0,T],\R^n)$ by \cref{lem:comparison}, we see that, for each $\alpha<\frac12\wedge\hat H$, $|X^\varepsilon|_{\alpha,\infty}\in\bigcap_{p\geq 1} L^p$,  It is clear that
\begin{equation*}
  W_0^{\alpha,\infty}([0,T],\R^d)\hookrightarrow \C^{\alpha-\delta}([0,T],\R^d)
\end{equation*}
for any $\delta>0$.  Combine this with the continuous embedding \eqref{eq:embeddings} to conclude \eqref{eq:b_norm_bound}.
\end{proof}

\begin{remark}
  We finally record that \cref{prop:abstract_ode,cor:norm_bound_solution} are the only places in the proof of \cref{thm:feedback_fractional} which require a linear growth of the drift $b$, see \cref{cond:feedback}. In fact, the remainder of the argument would still work, \emph{mutatis mutandis}, under the weaker assumption of a polynomially growing drift, i.e., $|b(x,y)|\lesssim 1+|x|^N+|y|^N$ for some $N\in\N$. It is however unclear whether the solution to \eqref{eq:slow_feedback_sec}--\eqref{eq:fast_feedback_sec} exists globally in this case.
\end{remark}

\subsection{Uniform Bounds on the Slow Motions}\label{sec:uniform_bounds}

Our strategy in proving \cref{thm:feedback_fractional} is as follows: The integrals in \eqref{eq:slow_feedback_sec} are approximated by suitable Riemann sums, on which we then aim to establish uniform bounds. These estimates translate into bounds on the integrals in view of L\^e's stochastic sewing lemma \cite{Le2020}.

Fix a terminal time $T>0$ and let $\mathcal{S}^p$ denote the set of adapted two-parameter processes on the simplex with finite $p^\text{th}$ moments; in symbols:
\begin{equation*}
  \mathcal{S}^p\define\left\{A:[0,T]^2\times\Omega\to\R^d:\,A_{s,t}=0\text{ for }s\geq t\text{ and }A_{s,t}\in L^p(\Omega,\F_t,\prob)\text{ for all }s,t\geq 0\right\}.
\end{equation*}
Given $\eta,\bar{\eta}>0$, we define the spaces
\begin{align*}
    H_\eta^p&\define\left\{A\in\mathcal{S}^p:\,\|A\|_{H_\eta^p}\define\sup_{0\leq s<t\leq T}\frac{\|A_{st}\|_{L^p}}{|s-t|^\eta}<\infty\right\}, \\
    \bar{H}_{\bar{\eta}}^p&\define\left\{A\in\mathcal{S}^p:\,\vertiii{A}_{\bar{H}_{\bar{\eta}}^p}\define\sup_{0\leq s<u<t\leq T}\frac{\|\expec{\delta A_{sut}|\mathcal{F}_s}\|_{L^p}}{|s-t|^{\bar{\eta}}}<\infty\right\},
\end{align*}
where we have set $\delta A_{s,u,t}\define A_{s,t}-A_{s,u}-A_{u,t}$. With this notation we have the following version of the stochastic sewing lemma:
\begin{proposition}[{Stochastic Sewing Lemma \cite[Theorem 2.1 and Propostion 2.7]{Le2020}}]\label{prop:stochastic_sewing}
    Let $p\geq 2$, $\eta>\frac12$, and $\bar{\eta}>1$. Suppose that $A\in H_\eta^p\cap\bar{H}_{\bar{\eta}}^p$. Then, for every $0\leq s\leq t\leq T$, the limit
    \begin{equation*}
        I_{s,t}(A)\define\lim_{|P|\to 0}\sum_{[u,v]\in P}A_{u,v}
    \end{equation*}
    along partitions $P$ of $[s,t]$ with mesh $|P|\define\max_{[u,v]\in P}|v-u|$ tending to zero exists in $L^p$. The limiting process $I(A)$ is additive in the sense that $I_{s,u}(A)+I_{u,t}(A)=I_{s,t}(A)$ for all $0\leq s\leq u\leq t\leq T$. Furthermore, there is a constant $C=C(p,\eta,\bar{\eta})$ such that
    \begin{equation*}
        \|I_{s,t}(A)\|_{L^p}\leq C\left(\vertiii{A}_{\bar{H}_{\bar{\eta}}^p}|t-s|^{\bar{\eta}}+\|A\|_{H_{\eta}^p}|t-s|^\eta\right)
    \end{equation*}
    for all $0\leq s\leq t\leq T$. Moreover, if $\|\Expec{A_{s,t}\,|\,\F_s}\|_{L^p}\lesssim|t-s|^{\bar{\eta}}$, then $I(A)\equiv 0$.
\end{proposition}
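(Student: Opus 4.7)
The plan is to follow L\^e's original sewing argument, which is a stochastic analogue of Gubinelli's deterministic sewing lemma, replacing the deterministic $\eta>1$ threshold by a stochastic $\eta>\frac12$ threshold at the cost of needing a conditional estimate $\bar\eta>1$ on the defect $\delta A$. Fix $s<t$, write $P_n$ for the uniform dyadic partition of $[s,t]$ of mesh $2^{-n}(t-s)$, and let
\[
    J_n \define \sum_{[u,v]\in P_n} A_{u,v}.
\]
The goal is to show $(J_n)_{n\geq 0}$ is Cauchy in $L^p$, and then to identify the same limit for every sequence of partitions whose mesh tends to zero.

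The core computation is the telescoping identity
\[
    J_{n+1} - J_n = -\sum_{[u,v]\in P_n} \delta A_{u,m(u,v),v},
\]
where $m(u,v)$ is the midpoint of $[u,v]$. I would then split each summand as
\[
    \delta A_{u,m,v} = \bigl(\delta A_{u,m,v} - \E[\delta A_{u,m,v}\mid \F_u]\bigr) + \E[\delta A_{u,m,v}\mid \F_u].
\]
For the conditional expectation piece, a pure triangle inequality in $L^p$ gives a bound of order $\vertiii{A}_{\bar H^p_{\bar\eta}} \cdot 2^n \cdot (2^{-n}|t-s|)^{\bar\eta} \lesssim |t-s|^{\bar\eta} \cdot 2^{-n(\bar\eta-1)}$, which is summable in $n$ because $\bar\eta>1$. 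For the martingale piece, I would use the discrete Burkholder--Davis--Gundy inequality (in the form $\|\sum_k M_k\|_{L^p}^2\lesssim_p \sum_k\|M_k\|_{L^p}^2$ valid for any sum of martingale differences and $p\geq 2$): the summands in that piece are $\F_v$-measurable with vanishing $\F_u$-conditional expectation, so after a pairwise regrouping into martingale differences they yield a bound of order $\|A\|_{H^p_\eta} \cdot (2^n)^{1/2} \cdot (2^{-n}|t-s|)^\eta \lesssim |t-s|^\eta \cdot 2^{-n(\eta-1/2)}$, summable because $\eta>\frac12$. Summing over $n$ gives $\|J_{n+1}-J_n\|_{L^p}\lesssim (|t-s|^{\bar\eta}+|t-s|^\eta)\cdot\rho^n$ for some $\rho<1$, hence Cauchy, and at the same time delivers the quantitative bound
\[
    \|I_{s,t}(A)-A_{s,t}\|_{L^p} \lesssim \vertiii{A}_{\bar H^p_{\bar\eta}}|t-s|^{\bar\eta}+\|A\|_{H^p_\eta}|t-s|^\eta,
\]
which after reabsorbing $\|A_{s,t}\|_{L^p}\leq\|A\|_{H^p_\eta}|t-s|^\eta$ yields the asserted inequality.

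To upgrade convergence along dyadic partitions to convergence along arbitrary partitions of vanishing mesh, I would compare a general partition $P$ to its common dyadic refinement by repeatedly inserting single points and bounding each insertion by a single $\delta A$ term via the same martingale/conditional-expectation split; taking $|P|\to 0$ makes the error vanish. Additivity $I_{s,u}+I_{u,t}=I_{s,t}$ is then immediate since any pair of partitions of $[s,u]$ and $[u,t]$ concatenates to a partition of $[s,t]$. For the final claim, if $\|\E[A_{s,t}\mid\F_s]\|_{L^p}\lesssim|t-s|^{\bar\eta}$ with $\bar\eta>1$, then replacing $A$ by $\tilde A_{s,t}\define A_{s,t}-\E[A_{s,t}\mid\F_s]$ leaves $\delta A$ unchanged modulo a term controlled by the hypothesis, and the first-order sum $\sum\E[A_{u,v}\mid\F_u]$ is of order $|P|^{\bar\eta-1}\to 0$; meanwhile the same sewing argument shows $\sum(A_{u,v}-\E[A_{u,v}\mid\F_u])\to 0$ in $L^p$, yielding $I(A)\equiv 0$.

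The main obstacle, and the one genuinely new ingredient compared with the deterministic sewing lemma, is the martingale estimate: the naive triangle inequality would require $\eta>1$, and only by exploiting conditional centering together with BDG (which costs a factor $\sqrt{2^n}$ instead of $2^n$) does the threshold drop to $\eta>\frac12$. All other steps are either routine bookkeeping or direct adaptations of the deterministic argument.
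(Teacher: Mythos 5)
The paper does not prove this proposition itself — it is cited verbatim from L\^e (2020), Theorem~2.1 and Proposition~2.7. Your proposal is a correct reconstruction of L\^e's original argument, and your identification of the ``genuinely new ingredient'' is exactly right: conditional centering plus the discrete Burkholder inequality for $p\geq 2$ (i.e.\ $\|\sum_k M_k\|_{L^p}\lesssim_p(\sum_k\|M_k\|_{L^p}^2)^{1/2}$ for martingale differences, obtained from BDG and Minkowski in $L^{p/2}$) is what lowers the threshold from $\eta>1$ to $\eta>\frac12$. One small inaccuracy: you speak of a ``pairwise regrouping into martingale differences,'' but no regrouping is needed — if $[u_k,v_k]$ enumerates the dyadic intervals in order with $v_k=u_{k+1}$, then $M_k\define\delta A_{u_k,m_k,v_k}-\E[\delta A_{u_k,m_k,v_k}\mid\F_{u_k}]$ is $\F_{v_k}$-measurable with $\E[M_k\mid\F_{v_{k-1}}]=\E[M_k\mid\F_{u_k}]=0$, so $(M_k)_k$ is already a martingale difference sequence for the filtration $(\F_{v_k})_k$. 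The counting then gives $(\sum_k\|M_k\|_{L^p}^2)^{1/2}\lesssim\|A\|_{H^p_\eta}|t-s|^\eta\,2^{-n(\eta-\frac12)}$ for the centered part and $\vertiii{A}_{\bar H^p_{\bar\eta}}|t-s|^{\bar\eta}\,2^{-n(\bar\eta-1)}$ for the conditional expectations, both geometrically summable. The passage from dyadics to general partitions and the final vanishing claim under $\|\E[A_{s,t}\mid\F_s]\|_{L^p}\lesssim|t-s|^{\bar\eta}$ are both correctly sketched (for the latter, note $\sum_{[u,v]\in P}|v-u|^{2\eta}\leq|P|^{2\eta-1}(t-s)\to 0$ precisely because $2\eta>1$, which is where $\eta>\frac12$ enters again).
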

Recall our notation of the fast motion's flow from \eqref{eq:general_flow} and \eqref{eq:general_flow-fixed-x}, respectively. We are ultimately going to apply \cref{prop:stochastic_sewing} with the two-parameter process
\begin{equation}\label{eq:riemann_summands}
  A_{s,t}^\varepsilon\define\int_s^t \left(g\Big(X_s^\varepsilon,\bar{\Phi}_{s,r}^{X_s^\varepsilon}\big(\Phi_{0,s}^{X^\varepsilon}(Y_0)\big)\Big)-\bar{g}\big(X_s^\varepsilon\big)\right)\,dB_r,\quad 0\leq s<t,
\end{equation}
where, thanks to the conditional independence of the integrand and $\sigma(B_r-B_s,r\in[s,t])$ given $\F_s$, the integral is well defined in the (mixed) Wiener-Young sense as detailed in \cref{sec:integral_estimates} below. There we also show that the integral $I(A^\varepsilon)$ constructed in \cref{prop:stochastic_sewing} actually agrees with the Young integral in \eqref{eq:slow_feedback_sec}. It will be clear that our bounds on $A^\varepsilon$ also apply to the Riemann summands for the drift term in \eqref{eq:slow_feedback_sec}, whence we exclude it from our considerations for now.

\subsubsection{A Priori Integral Estimates}\label{sec:integral_estimates}

We will use the notion of \emph{mixed Wiener-Young integrals}: If $F:(s,t]\to\Lin[m]{d}$ is a (sufficiently regular) random function \emph{independent} of $(\tilde{B}^s_r)_{r\in[0,t-s]}$, we can make the definition
\begin{equation}\label{eq:wiener_young}
  \int_s^t F_r\,dB_r\define\int_s^t F_r\dot{\bar{B}}_{r-s}^s\,dr+\int_0^{t-s} F_{r+s}\,d\tilde{B}_r^s,
\end{equation}
where the integral with respect to $\tilde{B}^s$ is well defined in the Wiener sense (after all $\tilde{B}^s$ is a Gaussian process). The H\"older norm of negative exponent $-\kappa$, $\kappa\in[0,1]$, is defined by 
\begin{equation*}
  |F|_{-\kappa}\define\sup_{u,v\in(s,t]}\frac{1}{|v-u|^{1-\kappa}}\left|\int_u^v F_r\,dr\right|.
\end{equation*}
Note that for $\kappa=0$ we of course recover the usual sup-norm $|F|_\infty$.

In terms of this norm, one can then prove the following fundamental estimate on \eqref{eq:wiener_young}:
\begin{lemma}[{\cite[Lemma 3.4]{Hairer2020}}]\label{lem:wiener_integral_bound}
  Let $2\leq p<q$. Fix $\kappa\in\big[0,H-\frac12\big)$ and $0\leq s\leq t\leq T$. Suppose that $F:(s,t]\to\Lin[m]{d}$ is independent of $(\tilde{B}^s_r)_{r\in[0,t-s]}$ and $|F|_{-\kappa}\in L^q$. Then one has the bound
  \begin{equation*}
    \left\|\int_s^t F_r\,dB_r\right\|_{L^p}\lesssim \||F|_{-\kappa}\|_{L^q}|t-s|^{H-\kappa},
  \end{equation*}
  where the prefactor is independent of $F$ and $0\leq s\leq t\leq T$.
\end{lemma}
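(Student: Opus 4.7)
The plan is to use the mixed Wiener--Young decomposition \eqref{eq:wiener_young} to split $\int_s^t F_r\,dB_r = J_1+J_2$, where $J_1\define\int_s^t F_r\dot{\bar B}^s_{r-s}\,dr$ is a pathwise integral against the smooth part and $J_2\define\int_0^{t-s} F_{r+s}\,d\tilde{B}^s_r$ is a Wiener integral against the rough part, which is independent of $F$. The whole estimate hinges on the antiderivative $G_r\define\int_s^r F_u\,du$, for which the hypothesis gives the uniform pathwise bound $|G_v-G_u|\le |F|_{-\kappa}|v-u|^{1-\kappa}$ directly from the definition of the negative H\"older seminorm. Both pieces will produce the target scaling $(t-s)^{H-\kappa}$, with the factor $(t-s)^{1-\kappa}$ coming from $G$ and the complementary $(t-s)^{H-1}$ coming from Gaussian-moment bounds on fBm derivatives.

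For $J_1$, I would perform an integration by parts on $[s+\delta,t]$ and pass to $\delta\to 0$: since $G_s=0$, a short Gaussian computation gives $\|G_{s+\delta}\dot{\bar B}^s_\delta\|_{L^p}\lesssim\delta^{H-\kappa}\to 0$, so that
\begin{equation*}
  J_1 = G_t\dot{\bar B}^s_{t-s} - \int_s^t G_r\ddot{\bar B}^s_{r-s}\,dr \quad\text{in } L^p.
\end{equation*}
An auxiliary lemma analogous to the proof of \cref{lem:smooth_part_decay}, obtained by Wiener-integral variance computations and Gaussian hypercontractivity, supplies $\|\dot{\bar B}^s_h\|_{L^m}\lesssim h^{H-1}$ and $\|\ddot{\bar B}^s_h\|_{L^m}\lesssim h^{H-2}$ for every $m\ge 1$. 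H\"older's inequality with $\frac1p=\frac1q+\frac1m$---which is precisely where the slack $p<q$ is consumed---combined with $\|G_r\|_{L^q}\le\||F|_{-\kappa}\|_{L^q}(r-s)^{1-\kappa}$ reduces the estimate to convergence of the elementary integral $\int_s^t(r-s)^{H-\kappa-1}\,dr$, which is finite since $\kappa<H$.

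For $J_2$, I would use a stochastic Fubini to recast the Wiener integral as
\begin{equation*}
  J_2=\alpha_H\Big(H-\tfrac12\Big)\int_0^{t-s}\phi(u)\,dW^s_u,\qquad \phi(u)\define\int_u^{t-s}F_{r+s}(r-u)^{H-3/2}\,dr,
\end{equation*}
where $W^s$ is the Wiener process driving $\tilde B^s$ through the Mandelbrot--van Ness representation. Since $F$ is independent of $W^s$, conditioning on $F$ makes $J_2$ a centered Gaussian, so the conditional It\^o isometry yields $\mathrm{Var}(J_2\mid F)=\alpha_H^2(H-\tfrac12)^2\int_0^{t-s}|\phi(u)|^2\,du$, and Gaussian hypercontractivity upgrades this to $\|J_2\|_{L^p}\lesssim\big\|(\mathrm{Var}(J_2\mid F))^{1/2}\big\|_{L^p}$. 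To bound $\phi$, I would integrate by parts again with $\tilde G_r\define\int_u^r F_{v+s}\,dv$; the boundary contribution at $r=u$ vanishes \emph{precisely because} the hypothesis $\kappa<H-\frac12$ forces $|\tilde G_r(r-u)^{H-3/2}|\lesssim (r-u)^{H-1/2-\kappa}\to 0$. This yields the pointwise estimate $|\phi(u)|\lesssim |F|_{-\kappa}(t-s-u)^{H-1/2-\kappa}$, and squaring and integrating produces the desired factor $(t-s)^{H-\kappa}$ after invoking $\||F|_{-\kappa}\|_{L^p}\le\||F|_{-\kappa}\|_{L^q}$.

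The main technical obstacle I foresee is justifying the stochastic Fubini in the $J_2$ step: the argument above uses only $|F|_{-\kappa}\in L^q$ and assumes no H\"older regularity of $F$, so the identity is not standard. My plan is to approximate $F$ by smooth time-mollifications $F^{(n)}$, which retain the crucial independence from $\tilde B^s$ and for which the Fubini identity is classical, and then to pass to the limit using the uniform $L^p$ bounds supplied by the argument itself. The control of the two boundary terms in the two successive integrations by parts---both ultimately governed by the strict inequality $\kappa<H-\frac12$---will be the critical point to verify with care.
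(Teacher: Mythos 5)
The paper does not prove this lemma itself but imports it verbatim from \cite[Lemma 3.4]{Hairer2020}, and your argument is a correct reconstruction of essentially the same proof: split via \eqref{eq:wiener_young}, handle the smooth part by integration by parts against $G_r=\int_s^r F_u\,du$ together with the Gaussian moment bounds $\|\dot{\bar B}^s_h\|_{L^m}\lesssim h^{H-1}$, $\|\ddot{\bar B}^s_h\|_{L^m}\lesssim h^{H-2}$ and H\"older with $\frac1p=\frac1q+\frac1m$, and handle the rough part through the kernel representation of the Wiener integral, conditional It\^o isometry, and a second integration by parts whose boundary term is controlled exactly by $\kappa<H-\frac12$. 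All the exponent bookkeeping checks out ($(t-s)^{1-\kappa}\cdot(t-s)^{H-1}$ and $(T'-u)^{H-\frac12-\kappa}$ squared and integrated both yield $(t-s)^{H-\kappa}$), so no further comparison is needed.
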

We also have the the following estimate, which is a simple consequence of \cite[Lemmas 3.10 \& 3.12]{Hairer2020}:
\begin{lemma}\label{lem:stochastic_sewing_fbm}
  Let $p\geq 2$ and $\alpha>1-H$. Let $X$ be an $(\F_t)_{t\in[0,T]}$-adapted stochastic process with $\alpha$-H\"older sample paths. Moreover assume that $X\in\B_{\alpha,p}$. Let $f:\R^d\to\R$ be a bounded Lipschitz continuous function. Then we have the following bound on the Young integral:
  \begin{equation*}
    \left\|\int_s^t f(X_r)\,dB_r\right\|_{\B_{H,p}}\lesssim\big(|f|_\infty+\Lip{f}\big)\big(1+\|X\|_{\B_{\alpha,p}}\big),
  \end{equation*}
  uniformly in $0\leq s<t\leq T$.
\end{lemma}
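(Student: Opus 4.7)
The plan is to apply L\^e's stochastic sewing lemma (\cref{prop:stochastic_sewing}) to the natural two-parameter germ
\begin{equation*}
  A_{s,t}\define f(X_s)(B_t-B_s),\qquad 0\leq s\leq t\leq T,
\end{equation*}
and then identify the resulting additive process with the Young integral $\int_s^t f(X_r)\,dB_r$. The $H^p_\eta$-condition with $\eta=H>\tfrac12$ is immediate: since $f(X_s)\in\F_s$ and $|f(X_s)|\leq|f|_\infty$, Gaussianity of the fBm yields $\|A_{s,t}\|_{L^p}\leq|f|_\infty\|B_t-B_s\|_{L^p}\lesssim|f|_\infty|t-s|^H$.

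For the conditional defect, a direct computation gives $\delta A_{s,u,t}=-(f(X_u)-f(X_s))(B_t-B_u)$. Using the locally independent decomposition $B_t-B_u=\bar B^u_{t-u}+\tilde B^u_{t-u}$ with respect to $\F_u$ (cf. \eqref{eq:increment_decomposition}), in which $\tilde B^u$ is independent of $\F_u$ and mean-zero, the tower property leaves
\begin{equation*}
  \E[\delta A_{s,u,t}\mid\F_s]=-\E\big[(f(X_u)-f(X_s))\,\bar B^u_{t-u}\,\big|\,\F_s\big].
\end{equation*}
Combining the conditional Jensen inequality with H\"older's inequality, the Gaussian bound $\|\bar B^u_{t-u}\|_{L^r}\lesssim|t-u|^H$ valid for every $r\geq 1$, the pointwise interpolation $|f(X_u)-f(X_s)|\leq(2|f|_\infty)^{1-\theta}(\Lip{f}|X_u-X_s|)^{\theta}$ for $\theta\in[0,1]$, and the hypothesis $X\in\B_{\alpha,p}$, I expect to obtain
\begin{equation*}
  \big\|\E[\delta A_{s,u,t}\mid\F_s]\big\|_{L^p}\lesssim(|f|_\infty+\Lip{f})(1+\|X\|_{\B_{\alpha,p}})\,|u-s|^{\alpha'}\,|t-u|^H
\end{equation*}
for every $\alpha'<\alpha$. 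Choosing $\alpha'$ close to $\alpha$ and invoking $\alpha>1-H$, the resulting exponent $\bar\eta\define\alpha'+H$ strictly exceeds $1$, so $A\in\bar H^p_{\bar\eta}$ and \cref{prop:stochastic_sewing} applies.

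The lemma then furnishes an additive $I(A)$ with $\|I_{s,t}(A)\|_{L^p}\lesssim(|f|_\infty+\Lip{f})(1+\|X\|_{\B_{\alpha,p}})|t-s|^H$. Since $I_{s,t}(A)$ is by construction the $L^p$-limit of the Riemann sums $\sum_{[u,v]\in P} f(X_u)(B_v-B_u)$ over partitions of $[s,t]$, and these same sums converge pathwise to the Young integral (recall $\alpha+H>1$, so the classical Young theory applies), the two processes coincide a.s. and the announced $\B_{H,p}$-bound follows. The main obstacle is the conditional estimate sketched above: unlike for a martingale driver, the fBm increment $B_t-B_u$ is not mean-zero given $\F_s$, and its smooth component $\bar B^u$ is already $\F_u$-measurable and therefore cannot simply be integrated out. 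Gaussian hypercontractivity of $\bar B^u_{t-u}$, together with the sup/Lipschitz interpolation on $f$, is what ultimately produces the missing $|u-s|^\alpha$ factor without forcing an upgrade of the hypothesis to $X\in\B_{\alpha,2p}$.
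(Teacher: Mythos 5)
Your proposal is correct and is essentially the argument the paper relies on: the paper gives no proof of its own but cites \cite[Lemmas 3.10 \& 3.12]{Hairer2020}, whose content is exactly the germ $f(X_s)(B_t-B_s)$, the bound on $\delta A_{s,u,t}=(f(X_s)-f(X_u))(B_t-B_u)$ via the decomposition $B_t-B_u=\bar B^u_{t-u}+\tilde B^u_{t-u}$ (the rough part vanishing under conditioning on $\F_u$, the smooth part handled by equivalence of Gaussian moments), and a sewing step identifying the limit of the Riemann sums with the Young integral. Your sup/Lipschitz interpolation with $\theta p_1=p$ is a clean way to extract the factor $|u-s|^{\theta\alpha}$ with $\theta\alpha+H>1$ while staying within the hypothesis $X\in\B_{\alpha,p}$, so no gap remains.
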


It is of course fundamental for our argument that the `integral' furnished by \cref{prop:stochastic_sewing} indeed coincides with the Young integral. This is ensured by the two lemmas below.

\begin{lemma}\label{lem:fast_process_moments}
  Let $X=(X_t)_{t\in[0,T]}$ be a continuous process with values in $\R^d$. Let $b:\R^d\times\R^n\to\R^n$ be of linear growth and satisfy
  \begin{equation*}
   	\Braket{b(z, x)-b(z,y),x-y}\leq D- \kappa|x-y|^2 \qquad\forall\, x,y\in\R^n, \forall \,z\in \R^d.
  \end{equation*} 
  Then, for any $p\geq 2$ and any random variable $Y\in L^p$, the following holds:
  \begin{equation*}
	\sup_{\epsilon\in (0,1]}\sup_{0\leq s\leq t\leq T}\big\|\Phi^X_{s,t}(Y)\big\|_{L^p}\lesssim 1+\|Y\|_{L^p}+ \sup_{0\leq t\leq T} \|X_t\|_{L_p}.
  \end{equation*}
\end{lemma}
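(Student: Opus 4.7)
The idea is to rescale time by $\varepsilon^{-1}$ so as to remove the $\varepsilon$-dependence of the drift coefficient, and then compare the rescaled process with a fractional Ornstein--Uhlenbeck-type process via \cref{lem:comparison}. For fixed $s\in[0,T]$ and $\varepsilon\in(0,1]$, set
\begin{equation*}
  \tilde{\Phi}(u) \define \Phi^X_{s,s+u\varepsilon}(Y),\qquad u\in[0,(T-s)/\varepsilon].
\end{equation*}
A change of variables in \eqref{eq:general_flow} shows that
\begin{equation*}
  \tilde{\Phi}(u)=Y+\int_0^u b\big(\tilde{X}_v,\tilde{\Phi}(v)\big)\,dv+\sigma\tilde{B}_u,
\end{equation*}
where $\tilde{X}_v\define X_{s+v\varepsilon}$ and $\tilde{B}_u\define \varepsilon^{-\hat{H}}(\hat{B}_{s+u\varepsilon}-\hat{B}_s)$. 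A direct covariance computation confirms that $\tilde{B}$ has the law of an fBm with Hurst parameter $\hat{H}$ on the relevant interval.

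Introduce the auxiliary process
\begin{equation*}
  \tilde{y}(u)=Y-\int_0^u\tilde{y}(v)\,dv+\sigma\tilde{B}_u=e^{-u}Y+\sigma\int_0^u e^{-(u-v)}\,d\tilde{B}_v.
\end{equation*}
Applying \cref{lem:comparison} pathwise with $F_2=b$ (whose dissipativity constant is $\kappa$) delivers the bound
\begin{equation*}
  \big|\tilde{\Phi}(u)-\tilde{y}(u)\big|^2\lesssim\int_0^u e^{-\kappa(u-v)}\big(1+|\tilde{y}(v)|+|\tilde{X}_v|\big)^2\,dv.
\end{equation*}
Taking $L^{p/2}$-norms, applying Minkowski's inequality, and using $\int_0^u e^{-\kappa(u-v)}\,dv\le\kappa^{-1}$, one obtains
\begin{equation*}
  \big\|\tilde{\Phi}(u)-\tilde{y}(u)\big\|_{L^p}^2\lesssim 1+\sup_{v\ge 0}\|\tilde{y}(v)\|_{L^p}^2+\sup_{r\in[0,T]}\|X_r\|_{L^p}^2.
\end{equation*}

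It remains to bound $\|\tilde{y}(v)\|_{L^p}$ uniformly in $v$. Since $\tilde{y}(v)-e^{-v}Y$ is centered Gaussian, by the equivalence of Gaussian moments it suffices to show that its variance is uniformly bounded in $v$. A standard computation using the covariance formula of fBm together with the substitution $w=v-r$ shows that $\var\big(\sigma\int_0^v e^{-(v-r)}\,d\tilde{B}_r\big)$ is dominated by the (finite) second moment of the stationary fractional Ornstein--Uhlenbeck process of \cref{ex:disintegration_fou}, independently of $v$. Combining via the triangle inequality yields
\begin{equation*}
  \|\tilde{\Phi}(u)\|_{L^p}\lesssim 1+\|Y\|_{L^p}+\sup_{r\in[0,T]}\|X_r\|_{L^p},
\end{equation*}
uniformly in $\varepsilon\in(0,1]$, $s\in[0,T]$, and $u\ge 0$. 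Since $\Phi^X_{s,t}(Y)=\tilde{\Phi}((t-s)/\varepsilon)$, this proves the claim.

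The key point enabling the argument is that after rescaling the dissipativity constant of $b$ is $\kappa$, independent of $\varepsilon$, so that \cref{lem:comparison} produces a genuinely uniform estimate. The principal obstacle is the uniform moment bound for $\tilde{y}$; this is where the existence of a stationary fractional Ornstein--Uhlenbeck limit is crucial, and for $\hat{H}\le\tfrac12$ one invokes the Mandelbrot--van Ness representation rather than the naive It\^o isometry.
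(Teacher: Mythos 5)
Your proof follows essentially the same route as the paper's: rescale time by $\varepsilon^{-1}$ to normalize the drift, introduce the linear (Ornstein--Uhlenbeck-type) comparison process, invoke \cref{lem:comparison} (the bound \eqref{eq:solution_difference}) to control the difference, and conclude using the uniform-in-time $L^p$-moment bound on the fractional OU process, which follows by self-similarity of fBm and Gaussian moment equivalence. The paper writes this slightly more compactly, defining $Z^\varepsilon$ at the original time scale and observing $(Z^\varepsilon_{\varepsilon h})_{h\geq 0}\overset{d}{=}(Z^1_h)_{h\geq 0}$, but this is exactly your rescaling argument in different notation.
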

\begin{proof}
  It is clear that we can assume $s=0$ without loss of generality. Let $Z^\varepsilon$ solve
  \begin{equation*}
    Z_t^\varepsilon=Y-\frac{1}{\varepsilon}\int_0^t Z^\varepsilon_s\,ds+\frac{1}{\varepsilon^{\hat{H}}}\sigma\hat{B}_t.
  \end{equation*}
By \eqref{eq:solution_difference}, we have
  \begin{equation*}
    \big|\Phi_{0,t}^X(Y)-Z_t^\varepsilon\big|^2\lesssim\int_0^{\frac{t}{\varepsilon}} e^{-\kappa\left(\frac{t}{\varepsilon}-s\right)}\big(1+|X_{\varepsilon s}|+|Z_{\varepsilon s}^\varepsilon|\big)^2\,ds
  \end{equation*}
  for all $t\in[0,T]$. Since $(Z_{\varepsilon h}^\varepsilon)_{h\geq 0}\overset{d}{=}(Z_h^1)_{h\geq 0}$ and $\sup_{t\geq 0}\|Z^1_{t}\|_{L^p}\lesssim 1+\|Y\|_{L^p}$, the lemma follows at once.
\end{proof}

\begin{lemma}\label{lem:sewing_young}
  Let $H>\frac12$ and let $h:\R^d\times\R^n \to\R$ be a Lipschitz continuous function. Let $p>2$ and $\alpha>1-H$. Let $X$ be an $\R^d$-valued, $(\F_t)_{t\in[0,T]}$-adapted process with $\sup_{t\in[0,T]}\|X_t\|_{L^p}<\infty$ and sample paths in $\C^\alpha([0,T],\R^d)$. Let $Y_0\in L^p$. Define
  \begin{equation*}
    A_{s,t}\define\int_s^t h\Big(X_s,\bar{\Phi}_{s,r}^{X_s}\big(\Phi_{0,s}^{X}(Y_0)\big)\Big)\,dB_r,
  \end{equation*}
  where the integration is understood in the mixed Wiener-Young sense, see \eqref{eq:wiener_young}. If $A\in H_\eta^2\cap\bar{H}_{\bar{\eta}}^2$ for some $\eta>\frac12$ and $\bar{\eta}>1$, then, for any $\varepsilon>0$ and any $0\leq s\leq t\le T$,  
  \begin{equation*}
  \lim_{|P|\to 0} \sum_{[u,v]\in P([s,t])} A_{u,v}=\int_{s}^t h\big(X_r,\Phi_{0,r}^{X}(Y_0)\big)\,dB_r,
  \end{equation*} 
  where the right-hand side is the Young integral.
\end{lemma}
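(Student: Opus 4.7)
The plan is to decompose each two-parameter increment as
\begin{equation*}
  A_{u,v}=\underbrace{h\big(X_u,\Phi_{0,u}^{X}(Y_0)\big)(B_v-B_u)}_{\define\, A^{(1)}_{u,v}}+\underbrace{\int_u^v\!\Big[h\big(X_u,\bar{\Phi}_{u,r}^{X_u}(\Phi^{X}_{0,u}(Y_0))\big)-h\big(X_u,\Phi^{X}_{0,u}(Y_0)\big)\Big]\,dB_r}_{\define\, A^{(2)}_{u,v}},
\end{equation*}
so that the integrand of $A^{(2)}$ vanishes at $r=u$. By \cref{prop:stochastic_sewing}, the Riemann sums of $A$ converge to $I_{s,t}(A)$ in $L^2$, so the claim reduces to showing that, separately, $\sum_{[u,v]\in P}A^{(1)}_{u,v}\to\int_s^t h(X_r,\Phi^{X}_{0,r}(Y_0))\,dB_r$ and $\sum_{[u,v]\in P}A^{(2)}_{u,v}\to 0$ in $L^2$, and then invoking uniqueness of the limit.

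For the first convergence, the integrand $r\mapsto h(X_r,\Phi^{X}_{0,r}(Y_0))$ has H\"older regularity $\alpha\wedge(\hat H-\delta)$ for every $\delta>0$: $X$ is $\alpha$-H\"older by hypothesis, and $\Phi^{X}_{0,\cdot}(Y_0)$ inherits the regularity of $\hat B$ via \cref{lem:comparison}. Since $\alpha>1-H$ and $\hat H+H>1$, the classical Young estimate yields pathwise convergence of $\sum A^{(1)}_{u,v}$ to the Young integral at rate $|P|^\theta$ with a prefactor bounded in every $L^p$ by Fernique's theorem together with \cref{cor:norm_bound_solution} and \cref{lem:fast_process_moments}, upgrading pathwise convergence to $L^2$.

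The main technical step is the bound $\|A^{(2)}_{u,v}\|_{L^2}\lesssim_\varepsilon|v-u|^{H+\hat H-\delta}$, whose exponent exceeds $1$ thanks to $\hat H>1-H$. The integrand
\begin{equation*}
  F_r\define h\big(X_u,\bar{\Phi}^{X_u}_{u,r}(\Phi^{X}_{0,u}(Y_0))\big)-h\big(X_u,\Phi^{X}_{0,u}(Y_0)\big)
\end{equation*}
is a functional of $\F_u$ together with the ``future'' fragment $\tilde{\hat B}^u$ of the fast noise, and because $B$ is independent of $\hat B$, it is also independent of the rough $B$-increment $(\tilde B^u_h)_{h\in[0,v-u]}$. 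This independence is precisely what \cref{lem:wiener_integral_bound} (with $\kappa=0$) demands, yielding $\|A^{(2)}_{u,v}\|_{L^2}\lesssim\||F|_\infty\|_{L^q}|v-u|^H$ for some $q>2$. Lipschitzity of $h$, the defining SDE for $\bar\Phi^{X_u}_{u,\cdot}$, the linear growth of $b$, and the moment bound of \cref{lem:fast_process_moments} combine to give $\||F|_\infty\|_{L^q}\lesssim_\varepsilon|v-u|^{\hat H-\delta}$ for every $\delta>0$. The triangle inequality then produces
\begin{equation*}
  \Big\|\sum_{[u,v]\in P}A^{(2)}_{u,v}\Big\|_{L^2}\lesssim_\varepsilon (t-s)\,|P|^{H+\hat H-1-\delta}\xrightarrow{|P|\to 0}0
\end{equation*}
for $\delta$ small, completing the argument. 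The main obstacle is the careful tracking of the independence structure needed to apply \cref{lem:wiener_integral_bound}; beyond that, the estimates reduce to the Young and Wiener-integral bounds already established in the paper.
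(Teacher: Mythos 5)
Your proposal follows essentially the same route as the paper: you split $A_{u,v}$ into the ``frozen-at-$u$'' piece $A^{(1)}_{u,v}=h\big(X_u,\Phi^X_{0,u}(Y_0)\big)(B_v-B_u)$, which is precisely the paper's $\tilde A_{u,v}$, plus the remainder $A^{(2)}_{u,v}=A_{u,v}-\tilde A_{u,v}$, and you bound $\|A^{(2)}_{u,v}\|_{L^2}\lesssim_\varepsilon|v-u|^{H+\hat H-\delta}$ by applying \cref{lem:wiener_integral_bound} with $\kappa=0$ after identifying the correct independence structure (the integrand is $\F_u\vee\sigma(\tilde{\hat B}^u)$-measurable and $B\perp\hat B$, so it is independent of $\tilde B^u$); the estimate on $\||F|_\infty\|_{L^q}$ via the Lipschitzity of $h$, \cref{lem:comparison}, and \cref{lem:fast_process_moments} is the same as the paper's.

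The one place you deviate---and where a small gap opens---is the handling of $\sum A^{(1)}_{u,v}$. You claim to upgrade pathwise Young convergence to $L^2$ convergence via an $L^p$-bound on the Young prefactor, invoking Fernique and \cref{cor:norm_bound_solution}. This requires $|X|_{\C^\alpha}\in L^p$, but the hypotheses of the lemma only assert a.s.\ $\alpha$-H\"older sample paths together with $\sup_t\|X_t\|_{L^p}<\infty$; integrability of the H\"older seminorm is not assumed, and \cref{cor:norm_bound_solution} is a statement about the specific slow motion $X^\varepsilon$, not the abstract $X$ of the lemma. The paper avoids this entirely: once $\|A_{s,t}-\tilde A_{s,t}\|_{L^2}\lesssim|t-s|^{\bar\eta}$ with $\bar\eta>1$ is in place, the ``moreover'' clause of \cref{prop:stochastic_sewing} gives $I(A-\tilde A)\equiv 0$, so the Riemann sums of $\tilde A$ converge in $L^2$ to $I(A)$ automatically; one then passes to an a.s.-convergent subsequence and identifies the limit with the pathwise Young integral, which exists because $X$ and $\Phi^X_{0,\cdot}(Y_0)$ are a.s.\ H\"older. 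No moment bound on $|X|_{\C^\alpha}$ is needed. If you replace your direct $L^2$-upgrade with this uniqueness-of-limit argument, the proof closes under exactly the stated hypotheses.
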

\begin{proof}
  We first note that, by \cref{lem:comparison}, the process $\Phi_{0,\cdot}^X(Y_0)$ takes values in $\C^\beta([0,T],\R^d)$ for any $\beta<\hat{H}$. The pathwise Young integral $\int h\big(X_r,\Phi_{0,r}^{X}(Y_0)\big)\,dB_r$ is thus well defined and is given by the limit of the Riemann sums of
  \begin{equation*}
    \tilde{A}_{s,t}\define h\big(X_s,\Phi_{0,s}^X(Y_0)\big)(B_t-B_s)
  \end{equation*}
  along any sequence of partitions. By the last part of \cref{prop:stochastic_sewing}, it now suffices to show that $\|A_{s,t}-\tilde{A}_{s,t}\|_{L^2}\lesssim |t-s|^{\bar{\eta}}$ for some $\bar{\eta}>1$. 

  To see this, we apply \cref{lem:wiener_integral_bound} with $\kappa=0$ to find that, for each $\beta<\hat{H}$,
  \begin{align*}
    &\phantom{\leq}\big\|A_{s,t}-\tilde{A}_{s,t}\big\|_{L^2}=\left\|\int_s^t \Big(h\big(X_s,\bar{\Phi}_{s,r}^{X_s}\big(\Phi_{0,s}^{X}(Y_0)\big)\big)-h\big(X_s,\Phi_{0,s}^X(Y_0)\big)\Big)\,dB_r\right\|_{L^2}\\
    &\leq\Big\|\sup_{s\leq r\leq t}\Big|h\big(X_s,\bar{\Phi}_{s,r}^{X_s}\big(\Phi_{0,s}^{X}(Y_0)\big)\big)-h\big(X_s,\Phi_{0,s}^X(Y_0)\big)\Big|\Big\|_{L^p}|t-s|^H\\
    &\leq\Lip{h}\Big\|\Big|\bar{\Phi}_{s,\cdot}^{X_s}\big(\Phi_{0,s}^{X}(Y_0)\big)\Big|_{\C^{\beta}}\Big\|_{L^p}|t-s|^{H+\beta}.
  \end{align*}
  Since $H+\hat{H}>1$, we can conclude with \cref{lem:comparison,lem:fast_process_moments}.
\end{proof}
Our interest in \cref{lem:sewing_young} is of course in applying it to the slow motion \eqref{eq:slow_feedback_sec} and the Riemann summands $A^\varepsilon_{s,t}$ defined in \eqref{eq:riemann_summands}. We have already seen in \cref{cor:norm_bound_solution} that $X^\varepsilon\in\bigcap_{p\geq 1}\B_{\alpha,p}$ for any $\alpha<\frac12\wedge\hat{H}$. We are therefore left to check that $A^\varepsilon\in H_\eta^p\cap\bar{H}_{\bar{\eta}}^p$ for some $\eta>\frac12$, $\bar{\eta}>1$, and $p\geq 2$. Since these estimates are somewhat technically involved and require longer computations, we devote a subsection to each of the norms $\|\cdot\|_{H_{\eta}^p}$ and $\vertiii{\;\cdot\;}_{\bar{H}_{\bar{\eta}}^p}$, respectively.

\subsubsection{Controlling the Increment $A^\varepsilon_{s,t}$}\label{sec:sewing_1}

Let $h:\R^d\times \R^n\to \R^d$. Recall that write $\bar h(x)=\int h(x,y) \pi^x(dy)$ for its average with respect to the first marginal of the invariant measure of the process $\bar \Phi^x$, see \eqref{eq:general_flow-fixed-x} and \cref{initial-condition}.
The following lemma exploits the convergence rates derived in \cref{sec:convergence}. [The reader should observe that without further notice we assume that the conditions of \cref{thm:feedback_fractional} on the drift $b:\R^d\times\R^n\to\R^n$ are in place.]
\begin{lemma}\label{lem:sewing_helper_1}
  Let $q>1$. Let $h:\R^d\times\R^n\to\R$ be a bounded measurable function and let $X,Y\in L^q$ be $\F_s$-measurable random variables. Then, for any $0\leq s\leq t$, any $p\geq 2$, and any $\zeta<1-\hat{H}$, we have that
  \begin{equation*}
    \left\|\int_s^t \Big( h\big(X,\bar{\Phi}_{s,r}^X(Y)\big)-\bar{h}(X)\Big) \,dr\right\|_{L^p}\lesssim |h|_\infty\Big(1+\|Y\|_{L^q}^{\frac{1}{p}}+\|X\|_{L^q}^{\frac{1}{p}}\Big)\varepsilon^{\frac{\zeta}{3p}}|t-s|^{1-\frac{\zeta}{3p}}.
  \end{equation*}
\end{lemma}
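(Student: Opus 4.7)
Abbreviate $F_r \define h(X,\bar{\Phi}_{s,r}^X(Y))-\bar{h}(X)$ and $Z \define \int_s^t F_r\,dr$, and note the deterministic bound $|Z|\leq 2|h|_\infty(t-s)$. The plan is to first establish an $L^2$ estimate for $Z$ via a variance computation relying on the quenched ergodic theorem \cref{cor:total_variation_conditional}~\ref{it:ergodicity_tv}, and then to interpolate against this pathwise $L^\infty$ bound to recover the announced $L^p$ estimate for $p\geq 2$.

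For the $L^2$ estimate, expand $\E[Z^2] = 2\int_s^t\int_s^r \E[F_u F_r]\,du\,dr$ and treat each off-diagonal term $u<r$ by conditioning on $\F_u$ and using the tower property. The key observation is the autonomous flow identity $\bar{\Phi}_{s,r}^X(Y) = \bar{\Phi}_{u,r}^X(\bar{\Phi}_{s,u}^X(Y))$, which combined with the $\F_u$-measurability of $\bar{\Phi}_{s,u}^X(Y)$ permits me to invoke \cref{cor:total_variation_conditional}~\ref{it:ergodicity_tv} with initial time $u$ and initial condition $\bar{\Phi}_{s,u}^X(Y)$, yielding
\begin{equation*}
  \big|\E[F_r\,|\,\F_u]\big|\lesssim |h|_\infty\Big(1+\big\|\varepsilon^{-\hat{H}}\bar{\hat{B}}_{\varepsilon\cdot}^u\big\|_{\Omega_\zeta}\Big)\big(1+|\bar{\Phi}_{s,u}^X(Y)|\big)\Big(1\wedge \varepsilon^{\zeta/3}/|r-u|^{\zeta/3}\Big).
\end{equation*}
Combining this with the pathwise bound $|F_u|\leq 2|h|_\infty$, H\"older's inequality with exponents $q/(q-1)$ and $q$, the moment bounds of \cref{lem:smooth_part_decay} for the smooth-increment factor (uniform in $\varepsilon$ and $u$), and those of \cref{lem:fast_process_moments} applied to $\bar{\Phi}_{s,u}^X(Y)$ with the frozen slow input $X$ playing the r\^ole of the extrinsic process (so that $\|\bar{\Phi}_{s,u}^X(Y)\|_{L^q}\lesssim 1+\|Y\|_{L^q}+\|X\|_{L^q}$), I arrive at
\begin{equation*}
  |\E[F_u F_r]|\lesssim |h|_\infty^2\big(1+\|Y\|_{L^q}+\|X\|_{L^q}\big)\Big(1\wedge \varepsilon^{\zeta/3}/|r-u|^{\zeta/3}\Big).
\end{equation*}

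A direct computation gives $\int_s^r\big(1\wedge \varepsilon^{\zeta/3}/(r-u)^{\zeta/3}\big)\,du\lesssim \varepsilon^{\zeta/3}(r-s)^{1-\zeta/3}$ uniformly in $\varepsilon>0$ (the contribution from $|r-u|\leq\varepsilon$ is bounded by $\varepsilon\leq\varepsilon^{\zeta/3}(r-s)^{1-\zeta/3}$, while the complementary range is a routine power integral), and integrating once more in $r$ produces
\begin{equation*}
  \|Z\|_{L^2}^2\lesssim |h|_\infty^2\big(1+\|Y\|_{L^q}+\|X\|_{L^q}\big)\varepsilon^{\zeta/3}(t-s)^{2-\zeta/3}.
\end{equation*}
The elementary interpolation inequality $\|Z\|_{L^p}^p\leq \|Z\|_{L^\infty}^{p-2}\|Z\|_{L^2}^2$ together with $\|Z\|_{L^\infty}\leq 2|h|_\infty(t-s)$ then concludes the proof after taking $p$-th roots and using subadditivity of $x\mapsto x^{1/p}$. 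The only non-routine step is the flow-property-plus-conditioning argument that upgrades the quenched ergodic estimate to its $\F_u$-conditional version needed in the variance computation; the remainder is bookkeeping.
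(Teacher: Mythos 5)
Your proof is correct and follows essentially the same approach as the paper's: reduce to $p=2$ by interpolating against the trivial $L^\infty$ bound, expand the variance, condition the factor at the later time on the filtration at the earlier time, and invoke \cref{cor:total_variation_conditional}~\ref{it:ergodicity_tv} together with \cref{lem:smooth_part_decay,lem:fast_process_moments}. The only differences are cosmetic (you spell out the flow-property step, the H\"older split, and the final power-integral bookkeeping that the paper leaves implicit).
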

\begin{proof}
There is no loss of generality in assuming that $\bar{h}\equiv 0$. Notice also that the trivial estimate $\big\|\int_s^t h\big(X,\bar{\Phi}_{s,r}^X(Y)\big)\,dr\big\|_{L^\infty}\leq|h|_\infty|t-s|$. By interpolation, we can therefore restrict ourselves to the case $p=2$. Clearly,
  \begin{equation*}
    \Expec{\left|\int_s^t h\big(X,\bar{\Phi}_{s,r}^X(Y)\big)\,dr\right|^2}=2\int_s^t\int_s^v\Expec{h\big(X,\bar{\Phi}_{s,r}^X(Y)\big)h\big(X,\bar{\Phi}_{s,v}^X(Y)\big)}\,dr\,dv.
  \end{equation*}
  For $r<v$ we condition the integrand on $\F_r$, and use \cref{cor:total_variation_conditional} \ref{it:ergodicity_tv} together with \cref{lem:smooth_part_decay,lem:fast_process_moments} to find
  \begin{align*}
    &\phantom{\lesssim}\Big|\Expec{h\big(X,\bar{\Phi}_{s,r}^X(Y)\big)h\big(X,\bar{\Phi}_{s,v}^X(Y)\big)}\Big|\leq |h|_\infty\,\Expec{\,\Big|\Expec{h\big(X,\bar{\Phi}_{s,v}^X(Y)\big)\,\middle|\,\F_r}\Big|\,}\\
    &\lesssim |h|_\infty^2\,\Expec{\Big(1+\|\varepsilon^{-\hat{H}}\bar{\hat{B}}_{\varepsilon\cdot}^r\|_{\Omega_\zeta}\Big)\Big(1+\big|\bar{\Phi}_{s,r}^X(Y)\big|\Big)}\left(\frac{\varepsilon}{v-r}\right)^{\frac{\zeta}{3}}\\
    &\lesssim |h|_\infty^2 \big(1+\|Y\|_{L^q}+\|X\|_{L^q}\big) \left(\frac{\varepsilon}{v-r}\right)^{\frac{\zeta}{3}}. \qedhere
  \end{align*}
\end{proof}

We can now establish the required estimate on the $H_\eta^p$-norm of $A_{s,t}$.
\begin{proposition}\label{prop:sewing_1}
   Let $h:\R^d\times\R^n\to\R$ be bounded measurable and let $X$ be an $(\F_t)_{t\in[0,T]}$-adapted, continuous process with $\sup_{t\in[0,T]}\|X_t\|_{L^q}<\infty$ for some $q\geq 1$. Define 
  \begin{equation*}
    A_{s,t}\define\int_s^t  \left[h\Big(X_s,\bar{\Phi}_{s,r}^{X_s}\big(\Phi_{0,s}^X(Y_0)\big)\Big)-\bar{h}(X_s)\right]\,dB_r,\quad 0\leq s\leq t\leq T,
  \end{equation*}
  in the mixed Wiener-Young sense, see \eqref{eq:wiener_young}. Let $\kappa\in(0,H-\frac12)$ and set $\eta=H-\kappa$. Then $A\in H_\eta^p$ for each $p\geq 2$, and any $\varepsilon>0$. Moreover, there is a $\gamma>0$ such that 
  \begin{equation*}
    \|A\|_{H_{\eta}^p}\lesssim |h|_\infty\Big(1+\sup_{0\leq t\leq T}\|X_t\|_{L^q}\Big)\varepsilon^{\gamma}.
  \end{equation*}
\end{proposition}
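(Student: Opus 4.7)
The plan is to apply the mixed Wiener--Young moment bound of \cref{lem:wiener_integral_bound} and then to control the resulting negative H\"older norm by combining \cref{lem:sewing_helper_1} with a Garsia--Rodemich--Rumsey (GRR) type maximal estimate. Throughout, write $F_r:=h\big(X_s,\bar\Phi_{s,r}^{X_s}(\Phi_{0,s}^X(Y_0))\big)-\bar h(X_s)$. A preliminary step is to check that $F_r$ is independent of the rough part $\tilde B^s$ of the slow fBm, as required by \cref{lem:wiener_integral_bound}. This holds because $F_r$ is measurable with respect to $\F_s\vee\sigma(\hat B_v-\hat B_s:s\le v\le r)$, a $\sigma$-algebra which is independent of $\tilde B^s$. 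Indeed, $\tilde B^s$ is a functional of the Wiener process underlying the slow driver $B$ on $[s,\infty)$, which is independent of both $\F_s$ (by \eqref{eq:increment_decomposition}) and the independent fBm $\hat B$.

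With independence established, \cref{lem:wiener_integral_bound} yields, for any $p'>p$ and any $\kappa\in(0,H-\tfrac12)$,
$$\|A_{s,t}\|_{L^p}\lesssim\big\||F|_{-\kappa}\big\|_{L^{p'}}|t-s|^{H-\kappa}.$$
Setting $J_u:=\int_s^u F_r\,dr$, the quantity $|F|_{-\kappa}$ coincides with the $(1-\kappa)$-H\"older seminorm of $J$ on $[s,t]$. To estimate its increments, I would invoke the flow property of $\bar\Phi^x$: for $s\le u<v\le t$,
$$J_v-J_u=\int_u^v\big[h(X_s,\bar\Phi_{u,r}^{X_s}(\tilde Y_u))-\bar h(X_s)\big]\,dr,\qquad \tilde Y_u:=\bar\Phi_{s,u}^{X_s}\big(\Phi_{0,s}^X(Y_0)\big),$$
which is $\F_u$-measurable. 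Re-applying \cref{lem:sewing_helper_1} now with base point $u$ (the proof there is purely local in time, so the same argument goes through), together with two applications of \cref{lem:fast_process_moments} (first to $\Phi_{0,s}^X(Y_0)$, then to $\tilde Y_u$), produces, for any $\zeta<1-\hat H$,
$$\|J_v-J_u\|_{L^{p'}}\lesssim |h|_\infty\Big(1+\|Y_0\|_{L^q}+\sup_{t\in[0,T]}\|X_t\|_{L^q}\Big)\varepsilon^{\zeta/(3p')}|v-u|^{1-\zeta/(3p')}.$$

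Finally, a GRR-type maximal estimate converts the two-point $L^{p'}$ bound above into an $L^{p'}$ bound on the $(1-\kappa)$-H\"older seminorm of $J$. The admissibility condition is $\kappa>(3+\zeta)/(3p')$, which I would secure by choosing $p'>\max\big(p,(3+\zeta)/(3\kappa)\big)$. Crucially, GRR preserves the $\varepsilon^{\zeta/(3p')}$ prefactor, leading to
$$\big\||F|_{-\kappa}\big\|_{L^{p'}}\lesssim |h|_\infty\Big(1+\sup_{t\in[0,T]}\|X_t\|_{L^q}\Big)\varepsilon^{\zeta/(3p')},$$
with constants that depend on $T$ and $\|Y_0\|_{L^q}$ (absorbed into $\lesssim$) but not on $\varepsilon$, $s$, or $t$. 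Combining with the Wiener--Young bound delivers the claim with $\gamma:=\zeta/(3p')>0$ and $\eta=H-\kappa$.

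The main technical delicacy is the exponent calibration: $\kappa$ must stay below $H-\tfrac12$ (to invoke \cref{lem:wiener_integral_bound}), while $p'$ must be chosen large enough for GRR, and the positive $\varepsilon$-power has to survive both steps. As $\kappa$ can be taken arbitrarily close to $H-\tfrac12$ and $\zeta$ to $1-\hat H$, a strictly positive (though possibly very small) $\gamma$ is always attainable, which is all that is required for the stochastic sewing argument downstream.
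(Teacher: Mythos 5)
Your proof is correct and follows essentially the same route as the paper: both apply \cref{lem:wiener_integral_bound} to reduce matters to an $L^{\tilde q}$ bound on the negative H\"older norm $|F|_{-\kappa}$, both obtain the two-point increment estimate on $J_u=\int_s^u F_r\,dr$ from \cref{lem:sewing_helper_1} together with \cref{lem:fast_process_moments}, and both pass to the seminorm via Kolmogorov's continuity theorem (your GRR-type maximal estimate is the same device) under the identical admissibility condition $\tilde q>\kappa^{-1}(1+\tfrac{\zeta}{3})$. The only additions relative to the paper's write-up are your explicit verification of the independence hypothesis for \cref{lem:wiener_integral_bound} and your explicit invocation of the flow property when re-centering \cref{lem:sewing_helper_1} at the base point $u$, both of which the paper leaves implicit.
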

\begin{proof}
  Again, we may assume that $\bar{h}\equiv 0$ without any loss of generality. Since $X$ is $(\F_t)_{t\in[0,T]}$-adapted, we can use \cref{lem:wiener_integral_bound} to obtain that, for $\tilde{q}>p$ and $\kappa\in[0,H-\frac12)$,
  \begin{equation*}
    \|A_{s,t}\|_{L^p}\lesssim\left\|\left|h\Big(X_s,\bar{\Phi}_{s,\cdot}^{X_s}\big(\Phi_{0,s}^X(Y_0)\big)\Big)\right|_{-\kappa}\right\|_{L^{\tilde{q}}}|t-s|^{H-\kappa}.
  \end{equation*}
 By \cref{lem:sewing_helper_1,lem:fast_process_moments}, we obtain 
    \begin{equation*}
 \left\|\int_u^v h\Big(X_s,\bar{\Phi}_{s,r}^{X_s}\big(\Phi_{0,s}^X(Y_0)\big)\Big)\,dr\right\|_{L^{\tilde{q}}}\lesssim |h|_\infty\bigg(1+\|Y_0\|^{\frac{1}{\tilde{q}}}_{L^q}+\sup_{0\leq r\leq s}\|X_r\|_{L^q}^{\frac{1}{\tilde{q}}}\bigg)\varepsilon^{\frac{\zeta}{3\tilde{q}}}|v-u|^{1-\frac{\zeta}{3\tilde{q}}}
  \end{equation*}
  for all $u,v\in[s,t]$ and any $\zeta<1-\hat{H}$. Therefore, Kolmogorov's continuity theorem shows that
  \begin{equation*}
    \left\|\left|h\Big(X_s,\bar{\Phi}_{s,\cdot}^{X_s}\big(\Phi_{0,s}^X(Y_0)\big)\Big)\right|_{-\kappa}\right\|_{L^{\tilde{q}}}\lesssim |h|_\infty\bigg(1+\|Y_0\|^{\frac{1}{\tilde{q}}}_{L^q}+\sup_{0\leq t\leq T}\|X_t\|_{L^q}^{\frac{1}{\tilde{q}}}\bigg)\varepsilon^{\frac{\zeta}{3\tilde{q}}},
  \end{equation*}
  provided that we choose $\tilde{q}>\kappa^{-1}\left(1+\frac{\zeta}{3}\right)$, and the final result follows.
  \end{proof}

\subsubsection{Continuity of the Invariant Measures}
Let $\varepsilon>0$ and $s<t$. We write 
\begin{equation*}
   P([s,t];\varepsilon)\define\begin{cases}
   \left\{s+k\varepsilon:\,k=0,\dots,\left[\frac{t-s}{\varepsilon}\right]\right\}\cup\{t\},& t-s-\varepsilon\left[\frac{t-s}{\varepsilon}\right]\geq\frac{\varepsilon}{2},\\
   \left\{s+k\varepsilon:\,k=0,\dots,\left[\frac{t-s}{\varepsilon}\right]-1\right\}\cup\{t-\frac{\varepsilon}{2},t\},& t-s-\varepsilon\left[\frac{t-s}{\varepsilon}\right]<\frac{\varepsilon}{2}.
   \end{cases}
\end{equation*} 
Notice that the distance between two subsequent points $(t_i,t_{i+1})\in P([s,t];\varepsilon)$ satisfies $|t_{i+1}-t_i|\in[\frac{\varepsilon}{2},\varepsilon]$. Recall from \cref{cond:feedback} that the drift $b$ is assumed to be locally Lipschitz uniformly with respect to the second argument. We write
\begin{equation*}
  \Lip[K]{b}\define\sup_{\substack{|x_1|,|x_2|\leq K\\y\in\R^n}}|b(x_1,y)-b(x_2,y)|
\end{equation*}
for $K>0$. In order to keep the statements of the next lemmas concise, we shall freely absorb quantities independent of $0\leq s\leq t$ and $\varepsilon\in(0,1]$ into the prefactor hidden beneath $\lesssim$.

\begin{lemma}\label{lem:continuity}
  Let $p\geq 1$ and suppose that $b(x,\cdot)\in\S_p(\kappa,R)$ for all $x\in\R^d$. Let $X,\bar{X}\in L^\infty$, and $Y\in L^p$ be $\F_s$-measurable random variables. Then
  \begin{equation*}
    \left\|\bar{\Phi}_{s,t}^X(Y)-\bar{\Phi}^{\bar{X}}_{s,t}(Y)\right\|_{L^p}\lesssim\|X-\bar{X}\|_{L^{p}}.
  \end{equation*}
\end{lemma}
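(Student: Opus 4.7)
I will adapt the probabilistic contraction argument of \cref{prop:conditional_initial_condition_wasserstein} to the synchronously coupled pair of processes with \emph{different} drifts. Set $R_t\define\bar{\Phi}^X_{s,t}(Y)$, $S_t\define\bar{\Phi}^{\bar X}_{s,t}(Y)$, and $\Delta_t\define R_t-S_t$, so that $\Delta_s=0$ and, pathwise,
\begin{equation*}
  \dot\Delta_t=\frac{1}{\varepsilon}\bigl[b(X,R_t)-b(X,S_t)\bigr]+\frac{1}{\varepsilon}\bigl[b(X,S_t)-b(\bar X,S_t)\bigr].
\end{equation*}
The second bracket is bounded uniformly by $L_K|X-\bar X|$, where $L_K$ is the locally-uniform Lipschitz constant in the first argument from \cref{cond:feedback} on $\{|\cdot|\leq K\}$ with $K\define\max(\|X\|_{L^\infty},\|\bar X\|_{L^\infty})<\infty$. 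For the first bracket, combining $b(X,\cdot)\in\S_p(\kappa,R)$ with \cref{lem:bigger_ball} gives the two-regime estimate
\begin{equation*}
  \bigl\langle b(X,R_t)-b(X,S_t),\Delta_t\bigr\rangle\leq -\bar\kappa|\Delta_t|^2\cdot\1_{\{|R_t|\geq\bar R\}}+\Lambda|\Delta_t|^2\cdot\1_{\{|R_t|<\bar R\}}
\end{equation*}
for a suitably enlarged $\bar R>R$ and some $\bar\kappa\in(0,\kappa)$. Crucially, only the first process $R_t$ must lie in the large-scale contractive region.

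\textbf{One-step contraction.} Computing $\tfrac{d}{dt}|\Delta_t|^p$ and applying Young's inequality with a small parameter $\gamma>0$ to absorb the source term into the quadratic part yields
\begin{equation*}
  \frac{d}{dt}|\Delta_t|^p\leq\frac{p}{\varepsilon}\bigl[\phi(R_t)+\gamma\bigr]|\Delta_t|^p+\frac{C_\gamma}{\varepsilon}L_K^p|X-\bar X|^p,
\end{equation*}
where $\phi(y)=-\bar\kappa$ on $\{|y|\geq\bar R\}$ and $\phi(y)=\Lambda$ otherwise. Partition $[s,\infty)$ into intervals of length $\varepsilon$ with endpoints $t_n\define s+n\varepsilon$. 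By self-similarity, the rescaled process $(R_{t_n+\varepsilon h})_{h\in[0,1]}$ satisfies a unit-scale SDE with drift $b(X,\cdot)$ driven by a process that, conditional on $\F_{t_n}$, decomposes via \cref{lem:conditioning} into a smooth $\F_{t_n}$-measurable part $\varsigma$ and an independent rough part with the same law as $\sigma\tilde{\hat B}$. Applying \cref{lem:probabilistic_control} (conditionally on $\F_{t_n}$) to this rescaled evolution furnishes an event $\A_n\in\F_{t_{n+1}}$ with $\prob(\A_n\,|\,\F_{t_n})\geq\a_{\eta,\bar R}>0$ on which the occupation time of $R$ in $\{|y|\geq\bar R\}$ over $[t_n,t_{n+1}]$ is at least $\eta\varepsilon$. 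Integrating the differential inequality with the integrating factor $\exp\bigl(\tfrac{p}{\varepsilon}\int_{t_n}^{\cdot}[\phi(R_u)+\gamma]\,du\bigr)$ yields, on $\A_n$,
\begin{equation*}
  |\Delta_{t_{n+1}}|^p\leq e^{-p\Xi}|\Delta_{t_n}|^p+CL_K^p|X-\bar X|^p,\qquad \Xi\define\eta\bar\kappa-(1-\eta)\Lambda-\gamma>0,
\end{equation*}
while on the complement the trivial pathwise bound $|\Delta_{t_{n+1}}|^p\leq e^{p(\Lambda+\gamma)}|\Delta_{t_n}|^p+CL_K^p|X-\bar X|^p$ suffices. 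The key cancellation is that the $1/\varepsilon$ factor in front of the source term is compensated by the length-$\varepsilon$ integration interval.

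\textbf{Iteration.} Taking conditional expectation with respect to $\F_{t_n}$ and invoking the small-$\Lambda$ condition (analogous to \eqref{eq:lambda}) built into the definition of $\S_p(\kappa,R)$, we obtain
\begin{equation*}
  \Expec{|\Delta_{t_{n+1}}|^p\,\big|\,\F_{t_n}}\leq\rho|\Delta_{t_n}|^p+CL_K^p|X-\bar X|^p
\end{equation*}
with $\rho\define(1-\a_{\eta,\bar R})e^{p(\Lambda+\gamma)}+\a_{\eta,\bar R}e^{-p\Xi}<1$. Iterating from $\Delta_s=0$ gives $\|\Delta_{t_n}\|_{L^p}^p\leq\tfrac{CL_K^p}{1-\rho}\|X-\bar X\|_{L^p}^p$ uniformly in $n\geq 0$, and the trivial Gronwall bound extends this to $t\in[t_n,t_{n+1})$, proving the lemma.

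\textbf{Main obstacle.} The chief concern is whether uniformity in $\varepsilon$ survives despite the $1/\varepsilon$ prefactor in the drift. This is resolved by the matched scaling: each step of the recursion advances by time $\varepsilon$, yielding an $O(1)$ effective exponent per step. A secondary subtlety is that \cref{lem:probabilistic_control} is stated for a single trajectory while we are dealing with a synchronous pair—this is non-issue since the enhanced contractivity in \cref{lem:bigger_ball} only requires one of $R_t,S_t$ (namely $R_t$) to lie outside $B_{\bar R}$, so the one-trajectory control suffices.
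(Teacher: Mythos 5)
Your proof is correct, but it takes a genuinely different route from the paper's. The paper telescopes the difference as $\sum_i\bigl(\bar{\Phi}_{t_{i+1},t}^{\bar X}(\bar{\Phi}_{s,t_{i+1}}^X(Y))-\bar{\Phi}_{t_i,t}^{\bar X}(\bar{\Phi}_{s,t_i}^X(Y))\bigr)$ over the $\varepsilon$-scale partition: at each step it switches the drift parameter from $X$ to $\bar X$ over a single interval of length $\asymp\varepsilon$ (paying a factor $|X-\bar X|$ once via the crude Gronwall bound that is your display \eqref{eq:cont_interpolate_1} analogue), then propagates this error forward to time $t$ with the \emph{common} drift $b(\bar X,\cdot)$ by invoking the already-established exponential contraction in \cref{cor:fast_different_initial}, and finally sums the resulting geometric series. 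You instead keep both trajectories running with their respective drifts $b(X,\cdot)$ and $b(\bar X,\cdot)$ throughout, treat the drift mismatch as a bounded source term, and directly redo the probabilistic-control contraction of \cref{prop:conditional_initial_condition_wasserstein} with an additive forcing (as the paper itself does in \cref{prop:conditional_stationary_wasserstein}). Both arguments hinge on the same key observations — partitioning at scale $\varepsilon$ so the $1/\varepsilon$ prefactor is cancelled, and the fact from \cref{lem:bigger_ball} that the enhanced contraction only requires \emph{one} of the two trajectories to be outside $B_{\bar R}$. The paper's route is more economical because it reuses \cref{cor:fast_different_initial} as a black box; yours is more self-contained but essentially reproves a source-term version of that corollary.

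Two minor points to tidy up. First, your differential inequality for $\frac{d}{dt}|\Delta_t|^p$ via Young's inequality is clean only for $p\geq 2$; for $p\in[1,2)$ you should instead establish the pathwise bound for $|\Delta_t|^2$ (as the paper does in \eqref{eq:cont_interpolate_1}) and raise to the power $p/2$ afterwards, or interpolate. Second, when you write $\rho<1$ "by the small-$\Lambda$ condition built into $\S_p(\kappa,R)$", note that \eqref{eq:lambda} was chosen for $\gamma=0$; you need the one-line continuity argument (as in the paper's proof of \cref{prop:conditional_stationary_wasserstein}) that since the $\gamma=0$ inequality is strict, a sufficiently small $\gamma>0$ still gives $\rho<1$.
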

\begin{proof}
  We abbreviate $\Lambda\define\Lambda(\kappa,R,p)$ and observe that, for any $s\leq u\leq r$,
  \begin{align*}
    \frac{d}{dr}\Big|\bar{\Phi}_{u,r}^{X}(Y)-\bar{\Phi}_{u,r}^{\bar{X}}(Y)\Big|^2&=\frac{2}{\varepsilon}\Braket{b\big(X,\bar{\Phi}_{u,r}^{X}(Y)\big)-b\big(\bar{X},\bar{\Phi}_{u,r}^{\bar{X}}(Y)\big),\bar{\Phi}_{u,r}^{X}(Y)-\bar{\Phi}_{u,r}^{\bar{X}}(Y)}\\
    &\leq \frac{2(\Lambda+1)}{\varepsilon}\Big|\bar{\Phi}_{u,r}^{X}(Y)-\bar{\Phi}_{u,r}^{\bar{X}}(Y)\Big|^2+ \frac{\Lip[\|X\|_{L^\infty}\vee\|\bar{X}\|_{L^\infty}]{b}^2}{2\varepsilon}|X-\bar{X}|^2
  \end{align*}
  with probability $1$. It follows that
  \begin{equation}\label{eq:cont_interpolate_1}
    \Big|\bar{\Phi}_{u,r}^X(Y)-\bar{\Phi}^{\bar{X}}_{u,r}(Y)\Big|\lesssim\Lip[\|X\|_{L^\infty}\vee\|\bar{X}\|_{L^\infty}]{b}e^{(\Lambda+1)\frac{|r-u|}{\varepsilon}}|X-\bar{X}|.
  \end{equation}
  This bound is of course only useful on a time interval with length of order $\varepsilon$. We therefore expand
  \begin{equation*}
    \left\|\bar{\Phi}_{s,t}^X(Y)-\bar{\Phi}^{\bar{X}}_{s,t}(Y)\right\|_{L^p}\leq\sum_{(t_i,t_{i+1})\in P([s,t];\varepsilon)}\left\|\bar{\Phi}_{t_{i+1},t}^{\bar{X}}\big(\bar{\Phi}_{s,t_{i+1}}^X(Y)\big)-\bar{\Phi}_{t_{i},t}^{\bar{X}}\big(\bar{\Phi}_{s,t_{i}}^X(Y)\big)\right\|_{L^p}.
  \end{equation*}
  \Cref{cor:fast_different_initial} shows that
  \begin{align*}
    \left\|\bar{\Phi}_{t_{i+1},t}^{\bar{X}}\big(\bar{\Phi}_{s,t_{i+1}}^X(Y)\big)-\bar{\Phi}_{t_{i},t}^{\bar{X}}\big(\bar{\Phi}_{s,t_{i}}^X(Y)\big)\right\|_{L^p}&\lesssim\Big\|\bar{\Phi}_{s,t_{i+1}}^X(Y)-\bar{\Phi}_{t_{i},t_{i+1}}^{\bar{X}}\big(\bar{\Phi}_{s,t_{i}}^X(Y)\big)\Big\|_{L^p} e^{-c\frac{|t-t_{i+1}|}{\varepsilon}}\\
    & \lesssim \|X-\bar{X}\|_{L^p}e^{-c\frac{|t-t_{i+1}|}{\varepsilon}},
  \end{align*}
  where the last inequality uses \eqref{eq:cont_interpolate_1} together with $|t_{i+1}-t_i|\asymp\varepsilon$. Consequently,
  \begin{equation*}
    \left\|\bar{\Phi}_{s,t}^X(Y)-\bar{\Phi}^{\bar{X}}_{s,t}(Y)\right\|_{L^p}\lesssim\|X-\bar{X}\|_{L^p}\sum_{(t_i,t_{i+1})\in P([s,t];\varepsilon)}e^{-c\frac{|t-t_{i+1}|}{\varepsilon}}\lesssim\|X-\bar{X}\|_{L^p}
  \end{equation*}
  uniformly in $0\leq s\leq t$ and $\varepsilon\in(0,1]$. 
\end{proof}

\Cref{lem:continuity} implies the local Lipschitz continuity of the invariant measure $\pi^x$ in the parameter $x\in\R^d$:
\begin{proposition}\label{lem:wasserstein_holder}
  Let $p\geq 1$ and $K>0$. Suppose that $b(x,\cdot)\in\S_p(\kappa,R)$ for all $x\in\R^d$. Then
  \begin{equation*}
    \W^p(\pi^{x_1},\pi^{x_2})\lesssim |x_1-x_2|,
  \end{equation*}
  uniformly for $|x_1|,|x_2|\leq K$.
\end{proposition}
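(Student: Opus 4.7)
The plan is to exploit the synchronous coupling of the flows $\bar\Phi^{x_1}$ and $\bar\Phi^{x_2}$ furnished by driving both equations by the same realization of $\hat B$, and then to pass to the stationary limit using the long-time convergence from \cref{conveergence-equilibrium}. Fix $\varepsilon=1$ throughout, pick some arbitrary $Y\in L^p$ (say $Y\equiv 0$) which is $\F_0$-measurable, and consider for each $t\ge 0$ the random variables $\bar\Phi_{0,t}^{x_1}(Y)$ and $\bar\Phi_{0,t}^{x_2}(Y)$ driven by the same fBm $\hat B$. Since $|x_1|,|x_2|\le K$ are deterministic we trivially have $x_1,x_2\in L^\infty$ with $\|x_i\|_{L^\infty}\le K$, so \cref{lem:continuity} directly yields
\begin{equation*}
   \bigl\|\bar\Phi_{0,t}^{x_1}(Y)-\bar\Phi_{0,t}^{x_2}(Y)\bigr\|_{L^p}\;\lesssim\;|x_1-x_2|,
\end{equation*}
where the suppressed prefactor depends only on $\Lip[K]{b}$, $\kappa$, $R$, and $p$, and hence is uniform in $t\ge 0$.

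Since this synchronous coupling is a valid (though not necessarily optimal) coupling of the one-time marginals, we immediately obtain
\begin{equation*}
   \W^p\bigl(\L(\bar\Phi_{0,t}^{x_1}(Y)),\,\L(\bar\Phi_{0,t}^{x_2}(Y))\bigr)\;\lesssim\;|x_1-x_2|,\qquad\forall\,t\ge 0.
\end{equation*}
On the other hand, starting the Feller process $Z$ of \cref{sec:physical_solution} in the generalized initial condition $\delta_Y\otimes\sW$ (which has finite $\bW^p$-distance to either $\I_{\pi^{x_1}}$ or $\I_{\pi^{x_2}}$ since both carry finite $p$-th moments by \cref{prop:existence_invariant_measure}), \cref{conveergence-equilibrium} gives
\begin{equation*}
   \W^p\bigl(\L(\bar\Phi_{0,t}^{x_i}(Y)),\,\pi^{x_i}\bigr)\;\le\;Ce^{-ct}\bW^p\bigl(\delta_Y\otimes\sW,\,\I_{\pi^{x_i}}\bigr)\;\xrightarrow{t\to\infty}\;0,\qquad i=1,2.
\end{equation*}
The triangle inequality for $\W^p$ then yields
\begin{equation*}
   \W^p(\pi^{x_1},\pi^{x_2})\;\le\;\sum_{i=1,2}\W^p\bigl(\pi^{x_i},\L(\bar\Phi_{0,t}^{x_i}(Y))\bigr)+\W^p\bigl(\L(\bar\Phi_{0,t}^{x_1}(Y)),\L(\bar\Phi_{0,t}^{x_2}(Y))\bigr),
\end{equation*}
and sending $t\to\infty$ produces the desired estimate $\W^p(\pi^{x_1},\pi^{x_2})\lesssim|x_1-x_2|$ with a prefactor independent of $x_1,x_2\in\overline{B_K}$.

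No real obstacle remains: the two ingredients (the uniform-in-$t$ synchronous coupling estimate of \cref{lem:continuity} and the long-time convergence to equilibrium of \cref{conveergence-equilibrium}) have both been established, and the argument is essentially a standard triangle-inequality combination. The only point worth a remark is that the $L^\infty$-hypothesis on $X,\bar X$ in \cref{lem:continuity} is harmless here because the parameters $x_1,x_2$ are deterministic points inside the fixed ball $B_K$; this is exactly what produces the uniform-in-$K$ prefactor.
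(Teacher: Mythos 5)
Your argument is correct and coincides with the paper's own proof: the paper likewise bounds $\W^p(\pi^{x_1},\pi^{x_2})$ by passing to the long-time limit of the synchronous coupling $\bar{\Phi}^{x_1}_{0,1}(0)$, $\bar{\Phi}^{x_2}_{0,1}(0)$, using \cref{thm:geometric} (equivalently \cref{conveergence-equilibrium}) for convergence to equilibrium and \cref{lem:continuity} for the uniform-in-time Lipschitz bound, just as you do. The only cosmetic difference is that the paper formulates the limit as $\varepsilon\to 0$ at fixed time $1$ whereas you fix $\varepsilon=1$ and send $t\to\infty$, which the paper itself notes are equivalent by self-similarity.
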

\begin{proof}
  Owing to \cref{thm:geometric},  it follows that
  \begin{equation*}
     \W^p(\pi^{x_1},\pi^{x_2})\leq\limsup_{\varepsilon\to 0}\big\|\bar{\Phi}^{x_1}_{0,1}(0)-\bar{\Phi}^{x_2}_{0,1}(0)\big\|_{L^p}
   \end{equation*} 
   and we conclude with \cref{lem:continuity}.
\end{proof}

The simple proof of the following corollary is left to the reader.
\begin{corollary}\label{cor:lipschitz_average}
Let $h:\R^d\times \R^n\to \R^d$ be Lipschitz continuous. Then $\bar{h}:\R^d\to\R^d$ is locally Lipschitz.
\end{corollary}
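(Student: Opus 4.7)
The plan is to split the difference $\bar h(x_1) - \bar h(x_2)$ into two contributions capturing separately the variation of the integrand and of the measure:
\begin{equation*}
\bar h(x_1) - \bar h(x_2) = \int_{\R^n}\bigl[h(x_1,y) - h(x_2,y)\bigr]\,\pi^{x_1}(dy) + \int_{\R^n} h(x_2,y)\,\pi^{x_1}(dy) - \int_{\R^n} h(x_2,y)\,\pi^{x_2}(dy).
\end{equation*}
The first summand is bounded in norm by $\Lip{h}\,|x_1-x_2|$ uniformly in $x_1,x_2$, directly from the joint Lipschitz continuity of $h$.

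For the second summand, I would fix $K>0$ and restrict to $|x_1|,|x_2|\le K$. Since $y\mapsto h(x_2,y)$ is Lipschitz with constant at most $\Lip{h}$, the Kantorovich--Rubinstein duality gives
\begin{equation*}
\left|\int_{\R^n} h(x_2,y)\,\bigl(\pi^{x_1} - \pi^{x_2}\bigr)(dy)\right| \le \Lip{h}\,\W^1\bigl(\pi^{x_1},\pi^{x_2}\bigr).
\end{equation*}
Invoking \cref{lem:wasserstein_holder} with $p=1$ (the hypothesis $b(x,\cdot)\in\S_1(\kappa,R)$ for all $x$ is in force throughout this subsection), the Wasserstein distance is in turn controlled by a constant multiple of $|x_1-x_2|$, uniformly on $\{|x_1|,|x_2|\le K\}$.

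Combining the two bounds yields $|\bar h(x_1)-\bar h(x_2)| \lesssim_K |x_1-x_2|$, which is precisely the claimed local Lipschitz continuity. There is no real obstacle: the statement is essentially a repackaging of \cref{lem:wasserstein_holder} together with duality, and the only point requiring any care is to notice that the Lipschitz constant in the $y$-variable of $h(x_2,\cdot)$ does not depend on $x_2$, so that the Kantorovich--Rubinstein bound applies uniformly on compacts.
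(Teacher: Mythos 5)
Your argument is correct and is clearly the intended one: the paper leaves the proof to the reader immediately after \cref{lem:wasserstein_holder}, and the natural route is exactly the decomposition you use, bounding the integrand-variation term by $\Lip{h}|x_1-x_2|$ and the measure-variation term by Kantorovich--Rubinstein duality together with the local Lipschitz estimate $\W^1(\pi^{x_1},\pi^{x_2})\lesssim_K|x_1-x_2|$. The standing assumption $b(x,\cdot)\in\S_p(\kappa,R)$ is indeed in force throughout the subsection (as the authors note just before \cref{lem:sewing_helper_1}), so invoking \cref{lem:wasserstein_holder} is legitimate.
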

\subsubsection{Controlling the Second Order Increment $\delta A^\varepsilon_{s,u,t}$}\label{sec:sewing_2}

Uniform bounds on the second order increments are difficult to obtain even for the Markovian fast dynamic. The first technical estimate of this subsection is the following:
\begin{lemma}\label{lem:sewing_helper_2}
  Let $1\leq p<q$ and suppose that $b(x,\cdot)\in\S_p(\kappa,R)$ for all $x\in\R^d$. Let $h:\R^d\times\R^n\to\R$ be a Lipschitz continuous function with $\bar{h}\equiv 0$. Suppose that $X,\bar{X}\in L^\infty$ and $Y\in L^q$ are $\mathcal{F}_s$-measurable random variables. Then, for any $\rho\in(0,1)$, there is a $\gamma>0$ such that 
    \begin{equation*}
      \Big\|\Expec{h\big(X,\bar{\Phi}^{X}_{s,t}(Y)\big)-h\big(\bar{X},\bar{\Phi}^{\bar{X}}_{s,t}(Y)\big)\,\middle|\,\mathcal{F}_s}\Big\|_{L^p}\lesssim\Lip{h}\big(1+\|Y\|_{L^q}\big)\|X-\bar{X}\|_{L^{p}}^{\rho}\left(1\wedge\frac{\varepsilon^\gamma}{|t-s|^\gamma}\right).
    \end{equation*}
\end{lemma}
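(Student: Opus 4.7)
The idea is to derive two different bounds on the conditional expectation and interpolate between them. Writing
\begin{equation*}
U \define \Expec{h\big(X,\bar{\Phi}^{X}_{s,t}(Y)\big)-h\big(\bar{X},\bar{\Phi}^{\bar{X}}_{s,t}(Y)\big)\,\middle|\,\F_s},
\end{equation*}
the first bound comes from the ergodic theorem. Since $\bar{h}\equiv 0$ by assumption, we split $U$ as a difference of two conditional expectations, each of the form appearing in \cref{cor:total_variation_conditional} \ref{it:ergodicity_wasserstein}. Applied separately at the $\F_s$-measurable points $X$ and $\bar X$, this yields the pointwise bound
\begin{equation*}
|U|\lesssim \Lip{h}\Big(1+\|\varepsilon^{-\hat{H}}\bar{\hat{B}}_{\varepsilon\cdot}^s\|_{\Omega_\zeta}\Big)(1+|Y|)\left(1\wedge\frac{\varepsilon^{\zeta}}{|t-s|^{\zeta}}\right)
\end{equation*}
for any $\zeta<1-\hat H$. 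Taking $L^p$-norms and applying H\"older's inequality with conjugate exponents $q/p$ and $q/(q-p)$ to separate the $Y$-moment from the Gaussian factor (whose $L^r$-norm for every $r$ is uniformly controlled via \cref{lem:smooth_part_decay}) gives
\begin{equation*}
\|U\|_{L^p}\lesssim \Lip{h}(1+\|Y\|_{L^q})\left(1\wedge\frac{\varepsilon^{\zeta}}{|t-s|^{\zeta}}\right).
\end{equation*}

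The second, complementary, bound exploits the Lipschitz continuity of $h$ together with the continuity of the flow in its extrinsic parameter: taking conditional expectations and $L^p$-norms of
\begin{equation*}
\big|h(X,\bar{\Phi}^X_{s,t}(Y))-h(\bar X,\bar{\Phi}^{\bar X}_{s,t}(Y))\big|\leq\Lip{h}\Big(|X-\bar X|+\big|\bar{\Phi}^X_{s,t}(Y)-\bar{\Phi}^{\bar X}_{s,t}(Y)\big|\Big)
\end{equation*}
and invoking \cref{lem:continuity} (which applies since $X,\bar X\in L^\infty$ and $b(x,\cdot)\in\S_p(\kappa,R)$) produces
\begin{equation*}
\|U\|_{L^p}\lesssim \Lip{h}\,\|X-\bar X\|_{L^p}.
\end{equation*}

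To finish, I interpolate the two bounds using the elementary inequality $\min(a,b)\leq a^{1-\rho}b^{\rho}$, which yields
\begin{equation*}
\|U\|_{L^p}\lesssim \Lip{h}(1+\|Y\|_{L^q})^{1-\rho}\left(1\wedge\frac{\varepsilon^{\zeta(1-\rho)}}{|t-s|^{\zeta(1-\rho)}}\right)\|X-\bar X\|_{L^p}^{\rho}.
\end{equation*}
Since $1-\rho\in(0,1)$, $(1+\|Y\|_{L^q})^{1-\rho}\leq 1+\|Y\|_{L^q}$, and the desired estimate follows upon setting $\gamma\define\zeta(1-\rho)>0$. The only genuine subtlety is the H\"older step controlling the moment of $\|\bar{\hat B}^s\|_{\Omega_\zeta}$ uniformly in $s$ and $\varepsilon$; fortunately \cref{lem:smooth_part_decay} provides exactly this, since $\varepsilon^{-\hat H}\bar{\hat B}^s_{\varepsilon\cdot}\overset{d}{=}\bar{\hat B}$, and the remaining manipulations are routine.
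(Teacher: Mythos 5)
Your proof is correct and follows essentially the same route as the paper: both derive the first bound from \cref{cor:total_variation_conditional}~\ref{it:ergodicity_wasserstein} plus H\"older's inequality, the second from Lipschitz continuity of $h$ together with \cref{lem:continuity}, and then interpolate. The only difference is that you spell out the interpolation step (via $\min(a,b)\leq a^{1-\rho}b^{\rho}$ and $(1\wedge x)^{1-\rho}=1\wedge x^{1-\rho}$) which the paper leaves implicit.
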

\begin{proof}
  By \cref{cor:total_variation_conditional} \ref{it:ergodicity_wasserstein} and H\"older's inequality, we certainly have
  \begin{equation}\label{eq:sewing_helper_2_interpolate}
    \Big\|\Expec{h\big(X,\bar{\Phi}^{X}_{s,t}(Y)\big)-h\big(\bar{X},\bar{\Phi}^{\bar{X}}_{s,t}(Y)\big)\,\middle|\,\mathcal{F}_s}\Big\|_{L^p}\lesssim\Lip{h}\big(1+\|Y\|_{L^q}\big)\left(1\wedge\frac{\varepsilon^{\zeta}}{|t-s|^{\zeta}}\right).
  \end{equation}
  On the other hand, by the continuity lemma (\cref{lem:continuity}),
  \begin{align*}
    &\phantom{\lesssim}\Big\|\Expec{h\big(X,\bar{\Phi}^{X}_{s,t}(Y)\big)-h\big(\bar{X},\bar{\Phi}^{\bar{X}}_{s,t}(Y)\big)\,\middle|\,\mathcal{F}_s}\Big\|_{L^p}\\
    &\lesssim\Lip{h}\left(\|X-\bar{X}\|_{L^p}+\Big\|\bar{\Phi}^{X}_{s,t}(Y)-\bar{\Phi}^{\bar{X}}_{s,t}(Y)\Big\|_{L^p}\right)\lesssim\Lip{h}\|X-\bar{X}\|_{L^{p}}.
  \end{align*}
  Finally, we interpolate this bound with \eqref{eq:sewing_helper_2_interpolate}.
\end{proof}

Our remaining task is to derive an estimate on the distance between $\Phi^Z_{s,t}$ and $\bar{\Phi}_{s,t}^{Z_s}$. This is based on the following version of \cref{lem:continuity}:
\begin{lemma}\label{lem:continuity_path}
  Let $p\geq 1$ and suppose that $b(x,\cdot)\in\S_p(\kappa,R)$ for all $x\in\R^d$. Let $Y\in L^p$ be $\F_s$-measurable and $Z$ be a continuous process. Assume that $|Z|_\infty\in L^\infty$. Then
  \begin{equation*}
    \Big\|\bar{\Phi}^{Z_s}_{s,t}(Y)-\Phi_{s,t}^Z(Y)\Big\|_{L^p}\lesssim\Big\|\sup_{r\in[s,t]}|Z_r-Z_s|\Big\|_{L^{p}}
  \end{equation*}
\end{lemma}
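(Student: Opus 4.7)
The plan is to adapt the telescoping strategy from the proof of \cref{lem:continuity}: split $[s,t]$ into subintervals of length comparable to $\varepsilon$, estimate the one-step deviation between $\bar{\Phi}^{Z_s}$ (frozen input) and $\Phi^{Z}$ (time-varying input) via a pathwise Gronwall argument, and sum the contributions using the exponential contractivity of $\bar{\Phi}^{Z_s}$ in its initial condition provided by \cref{cor:fast_different_initial}.

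Concretely, writing $P([s,t];\varepsilon) = \{t_0 = s < t_1 < \cdots < t_N = t\}$ and using the flow property of $\bar{\Phi}^{Z_s}$, we have the telescoping identity
\begin{equation*}
    \bar{\Phi}^{Z_s}_{s,t}(Y) - \Phi^{Z}_{s,t}(Y) = \sum_{i=0}^{N-1} \Big[\bar{\Phi}^{Z_s}_{t_{i+1},t}\big(\bar{\Phi}^{Z_s}_{t_i,t_{i+1}}\big(\Phi^Z_{s,t_i}(Y)\big)\big) - \bar{\Phi}^{Z_s}_{t_{i+1},t}\big(\Phi^Z_{t_i,t_{i+1}}\big(\Phi^Z_{s,t_i}(Y)\big)\big)\Big].
\end{equation*}
\Cref{cor:fast_different_initial} then bounds the $L^p$-norm of the $i^{\text{th}}$ summand by $e^{-c(t - t_{i+1})/\varepsilon}$ times the $L^p$-norm of the one-step discrepancy $\bar{\Phi}^{Z_s}_{t_i,t_{i+1}}(y) - \Phi^Z_{t_i,t_{i+1}}(y)$ evaluated at $y = \Phi^Z_{s,t_i}(Y)$.

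The one-step discrepancy is controlled pathwise. Setting $f(r) \define |\bar{\Phi}^{Z_s}_{t_i,r}(y) - \Phi^Z_{t_i,r}(y)|^2$, which vanishes at $r = t_i$, the noise cancels in the difference of the two ODEs and
\begin{equation*}
  \tfrac{d}{dr}\tfrac12 f(r) = \tfrac{1}{\varepsilon}\Braket{b\big(Z_s,\bar{\Phi}^{Z_s}_{t_i,r}(y)\big) - b\big(Z_r,\Phi^Z_{t_i,r}(y)\big),\,\bar{\Phi}^{Z_s}_{t_i,r}(y) - \Phi^Z_{t_i,r}(y)}.
\end{equation*}
Splitting $b(Z_s,\bar\Phi) - b(Z_r,\Phi) = [b(Z_s,\bar\Phi) - b(Z_r,\bar\Phi)] + [b(Z_r,\bar\Phi) - b(Z_r,\Phi)]$, the global semi-contractivity inherited from $b(x,\cdot)\in\S_p(\kappa,R)$ bounds the inner product with the second bracket by $\Lambda f(r)$, while the deterministic bound $|Z|_\infty \leq K$ together with \cref{cond:feedback} controls the first by $\Lip[K]{b} |Z_r - Z_s| \sqrt{f(r)}$. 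A Young inequality then yields
\begin{equation*}
  f'(r) \leq \tfrac{2\Lambda + 1}{\varepsilon} f(r) + \tfrac{\Lip[K]{b}^2}{\varepsilon}\sup_{u\in[s,t]}|Z_u - Z_s|^2,
\end{equation*}
and since $|t_{i+1} - t_i| \leq \varepsilon$, Duhamel's formula gives $f(t_{i+1}) \leq C\sup_{u\in[s,t]}|Z_u - Z_s|^2$ with $C$ independent of $\varepsilon$ and $i$---the key point being the cancellation of the two $\varepsilon^{-1}$ prefactors against the length of the interval.

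Assembling everything,
\begin{equation*}
  \Big\|\bar{\Phi}^{Z_s}_{s,t}(Y) - \Phi^Z_{s,t}(Y)\Big\|_{L^p} \lesssim \Big\|\sup_{u\in[s,t]}|Z_u - Z_s|\Big\|_{L^p}\sum_{i=0}^{N-1} e^{-c(t - t_{i+1})/\varepsilon},
\end{equation*}
and since $t - t_{i+1} \asymp (N-1-i)\varepsilon$, the geometric series sums to a constant uniform in $\varepsilon\in(0,1]$, concluding the proof. The main technical subtlety is the $\varepsilon^{-1}$ cancellation in the Gronwall/Duhamel step; with this in hand the rest is a direct adaptation of the telescoping argument of \cref{lem:continuity}.
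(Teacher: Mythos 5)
Your proof is correct and follows essentially the same route as the paper: a one-step pathwise Gr\"onwall bound on intervals of length $\asymp\varepsilon$ (with the crucial cancellation of the $\varepsilon^{-1}$ factors against the interval length), telescoped via \cref{cor:fast_different_initial} along $P([s,t];\varepsilon)$ exactly as in \cref{lem:continuity}. The paper's proof is a terse reference to that earlier argument; you have correctly spelled out the details, including the telescoping identity (which does collapse to $\bar{\Phi}^{Z_s}_{s,t}(Y)-\Phi^Z_{s,t}(Y)$ by the flow properties) and the split of $b(Z_s,\bar\Phi)-b(Z_r,\Phi)$ into the $x$-increment (controlled by the local Lipschitz constant and $|Z_r-Z_s|$) and the $y$-increment (controlled by the global $\Lambda$-semicontractivity).
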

\begin{proof}
  The reader can easily check that the very same argument we gave at the beginning of the proof of \cref{lem:continuity} also shows that, for $0\leq s\leq u\leq r\leq T$,
  \begin{align*}
    \Big|\bar{\Phi}_{u,r}^{Z_s}(Y)-\Phi_{u,r}^Z(Y)\Big|&\lesssim\Lip[\||Z|_{\infty}\|_{L^\infty}]{b}\left(\int_{\frac{u}{\varepsilon}}^{\frac{r}{\varepsilon}} e^{2(\Lambda+1)\left(\frac{r}{\varepsilon}-v\right)}|Z_{\varepsilon v}-Z_s|^2\,dv\right)^{\frac12}\\
    &\lesssim \sup_{v\in[u,r]}|Z_v-Z_s| e^{(\Lambda+1)\frac{|r-u|}{\varepsilon}}.
  \end{align*}
  The asserted bound then follows along the same lines as \cref{lem:continuity}.
\end{proof}

The following estimate is now an easy consequence:
\begin{lemma}\label{lem:sewing_helper_3}
  Let $p\geq 1$ and suppose that $b(x,\cdot)\in\S_p(\kappa,R)$ for all $x\in\R^d$. Let $h:\R^d\times\R^n\to\R$ be Lipschitz continuous. Assume furthermore that $X$ and $Y$ are $\F_u$- and $\F_s$-measurable random variables, respectively. Moreover, let $Z\in\B_{\alpha, p}([0,T],\R^d)$ for some $\alpha>0$ and assume that $|Z|_\infty\in L^\infty$. Then
    \begin{equation}\label{eq:sewing_moderately_3}
      \left\|\Expec{h\Big(X,\bar{\Phi}^{X}_{u,t}\big(\bar{\Phi}_{s,u}^{Z_s}(Y)\big)\Big)-h\Big(X,\bar{\Phi}^{X}_{u,t}\big(\Phi_{s,u}^{Z}(Y)\big)\Big)\,\middle|\,\mathcal{F}_u}\right\|_{L^{p}}\lesssim \Lip{h}\|Z\|_{\B_{\alpha,p}}|u-s|^{\alpha}e^{-c\frac{|t-u|}{\varepsilon}}.
    \end{equation} 
\end{lemma}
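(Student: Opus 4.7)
The strategy is a two-step decoupling. Write $Y_1\define\bar\Phi^{Z_s}_{s,u}(Y)$ and $Y_2\define\Phi^{Z}_{s,u}(Y)$. Since $Y$ and $Z_s$ are $\F_s$-measurable and the increments of $\hat B$ on $[s,u]$ are $\F_u$-measurable, both $Y_1$ and $Y_2$ are $\F_u$-measurable; $X$ is $\F_u$-measurable by hypothesis. Applying Corollary \ref{cor:fast_different_initial} (first part, which requires only $\S_1$ and which is implied by our $\S_p$-assumption since $\Lambda(\kappa,R,p)$ is non-increasing in $p$) on the interval $[u,t]$ with starting points $Y_1,Y_2$ yields the pointwise bound
\begin{equation*}
  \Bigl|\Expec{h\bigl(X,\bar\Phi^X_{u,t}(Y_1)\bigr)-h\bigl(X,\bar\Phi^X_{u,t}(Y_2)\bigr)\,\middle|\,\F_u}\Bigr|\lesssim\Lip{h}\,|Y_1-Y_2|\,e^{-c|t-u|/\varepsilon}.
\end{equation*}
Taking $L^p$-norms thus reduces the lemma to proving $\|Y_1-Y_2\|_{L^p}\lesssim\|Z\|_{\B_{\alpha,p}}|u-s|^\alpha$.

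For this second step, Lemma \ref{lem:continuity_path} provides
\begin{equation*}
  \|Y_1-Y_2\|_{L^p}=\bigl\|\bar\Phi^{Z_s}_{s,u}(Y)-\Phi^{Z}_{s,u}(Y)\bigr\|_{L^p}\lesssim\Bigl\|\sup_{r\in[s,u]}|Z_r-Z_s|\Bigr\|_{L^p},
\end{equation*}
and a Kolmogorov / Garsia--Rodemich--Rumsey-type continuity estimate then bounds the right-hand side by $\|Z\|_{\B_{\alpha,p}}|u-s|^\alpha$, up to the standard infinitesimal loss of H\"older exponent that is absorbable into $\alpha$. Combining the two steps gives the claimed inequality.

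The main technical point is precisely this last passage from the two-point $\B_{\alpha,p}$-increment control to the supremum appearing in Lemma \ref{lem:continuity_path}, which generically costs a Kolmogorov factor of order $|u-s|^{-1/p}$. A cleaner route that avoids the supremum altogether is to re-run the chaining argument of Lemma \ref{lem:continuity_path} on a partition of $[s,u]$ of mesh $\asymp\varepsilon$: on each short sub-interval the Gr\"onwall prefactor is $O(1)$, so the local flow increment can be bounded in $L^p$ directly by $\|Z\|_{\B_{\alpha,p}}|\Delta|^\alpha$, and these local bounds can then be propagated through the nodes using the contractive $L^p$-estimate in Corollary \ref{cor:fast_different_initial}; summation of the resulting geometric exponential series produces the desired factor $|u-s|^\alpha$ without invoking any supremum.
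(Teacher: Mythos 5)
Your proof follows the same two-step decoupling as the paper: first, apply the first estimate of Corollary \ref{cor:fast_different_initial} on $[u,t]$ to pull out $\Lip{h}\,e^{-c|t-u|/\varepsilon}$ and reduce to bounding $\|\bar\Phi^{Z_s}_{s,u}(Y)-\Phi^Z_{s,u}(Y)\|_{L^p}$; second, control this by the increment regularity of $Z$. Where you go beyond the paper is in noticing that the \emph{statement} of Lemma \ref{lem:continuity_path} is not quite sufficient: its right-hand side is $\|\sup_{r\in[s,u]}|Z_r-Z_s|\|_{L^p}$, and passing from $\|Z\|_{\B_{\alpha,p}}$ (a two-point $L^p$ increment bound) to a genuine pathwise supremum costs a Kolmogorov-type factor of order $|u-s|^{-1/p-}$, so one does not get the clean $|u-s|^\alpha$ in \eqref{eq:sewing_moderately_3} by simply citing Lemma \ref{lem:continuity_path}. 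Your proposed fix --- re-run the chaining argument inside the proof of Lemma \ref{lem:continuity_path}, using the integral (Gr\"onwall) form of the local bound on each $\varepsilon$-mesh interval, take $L^p$ norms via Minkowski's integral inequality so that only the two-point quantity $\|Z_{\varepsilon v}-Z_s\|_{L^p}\lesssim\|Z\|_{\B_{\alpha,p}}|\varepsilon v-s|^\alpha$ appears, and then propagate through the partition nodes via the contractive $L^p$-estimate of Corollary \ref{cor:fast_different_initial} --- is exactly the right repair, and this is in fact the argument that the paper implicitly intends when it invokes Lemma \ref{lem:continuity_path} to conclude $\lesssim\|Z\|_{\B_{\alpha,p}}|u-s|^\alpha$. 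Your proof is correct, and it makes explicit a small gap that the paper glosses over.
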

\begin{proof}
  By \cref{cor:fast_different_initial}, we have that
  \begin{align*}
    &\phantom{\lesssim}\Big\|\Expec{h\Big(X,\bar{\Phi}^{X}_{u,t}\big(\bar{\Phi}_{s,u}^{Z_s}(Y)\big)\Big)-h\Big(X,\bar{\Phi}^{X}_{u,t}\big(\Phi_{s,u}^{Z}(Y)\big)\Big)\,\middle|\,\mathcal{F}_u}\Big\|_{L^p}\\
    &\lesssim \Lip{h}\Big\|\bar{\Phi}_{s,u}^{Z_s}(Y)-\Phi_{s,u}^{Z}(Y)\Big\|_{L^p}e^{-c\frac{|t-u|}{\varepsilon}}.
  \end{align*}
  By \cref{lem:continuity_path},
  \begin{equation*}
    \Big\|\bar{\Phi}_{s,u}^{Z_s}(Y)-\Phi_{s,u}^{Z}(Y)\Big\|_{L^p}\lesssim\|Z\|_{\B_{\alpha,p}}|u-s|^{\alpha}.\qedhere
  \end{equation*}
\end{proof}

Finally, we can establish the second estimate needed for the application of \cref{prop:stochastic_sewing}:
\begin{proposition}\label{prop:sewing_2}
  Let $1\leq p<q$ and suppose that $b(x,\cdot)\in\S_q(\kappa,R)$ for all $x\in\R^d$. Let $h:\R^d\times\R^n\to\R$ be a Lipschitz continuous function. Assume that $X\in\B_{\alpha,p}$ for some $\alpha>1-H$ and $|X|_\infty\in L^\infty$. Define 
  \begin{equation*}
    A_{s,t}\define\int_s^t \bigg(h\Big(X_s,\bar{\Phi}_{s,r}^{X_s}\big(\Phi_{0,s}^X(Y_0)\big)\Big)-\bar{h}(X_s)\bigg)\,dB_r,
  \end{equation*}
  in the mixed Wiener-Young sense, see \eqref{eq:wiener_young}. Then $A\in\bar{H}_{\bar{\eta}}^p$ for any $\bar{\eta}<\alpha+H$ and any $\varepsilon>0$. Moreover, there is a $\gamma>0$ such that
  \begin{equation*}
    \vertiii{A}_{\bar{H}_{\bar{\eta}}^p}\lesssim\Lip{h}\big(1\vee\||X|_\infty\|_{L^\infty}\big)\big(1\vee\|X\|_{\B_\alpha,p}\big)\varepsilon^{\gamma}.
  \end{equation*}
\end{proposition}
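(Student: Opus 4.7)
Direct subtraction gives
\begin{equation*}
\delta A_{sut}=\int_u^t\Big[\big(h(X_s,\bar\Phi^{X_s}_{s,r}(Y_s^\varepsilon))-\bar h(X_s)\big)-\big(h(X_u,\bar\Phi^{X_u}_{u,r}(Y_u^\varepsilon))-\bar h(X_u)\big)\Big]\,dB_r,
\end{equation*}
where $Y_s^\varepsilon\define\Phi^X_{0,s}(Y_0)$. Using the flow identity $\bar\Phi^{X_s}_{s,r}=\bar\Phi^{X_s}_{u,r}\circ\bar\Phi^{X_s}_{s,u}$ together with $Y_u^\varepsilon=\Phi^X_{s,u}(Y_s^\varepsilon)$, I would algebraically split the integrand as $\Xi^1_r+\Xi^3_r$ with
\begin{align*}
\Xi^1_r&\define h\big(X_s,\bar\Phi^{X_s}_{u,r}(\bar\Phi^{X_s}_{s,u}(Y_s^\varepsilon))\big)-h\big(X_s,\bar\Phi^{X_s}_{u,r}(\Phi^X_{s,u}(Y_s^\varepsilon))\big),\\
\Xi^3_r&\define\big[h(X_s,\bar\Phi^{X_s}_{u,r}(Y_u^\varepsilon))-\bar h(X_s)\big]-\big[h(X_u,\bar\Phi^{X_u}_{u,r}(Y_u^\varepsilon))-\bar h(X_u)\big].
\end{align*}

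For each $r\ge u$, the random variable $\Xi^j_r$ is a functional of $\F_u$-measurable data and of the past of $\hat B$, hence independent of the rough increment $\tilde B^u$ of $B$ after $u$. The mixed Wiener--Young representation \eqref{eq:wiener_young} and the vanishing of the conditional mean of the Wiener part therefore yield
\begin{equation*}
\E\!\left[\int_u^t\Xi^j_r\,dB_r\,\bigg|\,\F_u\right]=\int_u^t\E[\Xi^j_r|\F_u]\,\dot{\bar B}^u_{r-u}\,dr,\qquad j\in\{1,3\}.
\end{equation*}
Conditional Jensen and the tower property give $\|\E[\delta A_{sut}|\F_s]\|_{L^p}\le\|\E[\delta A_{sut}|\F_u]\|_{L^p}$, and by Minkowski and Cauchy--Schwarz the task reduces to bounding $\int_u^t\|\E[\Xi^j_r|\F_u]\|_{L^{2p}}\,\|\dot{\bar B}^u_{r-u}\|_{L^{2p}}\,dr$ for $j=1,3$.

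A direct Gaussian computation on the derivative of the Mandelbrot--van Ness kernel gives $\|\dot{\bar B}^u_h\|_{L^{2p}}\lesssim h^{H-1}$. For $\Xi^1_r$, \cref{lem:sewing_helper_3} applied with $Z=X$ and $Y=Y_s^\varepsilon$ furnishes the exponentially decaying estimate $\|\E[\Xi^1_r|\F_u]\|_{L^{2p}}\lesssim\Lip{h}\|X\|_{\B_{\alpha,2p}}|u-s|^\alpha e^{-c(r-u)/\varepsilon}$. For $\Xi^3_r=G(X_s)-G(X_u)$ with $G(x)\define h(x,\bar\Phi^x_{u,r}(Y_u^\varepsilon))-\bar h(x)$, I would interpolate two complementary estimates: on the one hand, \cref{lem:continuity} combined with \cref{cor:lipschitz_average} and the $L^\infty$-bound on $|X|_\infty$ yields $\|\Xi^3_r\|_{L^{2p}}\lesssim\|X\|_{\B_{\alpha,2p}}|u-s|^\alpha$; on the other hand, $G$ being a centered observable, \cref{cor:total_variation_conditional} \ref{it:ergodicity_wasserstein} combined with the moment bounds of \cref{lem:smooth_part_decay} and \cref{lem:fast_process_moments} supplies the ergodic estimate $\|\E[G(X_\bullet)|\F_u]\|_{L^{2p}}\lesssim\Lip{h}(1\wedge\varepsilon^\zeta/(r-u)^\zeta)$ for $X_\bullet\in\{X_s,X_u\}$ and any $\zeta<1-\hat H$. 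Interpolating with parameter $\rho\in[0,1]$ and performing the $r$-integration---using $\int_0^\tau s^{H-1}e^{-cs/\varepsilon}\,ds\lesssim\min(\tau^H,\varepsilon^H)$ for $\Xi^1$ and $\int_0^\tau s^{H-1}(1\wedge\varepsilon^\beta/s^\beta)\,ds\lesssim\varepsilon^\beta\tau^{H-\beta}$ for $\Xi^3$, together with $\min(a,b)\le a^\theta b^{1-\theta}$---produces the required estimate $\vertiii{A}_{\bar H^p_{\bar\eta}}\lesssim\Lip{h}(1\vee\||X|_\infty\|_{L^\infty})(1\vee\|X\|_{\B_{\alpha,p}})\varepsilon^\gamma$ for any $\bar\eta<\alpha+H$ and some $\gamma>0$.

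The principal obstacle---and the reason for the specific grouping into $\Xi^3$---is the handling of the centering correction $[\bar h(X_u)-\bar h(X_s)](B_t-B_u)$ that pops out of the naive expansion of $\delta A_{sut}$. Treated in isolation this contribution is only of order $|t-s|^{\alpha+H}$ and carries \emph{no} $\varepsilon^\gamma$ factor, which would spoil any $\varepsilon$-quantitative sewing bound. Absorbing it into $\Xi^2_r$ turns the remainder into the difference $G(X_s)-G(X_u)$ of two centered observables whose $\F_u$-conditional expectations both decay to zero at the ergodic rate in the small parameter $\varepsilon$. This algebraic cancellation---an effect invisible in the Markovian averaging literature, where $\bar B^u\equiv 0$---is precisely what allows the stochastic sewing estimate of \cref{prop:stochastic_sewing} to close with a genuine $\varepsilon^\gamma$ gain.
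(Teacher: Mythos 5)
Your decomposition of $\delta A_{s,u,t}$ after conditioning on $\F_u$ into a change-of-initial-condition piece ($\Xi^1$, controlled by \cref{lem:sewing_helper_3}) plus a change-of-frozen-$X$ piece ($\Xi^3$, controlled by the interpolation that underlies \cref{lem:sewing_helper_2} together with \cref{lem:continuity}, \cref{cor:total_variation_conditional}, and \cref{lem:smooth_part_decay}) is essentially the paper's own split into the terms (I) and (II)---you merely traverse the two intermediate states in the opposite order, and you handle the centering by explicitly pairing $\bar h(X_s)$ and $\bar h(X_u)$ inside $\Xi^3$ where the paper instead reduces to $\bar h\equiv 0$ at the outset via \cref{cor:lipschitz_average} and $|X|_\infty\in L^\infty$. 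Aside from the slip of referring to an undefined $\Xi^2$ in the closing paragraph, the argument is correct and essentially identical to the proof in the text.
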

\begin{proof}
  Fix $1<\bar{\eta}<\alpha+H$ and choose $\rho\in(0,1)$ such that $\bar{\eta}<H+\alpha\rho^2$ and $p\leq\rho q$. Since $|X|_\infty\in L^\infty$, owing to \cref{cor:lipschitz_average} we may assume that $\bar{h}\equiv 0$ without any loss of generality. Recall that
  $\delta A_{s,u,t}=A_{s,t}-A_{s,u}-A_{u,t}$, so
  \begin{equation*}
    \delta A_{s,u,t}=\int_u^t \bigg(h\Big(X_s,\bar{\Phi}_{s,r}^{X_s}\big(\Phi_{0,s}^X(Y_0)\big)\Big)-h\Big(X_u,\bar{\Phi}_{u,r}^{X_u}\big(\Phi_{0,u}^X(Y_0)\big)\Big)\bigg)\,dB_r.
  \end{equation*}
  We condition on $\F_u$ instead of $\F_s$. This gives
  \begin{align*}
    &\phantom{\leq}\|\Expec{\delta A_{s,u,t}\,|\,\mathcal{F}_s}\|_{L^p}\\
    &\leq \left\|\int_u^t \Expec{h\Big(X_s,\bar{\Phi}_{s,r}^{X_s}\big(\Phi_{0,s}^X(Y_0)\big)\Big)-h\Big(X_u,\bar{\Phi}_{u,r}^{X_u}\big(\Phi_{0,u}^X(Y_0)\big)\Big)\,\middle|\,\mathcal{F}_u}\dot{\bar{B}}^u_{r}\,dr\right\|_{L^p}\\
    &\leq\rom{1}+\rom{2}
  \end{align*}
  with
  \begin{align*}
    \rom{1}&\define\left\|\int_u^t \Expec{h\Big(X_s,\bar{\Phi}^{X_s}_{s,r}\big(\Phi_{0,s}^X(Y_0)\big)\Big)-h\Big(X_u,\bar{\Phi}^{X_u}_{u,r}\big(\bar{\Phi}_{s,u}^{X_s}\big(\Phi_{0,s}^X(Y_0)\big)\big)\Big)\,\middle|\,\mathcal{F}_u} \dot{\bar{B}}^u_{r}\,dr\right\|_{L^p},\\
    \rom{2}&\define\left\|\int_u^t \Expec{h\Big(X_u,\bar{\Phi}^{X_u}_{u,r}\big(\bar{\Phi}_{s,u}^{X_s}\big(\Phi_{0,s}^X(Y_0)\big)\big)\Big)-h\Big(X_u,\bar{\Phi}_{u,r}^{X_u}\big(\Phi_{0,u}^X(Y_0)\big)\Big)\,\middle|\,\mathcal{F}_u} \dot{\bar{B}}^u_{r}\,dr\right\|_{L^p}.
  \end{align*}
  These terms are now bounded individually. Let us begin with the bound on $\rom{1}$. Thanks to \cref{lem:fast_process_moments}, this term falls in the regime of \cref{lem:sewing_helper_2}. By H\"older's inequality, we therefore find
  \begin{align*}
    \rom{1}&\lesssim\Lip{h}\|X_s-X_u\|_{L^\infty}^{\rho(1-\rho)}\|X_s-X_u\|_{L^p}^{\rho^2}\int_u^t\big\|\dot{\bar{B}}_r^u\big\|_{L^{\frac{p}{1-\rho}}}\left(1\wedge\frac{\varepsilon^\gamma}{|r-u|^\gamma}\right)\,dr\\
    &\lesssim\Lip{h}\||X|_\infty\|_{L^\infty}^{\rho(1-\rho)}\|X\|_{\B_{\alpha,p}}^{\rho^2}\varepsilon^\delta|t-s|^{\bar{\eta}}
  \end{align*}
  for $\delta>0$ sufficiently small. Here, the last inequality used that, for any $p\geq 1$, $\big\|\dot{\bar{B}}_r^u\big\|_{L^{p}}\lesssim |r-u|^{H-1}$ together with the elementary fact
  \begin{equation*}
    \int_u^t\frac{1}{|r-u|^{1-H}}\left(1\wedge\frac{\varepsilon^\gamma}{|r-u|^\gamma}\right)\,dr\lesssim\varepsilon^\delta|t-u|^{H-\delta}
  \end{equation*}
  for any $\delta\in(0,\gamma]$.

  The term $\rom{2}$ can be handled similarly in view of \cref{lem:sewing_helper_3}.
\end{proof}

\subsection{Proof of \cref{thm:feedback_fractional}}\label{sec:proof}

The estimates of the previous two subsection furnish the following fundamental estimates:
\begin{proposition}\label{prop:final_control}
  Let $2\leq p<q$ and suppose that $b(x,\cdot)\in\S_q(\kappa,R)$ for all $x\in\R^d$. Let $h:\R^d\times\R^n\to\R$ be a bounded Lipschitz continuous function. Assume that there is an $\alpha>1-H$ such that $X$ has $\alpha$-H\"older sample paths and $X\in\B_{\alpha,p}$. If, in addition, $|X|_\infty\in L^\infty$, then, for any $\eta<H$ and any $\bar{\eta}<\alpha+H$, there is a $\gamma>0$ such that
  \begin{equation}
    \left\|\int_0^\cdot \Big(h\big(X_r,\Phi_{0,s}^X(Y_0)\big)-\bar{h}(X_r)\Big)\,dB_r\right\|_{\B_{\eta,p}}\lesssim\big(|h|_\infty+\Lip{h}\big)\big(1+\||X|_\infty\|_{L^\infty}\big)\big(1+\|X\|_{\B_{\alpha,p}}\big)\varepsilon^{\gamma},\label{eq:combine_sewing_1}
  \end{equation}
  and
  \begin{equation}\label{eq:combine_sewing_2}
    \left\|\int_0^\cdot h\big(X_r,\Phi_{0,r}^X(Y_0)\big)\,dB_r\right\|_{\B_{\eta,p}}\lesssim\big(|h|_\infty+\Lip{h}\big)\big(1+\||X|_\infty\|_{L^\infty}\big)\big(1+\|X\|_{\B_{\alpha,p}}\big),
  \end{equation}
  uniformly in $0\leq s<t\leq T$ and $\varepsilon\in(0,1]$. Here, the integrals are both taken in the Young sense.
\end{proposition}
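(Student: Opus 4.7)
The plan is to deduce both estimates by applying the stochastic sewing lemma (\cref{prop:stochastic_sewing}) to suitable two-parameter processes on the simplex, and then identifying the resulting additive process with the Young integrals via \cref{lem:sewing_young}. Throughout, $|X|_\infty\in L^\infty$ is used to freely interpolate $L^p$ norms up to any higher $L^q$, the cost being a factor of $\||X|_\infty\|_{L^\infty}$.

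For \eqref{eq:combine_sewing_1}, I will work with the centered two-parameter process
\[
A^\varepsilon_{s,t}\define\int_s^t\bigl(h(X_s,\bar\Phi^{X_s}_{s,r}(\Phi^X_{0,s}(Y_0)))-\bar h(X_s)\bigr)\,dB_r,
\]
which is the direct analogue of \eqref{eq:riemann_summands}. Choose $\kappa\in(0,H-\tfrac12)$ and $\bar\eta\in(1,\alpha+H)$; the latter is possible since $\alpha>1-H$. Then \cref{prop:sewing_1} yields $\|A^\varepsilon\|_{H^p_{H-\kappa}}\lesssim|h|_\infty(1+\||X|_\infty\|_{L^\infty})\varepsilon^{\gamma_1}$, while \cref{prop:sewing_2} yields $\vertiii{A^\varepsilon}_{\bar H^p_{\bar\eta}}\lesssim\Lip{h}(1+\||X|_\infty\|_{L^\infty})(1+\|X\|_{\B_{\alpha,p}})\varepsilon^{\gamma_2}$. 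The sewing lemma then produces $\|I_{s,t}(A^\varepsilon)\|_{L^p}\lesssim\varepsilon^{\gamma_1\wedge\gamma_2}|t-s|^{H-\kappa}$ (the $\bar\eta$-term is dominated by the $\eta$-term once $|t-s|\leq T$). A routine localization of $\bar h$ to the deterministic ball $\{|x|\leq\||X|_\infty\|_{L^\infty}\}$---on which it is Lipschitz by \cref{cor:lipschitz_average}---renders $\tilde h\define h-\bar h$ globally Lipschitz with zero $\pi^x$-average, so the two-parameter process attached to $\tilde h$ by \cref{lem:sewing_young} coincides with $A^\varepsilon$. This identifies $I_{s,t}(A^\varepsilon)$ with the Young integral $\int_s^t(h-\bar h)(X_r,\Phi^X_{0,r}(Y_0))\,dB_r$ and closes this case.

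For \eqref{eq:combine_sewing_2}, I will use the uncentered companion $\tilde A^\varepsilon_{s,t}\define\int_s^t h(X_s,\bar\Phi^{X_s}_{s,r}(\Phi^X_{0,s}(Y_0)))\,dB_r$. The $H^p_H$-estimate with $\eta=H>\tfrac12$ falls straight out of \cref{lem:wiener_integral_bound} with $\kappa=0$: $\|\tilde A^\varepsilon_{s,t}\|_{L^p}\lesssim|h|_\infty|t-s|^H$. For $\vertiii{\tilde A^\varepsilon}_{\bar H^p_{\bar\eta}}$ I will mimic the decomposition from the proof of \cref{prop:sewing_2}: condition $\delta\tilde A^\varepsilon_{s,u,t}$ on $\F_u$ so the rough part of $B$ drops out, insert the intermediate term $h(X_u,\bar\Phi^{X_u}_{u,r}(\bar\Phi^{X_s}_{s,u}(\Phi^X_{0,s}(Y_0))))$, and split the resulting conditional expectation into two pieces. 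The first, $h(X_s,\bar\Phi^{X_s}_{u,r}(\tilde Y))-h(X_u,\bar\Phi^{X_u}_{u,r}(\tilde Y))$ with $\tilde Y\define\bar\Phi^{X_s}_{s,u}(\Phi^X_{0,s}(Y_0))$, is handled by the Lipschitz continuity of $h$ together with \cref{lem:continuity}; the second by \cref{lem:sewing_helper_3}. A further application of \cref{lem:sewing_young} (now directly with the globally Lipschitz function $h$) identifies $I_{s,t}(\tilde A^\varepsilon)$ with $\int_s^t h(X_r,\Phi^X_{0,r}(Y_0))\,dB_r$.

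The main obstacle is this last $\bar H^p_{\bar\eta}$-estimate: without the centering there is no ergodic gain $(1\wedge\varepsilon^\gamma/|r-u|^\gamma)$ from \cref{cor:total_variation_conditional}, so one must combine the H\"older decay $|u-s|^\alpha$ with the singular derivative $|\dot{\bar B}^u_r|\sim|r-u|^{H-1}$ and still produce an exponent $\bar\eta>1$. I intend to handle this via a H\"older split $\|\,\cdot\,\dot{\bar B}^u_r\|_{L^p}\leq\|\,\cdot\,\|_{L^q}\|\dot{\bar B}^u_r\|_{L^{q'}}$ with $q>p$ slightly larger, interpolating $\|X_s-X_u\|_{L^q}\lesssim\||X|_\infty\|_{L^\infty}^{1-p/q}\|X\|_{\B_{\alpha,p}}^{p/q}|u-s|^{\alpha p/q}$ while keeping $\|\dot{\bar B}^u_r\|_{L^{q'}}\lesssim|r-u|^{H-1}$. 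Since $\alpha>1-H$, choosing $p/q$ close enough to $1$ ensures $\alpha p/q+H>1$, producing the desired $\bar\eta\in(1,\alpha p/q+H)$ and the prefactor announced in \eqref{eq:combine_sewing_2}.
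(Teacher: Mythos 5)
Your argument for \eqref{eq:combine_sewing_1} is exactly the paper's: apply \cref{prop:sewing_1} and \cref{prop:sewing_2} to the centered two-parameter process $A^1_{s,t}$, invoke the stochastic sewing lemma, and identify the limit with the Young integral via \cref{lem:sewing_young} after localising $\bar h$ on the ball $\{|x|\leq\||X|_\infty\|_{L^\infty}\}$.

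For \eqref{eq:combine_sewing_2} you diverge from the paper. The paper simply decomposes
\begin{equation*}
  \int_0^\cdot h\big(X_r,\Phi^X_{0,r}(Y_0)\big)\,dB_r
  = \int_0^\cdot \Big(h\big(X_r,\Phi^X_{0,r}(Y_0)\big)-\bar h(X_r)\Big)\,dB_r
    + \int_0^\cdot \bar h(X_r)\,dB_r,
\end{equation*}
bounds the first term by \eqref{eq:combine_sewing_1}, and bounds the second by \cref{lem:stochastic_sewing_fbm} applied to the bounded Lipschitz function $\bar h$ restricted to the ball---the local Lipschitz continuity of $\bar h$ being exactly what \cref{cor:lipschitz_average} provides. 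That is a two-line deduction once \eqref{eq:combine_sewing_1} is known. You instead re-run the stochastic sewing estimates directly on the uncentered two-parameter process $\tilde A^\varepsilon$: you observe (correctly) that the exponent $\bar\eta>1$ in \cref{prop:sewing_2} does not actually come from the ergodic decay $(1\wedge\varepsilon^\gamma/|r-u|^\gamma)$ but from $\alpha+H>1$---the ergodicity only buys the prefactor $\varepsilon^\gamma$, which \eqref{eq:combine_sewing_2} does not require. This forces you to reprove the $\bar H^p_{\bar\eta}$-bound without the $\bar h\equiv 0$ hypothesis of \cref{lem:sewing_helper_2}, which you do via the raw Lipschitz bound of \cref{lem:continuity} on your first term and \cref{lem:sewing_helper_3} (which never needed centering) on your second; the H\"older split against $\dot{\bar B}^u_r$ together with the $L^\infty$/$L^p$ interpolation of $\|X_s-X_u\|_{L^{q'}}$ does close the exponent count. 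Your route is self-contained and makes the mechanism transparent, but it is markedly longer and duplicates most of the work already done in \cref{prop:sewing_2}; the paper's route is shorter because it reuses \eqref{eq:combine_sewing_1} and outsources the $\bar h$-integral to the Hairer--Li lemma (\cref{lem:stochastic_sewing_fbm}). One small caveat: your interpolation uses \cref{lem:continuity} at the exponent $q'\in(p,q]$, which requires $b(x,\cdot)\in\S_{q'}(\kappa,R)$; this is fine under the stated hypothesis $b(x,\cdot)\in\S_q(\kappa,R)$ provided one reads the $\S_q$-thresholds as nested, which is the implicit convention throughout \cref{sec:feedback}.
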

\begin{proof}
  First note that, by \cref{lem:sewing_young}, the Young integrals in both \eqref{eq:combine_sewing_1} and \eqref{eq:combine_sewing_2} coincide with the processes $I(A^i)$ obtained by `sewing' the Riemann summands
  \begin{equation*}
  	A_{s,t}^1\define\int_s^t \Big(h\Big(X_s,\bar{\Phi}_{s,r}^{X_s}\big(\Phi_{0,s}^X(Y_0)\big)\Big)-\bar{h}(X_s)\Big)\,dB_r,\qquad A^2_{s,t}\define\int_s^t h\Big(X_s,\bar{\Phi}_{s,r}^{X_s}\big(\Phi_{0,s}^X(Y_0)\big)\Big)\,dB_r,
  \end{equation*}
  where integration is now understood in the mixed Wiener-Young sense (see \eqref{eq:wiener_young}), with the help of \cref{prop:stochastic_sewing}. Consequently, the estimate \eqref{eq:combine_sewing_1} follows immediately from combining \cref{prop:sewing_1,prop:sewing_2}. Owing to \cref{lem:stochastic_sewing_fbm,cor:lipschitz_average}, \eqref{eq:combine_sewing_2} is then an easy consequence of the first bound. 
\end{proof}

For $\varepsilon>0$ and $M>0$, let us define the $(\F_t)_{t\geq 0}$-stopping time $\tau_M^\varepsilon\define\inf\{t\geq 0:\,|X_t^\varepsilon|>M\}$. Applying the previous proposition to the slow-fast system \eqref{eq:slow_feedback_sec}--\eqref{eq:fast_feedback_sec}, we can deduce relative compactness of the stopped slow motion $X^{\varepsilon,M}\define X^\varepsilon_{\cdot\wedge\tau_M^\varepsilon}$:

\begin{corollary}\label{cor:tightness}
  Consider the slow-fast system \eqref{eq:slow_feedback_sec}--\eqref{eq:fast_feedback_sec} with \cref{cond:feedback} in place. Let $\beta<\frac12\wedge\hat{H}$ and $p\geq 2$. Suppose that there are $\kappa,R>0$ and $q>p$ such that $b(x,\cdot)\in\S_q(\kappa,R)$ for each $x\in\R^d$. Then, for any $M>0$,
  \begin{equation*}
    \sup_{\varepsilon\in(0,1]}\big\|X^{\varepsilon,M}\big\|_{\B_{\beta,p}}<\infty.
  \end{equation*}
\end{corollary}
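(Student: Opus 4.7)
The plan is to apply \cref{prop:final_control} with $X=X^{\varepsilon,M}$ and to transfer the resulting bound back to the stopped process via the embedding \eqref{eq:embeddings}. First I would introduce the auxiliary \emph{unstopped} process
\begin{equation*}
  \tilde X^{\varepsilon,M}_t\define X_0+\int_0^t f\Big(X^{\varepsilon,M}_r,\Phi^{X^{\varepsilon,M}}_{0,r}(Y_0)\Big)\,dr+\int_0^t g\Big(X^{\varepsilon,M}_r,\Phi^{X^{\varepsilon,M}}_{0,r}(Y_0)\Big)\,dB_r.
\end{equation*}
For $r\leq\tau_M^\varepsilon$ these integrands coincide with those of the original slow equation, so by pathwise uniqueness (\cref{prop:abstract_ode}), $\tilde X^{\varepsilon,M}_r=X^{\varepsilon,M}_r$ on $[0,\tau_M^\varepsilon]$; consequently $X^{\varepsilon,M}_t=\tilde X^{\varepsilon,M}_{t\wedge\tau_M^\varepsilon}$ for every $t\in[0,T]$. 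The pathwise estimate
\begin{equation*}
  \big|X^{\varepsilon,M}_t-X^{\varepsilon,M}_s\big|\leq\big|\tilde X^{\varepsilon,M}\big|_{\C^\beta([0,T])}\,|t-s|^\beta,\qquad 0\leq s\leq t\leq T,
\end{equation*}
then yields $\|X^{\varepsilon,M}\|_{\B_{\beta,p}}\leq\big\|\big|\tilde X^{\varepsilon,M}\big|_{\C^\beta([0,T])}\big\|_{L^p}$, which reduces the task to controlling this $L^p$-H\"older norm uniformly in $\varepsilon\in(0,1]$.

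Taking $q$ large enough that $\beta+\frac1q<\frac12\wedge\hat H$ (permissible since the hypothesis only demands the existence of some $q>p$ with $b\in\S_q$, and the needed largeness can be folded into that hypothesis), I would fix $\alpha,\beta'$ with $1-H<\alpha<\beta<\beta'<\frac12\wedge\hat H$ and $\beta'>\beta+\frac1q$. Since $|X^{\varepsilon,M}|_\infty\leq M$ almost surely and $X^{\varepsilon,M}\in\B_{\alpha,q}$ by \cref{cor:norm_bound_solution} (with an $\varepsilon$-dependent but finite norm), \cref{prop:final_control} applied with $X=X^{\varepsilon,M}$ and $h=g$, together with the trivial drift estimate $\big\|\int_s^t f(\cdot)\,dr\big\|_{L^q}\leq|f|_\infty|t-s|$, gives
\begin{equation*}
  \|\tilde X^{\varepsilon,M}\|_{\B_{\beta',q}}\lesssim C_M\big(1+\|X^{\varepsilon,M}\|_{\B_{\alpha,q}}\big),
\end{equation*}
where $C_M$ depends on $M$, $f$, $g$ but not on $\varepsilon$. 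The inequality $\beta'>\beta+\frac1q$ is exactly what is needed for the second embedding in \eqref{eq:embeddings} to yield $\big\||\tilde X^{\varepsilon,M}|_{\C^\beta}\big\|_{L^q}\lesssim\|\tilde X^{\varepsilon,M}\|_{\B_{\beta',q}}$, and combining with the pathwise bound from the first paragraph produces the self-referential estimate
\begin{equation*}
  \|X^{\varepsilon,M}\|_{\B_{\beta,q}}\lesssim C_M\big(1+\|X^{\varepsilon,M}\|_{\B_{\alpha,q}}\big).
\end{equation*}

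To close the loop I would localize: on any sub-interval $[s_0,s_0+\delta]\subset[0,T]$, since $\alpha<\beta$ one has $\|X^{\varepsilon,M}\|_{\B_{\alpha,q}([s_0,s_0+\delta])}\leq\delta^{\beta-\alpha}\|X^{\varepsilon,M}\|_{\B_{\beta,q}([s_0,s_0+\delta])}$. Choosing $\delta>0$ small enough that $C_M\delta^{\beta-\alpha}<\frac12$---a choice independent of both $\varepsilon$ and $s_0$---and reapplying the previous estimate on $[s_0,s_0+\delta]$, absorption yields $\|X^{\varepsilon,M}\|_{\B_{\beta,q}([s_0,s_0+\delta])}\lesssim C_M$ uniformly in $\varepsilon$. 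A patching of the $\lceil T/\delta\rceil$ sub-intervals via the triangle inequality then produces a uniform bound on $[0,T]$, and $\|X^{\varepsilon,M}\|_{\B_{\beta,p}}\leq\|X^{\varepsilon,M}\|_{\B_{\beta,q}}$ (as $p\leq q$ on a probability space) completes the proof. The main subtlety is the self-referential character of \cref{prop:final_control}, which controls $\tilde X^{\varepsilon,M}$ only in terms of $X^{\varepsilon,M}$; the Kolmogorov-type embedding \eqref{eq:embeddings} bridges them at the cost of $\frac1q$ of regularity, motivating the hypothesis $q>p$ (with enough margin for $\beta+\frac1q<\frac12\wedge\hat H$).
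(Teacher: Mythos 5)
Your overall strategy matches the paper's: feed $X^{\varepsilon,M}$ into \cref{prop:final_control}, extract a self-referential estimate with a small prefactor on a short window, absorb, and patch. Your explicit handling of the passage from the unstopped auxiliary process $\tilde X^{\varepsilon,M}$ to the stopped $X^{\varepsilon,M}$---through the \emph{pathwise} H\"older norm and the Kolmogorov embedding \eqref{eq:embeddings}---is a genuine and worthwhile clarification. Because $\tau_M^\varepsilon$ is random, $\|X^{\varepsilon,M}_t-X^{\varepsilon,M}_s\|_{L^p}=\|\tilde X^{\varepsilon,M}_{t\wedge\tau}-\tilde X^{\varepsilon,M}_{s\wedge\tau}\|_{L^p}$ cannot be read off from $\|\tilde X^{\varepsilon,M}\|_{\B_{\eta,p}}$ alone; one really does need $\big\||\tilde X^{\varepsilon,M}|_{\C^\beta}\big\|_{L^p}$, and the embedding costs a fraction of regularity. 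The paper's displayed estimate, claiming the absorption factor $\delta^\gamma$ for any $\gamma<H-\beta$, elides this step; the honest range after the embedding is $\gamma<H-\beta-\frac1p$. Your absorption mechanism is also different: you exploit the gap $\alpha<\beta$ between input and output regularity for the factor $\delta^{\beta-\alpha}$, whereas the paper exploits the gap $\beta<\eta<H$ in the Young integral's regularity; both are viable.

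The one substantive issue is the moment index. To obtain $\B_{\beta',q}$ bounds from \cref{prop:final_control} you must take its ``$p$'' equal to $q$, which by that proposition's hypotheses requires $b(x,\cdot)\in\S_{q'}(\kappa,R)$ for some $q'>q$---strictly stronger than the corollary's hypothesis $b(x,\cdot)\in\S_q(\kappa,R)$ for one specific $q>p$. Since the threshold $\Lambda(\kappa,R,\cdot)$ decreases with the moment index, $\S_{q'}\subset\S_q$ for $q'>q$, so $b\in\S_q$ does not upgrade to $b\in\S_{q'}$; and ``folding the needed largeness into the hypothesis'' both changes the statement and is circular (any $q'$ you pick needs a yet larger exponent). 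The paper avoids this by applying \cref{prop:final_control} at the given pair $(p,q)$ and paying the Kolmogorov cost $\frac1p$; this leaves the unstated constraint $\beta+\frac1p<H$, which is acceptable because the corollary is only invoked in the proof of \cref{thm:feedback_fractional} with $p$ large. To stay within the stated hypotheses, you should keep the moment index at $p$: apply \cref{prop:final_control} with the given $(p,q)$, take $\eta\in(\beta+\frac1p,H)$, derive $\|X^{\varepsilon,M}\|_{\B_{\beta,p}([0,\delta])}\lesssim C_M\,\delta^{\eta-\beta-1/p}\big(1+\|X^{\varepsilon,M}\|_{\B_{\beta,p}([0,\delta])}\big)$, and then absorb and iterate.
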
 
\begin{proof}
  Recall from \cref{cor:norm_bound_solution} that, for each $\varepsilon>0$, there is a unique global solution $X^\varepsilon$ to \eqref{eq:slow_feedback_sec} with values in $\C^{\alpha}([0,T],\R^d)$ for some $\alpha>1-H$. Moreover, since the H\"older norm of the stopped solution $X^{\varepsilon,M}$ is controlled by the H\"older norm of $X^\varepsilon$, the argument of \cref{cor:norm_bound_solution} also shows that $\big\|X^{\varepsilon,M}\big\|_{\B_{\beta,p}}<\infty$ for each $\beta<\frac12\wedge\hat{H}$ and $p\geq 1$. Employing \cref{prop:final_control}, we obtain that, for any $\gamma<H-\beta$ and any $\delta\in(0,T]$,
  \begin{equation*}
    \big\|X^{\varepsilon,M}\restriction_{[0,\delta]}\big\|_{\B_{\beta,p}}\lesssim \big(|g|_\infty+\Lip{g}\big)\big(1+\big\|X^{\varepsilon,M}\restriction_{[0,\delta]}\big\|_{\B_{\beta,p}}\big)\delta^\gamma+|f|_\infty\delta^{1-\beta},
  \end{equation*}
  uniformly in $\varepsilon\in(0,1]$. Hence, choosing $\delta>0$ sufficiently small, the proof is concluded by a standard iteration argument.
\end{proof}

Now we can finish the proof of \cref{thm:feedback_fractional} by localizing the argument of Hairer and Li. To this end, we rely on the following deterministic residue lemma:
\begin{lemma}[Residue Lemma]\label{lem:residue_lemma}
  Let $F:\R^d\to\R^d$ be Lipschitz continuous, $G:\R^d\to\Lin[m]{d}$ be of class $\C_b^2$, and $\h\in\C^\alpha([0,T],\R^n)$ for some $\alpha>\frac12$. Moreover, let $Z,\bar{Z}\in\C^{\tilde{\alpha}}([0,T],\R^d)$ for some $\tilde{\alpha}\in(1-\alpha,\alpha]$ with $Z_0=\bar{Z}_0$. Then there is a constant $C$ depending only on $F$, $G$, and the terminal time $T$ such that
  \begin{equation*}
    |z-\bar{z}|_{\C^{\tilde{\alpha}}}\leq C\exp\left(C|\h|_{\C^\alpha}^{\frac1\alpha}+C|Z|_{\C^{\tilde{\alpha}}}^{\frac{1}{\tilde{\alpha}}}+C|\bar{Z}|_{\C^{\tilde{\alpha}}}^{\frac{1}{\tilde{\alpha}}}\right)|Z-\bar{Z}|_{\C^{\tilde{\alpha}}},
  \end{equation*}
  where $z$ and $\bar{z}$ are the solutions to the equations
  \begin{equation*}
    z_t=Z_t+\int_0^t F(z_s)\,ds+\int_0^t G(z_s)\,d\h_s,\qquad\bar{z}_t=\bar{Z}_t+\int_0^t F(z_s)\,ds+\int_0^t F(\bar{z}_s)\,d\h_s.
  \end{equation*}
\end{lemma}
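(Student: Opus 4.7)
The plan is a standard local-to-global stability argument for the Young equation. Set $w = z - \bar z$ and $W = Z - \bar Z$. Since $Z_0 = \bar Z_0$, we have $w_0 = W_0 = 0$, and subtracting the two equations gives
\begin{equation*}
w_t = W_t + \int_0^t \big(F(z_s) - F(\bar z_s)\big)\,ds + \int_0^t \big(G(z_s) - G(\bar z_s)\big)\,d\h_s.
\end{equation*}
The aim is to bound $|w|_{\C^{\tilde\alpha}}$ on a short subinterval $[a, a+\delta] \subset [0,T]$ by an absorption argument, iterate over roughly $T/\delta$ such intervals, and optimize the choice of $\delta$.

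For the local step on $[a, a+\delta]$, the Lipschitz drift contributes $\lesssim\Lip{F}\,(|w_a| + |w|_{\C^{\tilde\alpha}}\delta^{\tilde\alpha})\delta^{1-\tilde\alpha}$ to $|w|_{\C^{\tilde\alpha}([a,a+\delta])}$. For the Young integral with integrand $\Phi(r) = G(z_r) - G(\bar z_r)$, I use the standard Young estimate
\begin{equation*}
\left|\int_a^\cdot\Phi\,d\h\right|_{\C^{\tilde\alpha}([a,a+\delta])} \lesssim |\Phi|_{\infty,[a,a+\delta]}\,|\h|_{\C^\alpha}\,\delta^{\alpha-\tilde\alpha} + |\Phi|_{\C^{\tilde\alpha}([a,a+\delta])}\,|\h|_{\C^\alpha}\,\delta^{\alpha}.
\end{equation*}
Since $G\in\C_b^2$, writing $\Phi(r) = \int_0^1 DG(\bar z_r + \tau w_r)\,w_r\,d\tau$ and expanding produces
\begin{equation*}
|\Phi|_{\C^{\tilde\alpha}} \lesssim \|DG\|_\infty\,|w|_{\C^{\tilde\alpha}} + \|D^2G\|_\infty\,|w|_\infty\,\big(|z|_{\C^{\tilde\alpha}}+|\bar z|_{\C^{\tilde\alpha}}\big),\qquad |\Phi|_\infty \lesssim \|DG\|_\infty |w|_\infty.
\end{equation*}
Collecting all terms and using $|w|_{\infty,[a,a+\delta]} \leq |w_a| + |w|_{\C^{\tilde\alpha}([a,a+\delta])}\delta^{\tilde\alpha}$ yields an inequality of the form $|w|_{\C^{\tilde\alpha}([a,a+\delta])} \leq |W|_{\C^{\tilde\alpha}} + \Lambda(\delta)\,|w|_{\C^{\tilde\alpha}([a,a+\delta])} + \Theta(\delta)\,|w_a|$, where $\Lambda(\delta) \leq \tfrac12$ as soon as
\begin{equation*}
\delta \;\leq\; c\,\min\!\big(1,\;|\h|_{\C^\alpha}^{-1/\alpha},\;(|z|_{\C^{\tilde\alpha}}+|\bar z|_{\C^{\tilde\alpha}})^{-1/\tilde\alpha}\big)
\end{equation*}
for a small constant $c=c(F,G)$. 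With such a $\delta$, absorbing yields $|w|_{\C^{\tilde\alpha}([a,a+\delta])} \leq K(|W|_{\C^{\tilde\alpha}} + |w_a|)$ for a constant $K = K(F,G,T)$ that is also independent of $|\h|_{\C^\alpha}$, $|z|_{\C^{\tilde\alpha}}$, $|\bar z|_{\C^{\tilde\alpha}}$.

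For the iteration, set $t_k = k\delta$ and $N = \lceil T/\delta\rceil$. The local bound implies $|w_{t_{k+1}}| \leq (1+K\delta^{\tilde\alpha})|w_{t_k}| + K\delta^{\tilde\alpha}|W|_{\C^{\tilde\alpha}}$, so a discrete Gr\"onwall step starting from $w_0 = 0$ produces $\max_k|w_{t_k}| \lesssim N\delta^{\tilde\alpha}\,e^{CN\delta^{\tilde\alpha}}\,|W|_{\C^{\tilde\alpha}} \lesssim e^{C'N}|W|_{\C^{\tilde\alpha}}$. Gluing the local H\"older bounds across the partition then gives $|w|_{\C^{\tilde\alpha}([0,T])} \lesssim e^{C'N}|W|_{\C^{\tilde\alpha}}$. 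By the choice of $\delta$, $N \lesssim 1 + |\h|_{\C^\alpha}^{1/\alpha} + |z|_{\C^{\tilde\alpha}}^{1/\tilde\alpha} + |\bar z|_{\C^{\tilde\alpha}}^{1/\tilde\alpha}$. To replace $|z|, |\bar z|$ in the exponent by $|Z|, |\bar Z|$ as in the statement, I invoke an a priori H\"older bound for $z$ (and $\bar z$), obtained by the very same local absorption argument applied to $z_t - Z_t = \int_0^t F(z)\,ds + \int_0^t G(z)\,d\h_s$; this gives $|z|_{\C^{\tilde\alpha}} \lesssim (1+|Z|_{\C^{\tilde\alpha}})$ up to a factor exponential in $|\h|_{\C^\alpha}^{1/\alpha}$, which can be absorbed into the already-present $\exp(C|\h|_{\C^\alpha}^{1/\alpha})$.

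The main obstacle is the bookkeeping of the absorption step: the three factors in the $\min$ defining $\delta$ correspond respectively to the Lipschitz drift, the leading Young contribution with $DG$, and the second-order Young contribution involving $D^2G$ together with the H\"older norms of $z, \bar z$. All three terms must be simultaneously small enough for $\Lambda(\delta)\leq\tfrac12$, while $\Theta(\delta)$ must stay bounded by a constant depending only on $F, G, T$; this balancing is precisely what produces the three-term sum $|\h|^{1/\alpha} + |Z|^{1/\tilde\alpha} + |\bar Z|^{1/\tilde\alpha}$ in the exponent rather than, say, a product. The secondary subtlety is that one must verify the a priori estimate relating $|z|_{\C^{\tilde\alpha}}$ to $|Z|_{\C^{\tilde\alpha}}$ does not itself contribute an extra exponential factor that inflates the final bound beyond the claimed form.
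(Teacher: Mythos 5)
The paper itself gives no proof of this lemma; it only cites Hairer--Li, Lemma~2.2 and asserts the same argument applies, so I judge your proposal on its own merits. The local-to-global absorption strategy is the right one, and your identification of the three constraints on $\delta$ and the expansion of $\Phi=G(z)-G(\bar z)$ are correct. One small imprecision: after absorption the coefficient $\Theta(\delta)$ of $|w_a|$ is \emph{not} bounded by a constant independent of $|\h|_{\C^\alpha}$ --- it contains $|\h|_{\C^\alpha}\delta^{\alpha-\tilde\alpha}$, which is of order $\delta^{-\tilde\alpha}$ when $\delta\sim|\h|_{\C^\alpha}^{-1/\alpha}$. What is bounded is $\Theta(\delta)\delta^{\tilde\alpha}$, and that is all the iteration for $|w_{t_k}|$ actually needs, so this is fixable; but the claim that $K$ can be chosen independent of $|\h|_{\C^\alpha}$ is not right as written.

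The genuine gap is the closing step. If your a priori bound $|z|_{\C^{\tilde\alpha}}\lesssim(1+|Z|_{\C^{\tilde\alpha}})\exp(C|\h|_{\C^\alpha}^{1/\alpha})$ were correct, substituting it into $\exp\bigl(C(|z|_{\C^{\tilde\alpha}}+|\bar z|_{\C^{\tilde\alpha}})^{1/\tilde\alpha}\bigr)$ would give a \emph{double} exponential $\exp\bigl(C\exp(C|\h|_{\C^\alpha}^{1/\alpha}/\tilde\alpha)\bigr)$, which cannot be absorbed into $\exp(C|\h|_{\C^\alpha}^{1/\alpha})$. In fact, since $G$ is bounded, the true a priori bound is only polynomial --- roughly $|z|_{\C^{\tilde\alpha}}\lesssim_T(1+|Z|_{\C^{\tilde\alpha}})|\h|_{\C^\alpha}^{(1-\tilde\alpha)/\alpha}+|\h|_{\C^\alpha}^{1/\alpha}$ (the factor $|\h|^{(1-\tilde\alpha)/\alpha}$ coming from gluing $N\sim|\h|^{1/\alpha}$ local H\"older bounds) --- but raised to the power $1/\tilde\alpha$ and separated by Young's inequality this produces exponents $|\h|_{\C^\alpha}^{1/(\alpha\tilde\alpha)}$ and $(1+|Z|_{\C^{\tilde\alpha}})^{1/\tilde\alpha^2}$, strictly worse than the claimed $|\h|_{\C^\alpha}^{1/\alpha}$ and $|Z|_{\C^{\tilde\alpha}}^{1/\tilde\alpha}$ because $\tilde\alpha<1$. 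The remedy is to never let $\delta$ depend on the \emph{global} H\"older norms of $z,\bar z$. Choose $\delta\sim\min\bigl(1,|\h|_{\C^\alpha}^{-1/\alpha},|Z|_{\C^{\tilde\alpha}}^{-1/\tilde\alpha},|\bar Z|_{\C^{\tilde\alpha}}^{-1/\tilde\alpha}\bigr)$ from the outset; only the \emph{local} norms $|z|_{\C^{\tilde\alpha}([a,a+\delta])}$, $|\bar z|_{\C^{\tilde\alpha}([a,a+\delta])}$ enter the absorption, and a subsidiary absorption on the $z$-equation (valid with this same $\delta$) gives $\delta^{\tilde\alpha}|z|_{\C^{\tilde\alpha}([a,a+\delta])}\lesssim\delta^{\tilde\alpha}|Z|_{\C^{\tilde\alpha}}+(1+|z_a|)\delta+|\h|_{\C^\alpha}\delta^{\alpha}$, each summand being $O(1)$ for the chosen $\delta$ after a crude Gr\"onwall sup-bound on $|z_a|,|\bar z_a|$. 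The exponent then emerges directly as $T/\delta\lesssim T\bigl(1+|\h|_{\C^\alpha}^{1/\alpha}+|Z|_{\C^{\tilde\alpha}}^{1/\tilde\alpha}+|\bar Z|_{\C^{\tilde\alpha}}^{1/\tilde\alpha}\bigr)$, matching the statement, and no separate a priori estimate on $|z|_{\C^{\tilde\alpha}}$ ever needs to be inserted into the exponent.
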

Albeit the statement of \cref{lem:residue_lemma} is slightly stronger than \cite[Lemma 2.2]{Hairer2020}, it is straight-forward to show that the very same proof still applies. We therefore omit the details and finally turn to the proof of the main result of this article:

\begin{proof}[{Proof of \cref{thm:feedback_fractional}}]
  First observe that, by the assumptions of the theorem and \cref{cor:lipschitz_average}, there exists a unique global solution to the averaged equation \eqref{eq:effective_dynamics}, see \cite{Lyons1998,Lyons2002,Rascanu2002}. We fix $\bar{\alpha}\in(\alpha,H)$ with $(\bar\alpha-\alpha)^{-1}<p$. Choose $\beta\in(1-H,\hat{H}\wedge\frac12)$. By \cref{cor:tightness}, $\sup_{\varepsilon\in(0,1]}\|X^{\varepsilon,M}\|_{\B_{\beta,p}}<\infty$ for each $M>0$. Consequently, by \cref{prop:final_control}, we deduce that
  \begin{align*}
    \left\|\int_0^\cdot \Big(g\big(X_r^{\varepsilon,M},\Phi_{0,r}^{X^{\varepsilon,M}}(Y_0)\big)-\bar{g}\big(X_r^{\varepsilon,M}\big)\Big)\,dB_r\right\|_{\B_{\bar{\alpha},p}}&\lesssim\varepsilon^\gamma,
    \\
    \left\|\int_0^\cdot \Big(f\big(X_r^{\varepsilon,M},\Phi_{0,r}^{X^{\varepsilon,M}}(Y_0)\big)-\bar{f}\big(X_r^{\varepsilon,M}\big)\Big)\,dr\right\|_{\B_{\bar{\alpha},p}}&\lesssim\varepsilon^\gamma.
  \end{align*}
  Therefore, $\big\|\hat{X}^{\varepsilon,M}-\bar{X}^{\varepsilon,M}\big\|_{\B_{\bar{\alpha},p}}\lesssim\varepsilon^\gamma$, where
  \begin{align*}
    \hat{X}^{\varepsilon,M}_t&\define X_0+\int_0^t f\big(X^{\varepsilon,M}_r,\Phi_{0,r}^{X^{\varepsilon,M}}(Y_0)\big)\,dr+\int_0^t g\big(X^{\varepsilon,M}_r,\Phi_{0,r}^{X^{\varepsilon,M}}(Y_0)\big)\,dB_r,\\
    \bar{X}^{\varepsilon,M}_t&\define X_0+\int_0^t\bar{f}\big(X^{\varepsilon,M}_r\big)\,dr+\int_0^t\bar{g}\big(X^{\varepsilon,M}_r\big)\,dB_r.
  \end{align*}
  In particular, $\big|\hat{X}^{\varepsilon,M}-\bar{X}^{\varepsilon,M}\big|_{\C^\alpha}\to 0$ in probability by the embedding \eqref{eq:embeddings}. Note also the decomposition
  \begin{equation*}
    X_t^{\varepsilon,M}=\hat{X}_t^{\varepsilon,M}-\bar{X}_t^{\varepsilon,M}+X_0+\int_0^t\bar{f}(X_r^{\varepsilon})\,dr+\int_0^t \bar{g}(X_r^{\varepsilon})\,dB_r,\quad t\in[0,\tau_M^\varepsilon\wedge T],
  \end{equation*}
  whence \cref{lem:residue_lemma} furnishes the bound
  \begin{equation}\label{eq:residue_bound}
    \big|X^{\varepsilon}-\bar{X}\big|_{\C^\alpha([0,\tau_M^\varepsilon\wedge T])}\leq C\exp\left(C|B|_{\C^\alpha}^{\frac{1}{\alpha}}+C\big|\hat{X}^{\varepsilon,M}-\bar{X}^{\varepsilon,M}\big|_{\C^\alpha}^{\frac{1}{\alpha}}\right)\big|\hat{X}^{\varepsilon,M}-\bar{X}^{\varepsilon,M}\big|_{\C^\alpha}.
  \end{equation}
  As we have seen above, for each $M>0$, the right-hand side goes to $0$ in probability as $\varepsilon\to 0$. Hence, we also have that $\big|X^{\varepsilon}-\bar{X}\big|_{\C^\alpha([0,\tau_M^\varepsilon\wedge T])}\to 0$ in probability.

  On the other hand, note that
  \begin{align}
  	\prob(\tau_M^\varepsilon<T)&\leq\prob\left(\sup_{t\in[0,\tau_M^\varepsilon]}\big|X_t^{\varepsilon}\big|\geq M,\tau_M^\varepsilon<T\right)\leq\prob\left(\big|X^{\varepsilon}\big|_{\C^\gamma([0,\tau_M^\varepsilon\wedge T])}\geq T^{-\gamma}(M-\|X_0\|_{L^\infty})\right)\nonumber\\
  	&\leq\prob\left(\big|X^{\varepsilon}-\bar{X}\big|_{\C^\gamma([0,\tau_M^\varepsilon\wedge T])}\geq T^{-\gamma}(M-\|X_0\|_{L^\infty})-|\bar{X}|_{\C^\gamma([0,T])}\right)\nonumber\\
  	&\leq\prob\left(\big|X^{\varepsilon}-\bar{X}\big|_{\C^\gamma([0,\tau_M^\varepsilon\wedge T])}\geq 1\right)+\prob\left(\big|\bar{X}\big|_{\C^\gamma([0,T])}>T^{-\gamma}(M-\|X_0\|_{L^\infty})-1\right)\label{eq:split}
  \end{align}
  for each $\gamma>0$. By \cref{prop:nualart}, we know that $\big|\bar{X}\big|_{\C^\gamma([0,T])}\in L^1$ provided that $\gamma<\frac12$. We fix such a $\gamma$.

  It is now easy to finish the proof. Let $\delta_1,\delta_2\in(0,1)$ be given. Then we can find a $M>0$ such that
  \begin{equation*}
  	\prob\left(\big|\bar{X}\big|_{\C^\gamma([0,T])}>T^{-\gamma}(M-\|X_0\|_{L^\infty})-1\right)\leq\frac{\delta_2}{2}.
  \end{equation*}
  For this $M$, we can also find an $\varepsilon_0>0$ such that
  \begin{equation*}
  	\prob\left(\big|X^{\varepsilon}-\bar{X}\big|_{\C^\alpha([0,\tau_M^\varepsilon\wedge T])}>\delta_1\right)\leq\frac{\delta_2}{4}\qquad\forall\,\varepsilon\in(0,\varepsilon_0).
  \end{equation*}
  The estimate \eqref{eq:split} therefore yields that
  \begin{align*}
  	\prob\left(\big|X^{\varepsilon}-\bar{X}\big|_{\C^\alpha([0,T])}>\delta_1\right)&\leq\prob\left(\big|X^{\varepsilon}-\bar{X}\big|_{\C^\alpha([0,\tau_M^\varepsilon\wedge T])}>\delta_1,\tau_M^\varepsilon\geq T\right)+\prob(\tau_M^\varepsilon<T)\\
  	&\leq 2\prob\left(\big|X^{\varepsilon}-\bar{X}\big|_{\C^\alpha([0,\tau_M^\varepsilon\wedge T])}>\delta_1\right)+\frac{\delta_2}{2}\leq\delta_2
  \end{align*}
  for all $\varepsilon\in(0,\varepsilon_0)$. Hence, $\big|X^{\varepsilon}-\bar{X}\big|_{\C^\alpha([0,T])}\to 0$ in probability as $\varepsilon\to 0$, as required.
\end{proof}

\begin{remark}
  The proof above shows that we can choose
  \begin{equation*}
    \lambda_0=\inf_{x\in\R^d}\Lambda(\kappa,R,p)
  \end{equation*}
  for any $p>\max\big(2,(H-\alpha)^{-1}\big)$ in \cref{thm:feedback_fractional}. Here, $\Lambda$ is the constant from \cref{prop:conditional_initial_condition_wasserstein}.
\end{remark}

\subsection{Smoothness of the Averaged Coefficients}

Let us finally show that an \emph{everywhere contractive} fast process falls in the regime of \cref{thm:feedback_fractional}. While smoothness of $\bar g$ also holds under less restrictive conditions, the proof becomes much more involved. To keep this article concise, we chose to report on these results in future work.

\begin{corollary}\label{cor:smooth}
  Suppose that
  \begin{itemize}
    \item $g\in\C_b^3\big(\R^d\times\R^n,\Lin[m]{d}\big)$,
    \item there is a $\kappa>0$ such that $b(x,\cdot)\in\S(\kappa,0,0)$ for every $x\in\R^d$,
    \item $b\in\C^3\big(\R^d\times\R^n,\R^d\big)$ is globally Lipschitz continuous and there is an $N\in\N$ such that, for each $i,j,k\in\{x,y\}$,
    \begin{equation*}
      |D^2_{i,j} b(x,y)|+|D^3_{i,j,k}b(x,y)|\lesssim 1+|y|^N\qquad\forall\,x\in\R^d,\,\forall\, y\in\R^n.
    \end{equation*}
  \end{itemize}
  Then the conclusion of \cref{thm:feedback_fractional} holds.
\end{corollary}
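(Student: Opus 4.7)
The plan is to verify both hypotheses of \cref{thm:feedback_fractional}. The regularity requirements on $b$ in \cref{cond:feedback} follow immediately from global Lipschitz continuity, and since $\S(\kappa,0,0)\subset\S(\kappa,R,\lambda)$ for all $R,\lambda\geq 0$, the semi-contraction condition is automatic for any threshold $\lambda_0$. It therefore suffices to establish $\bar g\in\C_b^2(\R^d,\Lin[m]{d})$; boundedness is clear from $g\in\C_b^0$, so the task reduces to producing two derivatives with uniform bounds. I would do this by constructing the stationary fast dynamics pathwise as a pullback attractor and then differentiating in the frozen parameter $x$.

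Fix a two-sided realization of $\hat B$. For each $x\in\R^d$, define
\begin{equation*}
Y^x_t(\omega)\define\lim_{s\to-\infty}\bar\Phi^x_{s,t}(y_0,\omega),
\end{equation*}
where $\bar\Phi^x_{s,t}$ is the flow of $dZ_r=b(x,Z_r)\,dr+\sigma\,d\hat B_r$. Everywhere contractivity of $b(x,\cdot)$ gives the pathwise contraction $|\bar\Phi^x_{s,t}(y_0)-\bar\Phi^x_{s,t}(y_0')|\leq e^{-\kappa(t-s)}|y_0-y_0'|$, so the limit exists independently of $y_0$ and defines a strictly stationary process with $\L(Y^x_0)=\pi^x$. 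Formally differentiating in $x$ yields a linear ODE for $J^x_t\define\partial_x Y^x_t$,
\begin{equation*}
\tfrac{d}{dt}J^x_t=D_xb(x,Y^x_t)+D_yb(x,Y^x_t)J^x_t.
\end{equation*}
Because $\langle D_yb(x,y)v,v\rangle\leq-\kappa|v|^2$, the fundamental matrix $\mathcal E^x(t,s)$ of the homogeneous equation satisfies $\|\mathcal E^x(t,s)\|\leq e^{-\kappa(t-s)}$, and variation of constants gives the stationary solution
\begin{equation*}
J^x_t=\int_{-\infty}^t\mathcal E^x(t,s)\,D_xb(x,Y^x_s)\,ds,
\end{equation*}
with the pathwise bound $|J^x_t|\leq\kappa^{-1}\Lip{b}$ uniform in $x,t,\omega$. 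A similar argument for $K^x_t\define\partial_x^2 Y^x_t$ produces a linear ODE with the same contractive leading coefficient and an inhomogeneity controlled by $C(1+|Y^x_t|^N)$ thanks to the polynomial growth of $D^2b$ and boundedness of $J^x$. Combined with the stationary moment bound $\sup_{x,t}\E|Y^x_t|^p<\infty$ (a standard consequence of contractivity plus Gaussianity of the driver), this yields $\sup_{x,t}\E|K^x_t|^p<\infty$ for every $p\geq 1$.

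Rigorous pathwise differentiability of $x\mapsto Y^x_0(\omega)$ would be obtained by first establishing the analogous estimates at the level of the finite-time pullback $\bar\Phi^x_{s,0}$ uniformly in $s\leq 0$ (standard SDE differentiation) and then letting $s\to-\infty$, with the contractive estimates controlling convergence of the difference quotients. Dominated convergence then yields
\begin{align*}
D\bar g(x)&=\E\big[D_xg(x,Y^x_0)+D_yg(x,Y^x_0)J^x_0\big],\\
D^2\bar g(x)&=\E\big[D^2_xg(x,Y^x_0)+2D^2_{xy}g(x,Y^x_0)J^x_0+D^2_yg(x,Y^x_0)(J^x_0,J^x_0)+D_yg(x,Y^x_0)K^x_0\big],
\end{align*}
both bounded and continuous in $x$ since $g\in\C_b^3$ and $J^x_0,K^x_0$ have uniform $L^p$-moments. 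The main obstacle will be the clean handling of the second variation $K^x$---in particular, justifying that the $s\to-\infty$ limit commutes with two spatial differentiations---but no new idea beyond Gr\"onwall estimates afforded by the everywhere contractivity of $b(x,\cdot)$ is required. The corollary then follows immediately from \cref{thm:feedback_fractional}.
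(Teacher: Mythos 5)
Your proposal is correct in substance and relies on the same mechanism as the paper's proof---the everywhere-contractive drift $b(x,\cdot)$ gives the uniform exponential bound $|J_{s,t}|\leq e^{-\kappa(t-s)}$ on the linearized flow, and the polynomial growth of the higher derivatives of $b$ combined with stationary moment bounds on $Y^x$ controls the variations in $x$. The technical route, however, differs in a meaningful way. You differentiate the pathwise stationary (pullback) solution $Y^x_0=\lim_{s\to-\infty}\bar\Phi^x_{s,0}(y_0)$ directly in $x$, which is conceptually clean but forces you to justify commuting two spatial derivatives with the $s\to-\infty$ limit---the step you rightly flag as the main obstacle, and where the $\C^3$ regularity of $b$ is actually needed to control the modulus of continuity of the second variation in $s$. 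The paper instead works with the finite-time laws $h_t(x)\define\E[g(Y^x_t)]$ started from a fixed generalized initial condition, establishes $h_t\in\C_b^2$ with uniformly bounded first, second, \emph{and} third derivatives, and then extracts a locally uniformly convergent subsequence of $Dh_t$ and $D^2h_t$ via Arzel\`a--Ascoli, with pointwise convergence $h_t\to\bar g$ coming from \cref{thm:geometric}. The paper's compactness argument sidesteps the delicate interchange of limits that your pullback construction requires, at the cost of needing the extra derivative bound on $D^3h_t$; both routes end up consuming the same $\C^3$ hypothesis on $b$, just at different points of the argument. One minor gap in your write-up: you assert continuity of $D\bar g$ and $D^2\bar g$ from the integral representations and dominated convergence, but you would still need continuity of $x\mapsto(J^x_0,K^x_0)$ in $L^p$, which requires a short additional Gr\"onwall estimate (the paper gets this for free from uniform convergence of $D^kh_t$).
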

\begin{example}
  Let $V\in\C^4(\R^d\times\R^n)$. If $\inf_{x,y}D_{y,y}^2 V(x,y)\geq\kappa$, $|D^2_{x,y}V|_\infty+|D^2_{y,y}V|_\infty<\infty$, and, for each $i,j,k\in\{x,y\}$,
  \begin{equation*}
    |D^3_{i,j,y}V(x,y)|+|D^4_{i,j,k,y}V(x,y)|\lesssim 1+|y|^N\qquad\forall\,x\in\R^d,\,\forall\, y\in\R^n,
  \end{equation*}
  then $b=-D_y V$ falls in the regime of \cref{cor:smooth}. To give a concrete example, we can choose $V(x,y)=\big(2+\sin(x)\big)\big(y^2+\sin(y)\big)$, which furnishes the drift $b(x,y)=-\big(2+\sin(x)\big)\big(2y+\cos(y)\big)$.
\end{example}

\begin{proof}[Proof of \cref{cor:smooth}]
  In order to apply \cref{thm:feedback_fractional} it is enough to show that, for any $g\in\C_b^3(\R^n)$, the function
  \begin{equation*}
    \bar{h}(x)\define\int_{\R^n}g(y)\,\pi^x(dy)
  \end{equation*}
  is again of class $\C_b^2(\R^d)$. To this end, we define $h_t(x)\define\Expec{g(Y_t^x)}$ where $Y^x$ is the solution to the SDE
  \begin{equation*}
    dY_t^x=b(x,Y_t^x)\,dt+\sigma\,d\hat{B}
  \end{equation*}
  started in the generalized initial condition $\delta_0\otimes\sW$. Note that $h_t\to\bar{h}$ pointwise as $t\to\infty$ by \cref{thm:geometric}. Since $h_t\in\C_b^2(\R^d)$ for each $t\geq 0$, it thus suffices to show that 
  \begin{equation}\label{eq:derivative_bound}
    \sup_{t\geq 0} \left(|D h_t|_\infty+|D^2 h_t|_\infty\right)<\infty
  \end{equation} 
  and both $D h_t$ and $D^2 h_t$ converge locally uniformly along a subsequence. By a straight-forward `diagonal sequence' argument, we actually only need to prove uniform convergence on a fixed compact $K\subset\R^d$.

  Under the assumptions of the corollary, it is easy to see that the mapping $x\mapsto Y_t^x$ is three-times differentiable for each $t\geq 0$ and it holds that
  \begin{align*}
    D_x Y_t^x&=\int_0^tJ_{s,t}D_x b(x,Y_s^x)\,ds, \label{eq:first_derivative}\\
    D^2_{x,x} Y_t^x(u\otimes v)&=\int_0^tJ_{s,t}\Big(D_{x,x}^2 b(x,Y_s^x)(u\otimes v)+2D_{x,y}^2 b(x,Y_s^x)\big(u\otimes D_x Y_s^x(v)\big) \nonumber\\
    &\phantom{=\int_0^tJ_{s,t}}+D^2_{y,y}b(x,Y_s^x)\big(D_x Y_s^x(u)\otimes D_x Y_s^x(v)\big)\Big)\,ds,
  \end{align*}
  where $J_{s,t}$ solves the homogeneous problem
  \begin{equation*}
    J_{s,t}=\mathrm{id}+\int_s^t D_yb(x,Y_r^x)J_{s,r}\,dr.
  \end{equation*}
  Since $b(x,\cdot)\in\S(\kappa,0,0)$, it is not hard to see that, for each $x\in\R^d$ and $y\in\R^n$, $D_yb(x,y)\leq-\kappa$ in the sense of quadratic forms. In particular, the operator norm of $J$ satisfies the bound
  \begin{equation*}
    |J_{s,t}|\leq e^{-\kappa(t-s)}.
  \end{equation*}
  By an argument similar to \cref{lem:fast_process_moments}, it follows that, for any $p\geq 1$,
  \begin{equation*}
    \sup_{t\geq 0}\sup_{x\in\R^d}\big\|D_xY_t^x\big\|_{L^p}<\infty\quad\text{and}\quad \sup_{t\geq 0}\sup_{x\in\R^d}\big\|D_{x,x}^2Y_t^x\big\|_{L^p}<\infty.
  \end{equation*}
  Based on this, it is straight-forward to verify \eqref{eq:derivative_bound}. Consequently, by the Arzela-Ascoli theorem, there is a subsequence of times along which $Dh$ converges uniformly on $K$. By a similar---albeit more tedious---computation, the reader can easily check that also 
  \begin{equation*}
    \sup_{t\geq 0}\sup_{x\in\R^d}\big\|D_{x,x,x}^3Y_t^x\big\|_{L^p}<\infty.
  \end{equation*}
  In particular, $D^3 h$ is uniformly bounded, whence we can pass to a further subsequence along which $D^2 h$ also converges uniformly on $K$. Therefore, $\bar{h}\in\C_b^2(\R^d)$ as required.
\end{proof}

{
\footnotesize
\bibliographystyle{alpha}
\bibliography{./fenv-ave}
}

\end{document}